\documentclass[12pt]{article}

\usepackage{amssymb}
\usepackage{amsmath}
\usepackage{amsthm}
\usepackage{mathtools}
\usepackage{enumerate}
\usepackage{mathrsfs}
\usepackage[colorlinks=true,allcolors=black]{hyperref}
\usepackage{url}
\usepackage{float}

\usepackage{setspace}
\usepackage{tikz}
\tikzset{every picture/.style={line width=0.75pt}}
\usepackage[labelfont=bf]{caption}

\NewDocumentCommand{\sff}{}{\mathrm{I\!I}}

\usepackage{chngcntr}

\allowdisplaybreaks

\addtolength{\hoffset}{-2.25cm}
\addtolength{\textwidth}{4.5cm}
\addtolength{\voffset}{-2.5cm}
\addtolength{\textheight}{5cm}
\setlength{\parskip}{0pt}
\setlength{\parindent}{0pt}

\pagenumbering{arabic}

\newtheorem{theorem}{Theorem}
\newtheorem{proposition}{Proposition}
\newtheorem{definition}{Definition}
\newtheorem{remark}{Remark}
\newtheorem{lemma}{Lemma}
\newtheorem{corollary}{Corollary}

\newcommand{\mres}{\mathbin{\vrule height 1.6ex depth 0pt width
0.13ex\vrule height 0.13ex depth 0pt width 1.3ex}}

\newcommand{\Addresses}{{
  \bigskip
  \footnotesize

KMS:
\textsc{Department of Mathematics, Johns Hopkins University, 404 Krieger Hall, 3400 N. Charles Street, Baltimore, MD 21218, USA}\par\nopagebreak
  \textit{Email address:} \texttt{kmarsh34@jh.edu}

\medskip

GN:
\textsc{Department of Mathematics, University of Rochester, 915 Hylan Building, Rochester, NY 14620, USA}\par\nopagebreak
  \textit{Email address:} \texttt{g.niu@rochester.edu}

\medskip

DP:
\textsc{Department of Mathematics, Huxley Building, South Kensington Campus, Imperial College London, London, SW7 2AZ, UK}\par\nopagebreak
  \textit{Email address:} \texttt{d.parise@imperial.ac.uk}
}}

\begin{document}

\title{\large \textbf{GENERIC REGULARITY OF ISOPERIMETRIC REGIONS IN DIMENSION EIGHT}}
\author{\small KOBE MARSHALL-STEVENS, GONGPING NIU, \& DAVIDE PARISE}
\date{\vspace{-5ex}}
\maketitle

\begin{abstract}
\noindent We establish generic regularity results for isoperimetric regions in closed Riemannian manifolds of dimension eight. In particular, we show that every isoperimetric region has a smooth nondegenerate boundary for a generic choice of smooth metric and enclosed volume, or for a fixed enclosed volume and a generic choice of smooth metric.
\end{abstract}

\tableofcontents

\section{Introduction}\label{sec: introduction}

Isoperimetric regions arise as minimisers of boundary area for a fixed enclosed volume, with sharp regularity theory, as established in \cite{GMT83} and \cite{M03}, guaranteeing that the boundary of such a region is a smooth hypersurface away from a closed singular set of codimension seven (see Subsection \ref{subsec: notation} for a precise definition). In closed Riemannian manifolds of dimension eight, this singular set consists of at most finitely many isolated points, with explicit singular examples having been constructed in \cite{N24B}. One may thus hope that, under some assumption on the choice of ambient metric and enclosed volume, all isoperimetric regions have smooth boundary in closed Riemannian manifolds of dimension eight; for example under a genericity assumption. We show that this is indeed the case:

\begin{theorem}\label{thm: generic regularity metrics and volumes}
    In a closed manifold of dimension eight, every isoperimetric region has smooth nondegenerate boundary for a generic choice of smooth Riemannian metric and enclosed volume.
\end{theorem}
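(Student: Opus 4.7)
The plan is to work in the product space of smooth Riemannian metrics and enclosed volumes $\mathcal{M} \times (0, \mathrm{Vol}_g(M))$ with the $C^\infty$ topology, and show that the parameters $(g, v)$ for which every isoperimetric region of volume $v$ in $(M, g)$ has smooth nondegenerate boundary form a countable intersection of open dense sets. I would split the argument into two generic statements proved in sequence: first, generic smoothness of the boundary; second, conditional on smoothness, generic nondegeneracy of the volume-preserving Jacobi operator. In each case the bad locus is covered by a countable union of closed subsets stratified by area/volume/topological bounds, and each piece is shown to have empty interior by a suitable perturbation.

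For generic smoothness, I would argue by contradiction following the philosophy of perturbing singular area-minimising hypersurfaces in dimension eight developed by Smale and refined by Chodosh–Liokumovich–Spolaor. Suppose $(g, v)$ admits an isoperimetric region $E$ singular at a point $p \in \partial E$. By the sharp regularity of \cite{GMT83} and \cite{M03}, the tangent cone $C$ of $\partial E$ at $p$ is an area-minimising hypercone in $\mathbb{R}^8$ with isolated singularity at the origin. The Hardt–Simon foliation provides, on each side of $C$, a smooth foliation by area-minimising hypersurfaces asymptotic to $C$. I would then construct a compactly supported perturbation of $g$ near $p$, coupled with a small adjustment of the target volume $v$, which forces the new minimiser onto a smooth leaf of the foliation; strict stability of the leaves combined with lower semicontinuity of the perimeter under varifold convergence upgrades this into the desired contradiction.

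For generic nondegeneracy, I would adapt the bumpy metrics theorem of B.\ White to the volume-constrained constant mean curvature setting. The smooth boundary of an isoperimetric region is a CMC hypersurface, and the appropriate stability operator acts on mean-zero test functions — the tangent space to volume-preserving normal variations. Forming the total space of triples (metric, embedded hypersurface, mean-curvature value), the constraint equation becomes Fredholm of index zero over the metric factor, and the Sard–Smale theorem then identifies the metrics admitting a degenerate CMC of bounded area as a meagre subset. A countable exhaustion in area, combined with the smoothness step, yields generic nondegeneracy in the full parameter space.

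The main obstacle is the perturbation step for generic smoothness, because the volume constraint is global: a local metric perturbation near $p$ can shift or even destroy the minimiser elsewhere, so one must ensure simultaneously that the new minimiser is singularity-free and sits near a smooth leaf of the Hardt–Simon foliation. I would handle this through a compactness–contradiction scheme along sequences of converging parameters, using quantitative regularity estimates to rule out the appearance of new singularities, and an implicit function argument on the isoperimetric profile to couple the volume adjustment to the metric perturbation. When several singularities appear simultaneously on $\partial E$, the local construction is iterated; finiteness of the singular set in dimension eight guarantees termination.
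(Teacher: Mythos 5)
Your proposal identifies the right ambient ingredients (sharp regularity, Hardt--Simon foliation, White's bumpy metrics philosophy, Sard--Smale, compactness) but the core perturbation step is exactly the approach the paper argues \emph{cannot} work, and the iteration scheme has a genuine gap.

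\textbf{The perturbation step fails because isoperimetry is global.} You propose a compactly supported metric perturbation near the singular point $p$ that pushes the isoperimetric boundary onto a smooth leaf of the Hardt--Simon foliation. This is indeed the Smale/Chodosh--Liokumovich--Spolaor mechanism (and its constant mean curvature analogue in Bellettini--Marshall-Stevens and Bellettini--Workman), and it does successfully remove the singularity of a \emph{locally} area-minimising CMC hypersurface. The problem, which you flag as ``the main obstacle'' but do not actually resolve, is that after a local perturbation the resulting smooth CMC hypersurface need not bound an isoperimetric region: the global competitor class is unchanged away from $p$, and the minimiser may simply jump to a different configuration (possibly reintroducing singularities elsewhere, or choosing a qualitatively different competitor). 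The implicit-function-on-the-profile idea does not fix this — the isoperimetric profile is only Lipschitz in general and the minimiser can be nonunique, so there is no smooth selection to differentiate. The paper avoids this entirely by using \emph{global} conformal metric perturbations $g \mapsto (1+cf)g$, proving that a converging sequence of isoperimetric regions (for the \emph{same} enclosed volume) induces a nonzero ``twisted'' Jacobi field on the limit, and showing that when this induced field is not of slow growth at some singular point, the approximating sequence must eventually be regular near that point (Theorem \ref{thm: induced twisted jacobi fields} and Proposition \ref{prop: perturbation of singularities with fast growth}). Isoperimetry is never lost because one never leaves the space of isoperimetric regions.

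\textbf{The iteration does not terminate for the reason you give.} You argue that iterating the local construction terminates because the singular set in dimension eight is finite. But removing a singularity via perturbation does not monotonically decrease the count: two singular points of $\partial E_j$ can converge to a single singular point of higher density on the limit $\partial E$, and a small perturbation of $(g,v)$ that resolves the density-$\theta_k$ point can itself create several density-$\theta_{k'}$ points with $k' < k$. The paper introduces the \emph{singular capacity} (Definitions \ref{def: singular capacity} and \ref{def: singular capacity for isoperimetric regions}), an inductively defined integer counting the worst-case number of singularities that can emerge from blow-ups, and proves it is finite and upper semicontinuous (Proposition \ref{prop: upper semi-continuity of singular capacity}); the iteration then genuinely terminates because each perturbation step strictly decreases the maximal singular capacity.

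\textbf{The ordering of your two steps hides a circularity.} You do smoothness first, then nondegeneracy, appealing to White's Fredholm/Sard--Smale argument for smooth CMC hypersurfaces. But the smoothness step cannot be run first: the perturbation that removes singularities in the paper requires as input precisely that the limit be \emph{semi-nondegenerate} (no slow-growth twisted Jacobi field), a spectral condition on the singular boundary, and this is where the Sard--Smale machinery actually lives. To make that machinery Fredholm in the presence of isolated singularities one needs the $\mathscr{B}$-space / twisted Jacobi operator framework of Section \ref{subsec: twisted Jacobi fields}, the pseudo-neighbourhood localisation of Section \ref{subsec: pseudo-neighbourhoods and three compactness lemmas}, and a countable covering of the space of triples built from the cone decomposition of Appendix \ref{sec: appendix B}. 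None of this can be replaced by the standard smooth-hypersurface bumpy metric theorem, since the hypersurfaces one needs to make nondegenerate are exactly the singular ones. Your proposal implicitly assumes the singular difficulty away before reaching the step that was supposed to control it.
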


By \textit{generic} in the above statement, and indeed throughout this work, we mean in the Baire category sense; namely, we show that the conclusions of the above statement hold on a countable intersection of open and dense sets in the space of metric volume pairs. Moreover, \textit{nondegenerate} refers to the triviality of the kernel of the linearised mean curvature operator (see Subsection \ref{subsec: twisted Jacobi fields}). We also obtain the following result for any fixed choice of enclosed volume:

\begin{theorem}\label{thm: generic regularity fixed volume}
    In a closed manifold of dimension eight and for a fixed enclosed volume, every isoperimetric region has smooth nondegenerate boundary for a generic choice of smooth Riemannian metric.
\end{theorem}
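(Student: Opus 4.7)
The plan is to deduce Theorem \ref{thm: generic regularity fixed volume} from Theorem \ref{thm: generic regularity metrics and volumes}. Writing $\mathcal{M}$ for the space of smooth Riemannian metrics on $M$ equipped with the $C^\infty$ topology, I would show that
\[
\mathcal{R}_v := \left\{ g \in \mathcal{M} : \text{every isoperimetric region of } (M,g) \text{ enclosing volume } v \text{ has smooth nondegenerate boundary} \right\}
\]
is open and dense in $\mathcal{M}$, which directly yields genericity. Density is the more interesting step and will be obtained from Theorem \ref{thm: generic regularity metrics and volumes} by a constant conformal rescaling that trades a small perturbation in the enclosed volume for a small perturbation in the metric.

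\textbf{Density.} Given $g_0 \in \mathcal{M}$, Theorem \ref{thm: generic regularity metrics and volumes} produces a pair $(g', v')$ with $g'$ arbitrarily $C^\infty$-close to $g_0$ and $v'$ arbitrarily close to $v$ such that every isoperimetric region for $(g', v')$ has smooth nondegenerate boundary. Set $\lambda := (v/v')^{1/8}$ and $g := \lambda^2 g'$. The scaling relations $\mathrm{vol}_g = \lambda^8 \mathrm{vol}_{g'}$ and $P_g = \lambda^7 P_{g'}$ show that the isoperimetric regions for $(g,v)$ are exactly those for $(g', v')$ as subsets of $M$. Since constant rescalings leave the Levi-Civita connection unchanged, the Jacobi operator transforms by $L_g = \lambda^{-2} L_{g'}$, so nondegeneracy is preserved. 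Thus $g \in \mathcal{R}_v$, and for $v'$ close to $v$ the factor $\lambda$ is close to $1$, so $g$ is $C^\infty$-close to $g_0$.

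\textbf{Openness.} Given $g_\star \in \mathcal{R}_v$, compactness of isoperimetric regions together with the fact that nondegenerate smooth isoperimetric regions are isolated in the flat topology yields a finite list $E_1, \dots, E_k$ of isoperimetric regions for $(g_\star, v)$. An implicit function theorem argument applied to the constant-mean-curvature equation with a volume constraint then produces, for each $g$ in a $C^\infty$ neighborhood of $g_\star$, a unique smooth nondegenerate isoperimetric region $E_i(g)$ near each $E_i$. The non-appearance of further isoperimetric regions under small perturbation follows by a compactness and contradiction argument: any sequence $\tilde E_j$ of isoperimetric regions for $g_j \to g_\star$ must subsequentially flat-converge to some $E_i$, and smoothness of $\partial E_i$ upgrades this to smooth convergence of the boundaries, contradicting the uniqueness provided by the implicit function theorem.

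\textbf{Main obstacle.} I expect the main technical work to be in the openness step, particularly in ruling out the appearance of singular isoperimetric regions under small metric perturbation; this requires carefully combining quantitative regularity theory with compactness in a manner that is uniform in the metric. The density step, by contrast, reduces cleanly to Theorem \ref{thm: generic regularity metrics and volumes} via the scaling observation, which exploits the scale-invariance of the notion of nondegeneracy.
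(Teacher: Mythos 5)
Your proof is correct and takes a genuinely different route from the paper. The paper proves Theorem~\ref{thm: generic regularity fixed volume} by re-running the entire machinery of Theorem~\ref{thm: generic regularity metrics and volumes} with the enclosed volume frozen: the pseudo-neighbourhoods of Definition~\ref{def: pseudo-neighbourhoods} are redefined with $t$ fixed, the compactness lemmas, the Sard--Smale argument of Subsection~\ref{subsec: sard-smale}, and the cone-decomposition covering of Subsection~\ref{subsec: generic semi-nondegeneracy} are all re-established for a fixed slice $\mathcal{G}^{k,\alpha}\times\{t\}$, and the singular-capacity reduction of Proposition~\ref{prop: reducing singular capacity} is run with the final assertion ($t_j = t$) in force. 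You instead deduce Theorem~\ref{thm: generic regularity fixed volume} from Theorem~\ref{thm: generic regularity metrics and volumes} as a black box: the constant conformal rescaling $g = (v/v')^{1/4}\,g'$ converts the $\mathbb{R}$-direction perturbation into a metric perturbation while identifying $\mathcal{I}(g,v)$ with $\mathcal{I}(g',v')$ set-theoretically (by the scaling $\mathrm{Vol}_g = \lambda^{8}\mathrm{Vol}_{g'}$, $\mathrm{Per}_g = \lambda^{7}\mathrm{Per}_{g'}$) and preserving nondegeneracy (since $\widetilde{L}_{\Sigma,\lambda^2 g'} = \lambda^{-2}\widetilde{L}_{\Sigma,g'}$ has the same kernel); this gives density of $\mathcal{R}_v$. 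The openness step is a standard compactness--Allard--spectral-continuity argument, together with the fact (which follows from Theorem~\ref{thm: induced twisted jacobi fields} with $g_j=g$, applied to a hypothetical accumulating sequence) that smooth nondegenerate isoperimetric regions are isolated, hence finite in number. Your approach is considerably more economical and yields a slightly stronger conclusion ($\mathcal{R}_v$ open and dense, not merely generic), but it produces only the headline statement: it does not recover the finer structure recorded in Theorem~\ref{thm: implies theorem 2}, namely the relationship to $\mathcal{U}_0^{k,\alpha}$ and the conformal-class refinement, since the latter requires access to the intermediate objects (semi-nondegeneracy, singular capacity) that the rescaling bypasses. One small phrasing caveat: the implicit function theorem furnishes a unique nearby constant-mean-curvature hypersurface with the prescribed enclosed volume, not a priori a unique isoperimetric region; your argument is still correct because the compactness step shows any isoperimetric region near $E_i$ must coincide with this CMC solution, but the wording in your openness paragraph should be adjusted accordingly.
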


In the course of the proof of Theorems \ref{thm: generic regularity metrics and volumes} and \ref{thm: generic regularity fixed volume}, we will in fact obtain several ancillary results, the precise statements of which are contained in Theorems \ref{thm: implies theorem 1} and \ref{thm: implies theorem 2}. In particular, we mention here that one can phrase the statements of both results above for the class of $C^{k,\alpha}$ metrics (for $k \geq 4$ and $\alpha \in (0,1)$), from which the case for smooth metrics follows, and moreover restrict to the conformal class of a given metric and obtain analogous genericity results. We also note that it follows, by renormalisation, from the proof of both Theorem \ref{thm: generic regularity metrics and volumes} and \ref{thm: generic regularity fixed volume} that one can restrict to the class of unit volume Riemannian metrics in each of their statements.

\begin{remark}
    One cannot hope to fix a metric and vary enclosed volumes to obtain an analogous generic regularity result as above. To see this, we observe that, for sufficiently large $R>0$, the closed eight-dimensional Riemannian manifold, $(M(R),g_R)$, constructed in \cite[Theorem 3.3]{N24B} is such that every isoperimetric region with enclosed volume in an open interval (centred at half of the volume of this manifold) has a unique isoperimetric region whose boundary contains exactly two isolated singular points; this follows implicitly from the proof of \cite[Theorem 4.1]{N24B}. 
\end{remark}

As a direct consequence of Theorem \ref{thm: generic regularity metrics and volumes} we are able to extend a result on the generic Riemannian quantitative isoperimetric inequality, \cite[Theorem 1.2]{CES22}, previously known to hold in dimension seven, to dimension eight. Precisely, with the notation as introduced in Subsection \ref{subsec: notation}, we obtain: 

\begin{corollary} \label{corollary: stability isoperimetric inequality}
    Given a closed manifold of dimension eight, there exists a generic subset, $\mathcal{U} \subset \mathcal{G}^{k, \alpha} \times \mathbb{R}$, of Riemannian metrics and enclosed volumes with the following property. If $(g, t) \in \mathcal{U}$, then there is a constant $C > 0$, depending on $g$ and $t$, such that if $E \in \mathcal{C}(M)$ with $\mathrm{Vol}_g(E) = t$ then
    \begin{equation*}
        \mathrm{Per}_g(E) - \mathrm{I}_g(t) \geq C \alpha_g(E)^2,
    \end{equation*}
    where 
    \begin{equation*} \label{equation: isoperimetric profile}
        \mathrm{I}_g(t) = \inf_{F \in \mathcal{C}(M)}\{\mathrm{Per}_g(F) \, | \, \mathrm{Vol}_g(F) = t \} \quad \text{and} \quad  \alpha_g(E) = \inf\{ \mathrm{Vol}_g( E \Delta \Omega) \, | \, \Omega \in \mathcal{I}(g, t)\},
    \end{equation*}
    are the isoperimetric profile and the Fraenkel asymmetry respectively. 
\end{corollary}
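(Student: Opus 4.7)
The plan is to deduce Corollary \ref{corollary: stability isoperimetric inequality} from Theorem \ref{thm: generic regularity metrics and volumes} by replicating the dimension-seven argument of \cite{CES22}. The key observation is that the proof of \cite[Theorem 1.2]{CES22} rests entirely on every isoperimetric region having smooth nondegenerate boundary; in dimension seven this is automatic from the codimension-seven bound on the singular set, whereas in dimension eight it only holds generically, which is precisely the content of Theorem \ref{thm: generic regularity metrics and volumes}.

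First I would take $\mathcal{U} \subset \mathcal{G}^{k,\alpha} \times \mathbb{R}$ to be the generic subset furnished by Theorem \ref{thm: generic regularity metrics and volumes}, on which every isoperimetric region has smooth nondegenerate boundary, and fix $(g,t) \in \mathcal{U}$. The implicit function theorem applied to the constrained perimeter functional at any $\Omega \in \mathcal{I}(g,t)$---whose nondegeneracy guarantees invertibility of the relevant linearisation---shows that no other isoperimetric region of volume $t$ occurs in an $L^{1}$-neighbourhood of $\Omega$; combined with the $L^{1}$-compactness of $\mathcal{I}(g,t)$, this forces $\mathcal{I}(g,t) = \{\Omega_{1},\ldots,\Omega_{N}\}$ to be finite.

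Next I would split the desired inequality into two regimes, with thresholds $\delta_{i} > 0$ to be determined. In the near regime, where $\mathrm{Vol}_{g}(E) = t$ and $\mathrm{Vol}_{g}(E \Delta \Omega_{i}) < \delta_{i}$ for some $i$, a Fuglede-type argument produces constants $c_{i} > 0$ for which
\[
\mathrm{Per}_{g}(E) - \mathrm{Per}_{g}(\Omega_{i}) \geq c_{i} \, \mathrm{Vol}_{g}(E \Delta \Omega_{i})^{2}.
\]
This is obtained by parametrising $\partial E$ as a normal graph over the smooth hypersurface $\partial \Omega_{i}$ via interior regularity, Taylor-expanding the perimeter and volume functionals, and invoking positivity of the second variation on mean-zero normal deformations (which is equivalent to nondegeneracy of $\Omega_{i}$). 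In the far regime, where $\mathrm{Vol}_{g}(E \Delta \Omega_{i}) \geq \delta_{i}$ for every $i$, a standard contradiction/compactness argument---any sequence with $\mathrm{Per}_{g}(E_{j}) - \mathrm{I}_{g}(t) \to 0$ must accumulate in $L^{1}$ on some isoperimetric region---yields a uniform lower bound $\mathrm{Per}_{g}(E) - \mathrm{I}_{g}(t) \geq \varepsilon > 0$, which suffices since $\alpha_{g}(E) \leq \mathrm{Vol}_{g}(M)$.

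The main obstacle will be the Fuglede-type estimate in the near regime. While classical on $\mathbb{R}^{n}$ for perturbations of round balls, its extension to smooth nondegenerate hypersurfaces in general eight-dimensional Riemannian manifolds requires care in reducing arbitrary sets of finite perimeter close to $\Omega_{i}$ to normal-graph perturbations and in handling the volume constraint through the Lagrange multiplier supplied by the mean curvature of $\partial \Omega_{i}$. However, this is precisely what has been executed in the codimension-one setting in \cite{CES22} for dimension seven, and since the only structural input needed is the smooth nondegenerate boundary provided by Theorem \ref{thm: generic regularity metrics and volumes}, the arguments there transfer essentially verbatim to the present setting.
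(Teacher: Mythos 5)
Your proposal is correct and takes essentially the same route as the paper: invoke Theorem \ref{thm: generic regularity metrics and volumes} to obtain the generic set on which every isoperimetric region has smooth nondegenerate boundary, then observe that the argument of \cite[Theorem 1.2]{CES22} (finiteness of $\mathcal{I}(g,t)$, a Fuglede estimate in the near regime, a compactness argument in the far regime) transfers verbatim. The paper states this more tersely, simply citing \cite{CES22} and noting that the only dimension-restriction there, namely \cite[(1.6)]{CES22}, in fact holds in all dimensions for smooth metrics, whereas you have helpfully unpacked the structure of that argument.
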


This follows since \cite[(1.6)]{CES22} holds in all dimensions (for smooth metrics), from which the proof of  \cite[Theorem 1.2]{CES22} goes through verbatim by replacing the generic set of metric volume pairs there by the one provided by our Theorem \ref{thm: generic regularity metrics and volumes}. Moreover, as a consequence of Theorem \ref{thm: generic regularity fixed volume} we also extend \cite[Corollary 5.4]{CES22}, the fixed enclosed volume and generic metric analogue of \cite[Theorem 1.2]{CES22}, to closed manifolds of dimension eight.

\bigskip

It would be of interest to know to which other situations the results of Theorem \ref{thm: generic regularity metrics and volumes} and \ref{thm: generic regularity fixed volume} may be applied in order to extend effective applications of isoperimetric regions with smooth boundary to closed Riemannian manifolds of dimension eight. For instance, similarly to Corollary \ref{corollary: stability isoperimetric inequality}, we expect the main results of this work to be applicable to the generic quantitative stability problem for the Cheeger energy introduced in \cite{C70}; the (non-generic) quantitative stability for this problem has been addressed in \cite{d23} for closed Riemannian manifolds of dimension at most seven. Moreover, it would be interesting to investigate the case of non-compact manifolds with finite volume, as well as complete manifolds under various curvature assumptions (e.g.~nonnegative Ricci curvature, Euclidean volume growth, quadratic curvature decay, asymptotic flatness, etc.) in which isoperimetric regions exist; in this direction, we refer the reader to \cite{APS25} and references therein. 

\bigskip

As isoperimetric regions have constant mean curvature on smooth portions of their boundary, the present work is related to previous results on the generic regularity of constant mean curvature, and in particular minimal (i.e.~zero mean curvature), hypersurfaces, which we now summarise.

\bigskip

The generic existence of a smooth area-minimising minimal hypersurface in each non-zero homology class of a closed Riemannian manifold of dimension eight was established in \cite{S93}, utilising local metric perturbations based on the foliation result of \cite{HS85}, which in turn established the analogous generic existence result for smooth seven-dimensional Plateau minimisers. Similar local metric perturbations were then utilised in \cite{CLS22} to establish the generic existence of smooth minimal hypersurfaces in closed Riemannian manifolds with positive Ricci curvature in dimension eight. More recent works, \cite{CMS23}, \cite{CMS24}, and \cite{CMSW25}, develop further local metric perturbation techniques in order to show the generic existence of smooth area-minimising minimal hypersurfaces, in each non-zero homology classes and for Plateau solutions, up to ambient dimension eleven.

\bigskip

In \cite{LW20}, by exploiting the global metric perturbations for isolated singularities of minimal hypersurfaces  developed in \cite{W20}, it was shown that every eight-dimensional closed manifold equipped with a generic metric (with no ambient curvature assumption) admits a smooth minimal hypersurface. Building on these global metric perturbations and the results of \cite{E24}, which in particular show that seven-dimensional minimal hypersurfaces with bounded mass and index belong to a finite collection of diffeomorphism types, it was shown in \cite{LW25} that for a generic choice of metric, every embedded locally stable minimal hypersurface, with sufficiently small singular set, is smooth in each eight-dimensional Riemannian manifold. We also refer to recent work in \cite{CLW25}, which obtains generic regularity results allowing for strongly isolated singularities (those whose tangent cone is of multiplicity one with singular set consisting of one point) of minimal submanifolds (potentially of high codimension) to be perturbed away by index theoretic methods; it would be of interest to know if these methods could be adapted to the setting of constant mean curvature surfaces with strongly isolated singularities.

\bigskip

Local metric perturbations were employed in \cite{BM25}, exploiting the constant mean curvature analogue of the foliation result of \cite{HS85} developed in \cite{BL25}, in order to remove strongly isolated singularities of constant mean curvature hypersurfaces which locally minimise the naturally associated area-type functional. In particular, this was utilised to show, for each $\lambda \in \mathbb{R}$, the generic existence of a smooth closed embedded hypersurface of constant mean curvature $\lambda$ in closed Riemannian manifolds with positive Ricci curvature in dimension eight. As shown in \cite[Section 2.2.2]{M24}, applying the same method to the boundary of an isoperimetric region in a closed Riemannian manifold of dimension eight produces an entirely smooth hypersurface of constant mean curvature. However, since isoperimetry is a global property, it is not clear if the resulting smooth hypersurface bounds an isoperimetric region.

\subsection{Strategy}

Since local metric perturbation techniques may not preserve isoperimetry, the strategy of the present work instead establishes Theorems \ref{thm: generic regularity metrics and volumes} and \ref{thm: generic regularity fixed volume} via global metric perturbations and a decomposition of the space of isoperimetric regions in closed Riemannian manifolds of dimension eight, building upon the techniques introduced in \cite{LW25} and \cite{E24}. In the discussion that follows, we restrict our attention to closed Riemannian manifolds of dimension eight and outline the strategy taken in the present work.

\bigskip

We first develop a procedure that allows us to perturb away isolated singularities in the boundary of a given isoperimetric region subject to an appropriate assumption on its boundary hypersurface. To this end we develop (in Subsection \ref{subsec: twisted Jacobi fields}) the relevant theory for the linearised mean curvature operator, which we refer to as the \textit{twisted Jacobi operator}, on the boundary hypersurfaces of isoperimetric regions in the presence of isolated singularities. The kernel of this operator, which we refer to as the space of twisted Jacobi fields, can then be seen as a direct generalisation of the notion of twisted Jacobi fields introduced for smooth embedded constant mean curvature hypersurfaces in \cite{BdE88}. We then show (in Theorem \ref{thm: induced twisted jacobi fields}) that a convergent sequence of isoperimetric regions (with the same enclosed volume) induces a non-zero twisted Jacobi field on the limiting isoperimetric region. Under the assumption that every twisted Jacobi field has a sufficiently fast growth rate (as specified in Definition \ref{def: slow growth and semi-nondegenerate}) near at least one isolated singularity, these induced Jacobi fields allow us to perturb away such a point by a conformal metric change (this is carried out in Corollary \ref{prop: perturbation of singularities with fast growth}, building on the open and dense set of conformal perturbation functions defined in Proposition \ref{prop: perturbation functions open and dense}).

\bigskip

Isoperimetric regions whose twisted Jacobi fields posses this sufficiently fast growth rate near at least one isolated singularity are referred to as \textit{semi-nondegenerate}; a property which implies the usual notion of nondegeneracy if the boundary hypersurface is smooth. An analogous notion of semi-nondegeneracy was first introduced (and shown to be a generic property) for locally stable minimal hypersurfaces with isolated singularities in \cite{LW20} and \cite{LW25}, building on the analysis in \cite{W20}, which enabled such points to be perturbed away. We show (in Theorem \ref{thm: generic semi-nondegeneracy}) that  semi-nondegeneracy is a generic property for isoperimetric regions in dimension eight; generic nondegeneracy results for both smooth minimal and constant mean curvature hypersurfaces were established in the foundational work of \cite{W91} (see also \cite[Section 5]{CES22} for isoperimetric regions specifically). To achieve this, we establish a local Sard--Smale type theorem in appropriately defined \textit{pseudo-neighbourhoods} (see Subsection \ref{subsec: pseudo-neighbourhoods and three compactness lemmas}) of a given triple of data (consisting of a  metric, enclosed volume, and isoperimetric region associated to this metric volume pair); an analogous notion of pseudo-neighbourhoods were first introduced for metric and minimal hypersurface pairs in \cite{LW25}. Such a result shows that, for a given triple of data as above, we can produce (in Lemma \ref{lemma: generic pairs for neighbourhoods}) an open and dense set of metric volume pairs for which all associated isoperimetric regions in a small enough pseudo-neighbourhood of this triple are semi-nondegenerate.

\bigskip

In order to achieve a global result we introduce (in Theorem \ref{thm: cone decomposition}) a notion of cone decomposition for general almost minimisers of perimeter analogous to the cone decomposition for minimal hypersurfaces introduced in \cite{E24}. Since isoperimetric regions are themselves almost minimisers of perimeter, by using this cone decomposition result we are able to decompose the space of isoperimetric regions in a closed Riemannian manifold of dimension eight, showing that their boundary hypersurfaces belong to a countable collection of diffeomorphism types; a similar description of the space of almost minimisers of perimeter was established in \cite[Theorem 5.5]{ESV24}. This ultimately allows us (in Lemma \ref{lem: covering of triples by pseudo neighbourhoods}) to cover the space of triples by a countable collection of pseudo-neighbourhoods, to each of which we associate an open and dense set of metric volume pairs produced by the local Sard--Smale result mentioned above. Intersecting over the countable collection of these open and dense sets of metric volume pairs, we thus conclude that semi-nondegeneracy is a generic property for isoperimetric regions in dimension eight. 

\bigskip

Having a perturbation procedure for isolated singularities and the genericity of semi-nondegeneracy in hand, ideally one would inductively reduce the potential number of isolated singularities that arise along converging sequences of isoperimetric regions until the resulting isoperimetric regions are smooth. However, one issue in this approach is that, while we ensure that converging isoperimetric regions are smooth near a given singular point, this process does not necessarily strictly decrease the total number of singular points arising along the sequence (for instance, one cannot preclude the possibility that two singular points converge to one singular point, with a necessarily higher density, in the limit).

\bigskip

To overcome this we define a notion of \textit{singular capacity} (see Definition \ref{def: singular capacity for isoperimetric regions}) for isoperimetric regions, analogous to those introduced for minimal hypersurfaces in both \cite{LW20} and \cite{LW25}. This singular capacity accounts for the potential number of singularities that can arise along sequences of convergent isoperimetric regions, is upper semi-continuous with respect to this convergence, and is finite for a given metric volume pair (these latter two properties are shown in Proposition \ref{prop: upper semi-continuity of singular capacity}). These properties allow us to show (in Proposition \ref{prop: reducing singular capacity}) that, in the generic set of metric volume pairs for which every isoperimetric region is semi-nondegenerate, we can iteratively reduce the maximum value of the singular capacity associated to a metric volume pair. Repeated iterations of these perturbations produce (in Theorems \ref{thm: implies theorem 1} and \ref{thm: implies theorem 2}) a generic set of metric volume pairs for which the maximum value of the singular capacity is always zero; thus every isoperimetric region associated to such a metric volume pair has entirely smooth boundary. This line of reasoning directly establishes both Theorems \ref{thm: generic regularity metrics and volumes} and \ref{thm: generic regularity fixed volume}.

\bigskip

We expect that much of the theory for isoperimetric regions developed in the present work applies directly to the class of multiplicity one, locally stable (or indeed finite index), embedded constant mean curvature hypersurfaces with at most finitely many isolated singularities. The difficulty in concluding a generic regularity result for a more general class of constant mean curvatures hypersurfaces (which allow for touching spheres/cylinders for example) is that, while a robust regularity and compactness theory for the class of quasi-embedded locally stable constant mean curvature hypersurfaces was developed in \cite{BW18}, one would need to account in our arguments for both the presence of non-embedded points as well as higher multiplicity. Neither occurs for isoperimetric regions. Furthermore, in comparison to the class of minimal hypersurfaces considered in \cite{LW25} and \cite{E24}, we do not need to account for the presence of index in the boundary hypersurfaces in our arguments. Each of these above facts affords us several simplifications when compared to their approaches since, for the most part, we can rely solely on the theory of Caccioppoli sets. In particular, we emphasise that isoperimetric regions possess a relatively straightforward compactness theory (recorded below in Lemma \ref{lemma: compactness of isoperimetric regions}) when compared to the compactness results for minimal hypersurfaces with bounded index (which is detailed in \cite[Appendix G]{LW25} and the references therein).

\bigskip

However, several additional difficulties are present when studying the twisted Jacobi operator associated to an isoperimetric region when compared to the Jacobi operator for a minimal hypersurface. Since isoperimetric regions, and more generally constant mean curvature hypersurfaces, are only stationary with respect to volume preserving variations, their associated twisted Jacobi fields are necessarily of integral zero on the boundary. For Jacobi fields on minimal hypersurfaces no such integral constraint is required. This difference becomes particularly apparent whenever one wants to ensure integral control when taking limits of twisted Jacobi fields for isoperimetric regions whose boundaries contain isolated singularities, since one needs to rule out any integral concentration at these points in the limit; in the smooth case there is no such issue, since one has graphical convergence of the boundary hypersurfaces. The two main methods introduced here to overcome this integral constraint issue, which are exploited repeatedly throughout our arguments, involve constructions of appropriate integral zero test functions as well as analysing the local volume change around singular points, guaranteeing integral non-concentration along convergent sequences of isoperimetric regions that induce twisted Jacobi fields. 

\subsection{Structure}

We now proceed as follows. Section \ref{sec: twisted jacobi fields} records several preliminary results on isoperimetric regions that will be used frequently throughout this work, and introduces the notion of twisted Jacobi fields for isoperimetric regions with isolated singularities. Section \ref{sec: asymptotic rates and metric perturbations} studies the growth rates of isoperimetric regions near isolated singularities, shows that converging isoperimetric regions induce twisted Jacobi fields, and then uses this result to show that, under a semi-nondegenerate assumption, we can perturb away isolated singularities. Section \ref{sec: bumpy metric volume pairs} is devoted to showing that semi-nondegeneracy is a generic property for isoperimetric regions in dimension eight. Section \ref{sec: singular capacity for isoperimetric regions} introduces the singular capacity for isoperimetric regions, and shows that we can combine the results of Sections \ref{sec: asymptotic rates and metric perturbations} and \ref{sec: bumpy metric volume pairs} to reduce the singular capacity in the generic set of metric volume for which every isoperimetric region is semi-nondegenerate. Section \ref{sec: proof of theorems 1 & 2} is devoted to the proofs of Theorems \ref{thm: generic regularity metrics and volumes} and \ref{thm: generic regularity fixed volume}. Appendix \ref{sec: appendix A} records several technical lemmas on the growth rates for graphical hypersurfaces over cones and for the mean curvature operator for hypersurfaces that are graphical over one another. Appendix \ref{sec: appendix B} contains both the cone decomposition for almost minimisers of perimeter in dimension eight, as well as the relevant definitions that allow for the covering of the space of triples to be carried out in Section \ref{sec: bumpy metric volume pairs}.    

\section{Notation, isoperimetric regions, and twisted Jacobi fields}\label{sec: twisted jacobi fields}

In this section we will establish notation, record some preliminaries on isoperimetric regions, and introduce the notion of twisted Jacobi fields (homogeneous solutions to the linearised mean curvature operator on boundaries of isoperimetric regions) in the presence of isolated singularities.

\subsection{Notation}\label{subsec: notation}

We now collect some notation and definitions that will be used throughout this work:

\begin{itemize}
    \item We let $(M,g)$ be an $8$-dimensional closed (i.e.~compact with empty boundary) Riemannian manifold. Without loss of generality we will implicitly assume $M$ is connected with unit volume. For $k \geq 4$ and $\alpha \in (0,1)$ we denote by $\mathcal{G}^{k,\alpha}$ the set of all $C^{k,\alpha}$ Riemannian metrics on $M$ (note that this set is a Banach manifold since $k$ is finite). For $g \in \mathcal{G}^{k,\alpha}$ we denote the conformal class of $g$ (amongst $C^{k,\alpha}$ metrics) by $$[g] = \{(1+f)g \in \mathcal{G}^{k,\alpha} \, | \, f \in C^{k,\alpha}(M)\}.$$ We write $\mathrm{Vol}_g(E)$ for the volume of a measurable $E \subset M$, $\int_E u \, dV_g$ for the integral of some integrable function $u$, and $L^p_g(E)$ for the space of $p \in [1,\infty]$ integrable functions on $E$, all with respect to the metric $g$. We denote by $\mathrm{dist}_g$ the distance function on $M$, $B^g_r(p)$ the open geodesic ball in $M$ of radius $r > 0$ centred at $p$, and $A^g(p;s, r) = B^g_r(p)\setminus \overline{B^g_s(p)}$ for annuli, all with respect to the metric $g$. 
    
    \item A measurable set $E \subset M$ is a \textbf{Caccioppoli set} if the indicator function of $E$ is of bounded variation, or equivalently if
    \begin{equation*}
    \mathrm{Per}_g(E) = \sup \left\{\int_E \mathrm{div}_g X \: \bigg| \: X \in \Gamma(TM), \Vert X\Vert_\infty \leq 1 \right\} < \infty,
    \end{equation*}
    where $\mathrm{div}_g$ is the divergence with respect to the metric $g$, $\Gamma(TM)$ is the set of vector fields on $M$ and $\Vert \cdot \Vert_\infty$ denotes the supremum norm. We write $\mathcal{C}(S)$ for the set of Caccioppoli sets in a set $S$, and note that $\mathcal{C}(S)$ is independent of the choice of metric on $S$. We say that $\Omega \in \mathcal{C}(M)$ is an \textbf{isoperimetric region} of enclosed volume $t \in \mathbb{R}$ if
    \begin{equation*}
        \mathrm{Per}_g(\Omega ) = \inf_{E \in \mathcal{C}(M)}\{\mathrm{Per}_g(E) \: | \: \mathrm{Vol}_g(E) = t\},
    \end{equation*}
    where $\mathrm{Per}_g(E)$ is the perimeter of $E \in \mathcal{C}(M)$ with respect to the metric $g$ and the right-hand side is the \textbf{isoperimetric profile}, $\mathrm{I}_g(t)$, as introduced in the statement of Corollary \ref{corollary: stability isoperimetric inequality}. The existence of isoperimetric regions of a given enclosed volume is then guaranteed by the direct method of the calculus of variations (e.g.~see \cite[Section 12.5]{M12}). We denote by $\mathcal{I}(g,t)$ the set of all isoperimetric regions in $(M,g)$ of enclosed volume $t \in \mathbb{R}$. Notice that if $t < 0$ or $t > |M|_g$ then there are no Caccioppoli sets of enclosed volume $t$, and hence no isoperimetric regions, meaning that any statements concerning them will be vacuously true.

    \item We will often make use of the notion of a varifold; a reference for the notation and definitions may be found in \cite{S83A}. In particular, for a varifold $V$, a point $p$ and a radius $r > 0$, we will denote $\theta_{V, g}(p, r) = \omega_7^{-1}r^{-7} \Vert V \Vert_g (B^g_r(p))$ for the density ratios, where $\omega_7$ is the $7$-dimensional Lebesgue measure of the unit ball in $\mathbb{R}^7$, and $\theta_{V, g}(p) = \lim_{r \rightarrow 0^+}\theta_{V, g}(p,r)$ for the density. Furthermore, for a varifold $V$, in the definition of density, we denoted by $\Vert V \Vert_g$ the weight measure with respect to the metric $g$; more generally, we may omit metric dependence when working in Euclidean space or when it is clear from context. Finally, for $x \in \mathbb{R}^n$ and $r > 0$ we let $\eta_{x,r}(y) = \frac{y - x}{r}$ for each $y \in \mathbb{R}^n$ and denote by $(\eta_{x,r})_{\#}V$ the image of a varifold $V$ under $\eta_{x,r}$.

    \item Let $\mathcal{T}^{k,\alpha}$ denote the set of \textbf{triples}, $(g,t,\Omega)$, and $\mathcal{P}^{k,\alpha}(t)$ denote the set of \textbf{pairs}, $(g,\Omega)$, where $g \in \mathcal{G}^{k,\alpha}$, $t \in \mathbb{R}$, and $\Omega \in \mathcal{I}(g,t)$; it will often be convenient to view $\mathcal{P}^{k,\alpha}(t) \subset \mathcal{T}^{k,\alpha}$ for a fixed $t \in \mathbb{R}$. We endow $\mathcal{T}^{k,\alpha}$ with the topology induced by the $C^{k-1,\alpha}$ topology in the first factor, the standard topology on $\mathbb{R}$ in the second factor, and the $L^1_g$ topology in the last factor; this restricts to a topology on $\mathcal{P}^{k,\alpha}(t)$ for each $t \in \mathbb{R}$.

    \item Given an isoperimetric region $\Omega$ as above, we denote by $\Sigma \subset \partial \Omega$ (and $\Sigma_j \subset \partial \Omega_j$ for sequences, tildes, and primes etc.), with $\overline{\Sigma} = \partial \Omega$, the $C^2$, two-sided, embedded hypersurface of constant mean curvature, and the closed \textbf{singular set}, $\mathrm{Sing}(\Sigma) = \overline{\Sigma} \setminus \Sigma$, consisting of isolated points with multiplicity one tangent cones with isolated singularity at the origin; by \cite{GMT83} and \cite{M03}, this is always the case for boundaries of isoperimetric regions in ambient dimension $8$. We say that an isoperimetric region (or by abuse of terminology its boundary) is \textbf{regular} if its boundary has empty singular set. We will often also write $|\Sigma|_g$ for the multiplicity one varifold associated to $\Sigma$, $\int_\Sigma u \, dA_g$ to denote the integral of some integrable function on $\Sigma$, and denote by $\nu_{\Sigma,g}$ the outward pointing unit normal to $\Sigma$ (all with respect to the metric $g$). We will occasionally abuse notation and write $|\Sigma|_g = \mathrm{Per}_g(\Sigma)$ for notational convenience (in particular when considering the averaged integrals as introduced in Subsection \ref{subsec: twisted Jacobi fields}).
    
    \item We let $\mathrm{inj}(M,g)$ denote the injectivity radius of $M$ with respect to the metric $g$ and for each $\Omega \in \mathcal{I}(g,t)$ we denote 
    $$\mathrm{inj}(\Sigma,g) = \min\{\mathrm{inj}(M,g), \{\mathrm{dist_g}(p,\widetilde{p}) \, | \, p, \widetilde{p} \in \mathrm{Sing}(\Sigma) \text{ with } p \neq \widetilde{p}\}\}.$$
    For each $r > 0$ we also denote
    $$B^g_r(\mathrm{Sing}(\Sigma)) = \Sigma \cap \left(\cup_{p \in \mathrm{Sing}(\Sigma)} B^g_r(p)\right) =  \{x \in \Sigma \, | \, \mathrm{dist}_g(x,p) < r \text{ for some } p \in \mathrm{Sing}(\Sigma)\}.$$
    We write $\exp_{g,x}$ for the exponential map with respect to the metric $g$ based at $x \in M$ and for a measurable set $E \subset \Sigma$ and a function $u : E \rightarrow \mathbb{R}$ denote the graph of $u$ over $E$ with respect to the metric $g$ by 
    $$\mathrm{graph}_{E}^g(u) = \{ \exp_x^g(u(x)\nu_{\Sigma,g}(x)) \, | \, x \in E\}.$$

    \item For working in $\mathbb{R}^{8}$  we denote by $g_\mathrm{eucl}$ the standard Euclidean metric, $\mathbb{B}_r(p)$ the open ball of radius $r$ centred at $p$, $\mathbb{B}_r = \mathbb{B}_r(0)$, $\mathbb{S}_r(p)$ the sphere of radius $r$ centred at $p$, $\mathbb{S} = \mathbb{S}_1(0)$, $\mathbb{A}(p;s, r) = \mathbb{B}_r(p)\setminus \overline{\mathbb{B}_s(p)}$, and $\mathbb{A}(s,r) = \mathbb{A}(0;s,r)$. Furthermore, $d_\mathcal{H}(\cdot, \cdot)$ will denote the Hausdorff distance in $\mathbb{R}^8$ with respect to $g_\mathrm{eucl}$. We will occasionally omit metric dependence from notation that involves $g_\mathrm{eucl}$, in particular for the multiplicity one varifolds associated to hypercones.
    
    \item Suppose that $\mathbf{C} \subset \mathbb{R}^8$ is a \textbf{minimal hypercone}; i.e.~$\mathrm{Sing}(\mathbf{C}) \subset \{0\}$, we then denote the smooth minimal hypersurface $S = \mathbf{C} \cap \mathbb{S} \subset \mathbb{R}^8$ as its \textbf{link}, $\mathbf{B}_r = \mathbf{C} \cap \mathbb{B}_r$, and $\mathbf{A}(s,r)\ =\mathbf{B}_r\setminus \overline{\mathbf{B}_s}$. By parameterising $\mathbf{C}$ in radial coordinates, $(r,\omega) \in (0,\infty) \times S$, we decompose (as in \cite{S68}) the Jacobi operator, $L_{\mathbf{C}}$, of $\mathbf{C}$ as
    $$L_{\mathbf{C}} = \partial^2_r + \left(\frac{n-1}{r}\right)\partial_r + \frac{1}{r^2}(\Delta_S + |\mathrm{I\!I}_S|^2),$$
    where $\mathrm{I\!I}_S$ is the second fundamental form of $S$ in $\mathbb{S}^7$. We let $\mu_1 < \mu_2 \leq \mu_3 \leq \ldots \nearrow + \infty$ be the eigenvalues of $- (\Delta_S + \vert \mathrm{I\!I}_S \vert^2)$, and $\varphi_1, \varphi_2, \ldots$ be the corresponding $L^2(S)$-orthonormal eigenfunctions where $\varphi_1 > 0$. By \cite{S68}$, \mathbf{C}$ is \textbf{stable} if and only if $\mu_1\geq -\frac{(n-2)^2}{4}$ and, as in \cite{CHS84}, we say that $\mathbf{C}$ is \textbf{strictly stable} if $\mu_1 > -\frac{(n-2)^2}{4}$.
    By \cite{CHS84}, a general \textbf{Jacobi field}, $v \in C_\mathrm{loc}^\infty(\mathbf{C})$, on $\mathbf{C}$ (i.e.~a solution to $L_\mathbf{C}v = 0$) is given by a linear combination of homogeneous Jacobi fields
    \begin{equation*}
        v(r, \omega) = \sum_{j \geq 1} ( v_j^+(r) + v_j^-(r)) \varphi_j(\omega), 
    \end{equation*}
    where for each $j \geq 1$ we write
    \begin{equation*}
        v_j^+(r) = c_j^+ \cdot r^{\gamma_j^+} \qquad \text{and} \qquad v_j^- = \begin{cases}
    c_j^- \cdot r^{\gamma_j^-}, & \text{if $\mu_j > - \frac{(n - 2)^2}{4}$}, \\
    c_j^- \cdot r^{\gamma_j^-} \log(r) & \text{if $\mu_j = -  \frac{(n - 2)^2}{4}$}, 
        \end{cases}
    \end{equation*}
    for some constants $c_j^\pm \in \mathbb{R}$, and
    \begin{equation*}
        \gamma_j^\pm = \gamma_j^\pm(\mathbf{C}) = - \frac{n - 2}{2} \pm \sqrt{\mu_j + \frac{(n - 2)^2}{4}}.
    \end{equation*}
    The collection of the exponents $\gamma_j^\pm$ is called the \textbf{asymptotic spectrum} of $\mathbf{C}$, which we denote by $\Gamma(\mathbf{C})$. For every $\Lambda > 0$, we let
    \begin{equation*}
        \mathscr{C}_\Lambda = \{\text{stable minimal hypercones, $\mathbf{C} \subset \mathbb{R}^8$, with $\Vert \mathbf{C} \Vert(\mathbb{B}_1) \leq \Lambda$}\}, 
    \end{equation*}
    while we denote simply by $\mathscr{C}$ the collection of stable minimal hypercones without density bound. Given $\mathbf{C} \in \mathscr{C}$, we will denote by $\mathscr{C}(\mathbf{C})$ the collection of cones $\mathbf{C}^\prime \in \mathscr{C}$ satisfying $\theta_{|\mathbf{C}^\prime|}(0) = \theta_{|\mathbf{C}_i|}(0)$, and for which there exists a $C^2$-diffeomorphism $\phi : \partial B_1 \rightarrow \partial B_1$ with $\phi(\mathbf{C} \cap \partial B_1) = \mathbf{C}^\prime \cap \partial B_1$. By considering the varifold metric,
    \begin{equation*}
        \mathbf{F}(V, W) = \sup\{\Vert V \Vert(f) - \Vert W \Vert(f) \, | \, f \in C^1(M), \vert f \vert \leq 1, \vert Df \vert \leq 1 \}, 
    \end{equation*}
    defined for integer rectifiable $7$-varifolds $V$, $\mathscr{C}_\Lambda$ is compact under the $\mathbf{F}$-metric for every $\Lambda > 0$; hence 
    \begin{equation*}
        \gamma_{\mathrm{gap}}(\Lambda) = \inf\{\gamma_{2}^{+}(\mathbf{C}) - \gamma_{1}^{+}(\mathbf{C}) \, | \, \mathbf{C} \in \mathscr{C}_\Lambda \} > 0. 
    \end{equation*}
    The asymptotic spectrum is continuous under varifold convergence, i.e.~if $\mathbf{F}(\vert \mathbf{C}_i \vert, \vert \mathbf{C}_\infty \vert) \rightarrow 0$ for $\{\mathbf{C}_j\}_{j \geq 1} \subset \mathscr{C}$, then $\mu_j(\mathbf{C}_i) \rightarrow \mu_j(\mathbf{C}_\infty)$ for each $j \geq 1$ as $k \rightarrow \infty$. By \cite[Theorem 5.1]{E24}, given a sequence of stable minimal hypercones $\{\mathbf{C}_i\}_{i \geq 1} \subset \mathscr{C}_\Lambda$ for some $\Lambda > 0$ there is a subsequence (not relabelled) and a stable minimal hypercone $\mathbf{C}$ such that the links $\mathbf{C}_{i} \cap \partial B_1$ converge to $\mathbf{C} \cap \partial B_1$ smoothly with multiplicity one as $i \rightarrow \infty$, and so that $\theta_{|\mathbf{C}_{i}|}(0) = \theta_{|\mathbf{C}|}(0)$ for all $i \geq 1$ sufficiently large. Moreover, by \cite{S83B} (see also \cite[Theorem 5.1]{E24}) the densities of stable minimal hypercones in $(\mathbb{R}^8, g_\mathrm{eucl})$ are discrete:
    \begin{equation} \label{eqn: discrete densities stable cones}
        \{\theta_{|\mathbf{C}|}(0) \, | \, \mathbf{C} \subset \mathbb{R}^8 \text{ is a stable minimal hypercone}\} = \{1 = \theta_0 < \theta_1 < \theta_2 < \ldots \nearrow + \infty\}. 
    \end{equation}
    
    \item We define the regularity scale, $r_S(x)$, depending on $M,g,$ and $\Sigma$, at a point $x \in \Sigma$ to be the supremum among all $r \in (0, \mathrm{inj}(\Sigma,g)/2)$ such that both of the following properties hold:
        \begin{itemize}
            \item $r^{2} \Vert \mathrm{Rm}_g\Vert_{C^0(B_r^g(x))}+r^3\Vert \nabla \mathrm{Rm}_g\Vert_{C^0(B_r^g(x))} \le 1/10$, where $\mathrm{Rm}_g$ denotes the Riemann curvature tensor of $M$ with respect to the metric $g$.
            \item After pulling back by $\exp^g_x$ to $T_xM$ we have
                $$\frac1r (\exp^g_x)^{-1}(\Sigma) \cap \mathbb{B}_1 = \mathrm{graph}^{g_\mathrm{eucl}}_L (u) \cap \mathbb{B}_1,$$
          for some hyperplane $L \subset T_xM$ and $u \in C^3(L)$ with $\Vert u\Vert_{C^3} \le 1/10$.
        \end{itemize}

    \item For $\phi\in C_{\mathrm{loc}}^k(\Sigma)$ we define the following norm on a measurable subset $E \subset \Sigma$ to be
        $$\Vert \phi\Vert_{C^k_*(E)}= \sup_{x \in E} \sum_{j=0}^k r_S(x)^{j-1}|\nabla^j \phi(x)|;$$
    which is invariant under the scaling of $\phi$ to $\lambda \phi$ and $g$ to $\lambda^2g$ for $\lambda > 0$.
     For $f\in C^k(M)$ and $x \in \Sigma$, we define the pointwise norm
        $$[f]_{x,g,C^k_*} =  \sum_{j=0}^k r_S(x)^j\sup_{B^g_{r_S}(x)} |\nabla^j f|(x);$$
    which is invariant fixing $f$ and scaling $g$ to $\lambda^2 g$ for $\lambda > 0$.
    
    \item By \cite[Theorem 5]{S83B} and \cite[Theorem 6.3]{E24}, for any $p\in \mathrm{Sing}(\Sigma),$ we may express $\Sigma$ in \textbf{conical coordinates} near $p$ over its tangent cone $\mathbf{C}_p\Sigma$; precisely, for any $\epsilon >0$, there exists $\phi\in C^2(\mathbf{C}_p\Sigma)$ and $r_p(\epsilon)>0$ such that both $\Vert \phi\Vert_{C_*^2(\mathbb{B}_{r_p})}< r_p(\epsilon)$ and
    $$ \mathrm{graph}^{g_\mathrm{eucl}}_{\mathbf{C}_p\Sigma} (\phi)\cap {\mathbb{B}_{r_p(\epsilon)}} = (\exp_p^g)^{-1}\left(\Sigma\ \cap B^g_{r_p(\epsilon)}(p)\right).$$ 

\end{itemize}

\subsection{Preliminaries on isoperimetric regions}\label{subsec: preliminaries}

We record here several results and notions for isoperimetric regions that will be used frequently throughout this work:

\begin{lemma}[Compactness of isoperimetric regions]\label{lemma: compactness of isoperimetric regions}
For $t\in \mathbb{R}$, if $g_j \to g$ in $C^{k-1,\alpha}$ and $t_j\to t$, then for each sequence $\{\Omega_j\}_{j \geq 1} \subset \mathcal{I}(g_j, t_j)$ there exists $\Omega \in \mathcal{I}(g, t)$ such that, up to a subsequence (not relabelled), we have:
\begin{enumerate}
    \item We have $\Omega_j \to \Omega$ in $L_g^1(M)$ and $\mathrm{Per}_{g_j} (\Omega_j) \rightarrow \mathrm{Per}_g (\Omega)$.
    
    \item We have $|\Sigma_j|_{g_j} \rightarrow |\Sigma|_g$ as varifolds.
\end{enumerate} 
\end{lemma}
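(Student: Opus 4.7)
The plan is to argue in three steps, following the standard pattern for compactness of constrained minimizers under varying smooth data.

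First, I would establish a uniform upper bound $\sup_j \mathrm{Per}_{g_j}(\Omega_j) \leq C$ by producing explicit competitors. In the nontrivial range $0 < t < |M|_g$ (the cases $t \in \{0, |M|_g\}$ being immediate, and $t \notin [0,|M|_g]$ being vacuous), I fix a smooth domain $\Omega_0 \subset M$ with $\mathrm{Vol}_g(\Omega_0) = t$ and a smooth vector field $X$ on $M$ with $\int_{\partial \Omega_0} \langle X, \nu \rangle \, dA_g \neq 0$; since $g_j \to g$ smoothly and $t_j \to t$, one can solve for times $s_j \to 0$ such that $\phi^X_{s_j}(\Omega_0)$ has $g_j$-volume exactly $t_j$ and uniformly bounded $g_j$-perimeter, which yields the claimed bound. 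Because $g_j \to g$ in $C^{k-1,\alpha}$, the $g_j$-perimeter and $g$-perimeter of any fixed Caccioppoli set are uniformly comparable, so $\{\mathbf{1}_{\Omega_j}\}$ is uniformly bounded in $\mathrm{BV}_g(M)$. Standard BV compactness then produces a subsequential limit $\Omega_j \to \Omega$ in $L^1_g(M)$ with $\Omega \in \mathcal{C}(M)$, and joint continuity of the volume functional yields $\mathrm{Vol}_g(\Omega) = t$.

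To show $\Omega \in \mathcal{I}(g,t)$, I would take any competitor $E \in \mathcal{C}(M)$ with $\mathrm{Vol}_g(E) = t$ and apply the same volume-adjustment to it: choosing a smooth vector field $Y$ with $\int_{\partial^* E} \langle Y, \nu \rangle \, dA_g \neq 0$, I obtain $E_j = \phi^Y_{\tau_j}(E)$ with $\mathrm{Vol}_{g_j}(E_j) = t_j$ and $\tau_j \to 0$, so that $\mathrm{Per}_{g_j}(E_j) \to \mathrm{Per}_g(E)$ by $C^1$-smoothness of the flow and convergence of metrics. Lower semicontinuity of perimeter under $L^1_g$ convergence, modulated by the uniform comparability of $g_j$ and $g$, gives $\mathrm{Per}_g(\Omega) \leq \liminf_j \mathrm{Per}_{g_j}(\Omega_j)$. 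Combining this with $\mathrm{Per}_{g_j}(\Omega_j) \leq \mathrm{Per}_{g_j}(E_j)$ produces the sandwich
\begin{equation*}
    \mathrm{Per}_g(\Omega) \leq \liminf_{j \to \infty} \mathrm{Per}_{g_j}(\Omega_j) \leq \limsup_{j \to \infty} \mathrm{Per}_{g_j}(\Omega_j) \leq \mathrm{Per}_g(E),
\end{equation*}
so $\Omega$ is isoperimetric; taking $E = \Omega$ then yields $\mathrm{Per}_{g_j}(\Omega_j) \to \mathrm{Per}_g(\Omega)$.

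For the varifold statement, the uniform mass bound combined with standard compactness for integer-rectifiable varifolds of locally bounded density (guaranteed by monotonicity for the almost-minimizers $\Sigma_j$) gives a subsequential varifold limit $|\Sigma_j|_{g_j} \to V$ with $\|V\|(M) = \lim_j \mathrm{Per}_{g_j}(\Omega_j) = \mathrm{Per}_g(\Omega)$. On the other hand, $L^1$-convergence $\Omega_j \to \Omega$ yields weak convergence of the reduced-boundary vector measures $D\mathbf{1}_{\Omega_j} \rightharpoonup D\mathbf{1}_\Omega$, which, combined with matching of total variations, upgrades by Reshetnyak-type continuity to $|D \mathbf{1}_{\Omega_j}|_{g_j} \to |D \mathbf{1}_\Omega|_g$ as Radon measures. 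Hence $\|V\| = \||\Sigma|_g\|$; the integrality of $V$ together with the multiplicity-one information carried by $\partial^* \Omega$ identifies $V = |\Sigma|_g$. The principal technical difficulty I anticipate is the volume-adjustment step: arranging a smooth flow that matches $t_j$ exactly while perturbing the perimeter of a possibly singular competitor by a vanishing amount. This is handled cleanly by selecting the vector field to have nonzero first variation of volume across the competitor's reduced boundary, which reduces the issue to an application of the implicit function theorem for the one-parameter family $s \mapsto \mathrm{Vol}_{g_j}(\phi^Y_s(E))$ near $s = 0$.
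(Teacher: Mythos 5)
Your proposal is correct, and it takes a genuinely different route from the paper on each of its three main steps. For the volume adjustment, you construct explicit competitors $\phi^Y_{\tau_j}(E)$ via a smooth flow and the implicit function theorem applied to the first variation of volume across $\partial^* E$; the paper instead invokes the ``volume-fixing variations'' result of Maggi's book ([M12, Lemma 17.21]), which packages exactly this construction (together with the accompanying perimeter estimate $|\mathrm{Per}_{g_j}(\widetilde{\Omega}_j) - \mathrm{Per}_{g_j}(\Omega)| \leq C|\varepsilon_j|$) as a black box. Your approach is more self-contained; the paper's is shorter. For the isoperimetric and perimeter-convergence conclusions, you use a single sandwich inequality $\mathrm{Per}_g(\Omega) \leq \liminf_j \mathrm{Per}_{g_j}(\Omega_j) \leq \limsup_j \mathrm{Per}_{g_j}(\Omega_j) \leq \mathrm{Per}_g(E)$ valid for every competitor $E$ including $E = \Omega$, from which both conclusions fall out at once; the paper instead runs two separate arguments by contradiction, one for $\lim_j \mathrm{Per}_{g_j}(\Omega_j) = \mathrm{Per}_g(\Omega)$ and one for $\Omega \in \mathcal{I}(g,t)$, each invoking the Maggi lemma afresh. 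Your sandwich is cleaner and avoids the duplication. For the varifold statement, you reason via Reshetnyak continuity (weak-$*$ convergence of $D\mathbf{1}_{\Omega_j}$ plus matching of total variations upgrades to convergence as varifolds), whereas the paper cites [DT13, Proposition A.1] directly. Both are valid; note that your argument implicitly uses that the $g_j$-weight measure and the $g$-weight measure of a fixed set differ by a density factor converging uniformly to $1$, which is supplied by the $C^{k-1,\alpha}$ convergence of metrics — worth stating explicitly, as the paper's cited reference handles it internally. One small stylistic point: the lower-semicontinuity and sandwich steps should be phrased along the subsequence for which $L^1_g$-convergence has already been arranged, though since the lemma is stated ``up to a subsequence'' this is cosmetic.
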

\begin{proof}
    Since $g_j \rightarrow g$ in $C^{k-1,\alpha}$  and $t_j \rightarrow t$ we have $\sup_{j\geq 1} \mathrm{Per}_g(\Omega_j)<\infty$ so that, by the compactness of sets of finite perimeters \cite[Corollary 12.27]{M12}, there exits a Caccioppoli set $\Omega \in \mathcal{C}(M)$ such that, up to a subsequence (not relabelled), we have $\Omega_j \rightarrow \Omega$ in $L_g^1(M)$; as $|\Omega_j|_{g_j} =t_j$ for all $j$, we also have $|\Omega|_g = t$. By the lower semicontinuity of perimeter,  \cite[Proposition 12.15]{M12}, we have 
    \[
            \liminf_{j\to \infty} \mathrm{Per}_{g_j} (\Omega_j) \geq \mathrm{Per}_g (\Omega). 
    \]
    Assuming $\limsup_{j\to \infty} \mathrm{Per}_{g_j} (\Omega_j) > \mathrm{Per}_g (\Omega)$, define $\delta = \limsup_{j\to \infty} \mathrm{Per}_{g_j} (\Omega_j) -\mathrm{Per}_g (\Omega)> 0$. By \cite[Lemma 17.21]{M12}, for every $j$, there exists $\widetilde \Omega_j \in \mathcal{C}(M)$ with $\vert \widetilde{\Omega}_j \vert = t_j$, such that $|\mathrm{Per}_{g_j} (\widetilde \Omega_j)-\mathrm{Per}_{g_j} (\Omega)|\leq C|\varepsilon_j|$, for a fixed constant $C > 0$ and with $\varepsilon_j = |\Omega|_{g_j} -t$. As $g_j\to g$ in $C^{k-1,\alpha}$ and $t_j \rightarrow t$, we see that $\varepsilon_j \to 0$, which implies that for sufficiently large $j \geq 1$ we have
    \[ 
    \mathrm{Per}_{g_j} (\Omega_j)= I(g_j,t_j) \leq \mathrm{Per}_{g_j} (\widetilde \Omega_j)\leq \mathrm{Per}_{g_j}(\Omega) + C|\varepsilon_j| \leq  \mathrm{Per}_{g} (\Omega) +C|\varepsilon_j| + \delta/4.
    \]
    As $\varepsilon_j \rightarrow 0$, for sufficiently large $j \geq 1$ we have
    \[
    \limsup_{j\to \infty} \mathrm{Per}_{g_j} (\Omega_j) \leq \mathrm{Per}_g(\Omega) + \delta/2,
    \] 
    contradicting the choice of $\delta$. In particular, we have $\lim_{j\to \infty}\mathrm{Per}_{g_j} (\Omega_j) = \mathrm{Per}_g (\Omega)$. 
    
    \bigskip
    
    Suppose now there exists $\widetilde \Omega \in \mathcal{C}(M)$ with $\vert \widetilde{\Omega} \vert_g = t$ but such that $\mathrm{Per}_g(\widetilde\Omega) < \mathrm{Per}_g(\Omega)$,  and set $\delta = \mathrm{Per}_g(\Omega)-\mathrm{Per}_g(\widetilde \Omega) =\lim_{j\to \infty}\mathrm{Per}_{g_j}(\Omega_j)-\mathrm{Per}_g(\widetilde \Omega) > 0$. By \cite[Lemma 17.21]{M12} again there exists $\widetilde \Omega_j \in \mathcal{C}(M)$ with $\vert \Omega_j \vert = t_j$, such that $|\mathrm{Per}_{g_j}(\widetilde \Omega)-\mathrm{Per}_{g_j}(\widetilde \Omega_j)|< C|\varepsilon_j|$, for a fixed constant $C > 0$ and with $\varepsilon_j = |\widetilde \Omega|_{g_j} -t$.
    In particular, for sufficiently large $j \geq 1$ we have
         \begin{align*}
             \mathrm{Per}_{g_j}(\widetilde \Omega_j) & \leq \mathrm{Per}_{g_j}(\widetilde \Omega) + C|\varepsilon_j| \leq \mathrm{Per}_{g}(\widetilde \Omega) +C|\varepsilon_j| + \delta/10 < \mathrm{Per}_{g}(\Omega) - \delta+ C|\varepsilon_j| + \delta/10 < \mathrm{Per}_{g_j}(\Omega_j).
         \end{align*} 
         For sufficiently large $j \geq 1$, this contradicts the assumption that $\Omega_j \in \mathcal{I}(g_j,t_j)$ and hence $$\mathrm{Per}_g(\Omega) = \lim_{j\to \infty} \inf_{E \in \mathcal{C}(M)}\{\mathrm{Per}_{g_j}(E) \: | \: \mathrm{Vol}_{g_j}(E) = t_j\} = \inf_{E \in \mathcal{C}(M)}\{\mathrm{Per}_g(E) \: | \: \mathrm{Vol}_g(E) = t\} ;$$ this shows that $\Omega \in \mathcal{I}(g,t)$ and thus concludes part 1 of the lemma. Part 2 follows directly from \cite[Proposition A.1]{DT13}.
    \end{proof} 

\begin{remark} \label{remark: apply Allard}
    In view of the varifold convergence guaranteed by Lemma \ref{lemma: compactness of isoperimetric regions} part 2, whenever we have a convergent sequence of isoperimetric regions, $\Omega_j \rightarrow \Omega$, throughout this work we will often simply say that we ``apply Allard's theorem'' (e.g.~as stated in \cite[Chapter 5]{S83A}) in a neighbourhood of a regular point of the limit, i.e.~$p \in \Sigma \setminus \mathrm{Sing}(\Sigma)$, to obtain the conclusion that the $\Omega_j$ are eventually regular in this neighbourhood for sufficiently large $j \geq 1$.
\end{remark}

\begin{lemma}[Fixed volume mean curvature bound] \label{Lemma: mean curvature bound} 
    For $\delta > 0$ and $g \in \mathcal{G}^{k,\alpha}$ there exists an open neighbourhood, $U$, of $g \in \mathcal{G}^{k,\alpha}$ and a constant $C > 0$, depending on $\delta, g$, and $U$, such that if $\Omega \in \mathcal{I}(\bar{g}, t)$ with $\bar{g} \in U$ and $t \in (\delta, \vert M \vert_{\bar{g}} - \delta)$, then $\Sigma$ has constant mean curvature at most $C$. 
\end{lemma}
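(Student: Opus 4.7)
The plan is to argue by contradiction using Lemma~\ref{lemma: compactness of isoperimetric regions}. If the conclusion were to fail then, shrinking the neighbourhood $U$, one could extract sequences $g_j \to g$ in $C^{k-1,\alpha}$, $t_j \in (\delta, |M|_{g_j} - \delta)$, and $\Omega_j \in \mathcal{I}(g_j, t_j)$ whose constant mean curvatures $H_j$ satisfy $|H_j| \to \infty$. By Lemma~\ref{lemma: compactness of isoperimetric regions}, after passing to a subsequence, $t_j \to t \in [\delta, |M|_g - \delta]$, $\Omega_j \to \Omega \in \mathcal{I}(g, t)$ in $L^1_g$, and $|\Sigma_j|_{g_j} \to |\Sigma|_g$ as varifolds; note that $\mathrm{Per}_g(\Omega) > 0$ since $|\Omega|_g, |M \setminus \Omega|_g \geq \delta$ force a non-trivial reduced boundary.

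The heart of the argument is to test the first variation against an arbitrary vector field $X \in C^1(M; TM)$. Since $\Sigma_j$ has constant mean curvature $H_j$ with respect to $g_j$, combining the first variation formula with the divergence theorem for the Caccioppoli set $\Omega_j$ yields
\begin{equation*}
\int_{\Sigma_j} \mathrm{div}_{\Sigma_j, g_j} X \, dA_{g_j} = -H_j \int_{\Sigma_j} \langle X, \nu_{\Sigma_j, g_j} \rangle_{g_j} dA_{g_j} = -H_j \int_{\Omega_j} \mathrm{div}_{g_j} X \, dV_{g_j}.
\end{equation*}
The left-hand side converges to the finite quantity $\int_\Sigma \mathrm{div}_{\Sigma, g} X \, dA_g$, using the varifold convergence $|\Sigma_j|_{g_j} \to |\Sigma|_g$ together with $g_j \to g$ in $C^{k-1,\alpha}$ (which makes the tangent-plane divergence operator converge uniformly in $(x, T)$). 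The integral $\int_{\Omega_j} \mathrm{div}_{g_j} X \, dV_{g_j}$ converges to $\int_\Omega \mathrm{div}_g X \, dV_g$ from $\Omega_j \to \Omega$ in $L^1_g$ combined with $g_j \to g$. Since $|H_j| \to \infty$ while the product $H_j \int_{\Omega_j} \mathrm{div}_{g_j} X \, dV_{g_j}$ stays bounded, we must have $\int_\Omega \mathrm{div}_g X \, dV_g = 0$ for every $X \in C^1(M; TM)$.

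To derive the final contradiction, since $\mathrm{Sing}(\Sigma)$ consists of finitely many isolated points and $\mathrm{Per}_g(\Omega) > 0$, there is a regular point $p \in \Sigma$; I would then pick $\phi \in C^1_c(M)$ with $\phi \geq 0$, $\phi(p) > 0$, supported in a small neighbourhood of $p$ in the regular part of $\Sigma$ on which the outward unit normal $\nu_{\Sigma, g}$ extends to a smooth vector field $\tilde{\nu}$ via the tubular neighbourhood theorem. Taking $X = \phi \tilde{\nu}$ and applying the divergence theorem gives $\int_\Omega \mathrm{div}_g X \, dV_g = \int_\Sigma \phi \, dA_g > 0$, contradicting the vanishing obtained above. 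The main obstacle will be to justify the passage of the first variation to the limit under the simultaneous convergence of both the Caccioppoli set and the ambient metric, but this is handled by the joint convergence provided by Lemma~\ref{lemma: compactness of isoperimetric regions} together with the $C^{k-1,\alpha}$-convergence of the metric coefficients, which ensures convergence of the tangent-plane divergence integrands as continuous functions on the Grassmannian bundle.
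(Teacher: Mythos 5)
Your proof is correct but genuinely different from the one in the paper. The paper reduces to the fixed-metric mean curvature bound of \cite[Lemma 4.3]{N24B} (which in turn builds on \cite[Lemma C.1]{CES22}), applies Lemma~\ref{lemma: compactness of isoperimetric regions} to extract a limit $\Omega \in \mathcal{I}(g,t)$ whose boundary has some bounded mean curvature $H$ by that cited lemma, and then uses Allard's theorem together with the varifold convergence $|\Sigma_j|_{g_j} \to |\Sigma|_g$ to conclude $H_j \to H$, a contradiction. Your argument bypasses both the cited lemma and Allard's theorem: you instead pair the first variation formula for the CMC varifold $|\Sigma_j|_{g_j}$ with the Gauss--Green formula for $\Omega_j$ to get $\int_{\Sigma_j} \mathrm{div}_{\Sigma_j, g_j} X \, dA_{g_j} = \pm H_j \int_{\Omega_j} \mathrm{div}_{g_j} X \, dV_{g_j}$, pass to the limit in both sides using the joint compactness of Lemma~\ref{lemma: compactness of isoperimetric regions}, and conclude $\int_\Omega \mathrm{div}_g X \, dV_g = 0$ for all $X$, which is incompatible with $\mathrm{Per}_g(\Omega) > 0$ (guaranteed by $t \in [\delta, |M|_g - \delta]$ and connectedness of $M$). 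This is essentially a self-contained re-derivation of the underlying argument behind \cite[Lemma C.1]{CES22}/\cite[Lemma 4.3]{N24B}, extended to the metric-varying setting in one pass; it is more elementary (no Allard) but establishes slightly less along the way (the paper's version also recovers $H_j \to H$, which is not needed for the statement). For the final step one could also avoid the tubular-neighbourhood construction and simply observe that $\int_\Omega \mathrm{div}_g X \, dV_g = 0$ for all $X$ with $\|X\|_\infty \leq 1$ gives $\mathrm{Per}_g(\Omega) = 0$ directly from the definition of perimeter. One caveat worth making explicit is that the first variation identity you invoke requires knowing that the singular set of $\Sigma_j$ (finitely many points in dimension eight) makes no contribution to the first variation of $|\Sigma_j|_{g_j}$; this follows from the codimension-seven bound on the singular set by a standard cutoff argument, and is implicit in the regularity theory cited in Subsection~\ref{subsec: notation}.
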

\begin{proof}
    The mean curvature bound for a fixed metric was established in \cite[Lemma 4.3]{N24B} (extending the result in the smooth case in \cite[Lemma C.1]{CES22}). Assuming our desired statement fails, then there exist $g_j \rightarrow g$ in $C^{k-1,\alpha}$ and a sequence $\{\Omega_j \}_{j \geq 1} \subset \mathcal{I}(g_j, t_j)$ with $t_j \in (\delta, \vert M \vert_{g_j} - \delta)$, such that the mean curvatures, $H_j$, of the $\Sigma_j$ diverge to infinity. By Lemma \ref{lemma: compactness of isoperimetric regions} there exists $\Omega \in \mathcal{I}(g, t)$ such that, up to a subsequence (not relabelled), $\Omega_j \rightarrow \Omega \in \mathcal{I}(g,t)$, where $t \in (\delta/2, \vert M \vert_g - \delta/2)$; but then by \cite[Lemma 4.3]{N24B}, $\Sigma$ must have bounded mean curvature, say $H$. Allard's theorem applied to the regular part of $\Sigma$ then ensures that, since $|\Sigma_j|_{g_j} \rightarrow |\Sigma|_g$ as varifolds by Lemma \ref{lemma: compactness of isoperimetric regions}, we have $H_j \rightarrow H$; a contradiction.
\end{proof}

\begin{remark}\label{rem: Morgan-Johnson}
    From Lemma \ref{Lemma: mean curvature bound} we see that for each $\Omega \in \mathcal{I}(g,t)$, $\Sigma$ has at most finitely many connected components. Precisely, by \cite[Theorem 2.2]{MJ00}, there is a $\delta > 0$ such that if $t \in (0, \delta] \cup [\vert M \vert_g - \delta, \vert M \vert_g)$, then $\Sigma$ is a perturbation of a coordinate sphere (i.e.~a ``nearly round sphere'' in their notation), and hence connected. Then, by applying Lemma \ref{Lemma: mean curvature bound} for this choice of $\delta$ along with the monotonicity formula (which holds since the mean curvature of $\Sigma$ is then bounded by Lemma \ref{Lemma: mean curvature bound}), we see that $\Sigma$ has at most finitely many connected components for all $t$, see \cite[Lemma 2.4]{CES22}. Reasoning similarly to proof of Lemma \ref{Lemma: mean curvature bound}, up to potentially decreasing $\delta > 0$ above, the same result holds in an open neighbourhood of $g$ in $\mathcal{G}^{k,\alpha}$.
\end{remark}

It will be convenient at different stages of this work to view isoperimetric regions as almost minimisers of perimeter or as volume constrained minimisers. Specifically, the former is better suited to decompose the space of triples, $\mathcal{T}^{k,\alpha}$, while the latter is useful in order to define the notion of a singular capacity. We now briefly recall definition and some basic properties of each notion, referring to \cite[Chapter 21]{M12} for further details:   

\begin{definition}
    Given an open set $U \subset \mathbb{R}^8$, constants $\Lambda \geq 0$, and $r_0 > 0$, a set of locally finite perimeter $E$ in $\mathbb{R}^8$ is called a $(\Lambda, r_0)$-perimeter minimiser or \textbf{almost minimiser} in $U$ provided 
\begin{equation*}
   \mathrm{Per}(E; \mathbb{B}_r(x)) \leq \mathrm{Per}(F; \mathbb{B}_r(x)) + \Lambda \vert E \Delta F \vert, 
\end{equation*}
whenever $E \Delta F \subset \subset \mathbb{B}_r(x) \cap U$, and $r < r_0$. 
\end{definition}

We note that the compactness and regularity theory for almost-minimisers of perimeter in Euclidean space $\mathbb{R}^{n}$ and in a closed Riemannian manifold are comparable. More precisely, if $g$ is a $C^2$-Riemannian metric on $B_1(0) \subset \mathbb{R}^{n}$, satisfying $\Vert g - g_{\mathrm{eucl}} \Vert_{C^2} \leq \delta$, then, provided $\delta > 0$ is sufficiently small, we have 
\begin{equation*}
    (1 - \delta) \vert x - y \vert \leq \mathrm{dist}_g(x, y) \leq (1 + \delta) \vert x - y \vert \qquad \text{and} \qquad \mathbb{B}_{(1 - \delta)r}(x) \subset B_{r}^{g}(x) \subset \mathbb{B}_{(1 + \delta)r}(x). 
\end{equation*}
Furthermore, for every $\mathbb{B}_r(x) \subset \mathbb{B}_{1 - 10\delta}(0)$, there is a normal change of coordinates $\phi_x : B_{(1 + \delta)r}^{g}(x) \rightarrow B_{(1 + \delta)r}^{g}(x)$ in which the metric satisfies $\Vert (\phi_x^\ast g)(z)  - g_{\mathrm{eucl}} \Vert \leq C \delta \vert x - z \vert^2. $
Furthermore, if $E$ is $(\Lambda, r_0)$-perimeter minimising in $(B_1, g)$, then $E$ is $(\Lambda + C\delta, (1 - \delta)r_0)$-perimeter minimising in $(\mathbb{B}_{1 - 10\delta}, g_{\mathrm{eucl}})$, where the constant $C > 0$ depends only on $n, \Lambda,$ and $ \mathrm{Per}(E; B_1(0))$. Compactness results for almost minimisers can be found in \cite[Section 21.5]{M12}, while their regularity results, with identical conclusions on the singular set as for isoperimetric regions, is contained in \cite[Part 3]{M12}. 

\begin{definition}
Given an open set $U \subset \mathbb{R}^8$, a set of locally finite perimeter $E$ in $\mathbb{R}^8$ is a \textbf{volume-constrained minimiser} in $U$ if 
\[
\mathrm{Per} (E; U) \le \mathrm{Per} (F; U), 
\]
whenever $\mathrm{Vol}(E \cap U) = \mathrm{Vol}(F \cap U)$ and $E \Delta F \subset \subset U$. 
\end{definition}

Arguing as in \cite[Example 16.13]{M12}, we see that isoperimetric regions are volume constrained minimisers, while \cite[Example 21.3]{M12} shows that volume constrained minimisers are $(\Lambda, r_0)$-almost minimisers for $\Lambda \geq 0$ and $r_0 > 0$, depending only on $E$ and $U$. Alternatively, \cite[Example 21.2]{M12} guarantees that minimisers of the prescribed mean curvature problem are $(\Lambda, r_0)$-perimeter minimisers with $r_0$ arbitrary, and $\Lambda$ being the mean curvature value. See also \cite[Appendix B]{CES22}. Thus, by the above reasoning, the notion of volume constrained minimisers extends in a natural way to closed Riemannian manifolds; with compactness and regularity results, again with identical conclusions on the singular set as for isoperimetric regions, following by a volume-fixing argument similar to that of the proof of Lemma \ref{lemma: compactness of isoperimetric regions}. In particular, given $\delta > 0$ and a Riemannian metric, $g$, on $\mathbb{B}_\delta$ we will write
\begin{align*}
       \mathrm{VCM}(\delta,g) = \{& E \in \mathcal{C}(\mathbb{B}_\delta) \, | \, E \text{ is a volume constrained minimiser in } (\mathbb{B}_\delta,g) \};
\end{align*} 
this notation will used when defining the singular capacity for isoperimetric regions in Section \ref{sec: singular capacity for isoperimetric regions}.

\subsection{Twisted Jacobi fields and a spectral theorem}\label{subsec: twisted Jacobi fields}

In this subsection we introduce a suitable notion of Jacobi field for constant mean curvature hypersurfaces with isolated singularities which respects their stationarity with respect to only volume preserving deformations.

\bigskip

Given $\Omega \in \mathcal{I}(g,t)$, associated to the second variation of the area functional we have the following quadratic form on functions $\phi \in C^1_c(\Sigma)$ given by
\begin{equation*}
        Q_\Sigma (\phi,\phi) = \int_\Sigma \left(|\nabla \phi|_g^2 - (|\mathrm{I\!I}_\Sigma|_g^2 + \mathrm{Ric}_g(\nu_{\Sigma,g},\nu_{\Sigma,g}) ) \phi^2 \right) \, dA_g,
\end{equation*}
where $\mathrm{I\!I}_\Sigma$ is the second fundamental form of $\Sigma$ and $\mathrm{Ric}_g(\nu_{\Sigma,g},\nu_{\Sigma,g})$ is the Ricci curvature of $M$ with respect to the metric $g$ evaluated on the unit normal, $\nu_{\Sigma,g}$, to $\Sigma$. We then define the \textbf{Jacobi operator} associated to the second variation to be
\begin{equation*}
    L_{\Sigma,g} = \Delta_g + \left(|\mathrm{I\!I}_\Sigma|_g^2 + \mathrm{Ric}_g(\nu_{\Sigma,g}, \nu_{\Sigma,g}) \right),
\end{equation*}
where $\Delta_g$ is the Laplace operator with respect to the metric $g$. As in \cite{BdE88, BB00} we define the function spaces
$$\mathcal{D}_T(\Sigma)= \left\{\phi \in C^\infty_c (\Sigma) \, \bigg| \, \int_\Sigma \phi \, dA_g=0 \right\},$$ 
and for each $p \geq 1$
$$L^p_T(\Sigma)= \left\{\phi \in L_g^p (\Sigma)\, \bigg| \, \int_\Sigma \phi \, dA_g =0 \right\}.$$
We define the \textbf{twisted Jacobi operator} for $\phi \in C_c^\infty(\Sigma)$ by setting $$\widetilde{L}_{\Sigma,g}\phi =  L_{\Sigma,g}\phi - \frac{1}{|\Sigma|_g}\int_\Sigma L_{\Sigma,g}\phi \, dA_g,$$
which we note is $L^2$ self-adjoint when restricted to $\mathcal{D}_T(\Sigma)$. Recall that, as mentioned in Subsection \ref{subsec: notation}, in the above and hereafter when dealing with averaged integrals we are writing $|\Sigma|_g = \mathrm{Per}_g(\Sigma)$ for notational convenience. We are restricting the quadratic form, $Q_\Sigma$, (or more precisely its associated bilinear form) to a different function space in order to capture a suitable notion of stability for constant mean curvature hypersurfaces; often referred to as weak stability, e.g.~\cite{BdE88}, which in particular ensures that Euclidean spheres are (weakly) stable.

\bigskip

We now generalise the notion of twisted Jacobi fields:
\begin{definition}[Twisted Jacobi fields]
  Given $\Omega \in \mathcal{I}(g,t)$, we say that a function $u \in L^1_T(\Sigma) \cap C^2_\mathrm{loc}(\Sigma)$ is a \textbf{twisted Jacobi field} on $\Sigma$ if $\widetilde{L}_{\Sigma,g}u = 0$ pointwise on $\Sigma$.
\end{definition}
    
\begin{remark}
    If $\Sigma$ has no singularities then this definition agrees with the notion of twisted Jacobi field introduced in \cite{BdE88}.
\end{remark}

We now introduce function spaces adapted to the singular geometry. We first observe the following:
    \begin{lemma}\label{lem: C} For each $\Omega \in \mathcal{I}(g,t)$ there exists a constant $C > 0$, depending on $\Sigma$, $g$, and $t$, such that for all $\psi \in C^1_c(\Sigma)$ we have
        $$ Q_\Sigma (\psi,\psi) + C \Vert {\psi}^2\Vert_{L_g^2(\Sigma)} \ge 0.$$ 
    \end{lemma}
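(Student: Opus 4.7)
The plan is to reduce the claim to the upper bound $\int_\Sigma V \psi^2 \, dA_g \leq \int_\Sigma |\nabla \psi|_g^2 \, dA_g + C\int_\Sigma \psi^2 \, dA_g$ with $V = |\mathrm{I\!I}_\Sigma|_g^2 + \mathrm{Ric}_g(\nu_{\Sigma,g}, \nu_{\Sigma,g})$, and to handle separately the contributions away from and near the (finitely many, isolated) points of $\mathrm{Sing}(\Sigma)$ via a partition-of-unity argument. The Ricci contribution is globally bounded in terms of $(M, g)$, so I may focus on $\int_\Sigma |\mathrm{I\!I}_\Sigma|_g^2 \psi^2 \, dA_g$.

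On the bulk part of $\Sigma$ away from $\mathrm{Sing}(\Sigma)$, Lemma~\ref{Lemma: mean curvature bound} gives a uniform mean curvature bound, and standard elliptic/Schauder estimates for CMC hypersurfaces upgrade this to a uniform pointwise bound on $|\mathrm{I\!I}_\Sigma|_g$ on $\Sigma \setminus B^g_\delta(\mathrm{Sing}(\Sigma))$ for each $\delta > 0$, so on this region the desired inequality is immediate. Near each $p \in \mathrm{Sing}(\Sigma)$, the conical coordinate representation recorded in Subsection~\ref{subsec: notation} writes $\Sigma \cap B^g_{r_p}(p)$ as a $C^2_*$-small graph over the tangent cone $\mathbf{C}_p\Sigma$, which is itself a stable minimal hypercone in $\mathbb{R}^8$ (arising as the blow-up of a volume-constrained perimeter minimiser, with the volume constraint neutralised at small scales). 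The stability of the cone yields
\[
\int_{\mathbf{C}_p\Sigma}|\mathrm{I\!I}_{\mathbf{C}_p\Sigma}|^2 \phi^2 \, d\mathcal{H}^7 \leq \int_{\mathbf{C}_p\Sigma}|\nabla \phi|^2 \, d\mathcal{H}^7 \quad \text{for all } \phi \in C^1_c(\mathbf{C}_p\Sigma \setminus \{0\}),
\]
and transferring this to $\Sigma$ via the graphical change of variables gives the local bound $\int_\Sigma |\mathrm{I\!I}_\Sigma|_g^2 \psi^2 \, dA_g \leq \int_\Sigma |\nabla \psi|_g^2 \, dA_g + C\int_\Sigma \psi^2 \, dA_g$ for $\psi$ supported in $\Sigma \cap B^g_{r_p}(p)$. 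Gluing the pieces via a smooth partition of unity subordinate to $\{B^g_{r_p/2}(p)\}_{p \in \mathrm{Sing}(\Sigma)} \cup \{\Sigma \setminus \cup_p \overline{B^g_{r_p/4}(p)}\}$ produces bounded cross-term contributions absorbable into the $\int_\Sigma \psi^2 \, dA_g$ term, completing the proof.

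The main obstacle is to transfer the cone-level stability inequality to $\Sigma$ without picking up a factor $(1 + \epsilon)$ in front of the gradient term that would preclude coercivity. The natural remedy is to perform the graphical change of variables directly and use the $C^2_*$-smallness of the representing function together with the metric-distortion estimates for graphs over cones that appear in Appendix~\ref{sec: appendix A}, so that all error terms in the transferred inequality scale as a small multiple of $r^{-2}\psi^2$. These, via Hardy's inequality on $\mathbf{C}_p\Sigma$ (valid with sharp constant $4/25$ since the cone is seven-dimensional), are then absorbed into the dominant $\int |\nabla \psi|_g^2 \, dA_g$ and $\int \psi^2 \, dA_g$ contributions.
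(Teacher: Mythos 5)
Your route is genuinely different from the paper's, but it has a gap you flag yourself and then do not close. The paper covers $\overline{\Sigma}$ by finitely many open sets in each of which $\Sigma$ is \emph{itself} stable (for all compactly supported test functions, not just integral-zero ones), and then uses the localisation identity $Q_\Sigma(\psi,\psi) \geq \int_\Sigma \psi^2\sum_j\rho_j\Delta_\Sigma\rho_j$ for a partition of unity with $\sum_j\rho_j^2 = 1$; the only error terms one then has to bound are $|\Delta_\Sigma\rho_j|$, which are controlled by the ambient Hessian bounds of the $\rho_j$ and the (constant, finite) mean curvature. Nothing about the tangent cone or graphical representations near singularities enters. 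You instead bypass local stability of $\Sigma$ and try to import stability of the tangent cone $\mathbf{C}_p\Sigma$; this is a strictly weaker piece of information, since cone stability is a \emph{consequence} of local stability of $\Sigma$ by blow-up, not conversely.

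The concrete gap is exactly the obstacle you name and then do not resolve. The graphical transfer of $\int_{\mathbf{C}}|\mathrm{I\!I}_{\mathbf{C}}|^2\phi^2 \leq \int_{\mathbf{C}}|\nabla\phi|^2$ to $\Sigma$ produces, after accounting for the Jacobian of the graphing map, the comparison $|\mathrm{I\!I}_\Sigma|^2 \leq (1 + C\varepsilon)|\mathrm{I\!I}_{\mathbf{C}}|^2 + C\varepsilon\,r^{-2}$, and the comparison $|\nabla_{\mathbf{C}}\phi|^2 \leq (1 + C\varepsilon)|\nabla_\Sigma\psi|^2$, an inequality of the form
\begin{equation*}
\int_\Sigma |\mathrm{I\!I}_\Sigma|^2\psi^2\,dA_g \leq (1 + C'\varepsilon)\int_\Sigma|\nabla\psi|^2\,dA_g + C\int_\Sigma\psi^2\,dA_g.
\end{equation*}
Hardy's inequality on $\mathbf{C}_p\Sigma$ converts the $r^{-2}\psi^2$ contribution into yet another multiple of $\int|\nabla\phi|^2$, so it only feeds into the $(1+C'\varepsilon)$ factor rather than removing it. This gives $Q_\Sigma(\psi,\psi) \geq -C'\varepsilon\int|\nabla\psi|^2 - C\int\psi^2$, which is strictly weaker than the lemma for any $\varepsilon > 0$; making $r_p$ smaller shrinks $\varepsilon$ but never to zero. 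To close this you would need the cone stability inequality to hold with slack (strict stability of $\mathbf{C}_p\Sigma$), which is neither assumed nor available in general for the cones in $\mathscr{C}$ as defined in the paper. Separately, a cosmetic point: on the compact set $\Sigma\setminus B^g_\delta(\mathrm{Sing}(\Sigma))$ the bound on $|\mathrm{I\!I}_\Sigma|$ is immediate by continuity of a fixed $C^2$ hypersurface; no mean curvature bound or elliptic estimate is needed, and indeed a mean curvature bound alone would not imply a second fundamental form bound in general.
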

\begin{proof}
        This is an adaptation of \cite[Lemma D.1]{LW25} to the case where $\Sigma$ is locally stable with constant mean curvature. In particular, following the same computation we obtain the inequality $$Q_\Sigma(\psi, \psi) \geq \int_\Sigma \psi^2 \sum_{j \geq 1} \rho_j \Delta_\Sigma \rho_j, $$ where $\{\rho_j\}_{j \geq 1}$ is a partition of unity subordinate to a finite cover of $\overline{\Sigma}$ by open sets in each of which $\Sigma$ is stable. Noting that $\Delta_g f = \nabla^2 f(\nu_{\Sigma,g}, \nu_{\Sigma,g}) + H_\Sigma \nu_{\Sigma,g}(f) + \Delta_\Sigma f,$
        where $H_\Sigma$ denotes the (constant) mean curvature of $\Sigma$, we compute that
        \begin{equation*}
            \vert \Delta_\Sigma \rho_j \vert = \vert \Delta_g \rho_j - \nabla^2 \rho_j(\nu_{\Sigma,g}, \nu_{\Sigma,g}) - H_\Sigma \nu_{\Sigma,g}(\rho_j)\vert \leq n \vert \nabla^2 \rho_j(\nu_{\Sigma,g}, \nu_{\Sigma,g}) \vert + \vert H_\Sigma\vert \vert \nabla \rho_j \vert.
        \end{equation*}
        Thus we see that $|\sum_j \rho_j\cdot \Delta_g\rho_j | \ge - C $ for a constant $C > 0$, depending on $\Sigma$, $g$, and $t$ as desired.
\end{proof}

\begin{definition}
        Given $\Omega \in \mathcal{I}(g,t)$, for $\psi\in C^1_c (\Sigma)$ we define the norm $$\Vert \psi\Vert_{\mathscr{B}(\Sigma)} = Q_\Sigma (\psi,\psi) +(C+1) \Vert \psi\Vert_{L^2(\Sigma)},$$
        where $C > 0$ is as in in Lemma \ref{lem: C}. We then define the Hilbert spaces $\mathscr{B}(\Sigma) = \overline{C^1_c (\Sigma)}^{\mathscr{B}}$ (i.e.~the completion with respect to the $\mathscr{B}$ norm as defined above) and $\mathscr{B}_T(\Sigma) = \mathscr{B}(\Sigma) \cap L^2_T(\Sigma)$, both equipped with the $L^2$ inner product. 
\end{definition}

\begin{remark}
    The above Hilbert spaces serve as suitable replacements for the standard Sobolev spaces for inverting the twisted Jacobi operator in the presence of isolated singularities. As remarked in \cite[Section 5.3.1]{W23}, $W^{1,2}_0$ is only subset of $\mathscr{B}_0$ in general; however, when every singularity is strongly isolated with strictly stable tangent cone, we have $W^{1,2}_0 = \mathscr{B}_0$. We refer to \cite[Example 5.3.3]{W23} for an example where this equality fails (which is always the case if some isolated singularity has a tangent cone which is not strictly stable). 
\end{remark}

\begin{definition}[Weak twisted Jacobi fields] Given $h \in L^2_T(\Sigma)$ and $\Omega \in \mathcal{I}(g,t)$, we say that a function $u \in \mathscr{B}_T(\Sigma)$ is a \textbf{weak solution} to $\widetilde{L}_{\Sigma,g}u = h$ if for each $\phi \in C^\infty_c(\Sigma)$ we have $Q_\Sigma(u,\phi) = \langle u,h\rangle_{L^2_g(\Sigma)}$. In particular, we say that $u \in \mathscr{B}_T(\Sigma)$ is a \textbf{weak twisted Jacobi field} on $\Sigma$ if it is a weak solution to $\widetilde{L}_{\Sigma,g}u = 0$. We denote the collection of all weak twisted Jacobi fields on $\Sigma$ by $$\mathrm{Ker}\widetilde{L}_{\Sigma,g} = \{\omega \in \mathscr{B}_T(\Sigma) \, | \, \widetilde{L}_{\Sigma,g}\omega = 0\} \subset \mathscr{B}_T(\Sigma),$$
and say that $\Sigma$ is \textbf{non-degenerate} if $\mathrm{Ker}\widetilde{L}_{\Sigma,g} = \{0\}$ (and \textbf{degenerate} if not).
\end{definition}

\begin{remark}\label{rem: higher regularity for weak solutions}
    One can show, e.g.~as in \cite[Corollary 4.2.22]{N24A}, that weak solutions as defined above are in fact smooth. The method of proof is similar to establishing higher interior regularity for linear elliptic equations, the only difference being that we test with integral zero functions.
\end{remark}

We note that by Lemma \ref{lem: C}, $Q_\Sigma$ extends to a well defined quadratic form on $\mathscr{B}_T(\Sigma)$, which continuously embeds into $L^2$; in fact, this embedding is compact and we establish the following spectral theorem for twisted Jacobi fields:

\begin{theorem}[Spectral theorem for the twisted Jacobi operator]\label{thm: spectral theorem for twisted Jacobi operator}
    Given $\Omega \in \mathcal{I}(g,t)$, we have that:
\begin{enumerate}
    \item $\mathscr{B} (\Sigma)$ and $\mathscr{B}_{T}(\Sigma)$ are compactly embedded in $L^2(\Sigma)$ and $L^2_T(\Sigma)$ respectively.
            
    \item There exists a strictly increasing sequence, $\sigma_p(\Sigma)=\{\lambda_j\}_{j \geq 1}$, diverging to infinity and finite-dimensional pairwise $L^2$-orthogonal linear subspaces, $\{E_j\}$, of $\mathscr{B}_T(\Sigma) \cap C^\infty(\Sigma)$ such that
    $$-\widetilde{L}_{\Sigma,g} \psi  = \lambda_j \psi$$
    for all $\psi \in E_j$. Furthermore, 
    $$L^2_T (\Sigma) = span_{L^2} \{E_j\}_{j \geq 1} \quad \text{ and } \quad \mathscr{B}_T(\Sigma) = span_{\mathscr{B}} \{E_j\}_{j \geq 1},$$
    where the notation above denotes the closure of the span in the respective subscript norm.
            
    \item For each $f \in (\mathrm{Ker}\widetilde{L}_{\Sigma,g})^\perp = \{ g \in L^2(\Sigma) \, | \, \langle g,\omega \rangle_{L^2(\Sigma)} = 0 \text{ for all }  \omega \in \mathrm{Ker}\widetilde{L}_{\Sigma,g}\}$ there exists a unique $\psi \in \mathscr{B}_T(\Sigma) \cap (\mathrm{Ker}\widetilde{L}_{\Sigma,g})^\perp$ such that
        $$-\widetilde{L}_{\Sigma,g}\psi = f $$
        on $\Sigma$ and with the estimate
    \begin{align*}
        \|\psi\|_{\mathscr{B}(\Sigma)} \le C \|f\|_{L^2(\Sigma)},
    \end{align*}
    for a constant $C>0$, depending on $\Sigma$ and $g$. 
    \end{enumerate}
\end{theorem}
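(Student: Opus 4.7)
The plan is to establish Part 1 first, after which Parts 2 and 3 follow by standard functional analysis. For Part 1 it suffices to show that any sequence $\{u_n\} \subset \mathscr{B}(\Sigma)$ bounded in the $\mathscr{B}$-norm admits an $L^2$-convergent subsequence, since the integral-zero constraint in $\mathscr{B}_T(\Sigma) \hookrightarrow L^2_T(\Sigma)$ is preserved under $L^2$-limits. The argument splits into two pieces. Away from the finite singular set, the quantity $|\mathrm{I\!I}_\Sigma|_g^2 + \mathrm{Ric}_g(\nu_{\Sigma,g}, \nu_{\Sigma,g})$ is bounded on any compact subset $K \subset \Sigma \setminus \mathrm{Sing}(\Sigma)$, so the $\mathscr{B}$-norm controls the $W^{1,2}(K)$-norm and Rellich--Kondrachov produces, via a diagonal argument on an exhaustion of $\Sigma \setminus \mathrm{Sing}(\Sigma)$, a subsequence converging in $L^2_{\mathrm{loc}}(\Sigma \setminus \mathrm{Sing}(\Sigma))$. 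To control the $L^2$-mass near each $p \in \mathrm{Sing}(\Sigma)$ I would use the conical coordinates recorded in Subsection \ref{subsec: notation}: $\Sigma$ is graphical with small $C^2$-error over its stable tangent cone $\mathbf{C}_p\Sigma$ in $B_r^g(p)$ for small $r > 0$. Combining stability of $\mathbf{C}_p\Sigma$ (namely $\mu_1(\mathbf{C}_p\Sigma) \geq -(n-2)^2/4$) with the Hardy-type inequality on stable minimal hypercones then yields a bound of the form
\[
\| \phi \|_{L^2(B_r^g(p))}^2 \leq \varepsilon(r) \, \|\phi\|_{\mathscr{B}(\Sigma)}^2
\]
for every $\phi \in C^1_c(\Sigma)$, with $\varepsilon(r) \to 0$ as $r \to 0^+$. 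Since $C^1_c(\Sigma)$ is dense in $\mathscr{B}(\Sigma)$, this estimate passes to the sequence $\{u_n\}$ and, combined with the earlier interior convergence, produces the desired $L^2$-convergent subsequence.

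For Part 2, the bilinear form $B(u,v) := Q_\Sigma(u,v) + (C+1)\langle u,v \rangle_{L^2_g(\Sigma)}$ is symmetric, continuous, and coercive on $\mathscr{B}_T(\Sigma)$ by Lemma \ref{lem: C} and the definition of $\|\cdot\|_{\mathscr{B}(\Sigma)}$. By Lax--Milgram, for each $f \in L^2_T(\Sigma)$ there is a unique $Tf \in \mathscr{B}_T(\Sigma)$ with $B(Tf, v) = \langle f, v \rangle_{L^2_g(\Sigma)}$ for all $v \in \mathscr{B}_T(\Sigma)$; the map $T : L^2_T(\Sigma) \to L^2_T(\Sigma)$ is bounded, self-adjoint (by symmetry of $B$), and compact (by Part 1). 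The spectral theorem for compact self-adjoint operators then produces a sequence of eigenvalues $\mu_j \to 0^+$ with finite-dimensional eigenspaces $\{E_j\}$ spanning $L^2_T(\Sigma)$; the associated $\lambda_j := \mu_j^{-1} - (C+1) \nearrow +\infty$ are the eigenvalues of $-\widetilde{L}_{\Sigma,g}$, with eigenfunctions smooth by Remark \ref{rem: higher regularity for weak solutions}. The $\mathscr{B}$-density of the span follows from the identity $\|\psi\|_{\mathscr{B}(\Sigma)}^2 = (\lambda_j + C + 1)\|\psi\|_{L^2(\Sigma)}^2$ for $\psi \in E_j$ applied to a standard Bessel/partial-sum argument. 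Part 3 is then the Fredholm alternative: for $f \in (\mathrm{Ker}\widetilde{L}_{\Sigma,g})^\perp$ the eigenfunction expansion $f = \sum_{j \geq 1} a_j \psi_j$ has no kernel component, so $\psi := \sum_{\lambda_j \neq 0} (a_j/\lambda_j)\psi_j$ converges in $\mathscr{B}_T(\Sigma)$, solves $-\widetilde{L}_{\Sigma,g}\psi = f$, and satisfies the stated estimate via the spectral gap below the first nonzero eigenvalue.

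The main obstacle is establishing the uniform $L^2$-smallness near singular points in Part 1. The potential $|\mathrm{I\!I}_\Sigma|_g^2$ appearing in $Q_\Sigma$ blows up like $\mathrm{dist}_g(\cdot, p)^{-2}$ near each $p \in \mathrm{Sing}(\Sigma)$, so the $\mathscr{B}$-norm cannot be naively compared to the $W^{1,2}$-norm on such regions. The resolution exploits the stability of the tangent cones (which is automatic for boundaries of isoperimetric regions in dimension eight) via a Hardy-type inequality that absorbs the singular potential and produces quadratic $L^2$-decay near the singular set; transferring this estimate from the cone to $\Sigma$ itself requires the $C^2$-graphical control supplied by the conical coordinates.
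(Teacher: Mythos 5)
Your overall architecture matches the paper's: Part~1 is reduced to an $L^2$ non-concentration estimate near $\mathrm{Sing}(\Sigma)$, and Parts~2 and~3 then follow by the variational/compact-resolvent argument (the paper invokes \cite[Theorem 8.37]{GT01} with the $\mathscr{B}$-norm in place of $W^{1,2}_0$, which is the same content as your Lax--Milgram plus spectral theorem route; your relation $\lambda_j = \mu_j^{-1} - (C+1)$ and the identity $\|\psi\|_{\mathscr{B}}^2 = (\lambda_j + C + 1)\|\psi\|_{L^2}^2$ are correct, and the Fredholm alternative for Part~3 is as in the paper). Parts~2 and~3 are fine.

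The discrepancy is in how the non-concentration estimate is obtained. The paper (following \cite[Lemma E.1]{LW25}, which in turn adapts \cite[Lemma 3.9]{W20}) uses the Michael--Simon--Sobolev inequality, which is extrinsic and makes no assumption beyond stability and bounded mean curvature (the latter being why Lemma~\ref{Lemma: mean curvature bound} is invoked). You instead propose a Hardy-type inequality on the stable tangent cones $\mathbf{C}_p\Sigma$. This is a genuinely different route, and it has a gap: for a tangent cone that is stable but not \emph{strictly} stable (i.e.\ $\mu_1(\mathbf{C}_p\Sigma) = -(n-2)^2/4$ exactly), the Hardy-type inequality $Q_{\mathbf{C}_p}(\phi,\phi) \gtrsim \int \phi^2/|x|^2$ has vanishing constant and therefore does not yield $\|\phi\|_{L^2(B_r^g(p))}^2 \leq \varepsilon(r)\|\phi\|_{\mathscr{B}}^2$ directly. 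This is precisely the situation the paper flags in the remark following the definition of $\mathscr{B}(\Sigma)$, where it is noted that $W^{1,2}_0 \subsetneq \mathscr{B}_0$ for non-strictly-stable singularities. (The non-concentration estimate is still \emph{true} in the marginal case --- the borderline homogeneous mode $r^{-(n-2)/2}$ has $\int_{B_r}|\cdot|^2 \sim r^2 \to 0$ --- but that requires a separate argument, not just a naive application of Hardy.) The MSS approach is robust with respect to this distinction, which is why the paper uses it.

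A second, smaller issue: your claim that ``the $\mathscr{B}$-norm controls the $W^{1,2}(K)$-norm'' on compact $K \subset \Sigma \setminus \mathrm{Sing}(\Sigma)$ is not immediate. The $\mathscr{B}$-norm is a global quantity $\int_\Sigma |\nabla\phi|^2 - (|\mathrm{I\!I}_\Sigma|^2 + \mathrm{Ric})\phi^2 + (C+1)\phi^2$, and the potential $|\mathrm{I\!I}_\Sigma|^2$ is unbounded near $\mathrm{Sing}(\Sigma)$; extracting an upper bound on $\int_K |\nabla\phi|^2$ requires a localization/cutoff argument that uses Lemma~\ref{lem: C} (or local stability near singularities) and, circularly, the non-concentration estimate itself. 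You should state this step more carefully --- as written it suggests a purely local comparison that does not hold.
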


\begin{proof}
    For the first part, just as in \cite[Lemma D.3/Lemma E.1]{LW25}, the compact embeddings follow by establishing an $L^2$ non-concentration estimate near $\mathrm{Sing}(\Sigma)$ of the form in \cite[Lemma 3.9]{W20}. Precisely, given $\Omega \in \mathcal{I}(g,t)$, for each $\varepsilon > 0$ one can show that there is an open neighbourhood, $V_\varepsilon$, of $\mathrm{Sing}(\Sigma)$ in $M$ such that 
        $$ \int_{V_\varepsilon \cap \Sigma} \phi^2 \, dA_g \leq \varepsilon \cdot \Vert \phi\Vert^2_{\mathscr{B}(\Sigma)}$$
        for all $\phi \in C^1_c(\Sigma)$.
    As the space of test functions used is the same as in \cite[Lemma E.1]{LW25} and the argument hinges on the Michael--Simon--Sobolev inequality (see \cite{MS73}) through an embedding into Euclidean space, the same proof there carries through identically in our setting since the mean curvature of $\Sigma$ is bounded by Lemma \ref{Lemma: mean curvature bound}. Using the same reasoning as \cite[Proposition 3.5]{W20} we obtain the compact embeddings for part 1 as desired.

    \bigskip

    Given part 1, parts 2 and 3 then follow as discussed in the proof of \cite[Lemma D.3]{LW25}; namely part 2 follows by applying \cite[Theorem 8.37]{GT01} with the $\mathscr{B}$ norm in place of the $W^{1,2}_0$ norm, and part 3 follows from the spectral decomposition in part 2. More details on these arguments can be found in \cite[Theorem 4.2.17]{N24A}.
\end{proof}

\begin{remark}
    We note that an analogue of Theorem \ref{thm: spectral theorem for twisted Jacobi operator} in the case that $\mathrm{Sing}(\Sigma) = \emptyset$ for the twisted Jacobi operator was established in \cite[Proposition 2.2]{BB00}. 
\end{remark}

\section{Asymptotic rates and metric perturbations}\label{sec: asymptotic rates and metric perturbations}

In this section we study the behaviour of isoperimetric regions near isolated singularities and show that, provided a suitably defined growth rate for the hypersurface is sufficiently fast, we can remove these singularities by global metric perturbation.

\subsection{Definitions and perturbation functions}\label{subsec: definitions and perturbation functions}

We first introduce the notion of asymptotic rates and growth rates for functions defined on the boundaries of isoperimetric regions and regular cones: 

\begin{definition} \label{definition: asymptotic and growth rates} Given $\Omega \in \mathcal{I}(g,t)$, $p\in \mathrm{Sing}(\Sigma)$, $v\in L^2_{g,{\mathrm{loc}}}(\Sigma)$, $\mathbf{C} \in \mathscr{C}$, $w \in L^2_{\mathrm{loc}}(\mathbf{C})$, and $K > 1$, we define:
\begin{itemize}
    \item The \textbf{asymptotic rate} of $v$ at $p$ to be
    $$\mathcal{AR}_p(v) = \sup\left\{ \gamma  \, \bigg| \, \lim_{s\searrow 0} \int_{A(p,s,2s)} v^2 \cdot \rho^{-n-2\gamma} \, d|\Sigma|_g =0 \right\},$$
    and the \textbf{growth rate} of $v$ at $p$ to be
        $$ J_{K;\Sigma,g}^{\gamma}(v;r) = \left(\int_{A(p;K^{-1}r, r)} v^2 \cdot \rho^{- n - 2\gamma} \, d|\Sigma|_g\right)^{\frac{1}{2}},$$
    where $\rho(x) = \mathrm{dist}_g(x, p)$. 

    \item The \textbf{asymptotic rate} of $w$ at infinity to be
        $$\mathcal{AR}_\infty(w) = \inf\left\{ \gamma \, \bigg| \, \lim_{s\nearrow +\infty} \int_{\mathbf{A}(s,2s)} w^2(x) \cdot |x|^{-n-2\gamma} \, d|\mathbf{C}|(x) = 0 \right\},$$ 
    and the growth rate of $w$ on $\mathbf{C}$ to be
    \begin{equation*}
        J_{K;\mathbf{C}}^{\gamma}(w;r) = \left(\int_{\mathbf{A}(K^{-1}r, r)} w^2(x) \cdot \vert x \vert^{- n - 2\gamma} \, d|\mathbf{C}|(x)\right)^\frac{1}{2}.
    \end{equation*}

    \item For a stationary integral $7$-varifold, $V$, in $\mathbb{R}^8 \setminus \mathbb{B}_{R_0}$ for some $R_0 > 0$, we say that $V$ is \textbf{asymptotic to $\mathbf{C}$ at infinity} if $V \mres \mathbb{R}^8 \setminus \mathbb{B}_{R_0} = |\mathrm{graph}^{g_\mathrm{eucl}}_{\mathbf{C} \setminus \mathbb{B}_{R_0}}(w)|$ and $\Vert w\Vert_{C^2_*(\mathbf{A}(R,2R))} \rightarrow 0$ as $R \rightarrow \infty$ for some $w \in L^2_\mathrm{loc}(\mathbf{C})$; for such a varifold, $V$, asymptotic to $\mathbf{C}$ at infinity we write $\mathcal{AR}_\infty(V) = \mathcal{AR}_\infty(w)$. Moreover, if $\Sigma \subset \mathbb{R}^{8} \setminus \mathbb{B}_{R_0}$ is a hypersurface asymptotic to $\mathbf{C}$, we define $\mathcal{AR}_\infty(\Sigma) = \mathcal{AR}_\infty(\vert \Sigma \vert)$. 
\end{itemize}
For the above definitions we adopt the convention that $\inf \emptyset = \infty$ and $\inf\mathbb{R} = -\infty$. 
\end{definition}

\begin{remark}\label{rem: J on cone to J on cmc} By expressing $\Sigma$ in conical coordinates over its tangent cone $\mathbf{C}_p\Sigma$, there is some constant $C > 0$, depending on the radius in which these coordinates are defined, such that
\begin{align*}
       C^{-1}J^{\gamma}_{K;\mathbf{C}}(u;r)^2 \leq J^{\gamma}_{K;\Sigma,g}(u;r)^2
\leq CJ^{\gamma}_{K;\mathbf{C}}(u;r)^2.
   \end{align*}
\end{remark}

\begin{remark}\label{rem: property of J^gamma}    
    If $v$ grows at a rate $r^\gamma$ on approach to $p \in \mathrm{Sing}(\Sigma)$, then we should expect $\mathcal{AR}_p(v) = \gamma$. More precisely, if we suppose $\mathcal{AR}_p(v) \in \mathbb{R}$, then $\mathcal{AR}_p(v) < \gamma$ implies that $\mathrm{\limsup}_{t\to 0^+} J_{K;\Sigma,g}^{\gamma} (\phi; t) = \infty$, and similarly $\mathcal{AR}_p(v) > \gamma$ implies $\mathrm{\liminf}_{t\to 0^+} J_{K;\Sigma,g}^{\gamma} (\phi; t)  = 0$.
\end{remark}

\begin{definition}\label{def: slow growth and semi-nondegenerate}
    Given $\Omega \in \mathcal{I}(g,t)$, a function $v\in L^2_{g,\mathrm{loc}}(\Sigma)$ is of \textbf{slow growth} if for each $p\in \mathrm{Sing}(\Sigma)$ we have 
    $$ \mathcal{AR}_p (v) \geq \gamma^+_2(\mathbf{C}_p \Sigma),$$
    and we let $\mathrm{Ker}^+ \widetilde L_{\Sigma,g} \subset \mathrm{Ker}\widetilde{L}_{\Sigma,g}$ be the space of twisted Jacobi fields which are of slow growth. We say that $\Omega$, or equivalently $\Sigma$, is \textbf{semi-nondegenerate} if 
    $$ \mathrm{Ker}^+  \widetilde L_{\Sigma,g} = \{ 0 \},$$
    i.e.~the only twisted Jacobi field of slow growth on $\Sigma$ is trivial.
\end{definition}

We record here the following useful properties of the asymptotic rate:

\begin{lemma}\label{lemma: properties of slow growth}
    Suppose that $p \in \mathrm{Sing}(\Sigma)$ and $u \in W^{2,2}_{g,\mathrm{loc}}(\Sigma)$ is such that $L_{\Sigma,g}u \in L^{\infty}(\Sigma)$, then:
    \begin{enumerate}
    \item $\mathcal{AR}_p (u) \in \{-\infty\} \cup \Gamma(\mathbf{C}_p\Sigma) \cup [1,+\infty]$.
        \item If $u > 0$ and $L_{\Sigma,g} u$ vanishes near $p$, then $\mathcal{AR}_p(u) \in \{\gamma_1^- (\mathbf{C}_p \Sigma),\gamma_1^+(\mathbf{C}_p \Sigma)\}$.
        
        \item If $u \in W_{g}^{1,2}(\Sigma)$, then $\mathcal{AR}_p(u) \geq \gamma_1^{+}(\mathbf{C}_p\Sigma)$.
        
        \item If $\mathcal{AR}_p(u) > -(n-2)/2$, then
        \begin{align*}
            \int_\Sigma |\nabla_\Sigma u|^2 +\rho^{-2} u^2 \, dA_g <+\infty,
        \end{align*}
        where $\rho(x) = \mathrm{dist}_g(x, p)$.
\end{enumerate}
        In particular, if $u,v \in W^{2,2}_{g,\mathrm{loc}}(\Sigma) \cap L^1_T(\Sigma)$ such that $L_{\Sigma,g} u,L_{\Sigma,g} v \in L^{\infty}(\Sigma)$, and $\mathcal{AR}_p(u), \mathcal{AR}_p(v) > -(n-2)/2$, then for every $p \in \mathrm{Sing}(\Sigma)$ we have the following integration by parts:
        $$ \int_\Sigma u \cdot \widetilde{L}_{\Sigma,g}v \, d A_g =  \int_\Sigma v \cdot \widetilde{L}_{\Sigma,g}u \, d A_g .$$
\end{lemma}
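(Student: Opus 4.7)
The plan is to analyse $u$ near each isolated singularity $p \in \mathrm{Sing}(\Sigma)$ by passing to conical coordinates over the tangent cone $\mathbf{C}_p\Sigma$, as provided by the final bullet of Subsection \ref{subsec: notation}. In these coordinates $\Sigma$ is a small graph of a function $\phi$ over $\mathbf{C}_p\Sigma$, and $g$ differs from the Euclidean metric by a term vanishing quadratically at $p$; by Remark \ref{rem: J on cone to J on cmc}, the asymptotic rate $\mathcal{AR}_p(u)$ can therefore be computed after pulling $u$ back to the Euclidean cone model. In this model, $L_{\Sigma,g}$ agrees to leading order with the Jacobi operator $L_{\mathbf{C}_p\Sigma}$ of the cone, with perturbation terms controlled in powers of $r=|x|$, and $L_{\Sigma,g}u$ is assumed to be bounded.

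For Part~(1), I would expand $u$ in the $L^2(S)$-eigenbasis $\{\varphi_j\}_{j\geq 1}$ of $-(\Delta_S + |\mathrm{I\!I}_S|^2)$ on the link $S$, writing $u(r,\omega) = \sum_j u_j(r)\varphi_j(\omega)$, where each Fourier coefficient satisfies an Euler-type ODE with bounded forcing. The homogeneous solutions are spanned by $r^{\gamma_j^\pm}$ (with a logarithmic correction in the borderline case $\mu_j = -(n-2)^2/4$), while a bounded inhomogeneity contributes a particular piece growing no worse than $r^2$. A standard three-annulus / frequency monotonicity argument then forces $\mathcal{AR}_p(u)$ to equal one of the exponents $\gamma_j^\pm \in \Gamma(\mathbf{C}_p\Sigma)$, unless every homogeneous mode vanishes to infinite order, in which case the particular piece dictates the rate and $\mathcal{AR}_p(u) \in [1,+\infty]$. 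For Part~(2), I would invoke the Hardt--Simon-type analysis underlying \cite{CHS84}: positivity of $u$ and vanishing of $L_{\Sigma,g}u$ near $p$ restrict the leading-order asymptotic to a positive homogeneous Jacobi field on $\mathbf{C}_p\Sigma$, which by the uniqueness of the ground eigenfunction $\varphi_1 > 0$ must be proportional to $\varphi_1 \cdot r^{\gamma_1^\pm}$.

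For Parts~(3) and~(4), I would work with the dyadic annular decomposition $\Sigma \cap A^g(p; 2^{-k-1}, 2^{-k})$ near each $p$. For Part~(3), if $\mathcal{AR}_p(u) < \gamma_1^+$ then by Part~(1) the rate lies in $\Gamma(\mathbf{C}_p\Sigma)$ and is strictly below $\gamma_1^+$; the corresponding mode in the Fourier expansion of $u$ then fails the Hardy-type integrability required for $W^{1,2}$-membership on the cone, contradicting $u \in W^{1,2}_g(\Sigma)$. For Part~(4), the definition of $\mathcal{AR}_p$ together with a geometric series over dyadic annuli yields the $L^2$-integrability of $\rho^{-1}u$, and a Caccioppoli inequality on each annulus (using the bounded forcing $L_{\Sigma,g}u$) upgrades this to an $L^2$-bound on $\nabla u$.

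Finally, for the integration by parts identity, since $u,v \in L^1_T(\Sigma)$ the averaging correction in $\widetilde{L}_{\Sigma,g}$ annihilates both test functions, reducing the claim to symmetry of $L_{\Sigma,g}$. I would introduce radial cutoffs $\chi_\delta$ vanishing on $B^g_\delta(\mathrm{Sing}(\Sigma))$ and equal to $1$ outside $B^g_{2\delta}(\mathrm{Sing}(\Sigma))$ with $|\nabla \chi_\delta|_g \leq C/\delta$, apply the classical divergence-theorem symmetry of $L_{\Sigma,g}$ on the smooth region $\Sigma\setminus B^g_\delta(\mathrm{Sing}(\Sigma))$, and bound the commutator and boundary error terms by
\begin{equation*}
\int_{A^g(p;\delta,2\delta)\cap\Sigma}\bigl(|\nabla u|^2 + \rho^{-2}u^2\bigr)^{1/2}\bigl(|\nabla v|^2 + \rho^{-2}v^2\bigr)^{1/2}\, dA_g,
\end{equation*}
via Cauchy--Schwarz. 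The main obstacle will be verifying that these error integrals vanish as $\delta \to 0$: this is precisely where Part~(4) is invoked, with the threshold $-(n-2)/2$ being sharp, since it matches the critical Hardy exponent on the cone and is tight exactly when $\mathbf{C}_p\Sigma$ is only stable but not strictly stable, so that $\gamma_1^- = -(n-2)/2$.
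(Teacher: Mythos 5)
Your proposal takes essentially the same route as the paper. The paper's proof of Parts (1)--(4) consists of a citation to \cite[Lemma~2.4/Corollary~D.5]{LW25}; your outline (conical coordinates, Fourier decomposition over the link eigenbasis, three-annulus/frequency monotonicity for Part~(1), Hardt--Simon positivity analysis for Part~(2), the Hardy/Caccioppoli arguments for Parts~(3)--(4)) is precisely the content hiding behind that citation, so you have reconstructed rather than replaced the argument. For the ``in particular'' conclusion, both you and the paper first use $u,v \in L^1_T(\Sigma)$ to kill the averaging term in $\widetilde L_{\Sigma,g}$ and reduce to the symmetry of $L_{\Sigma,g}$; the paper then cites \cite{LW25} once more for this non-twisted integration by parts, whereas you carry out the underlying cutoff-and-Cauchy--Schwarz argument (invoking Part~(4) to make the annular error vanish as $\delta\to 0$), which is again exactly the mechanism in the cited reference.
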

\begin{proof}
Parts 1 through 4 are precisely the content of \cite[Lemma 2.4/Corollary D.5]{LW25}. In order to establish the integration by parts for slow growth functions we note that since $u,v \in W^{2,2}_{g,\mathrm{loc}}(\Sigma) \cap L^1_T(\Sigma)$, we have
\begin{align*}
    \int_\Sigma u \cdot \widetilde{L}_{\Sigma,g}v \, d A_g &= \int_\Sigma u \cdot \left(L_{\Sigma,g}v -\frac{1}{|\Sigma|_g}\int_\Sigma L_{\Sigma,g}v \, dA_g\right)\, d A_g\\
    &=\int_\Sigma u \cdot L_{\Sigma,g}v\, d A_g.
\end{align*}
Again by \cite[Lemma 2.4/Corollary D.5]{LW25} we can then apply the integration by parts formula there (since $L_{\Sigma,g}u,L_{\Sigma,g}v \in L^\infty(\Sigma)$) to see that
\begin{align*}
    \int_\Sigma u \cdot L_{\Sigma,g}v\, d A_g = \int_\Sigma L_{\Sigma,g}u \cdot v\, d A_g,
\end{align*}
then by reversing the calculation above with the roles of $u$ and $v$ swapped this yields the integration by parts for slow growth functions as desired.
\end{proof}

\begin{remark}\label{rem: slow growth remarks}
    By Lemma \ref{lemma: properties of slow growth} part 4, we see that every twisted Jacobi field of slow growth actually belongs to $W^{1,2}_g(\Sigma)$. As a consequence, if $\Sigma$ is nondegenerate, so that in the notation of Theorem \ref{thm: spectral theorem for twisted Jacobi operator} part 2 we have $0 \notin \sigma_p(\Sigma)$ (i.e.~$0$ is not an eigenvalue of $-\widetilde{L}_{\Sigma,g}$), then in fact $\Sigma$ is also semi-nondegenerate; hence if $\mathrm{Sing}(\Sigma) = \emptyset$, semi-nondegeneracy coincides with the usual notion of nondegeneracy. When $\mathrm{Sing}(\Sigma) \neq \emptyset$ however, semi-nondegeneracy does not in general guarantee either degeneracy or non-degeneracy of $\Sigma$.
\end{remark}

Given a semi-nondegenerate isoperimetric region, we now produce a set of functions that later will be utilised in order to perturb away isolated singularities by a conformal change in metric:

\begin{proposition}[Perturbation functions]\label{prop: perturbation functions open and dense}
    If $\Sigma$ is semi-nondegenerate then we have that the set $\mathcal{V}^{k,\alpha}$, defined to be $$\left\{f \in C^{k,\alpha}(M) \, \bigg| \, \widetilde{L}_{\Sigma,g}u = \nu_{\Sigma,g}(f) - \frac{1}{|\Sigma|_g}\int_\Sigma \nu_{\Sigma,g}(f) \, dA_g \text{ has no slow growth solution $u \in L^1_T(\Sigma)$}\right\},$$
    contains some open and dense subset of $C^{k,\alpha}(M)$.
\end{proposition}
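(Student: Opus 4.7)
The plan is to show that $\mathcal V^{k,\alpha}_\star := C^{k,\alpha}(M)\setminus \mathcal V^{k,\alpha}$, the set of $f$ admitting a slow growth solution $u\in L^1_T(\Sigma)$ to $\widetilde L_{\Sigma,g} u = h_f$ with $h_f := \nu_{\Sigma,g}(f) - \frac{1}{|\Sigma|_g}\int_\Sigma \nu_{\Sigma,g}(f)\,dA_g$, is a proper closed linear subspace of $C^{k,\alpha}(M)$; the complement of any proper closed subspace in a Banach space is automatically open and dense, which suffices. Linearity of $\mathcal V^{k,\alpha}_\star$ is immediate from linearity of $\widetilde L_{\Sigma,g}$, of $f\mapsto h_f$, and of the slow growth condition itself, since $(u_1+u_2)^2 \le 2(u_1^2+u_2^2)$ makes the $J^{\gamma}$ integrals of Definition \ref{definition: asymptotic and growth rates} subadditive.

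For closedness, given $f_j \to f$ in $C^{k,\alpha}(M)$ with slow growth solutions $u_j$, I would run a dichotomy on $\|u_j\|_{L^2(\Sigma)}$. If these norms are uniformly bounded, the compact embedding $\mathscr B_T(\Sigma) \hookrightarrow L^2_T(\Sigma)$ from Theorem \ref{thm: spectral theorem for twisted Jacobi operator} together with interior elliptic regularity extracts a subsequence converging to a limit $u$ solving the equation for $f$, with the slow growth property passing to the limit via the $J^{\gamma}$ characterisation of Remark \ref{rem: property of J^gamma} and uniform local estimates near each $p \in \mathrm{Sing}(\Sigma)$. If instead the norms diverge, the rescalings $u_j/\|u_j\|_{L^2}$ converge (along a subsequence) to a nontrivial unit-$L^2$ slow growth twisted Jacobi field, contradicting semi-nondegeneracy.

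For properness one exhibits $f_0$ with no slow growth solution, splitting on the kernel. When $\mathrm{Ker}\widetilde L_{\Sigma,g} \neq \{0\}$, fix $\zeta \neq 0$ in the kernel: semi-nondegeneracy forbids $\zeta$ from being of slow growth, but $\zeta \in \mathscr B_T(\Sigma) \subset W^{1,2}_g(\Sigma)$ by Lemma \ref{lemma: properties of slow growth} part 3. A tubular-neighbourhood construction gives $f_0 \in C^{k,\alpha}(M)$ with $\nu_{\Sigma,g}(f_0)$ approximating $\zeta$ on $\Sigma$, so that $\int_\Sigma h_{f_0}\zeta\,dA_g \neq 0$. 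Any slow growth solution $u$ would lie in $\mathscr B_T$, and the integration by parts of Lemma \ref{lemma: properties of slow growth} applied to $u$ and $\zeta$ forces this pairing to equal $\int_\Sigma u\cdot \widetilde L_{\Sigma,g}\zeta\,dA_g = 0$, a contradiction. When $\mathrm{Ker}\widetilde L_{\Sigma,g} = \{0\}$, Theorem \ref{thm: spectral theorem for twisted Jacobi operator} part 3 gives a bounded solution operator $h\mapsto u_h$ from $L^2_T(\Sigma)$ to $\mathscr B_T(\Sigma)$; the slow growth subspace $\mathcal S \subset \mathscr B_T(\Sigma)$ is closed (by the $J^{\gamma}$ characterisation) and proper, as witnessed by a cutoff-and-mean-zero-adjusted conical extension $\chi_p\cdot r^{\gamma_1^+(\mathbf C_p\Sigma)}\varphi_1(\omega)$ about a fixed $p\in \mathrm{Sing}(\Sigma)$. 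As $\{h_f : f \in C^{k,\alpha}(M)\}$ is dense in $L^2_T(\Sigma)$ (by the tubular-neighbourhood construction, allowing $\nu_{\Sigma,g}(f)$ to realise any smooth function on $\Sigma$ up to mean-zero adjustment), pulling back the proper closed subspace $\{h : u_h \in \mathcal S\} \subset L^2_T(\Sigma)$ through the continuous map $f \mapsto h_f$ realises $\mathcal V^{k,\alpha}_\star$ as a proper closed subspace of $C^{k,\alpha}(M)$.

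The main obstacle I anticipate is the properness argument in the kernel-trivial case: one must simultaneously verify closedness of $\mathcal S$ in $\mathscr B_T$ and produce an explicit non-slow-growth element in the image of the solution operator, with particular care required when $\gamma_1^+(\mathbf C_p\Sigma) = -(n-2)/2$ at a marginally stable tangent cone (where logarithmic corrections enter the asymptotic expansion). This step will rely on the asymptotic analysis of weak solutions near isolated singularities developed in \cite{LW25} and \cite{W20}, adapted to the twisted Jacobi setting via the spectral and regularity theory of Section \ref{sec: twisted jacobi fields}.
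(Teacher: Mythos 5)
Your proposal takes a genuinely different route from the paper: you aim to show that $\mathcal{V}^{k,\alpha}_\star := C^{k,\alpha}(M)\setminus\mathcal V^{k,\alpha}$ is a proper closed linear subspace, whereas the paper (in the kernel-trivial case) never proves or uses closedness of $\mathcal V^{k,\alpha}_\star$. Instead, after constructing the barrier functions $\xi_p, \check\xi_p$ on $B_r^g(p)$ with asymptotic rate $\gamma_1^+(\mathbf C_p\Sigma)$, the paper defines the sets $V_p$ of right-hand sides $h$ whose unique $\mathscr B_T$-solution satisfies $\limsup_{x\to p}|u(x)|/\xi_p(x) > 0$, and shows directly that $V_p$ is open by proving $\limsup|u|/\xi_p \le 2\widehat C\,\Vert h\Vert_{L^\infty}$ via the comparison principle (the key estimate \eqref{eqn: bound for perturbation proof} controlling the averaged constant $C_u$). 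This is a pointwise barrier argument and entirely avoids compactness of solution sequences.

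The central gap in your proposal is exactly the step this construction sidesteps: your closedness claim rests on ``the slow growth property passing to the limit via the $J^{\gamma}$ characterisation ... and uniform local estimates near each $p$,'' but these uniform $L^2$ non-concentration estimates near $\mathrm{Sing}(\Sigma)$ are not free. They require the $J^\gamma$-monotonicity machinery of Lemma \ref{lemma: J monotone for divergence form} adapted to the inhomogeneous equation, and this is precisely the substance of the estimate \eqref{eqn: L2 nonconcentration for twisted jacobi} established only later in Lemma \ref{lemma: compactness of twisted JF} for homogeneous twisted Jacobi fields (with further adaptation needed for a bounded right-hand side). Without them one cannot rule out that along a sequence $f_j \to f$ the leading coefficient $\xi_p$-component, which must vanish for each $u_j$, re-appears in the limit $u$. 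A second issue: being a linear subspace, $\mathcal V^{k,\alpha}_\star$ need only be \emph{not dense} to give an open dense complement (one can then pass to the closure, which is again a proper subspace); but your proposal establishes neither closedness nor non-density directly, and properness of $\mathcal V^{k,\alpha}_\star$ alone does not prevent it from being dense. The paper's $V_p$'s supply exactly the missing open sets disjoint from (the preimage of) $\mathcal V^{k,\alpha}_\star$. Finally, a small but real error: you invoke ``Lemma \ref{lemma: properties of slow growth} part 3'' to claim $\mathscr B_T(\Sigma) \subset W^{1,2}_g(\Sigma)$; that lemma asserts the converse direction (a lower bound on $\mathcal{AR}_p$ for $W^{1,2}$ functions), and the paper explicitly remarks after the definition of $\mathscr B$ that the inclusion $W^{1,2}_0 \subset \mathscr B_0$ can be strict, so the containment you want is not available in general.
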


\begin{proof}
    We consider cases on $\mathrm{Ker}\widetilde{L}_{\Sigma,g} = \{\omega \in \mathscr{B}_T(\Sigma) \, | \, \widetilde{L}_{\Sigma,g}\omega = 0\}$. First, if $\mathrm{Ker}\widetilde{L}_{\Sigma,g} \neq \{0\}$ then we guarantee the existence of some non-zero $w \in \mathrm{Ker}\widetilde{L}_{\Sigma,g} \subset \mathscr{B}_T(\Sigma)$. If $h = \nu_{\Sigma,g}(f) - \frac{1}{|\Sigma|_g}\int_\Sigma \nu_{\Sigma,g}(f)$ and we had some slow growth solution, $u \in L^1_T(\Sigma)$, to $\widetilde{L}_{\Sigma,g}u = h$ then, by integrating by parts from Lemma \ref{lemma: properties of slow growth}, we have for every such $\omega \in \mathrm{Ker}\widetilde{L}_{\Sigma,g}$ (noting that in particular $w \in L^1_T(\Sigma)$) that
    $$\int_{\Sigma} w \cdot h \, dA_g = \int_{\Sigma} w \cdot \widetilde{L}_{\Sigma,g}u\, dA_g  = \int_{\Sigma}\widetilde{L}_{\Sigma,g}w \cdot u \, dA_g= 0;$$
    thus $h \in(\mathrm{Ker}\widetilde{L}_{\Sigma,g})^\perp$. We therefore see from Theorem \ref{thm: spectral theorem for twisted Jacobi operator} part 3 that $$C^{k,\alpha}(M) \setminus \mathcal{V}^{k,\alpha} =\left\{f \in C^{k,\alpha}(M) \, \bigg| \, \nu_{\Sigma,g}(f) - \frac{1}{|\Sigma|_g}\int_\Sigma \nu_{\Sigma,g}(f) \, dA_g \in (\mathrm{Ker}\widetilde{L}_{\Sigma,g})^\perp\right\},$$
    from which we conclude that, since in this case $\mathrm{dim}(\mathrm{Ker}\widetilde{L}_{\Sigma,g}) < \infty$ by Theorem \ref{thm: spectral theorem for twisted Jacobi operator} part 2,  $C^{k,\alpha}(M) \setminus \mathcal{V}^{k,\alpha}$ forms a proper closed linear subspace of $C^{k,\alpha}(M)$; hence $\mathcal{V}^{k,\alpha}$ is open and dense in $C^{k,\alpha}(M)$.

    \bigskip

     If $\mathrm{Ker}\widetilde{L}_{\Sigma,g} = \{0\}$, first choose $r \in \left(0, \frac{\mathrm{inj}(\Sigma,g)}{2}\right)$ sufficiently small so that for each $p \in \mathrm{Sing}(\Sigma)$ the Jacobi operator $-L_\Sigma$ restricted to $B_{2r}^g(p) \cap \Sigma$ has positive first eigenvalue (e.g.~as shown in \cite[Claim E.1]{LW25}); note also that the sets $\{B_{2r}^g(p)\}_{p \in \mathrm{Sing}(\Sigma)}$ are pairwise disjoint. Exactly as in \cite[Proof of Lemma D.6]{LW25} for each $p \in \mathrm{Sing}(\Sigma)$ we define $\xi_p,\check{\xi}_p \in \mathscr{B}(B_r^g(p) \cap \Sigma) = \overline{C_c^1(B_r^g \cap \Sigma)}^\mathscr{B}$ as the unique positive solutions, guaranteed by the maximum principle and \cite[Lemma D.3]{LW25} (the non-twisted analogue of Theorem \ref{thm: spectral theorem for twisted Jacobi operator}), to
     $$\begin{cases}
         L_{\Sigma,g}\xi_p = 0 & \text{on } B_r^g(p)\\
         \xi_p = 1 & \text{on } \partial B_r^g(p)
     \end{cases} \text{ and} \begin{cases}
         (1 - L_{\Sigma,g}\check{\xi}_p) = 0 & \text{on } B_r^g(p)\\
         \xi_p = 1 & \text{on } \partial B_r^g(p)
     \end{cases},$$
    respectively. Then $\xi_p \geq \check{\xi}_p > 0$, and by Lemma \ref{lemma: properties of slow growth} parts 1 and 2 we ensure that both $\mathcal{AR}_p(\xi_p) = \mathcal{AR}_p(\check{\xi}_p) = \gamma^+_1(\mathbf{C}_p\Sigma)$ and $\xi_p(x),\check{\xi}_p(x) \rightarrow \infty$ as $x \rightarrow p$. In particular whenever $\limsup_{x \rightarrow p}\frac{|u(x)|}{\xi_p(x)} > 0$ we have $\mathcal{AR}_p(u) \leq \gamma_1^+(\mathbf{C}_p)$, which is saying that $u$ does not have slow growth at $p \in \mathrm{Sing}(\Sigma)$. We then consider the sets
    $$V_p = \left\{h \in L_g^\infty(\Sigma) \cap L^1_T(\Sigma) \, \bigg| \, \widetilde{L}_{\Sigma,g}u = h \text{ has unique solution } u \in \mathscr{B}_T(\Sigma) \text{ with } \limsup_{x \rightarrow p}\frac{|u(x)|}{\xi_p(x)} > 0 \right\};$$
    where the uniqueness of such solutions in the definition of $V_p$ follows from Theorem \ref{thm: spectral theorem for twisted Jacobi operator} part 3. We then define 
    \begin{equation}\label{eqn: perturbation sets}
        G = \bigcap_{p \in \mathrm{Sing}(\Sigma)}\left\{f \in C^{k,\alpha}(M) \, \bigg| \, \nu_{\Sigma,g}(f) - \frac{1}{|\Sigma|_g}\int_\Sigma \nu_{\Sigma,g}(f) \, dA_g\in V_p\right\} \subset \mathcal{V}^{k,\alpha};
    \end{equation}
    we will show that the set $G$ is open and dense in $C^{k,\alpha}(M)$.
    
    \bigskip

    We first note that if $V_p$ is open in $L_g^\infty(\Sigma) \cap L^1_T(\Sigma)$ (with respect to the supremum norm), then this implies that the set $$\left\{f \in C^{k,\alpha}(M) \, \bigg| \, \nu_{\Sigma,g}(f) - \frac{1}{|\Sigma|_g}\int_\Sigma \nu_{\Sigma,g}(f) \, dA_g\in V_p\right\}$$ is open in $C^{k,\alpha}(M)$; thus if we can show that $V_p$ is open in $L_g^\infty(\Sigma) \cap L^1_T(\Sigma)$ for each $p \in \mathrm{Sing}(\Sigma)$, we conclude that $G$ is also open in $C^{k,\alpha}(M)$.

    \bigskip

    To show this we first establish that if $h \in (L_g^\infty(\Sigma) \cap L_T^1(\Sigma)) \setminus\{0\}$ and $u \in \mathscr{B}_T(\Sigma)$ is such that $\widetilde{L}_{\Sigma,g}u = h$, then there is some constant $C > 0$, depending on $\Sigma,M,$ and $g$, such that
    \begin{equation}\label{eqn: bound for perturbation proof}
        \Vert L_{\Sigma,g}u\Vert_{L^\infty(\Sigma)} \leq C \Vert h\Vert_{L^\infty(\Sigma)}.
    \end{equation}
    Since in particular $h \in L^1_T(\Sigma)$, we thus have that $L_{\Sigma,g}u - h = \frac{1}{|\Sigma|_g} \int_\Sigma L_{\Sigma,g}u\, dA_g$ is constant. We now bound the constant $C_u =  \frac{1}{|\Sigma|_g} \int_\Sigma L_{\Sigma,g}u\, dA_g$ in terms of $\Vert h\Vert_{L^\infty(\Sigma)}$; noting that it suffices to bound it at an arbitrary point in $\Sigma$.

    \bigskip

    Fix disjoint open sets $U,V \subset \subset \Sigma$, non-negative $\phi_1 \in C^\infty_c(U)\setminus \{0\}$ and $\phi_2 \in C^\infty_c(V)\setminus \{0\}$ such that $\int_\Sigma\phi_1\, dA_g = -\int_\Sigma \phi_2\, dA_g$. As $\widetilde{L}_{\Sigma,g}u = h$ and $\phi_1 + \phi_2 \in \mathcal{D}_T(\Sigma) \setminus \{0\}$ we have, after integrating by parts (since $\phi_1,\phi_2$ are supported away from $\mathrm{Sing}(\Sigma)$), that
    $$Q_\Sigma(u,\phi_2) + \langle h, \phi_2 \rangle = \langle L_{\Sigma,g}u - h,\phi_1\rangle = C_u \cdot \Vert \phi_1\Vert_{L^1(\Sigma)}.$$
    We then bound
    $$|C_u| \cdot \Vert \phi_1\Vert_{L^1(\Sigma)}\leq |Q_\Sigma(u,\phi_2)| + \Vert h\Vert_{L^2(\Sigma)} \cdot \Vert \phi_2\Vert_{L^2(\Sigma)}.$$
    Using Theorem \ref{thm: spectral theorem for twisted Jacobi operator} part 3 we have that
    $$|Q_\Sigma(u,\phi_2)| \leq C \Vert u\Vert_{\mathscr{B}(\Sigma)} \cdot \Vert \phi_2\Vert_{\mathscr{B}(\Sigma)} \leq C \Vert h\Vert_{L^2(\Sigma)} \cdot \Vert \phi_2\Vert_{\mathscr{B}(\Sigma)}.$$
    Combining the above estimates we see that there exists some constant $C > 0$, depending on $\Sigma$, $M$, $g$, and the choice of $\phi_1,\phi_2$, such that
    $$|C_u| \leq C\Vert h\Vert_{L^\infty(\Sigma)},$$
    and so we see that (\ref{eqn: bound for perturbation proof}) holds as desired.
    
    \bigskip

    To show that $V_p$ is open for each $p \in \mathrm{Sing}(\Sigma)$ we will show that its complement is closed by deducing that $\limsup_{x \rightarrow p}\frac{|u(x)|}{\xi_p(x)}$ is a continuous function of $h$, where $\widetilde{L}_{\Sigma,g}u = h$; as $\widetilde{L}_{\Sigma,g}$ is linear it suffices to verify this at the zero function. If $\Vert h\Vert_{L^\infty_g(\Sigma)} \leq 1$, we set $\widehat{C} = \frac{\max\{1+C,\widetilde{C}\}}{\kappa}$ where $C > 0$ is as in (\ref{eqn: bound for perturbation proof}), $|u| \leq \widetilde{C}$ on $\partial B_r^g(p)$ (such a constant exists by Theorem \ref{thm: spectral theorem for twisted Jacobi operator} part 3), and $\kappa = \inf_{B_r^g(p)}\check{\xi}_p> 0$, then we compute that on $\partial B_r^g(p)$ we have
    $|u| \leq \widetilde{C} \leq \widehat{C}\kappa \leq \widehat{C}(2\xi_p - \check{\xi}_p)$ by definition and that by (\ref{eqn: bound for perturbation proof}) on $B_r^g(p)$ we have
    $$|L_{\Sigma,g}u| \leq \Vert h\Vert_{L^{\infty}_g(\Sigma)} + C_u \leq 1 + C \leq (1+C)\frac{\check{\xi}_p}{\kappa}\leq \widehat{C}L_{\Sigma,g}(2\xi_p - \check{\xi}_p).$$
    Then by applying the comparison/weak maximum principle we see that $|u| \leq \widehat{C}(2\xi_p - \check{\xi}_p) \leq 2\widehat{C}\xi_p$ on $B_r^g(p)$ and hence the operator norm of $\limsup_{x \rightarrow p}\frac{|u(x)|}{\xi_p(x)}$ is bounded by $2\widehat{C}$. Thus we deduce that $V_p$ is open for each $p  \in \mathrm{Sing}(\Sigma)$, and hence $G$ is open as argued above.
    
    \bigskip

    To conclude that $G$ is dense we note that given $\eta \in V_p$, $\zeta \in (L_g^\infty(\Sigma) \cap L^1_T(\Sigma)) \setminus V_p$, and $c \in \mathbb{R} \setminus \{0\}$ we have $c\eta + \zeta \in V_p$. Given this, we now claim that it is sufficient to establish the denseness of $G$ by showing that $C_c^\infty(\Sigma) \cap V_p \neq \emptyset$ for each $p \in \mathrm{Sing}(\Sigma)$. To see this, given any $h \in C^{k,\alpha}(M) \setminus G$, as  $\nu_{\Sigma,g}(h) - \frac{1}{|\Sigma|_g}\int_\Sigma \nu_{\Sigma,g}(h)\, dA_g \in (L_g^\infty(\Sigma) \cap L^1_T(\Sigma)) \setminus V_p$ for each $p \in \mathrm{Sing}(\Sigma)$, then if $\eta \in C_c^\infty(\Sigma) \cap V_p$ for some $p \in \mathrm{Sing}(\Sigma)$ then we have that $c\eta + \nu_{\Sigma,g}(h) - \frac{1}{|\Sigma|_g}\int_\Sigma \nu_{\Sigma,g}(h)\, dA_g \in V_p$. For any $f \in C^{k,\alpha}(M)$ with $\nu_{\Sigma,g}(f) = \eta$ on $\Sigma$ (which exists since $\eta \in C^\infty_c(\Sigma)$), we ensure that $cf + h \in G$ (since $\nu_{\Sigma,g}(cf  + h) = c\eta + \nu_{\Sigma,g}(h)$ and in particular $\eta \in L^1_T(\Sigma)$) and is arbitrarily close to $h$ in $C^{k,\alpha}(M)$ norm by taking $c$ small.
    
    \bigskip

    We now construct some $\eta \in C_c^\infty(\Sigma) \cap V_p$ which will conclude the denseness. Consider a smooth cutoff function $\chi \in C^\infty(\mathbb{R})$ identically equal to one if $|x| \leq \frac{1}{2}$ and zero if $|x| \geq \frac{3}{4}$, and consider $\widetilde{\xi}_p(x) = \chi(\frac{d_\Sigma(x,p)}{\tau_p}) \cdot \xi_p(x)$; we then see that both $\mathcal{AR}_p(\widetilde{\xi}_p) = \gamma_1^+(\mathbf{C}_p)$ and $\widetilde{\xi}_p \in L^1(\Sigma)$. We then choose $f \in C_c^\infty(\Sigma)$ such that $\int_\Sigma f \, dA_g= -\int_\Sigma \widetilde{\xi}_p\, dA_g$, so that $\widetilde{\xi}_p + f \in L^1_T(\Sigma)$. Next, we may choose a further function $h \in C_c^\infty(\Sigma) \cap L^1_T(\Sigma)$ such that $\frac{1}{|\Sigma|_g}\int_{\Sigma} L_{\Sigma,g} h \, dA_g= -\frac{1}{|\Sigma|_g}\int_\Sigma L_{\Sigma,g}(\widetilde{\xi}_p + f)\, dA_g$ (to do this one can consider scaling the sum of two integral zero test functions with disjoint support). Setting $\eta = \widetilde{L}_{\Sigma,g}(\widetilde{\xi}_p + f + h)$, we then have that $\eta \in C^\infty_c(\Sigma) \cap L^1_T(\Sigma)$ and since $\widetilde{\xi}_p + f + h = \xi_p$ near $p$ we ensure that $\eta \in C_c^\infty(\Sigma) \cap V_p$ also (as $L_{\Sigma,g}(\widetilde{\xi}_p + f + h) = 0$ near $p$).
\end{proof}

\subsection{Induced twisted Jacobi fields and metric perturbations}

The aim of this subsection is to prove the following theorem, which will ultimately allow us to perturb away singularities of isoperimetric regions under appropriate assumptions on the asymptotic growth rate:

\begin{theorem}[Induced twisted Jacobi fields]\label{thm: induced twisted jacobi fields}
    Suppose that for $t \in \mathbb{R}$ we have $(g_j,\Omega_j) \rightarrow (g,\Omega)$ in $\mathcal{P}^{k,\alpha}(t)$ with $\Sigma_j \neq \Sigma$ for all $j \geq 1$, and one of the following cases hold:
    \begin{enumerate}
        \item[(i)] $g_j = g$ for all $j \geq 1$.
        \item[(ii)] $g_j = (1 + c_jf_j)g$ where $f_j \rightarrow f$ in $C^4(M)$, $\nu_{\Sigma,g}(f)$ is not constant on $\Sigma$, and $c_j \rightarrow 0$.
    \end{enumerate}
    Then, there exists some non-zero $\phi \in C^2_{\mathrm{loc}}(\Sigma)$ with $\mathcal{AR}_p(\phi) \geq \gamma^-_1(\mathbf{C}_p)$ for each $p \in \mathrm{Sing}(\Sigma)$, induced by the sequence $(g_j,\Omega_j)$, such that:
    \begin{enumerate}
        \item In case (i) above, $\phi$ is a twisted Jacobi field.

        \item In case (ii), $\widetilde L_{\Sigma,g} \phi = c\left(\nu_{\Sigma,g}(f) - \frac{1}{|\Sigma|_g}\int_\Sigma \nu_{\Sigma,g}(f)\, dA_g\right)$ and $\int_\Sigma \phi \, dA_g = c\left(\frac{n}{2}\int_\Omega f\, dV_g\right)$ for some constant $c \geq 0$. 
    \end{enumerate}
\end{theorem}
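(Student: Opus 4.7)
The strategy is to obtain $\phi$ as a normalized blow-up limit of the graph function describing $\Sigma_j$ over $\Sigma$, and then to pass both the linearized CMC equation and the volume constraint to the limit, taking care to control the behaviour near $\mathrm{Sing}(\Sigma)$. By Lemma \ref{lemma: compactness of isoperimetric regions} part 2 and Allard's theorem (Remark \ref{remark: apply Allard}), on compact subsets $K \subset \Sigma \setminus \mathrm{Sing}(\Sigma)$ we may write $\Sigma_j \cap U = \mathrm{graph}^g_K(u_j)$ with $u_j \to 0$ in $C^2(K)$. Since $\Sigma_j$ has constant $g_j$-mean curvature $H_j$ and $\Sigma$ has constant $g$-mean curvature $H$, simultaneously linearizing the mean curvature operator in the normal perturbation $u_j$ and the metric perturbation $g_j - g = c_j f_j g$ yields an identity of the form
\begin{equation*}
L_{\Sigma,g}u_j + c_j \mathcal{M}_g(f_j) = (H_j - H) + Q_j(u_j, c_jf_j) \qquad \text{on } \Sigma \setminus \mathrm{Sing}(\Sigma),
\end{equation*}
with $Q_j$ a quadratic remainder and $\mathcal{M}_g$ the linearization of the mean curvature under the conformal change $g \mapsto (1+sf)g$; in case (i) the $c_j$ term is absent. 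Similarly, the volume constraint $\mathrm{Vol}_{g_j}(\Omega_j) = t = \mathrm{Vol}_g(\Omega)$, expanded using $g_j = (1 + c_jf_j)g$ and the graphical decomposition, gives
\begin{equation*}
\int_\Sigma u_j\, dA_g + \tfrac{n}{2} c_j \int_\Omega f_j\, dV_g = O\bigl(\|u_j\|^2 + c_j^2\bigr).
\end{equation*}

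Next, choose a normalization $\epsilon_j>0$ (for instance, the maximum of $|c_j|$ and a suitable weighted $L^2_{\mathrm{loc}}$-norm of $u_j$ away from $\mathrm{Sing}(\Sigma)$) so that $\phi_j := u_j/\epsilon_j$ and $d_j := c_j/\epsilon_j$ remain bounded, and pass to a subsequence so $d_j \to d \geq 0$ (flipping $f$ if necessary). Dividing the linearized CMC equation by $\epsilon_j$, standard local elliptic estimates extract a $C^2_{\mathrm{loc}}(\Sigma \setminus \mathrm{Sing}(\Sigma))$ limit $\phi$ satisfying a PDE of the form $L_{\Sigma,g}\phi = C_\infty - d\,\mathcal{M}_g(f)$, where $C_\infty = \lim (H_j - H)/\epsilon_j \in \mathbb{R}$. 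Subtracting the $\Sigma$-average yields the twisted identity, with the pure $f$-contributions in $\mathcal{M}_g(f)$ absorbed into the adjustment coming from the isoperimetric Lagrange multiplier (so that only the $\nu_{\Sigma,g}(f)$ part survives after averaging), producing the equation in the statement with a constant $c \geq 0$. Dividing the volume identity by $\epsilon_j$ and letting $j \to \infty$ yields the claimed integral identity $\int_\Sigma \phi \, dA_g = c \tfrac{n}{2}\int_\Omega f\, dV_g$, provided the $L^1$-integral of $\phi_j$ does not concentrate at $\mathrm{Sing}(\Sigma)$.

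The asymptotic rate estimate $\mathcal{AR}_p(\phi) \geq \gamma_1^-(\mathbf{C}_p)$ at each $p \in \mathrm{Sing}(\Sigma)$ follows by writing both $\Sigma$ and $\Sigma_j$ in conical coordinates over $\mathbf{C}_p$ (which is possible for $j$ large by continuity of the asymptotic spectrum and \cite{S83B}, \cite[Theorem 5.1]{E24}) and running a three-annulus / monotonicity argument in the weighted quantities $J^\gamma_{K;\Sigma,g}(\phi_j; r)$, as in \cite{LW25, W20, E24}; the uniform bounds this produces pass to $\phi$. The main obstacle, and the step where our CMC setting genuinely departs from the minimal case, is proving $\phi \not\equiv 0$ and simultaneously justifying the integral passage to the limit: because twisted Jacobi fields carry an intrinsic zero-integral constraint, standard $L^2$-non-concentration of $\phi_j$ near $\mathrm{Sing}(\Sigma)$ is insufficient — we must also preclude $L^1$-concentration at singular points. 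This is precisely where the paper's novel techniques enter: constructing suitable integral-zero test functions adapted to the singular geometry and analyzing the local volume change around each $p \in \mathrm{Sing}(\Sigma)$ under the deformation $\Sigma \rightsquigarrow \Sigma_j$. Combined with the hypotheses $\Sigma_j \neq \Sigma$ in case (i) and non-constancy of $\nu_{\Sigma,g}(f)$ in case (ii), these ensure the normalization does not disappear in the limit and deliver a non-trivial $\phi$ of the required form.
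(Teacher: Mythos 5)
Your high-level outline is faithful to the strategy the paper actually follows: write $\Sigma_j$ as a graph $v_j$ over $\Sigma$ away from the singular set, normalize, extract a $C^2_{\mathrm{loc}}$ limit $\phi$ solving a linearized equation, estimate the asymptotic rate via monotonicity of the growth quantity $J^{\gamma}$ (arising from the dichotomy in Lemma \ref{lem: dichotomy}), and pass the volume constraint to the limit. However, you explicitly defer the hardest steps to ``the paper's novel techniques'' without carrying them out, and these are precisely the steps that make the theorem non-trivial, so the proposal has a genuine gap.

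Concretely, three pieces are missing. First, in case (ii) the correct normalization is not $\max(|c_j|,\,\|u_j\|)$: the paper sets $a_j = \|v_j\|_{L^2(U)}$ on a fixed $U$ away from $\mathrm{Sing}(\Sigma)$ on which $\nu_{\Sigma,g}(f)$ is not identically zero and then \emph{proves} $\liminf_j a_j/c_j > 0$ by a separate contradiction argument that exploits the non-constancy of $\nu_{\Sigma,g}(f)$; this is what guarantees $\phi \neq 0$ and produces a finite constant $c = \lim c_j/a_j \geq 0$. Second, to pass the volume identity $\mathrm{Vol}_{g_j}(\Omega_j) = \mathrm{Vol}_g(\Omega)$ to the limit, it does not suffice to observe ``the $L^1$-integral of $\phi_j$ does not concentrate at $\mathrm{Sing}(\Sigma)$''; one must quantitatively bound the error from the excised ball $B_{s_j}(\mathrm{Sing}(\Sigma))$, which contributes $O(s_j^8)$ to the volume, against the normalization. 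The paper establishes the sharp estimate $s_j \leq C\, a_j^{1/(1-\widetilde\gamma)}$ (for suitable $\widetilde\gamma < \gamma_p$) via a blow-up argument: rescaling by $s_j$ and using the monotonicity of $J^{\gamma_p}$ between the scales $K^3 s_j$ and $1$ forces $J^{\gamma_p}$ to grow, contradicting the strict positivity of the corresponding quantity on the limiting asymptotically-conical varifold if $s_j$ were too large. Third, to justify the dominated convergence theorem for $\int_\Sigma \phi_j$ one also needs an $L^\infty$-envelope for $\phi_j$ near each singular point of the form $|\phi_j(x)| \leq C\,|x|^{\gamma}$ with $\gamma > -7$; this comes from combining the $J^{\gamma_p}$ monotonicity with a scaled Moser-type estimate on annuli. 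None of these steps is sketched in your proposal, and without the estimate on $s_j$ in particular your volume identity divided by $\epsilon_j$ has an uncontrolled term $s_j^8/\epsilon_j$ that cannot simply be discarded. It would also be cleaner in case (i) to note, as the paper does, that the Lagrange-multiplier term $(H_j - H)$ vanishes identically when integrated against integral-zero test functions $\psi$, rather than invoking a subtraction-of-averages absorption which you leave vague.
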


\begin{proof} 
    We first consider case (i). For each $p \in \mathrm{Sing}(\Sigma)$ and arbitrary $\gamma_p \in (\gamma^-_2(\mathbf{C}_p),\gamma^-_1(\mathbf{C}_p))$ we choose parameters in order to apply Lemma \ref{lem: dichotomy}. Namely, let $\sigma = \inf_{p \in \mathrm{Sing}(\Sigma)}\mathrm{dist}_{\mathbb{R}}(\gamma_p, \Gamma(\mathbf{C}_p)\cup \{-\frac{n-2}{2}\}) > 0$, $\kappa = 1$, $\Lambda \geq 1$ be sufficiently large so that for each $p \in \mathrm{Sing}(\Sigma)$ we have $\mathbf{C}_p \in \mathcal{C}_\Lambda$, $K > 2$ determined by these choices of $\sigma$ and $\Lambda$ and sufficiently large so that $\Sigma$ can be written in conical coordinates in $B_{\frac{2}{K}}(\mathrm{Sing}(\Sigma))$. We then produce $\delta > 0$ as in the conclusion of Lemma \ref{lem: dichotomy} so that, up to rescaling, we may assume that around each $p \in \mathrm{Sing}(\Sigma)$ the hypothesis (i) and (ii) of Lemma \ref{lem: dichotomy} are satisfied. Now, if property (1) of Lemma \ref{lem: dichotomy} were to occur for some $p \in \mathrm{Sing}(\Sigma)$, then we have some stationary varifold, $V_\infty$, in $\mathbb{R}^{n+1}$ which is asymptotic to $\mathbf{C}_p$ but with $\mathcal{AR}_\infty(V_\infty) < \gamma_p$, which contradicts Lemma \ref{lemma: lower growth rate implies smooth}. Thus property (1) cannot occur and thus only property (2) of Lemma \ref{lem: dichotomy} can occur for each $p \in \mathrm{Sing}(\Sigma)$.

    \bigskip
    
    As $\Omega_j \rightarrow \Omega$ in $L^1$ we have that the varifolds $\Sigma_j \rightarrow \Sigma$ and hence, for sufficiently large $j \geq 1$, there exists a sequence of rescaled graphing radii, $s_j \rightarrow 0$, (i.e.~rescalings of $\tau_j$ in Lemma \ref{lem: dichotomy}) and graphing functions, $v_j \in C^2_{loc}(\Sigma)$ such that $\Vert v_j\Vert_{C^2_*(\Sigma \setminus B_{s_j}(\mathrm{Sing}(\Sigma)))} \leq \delta$ and
    $$\Sigma_j \setminus B_{s_j}(\mathrm{Sing}(\Sigma)) = \mathrm{graph}_{\Sigma}^g(v_j) \setminus B_{s_j}(\mathrm{Sing}(\Sigma))).$$
    Since only property (2) of Lemma \ref{lem: dichotomy} can hold in particular we must have for each $t \in (K^3s_j,1),$ we have
    \begin{equation}\label{eqn: monotonicity of $J$ for induced Jacobi fields}
        J^{\gamma_p}_{K;\Sigma,g}(v_j;K^{-1}t) \leq J^{\gamma_p}_{K;\Sigma,g}(v_j,t),        
    \end{equation}
    which in particular implies that given some $U \subset \subset \Sigma \setminus B_{s_j}(\mathrm{Sing}(\Sigma))$ if $j \geq 1$ is sufficiently large then
    $$\int_U v_j^2\, dA_g \leq C \cdot \int_{\Sigma\setminus B_{K^{-1}}(\mathrm{Sing}(\Sigma))}v_j^2\, dA_g,$$
    where the constant $C > 0$ depends on $U$ but is independent of $j \geq 1$. Set $a_j = \Vert v_j\Vert_{L^2(\Sigma \setminus B_{K^{-1}}(\mathrm{Sing}(\Sigma))}$, which is strictly positive since $\Sigma_j \neq \Sigma$ for all $j \geq 1$ with $a_j \rightarrow 0$ since $\Sigma_j \rightarrow \Sigma$ as $j \rightarrow \infty$, we ensure that the function $\frac{v_j}{a_j}$ is such that $\Vert \frac{v_j}{a_j}\Vert_{L^2(U)} \leq C$.
    
    \bigskip
    
    By Proposition \ref{prop: caccioppoli inequality} applied in the case that $f^\pm = 0$, $u = v_j$, and $v = 0$ we deduce that for $\widetilde{U} \subset \subset U$ there is some constant $C > 0$, depending on $\widetilde{U}, U, \Sigma, g,$ and $\delta$, such that
    \begin{equation}\label{eqn: Caccioppoli for cmc}
        \int_{\widetilde{U}} |\nabla v_j|^2 \leq C \int_{U} v_j^2.
    \end{equation}
    Using this, we can argue similarly to \cite[Theorem 4.2.21]{N24A} to obtain a $W^{2,2}$ bound on the $v_j$ in any subset of $\widetilde{U}$, then a bootstrapping argument using \cite[Proposition 9.11]{GT01} implies that, by (\ref{eqn: Caccioppoli for cmc}), the $W^{2,2}$ control, and the $L^2$ bounds on $\frac{v_j}{a_j}$ above, we ensure the existence of some non-zero function, $\phi \in C^2_{loc}(\Sigma)$, with $\frac{v_j}{a_j} \rightarrow \phi$ in $C^2_{loc}(\Sigma)$ (since $g \in \mathcal{G}^{k,\alpha}$ with $k \geq 4$); the fact that $\phi$ is non-zero follows since $\Vert \frac{v_j}{a_j}\Vert_{L^2(\Sigma \setminus B_{K^{-1}}(\mathrm{Sing}(\Sigma)))} = 1$ for sufficiently large $j \geq 1$. We will show that $\phi$ is a twisted Jacobi field and $\mathcal{AR}_p(\phi) \geq \gamma_1^{-}(\mathbf{C}_p)$ for each $p \in \mathrm{Sing}(\Sigma)$.

    \bigskip

    Denote by $\mathcal{M}^g$ the mean curvature operator with respect to the metric $g$ and $H_j$ and $H$ the mean curvature of $\Sigma_j$ and $\Sigma$ respectively, so that $\mathcal{M}^g(v_j) = H_j$ for each $j \geq 1$, we have for each $\psi \in L^1_T(\Sigma) \cap C^1_{c}(\Sigma)$ that
    \begin{equation}\label{eqn: case (i) difference of mean curvatures}
    \int_\Sigma (\mathcal{M}^g(v_j) - H) \cdot \psi \, dA_g=  (H_j - H) \cdot \int_\Sigma \psi  \, dA_g= 0.
    \end{equation}
    Dividing both sides of (\ref{eqn: case (i) difference of mean curvatures}) by $a_j$ and using the notation for the mean curvature operator as in Subsection \ref{subsec: mean curvature operator} we have that
    $$0 = \int_\Sigma \frac{(\mathcal{M}^g(v_j) - H)}{a_j} \cdot \psi \, dA_g= \int_\Sigma \left(-L_{\Sigma,g} \left(\frac{v_j}{a_j}\right) + \mathrm{div}_{\Sigma}\left(\frac{\mathcal{E}_1(v_j)}{a_j}\right) + r_S^{-1}\cdot\frac{\mathcal{E}_2(v_j)}{a_j}\right) \cdot \psi\, dA_g,$$
    which, after integrating by parts, using the estimates on the error terms, and sending $j \rightarrow \infty$ we have
    $$\int_\Sigma L_{\Sigma,g} \phi \cdot \psi \, dA_g= 0;$$
    thus $L_{\Sigma,g} \phi \, dA_g= \frac{1}{|\Sigma|_g}\int_\Sigma L_{\Sigma,g} \phi\, dA_g$, i.e.~$\widetilde{L}_{\Sigma,g}\phi = 0$ on $\Sigma$.
    
    \bigskip

    We now show that $\mathcal{AR}_p(\phi) \geq \gamma_1^{-}(\mathbf{C}_p)$ for each $p \in \mathrm{Sing}(\Sigma)$. By dividing both sides of (\ref{eqn: monotonicity of $J$ for induced Jacobi fields}) by $a_j^2$ and passing to the limit as $j \rightarrow \infty$ we conclude that (\ref{eqn: monotonicity of $J$ for induced Jacobi fields}) holds with $\phi$ in place of $v_j$. Now, by (\ref{eqn: monotonicity of $J$ for induced Jacobi fields}) for $\phi$ we see that $\limsup_{t \rightarrow 0^+}J^{\gamma_p}_{K;\Sigma,g}(\phi;t) < \infty$ and thus we have $\mathcal{AR}_p(\phi) \geq \gamma_p$ for each $p \in \mathrm{Sing}(\Sigma)$ by Remark \ref{rem: property of J^gamma}. As the choice of $\gamma_p \in (\gamma^{-}_2(\mathbf{C}_p),\gamma_1^-(\mathbf{C}_p))$ was arbitrary, we conclude that $\mathcal{AR}_p(\phi) \geq \gamma_1^{-}(\mathbf{C}_p)$ for each $p \in \mathrm{Sing}(\Sigma)$.

    \bigskip

    We now show that $\int_\Sigma \phi = 0$ which, combined with the above, implies that $\phi$ is a twisted Jacobi field. Note that since $\Omega_j,\Omega \in \mathcal{I}(g,t)$ we have $\mathrm{Vol}_g(\Omega) = \mathrm{Vol}_g(\Omega_j)$ for each $j \geq 1$, thus by denoting $\mathrm{Vol}^\pm_g(\Omega_j \Delta \Omega) = \mathrm{Vol}_g(\Omega \setminus \Omega_j) - \mathrm{Vol}_g(\Omega_j \setminus \Omega)$ we can write
    $$0 = \mathrm{Vol}^\pm_g(\Omega_j \Delta \Omega) = \mathrm{Vol}^\pm_g((\Omega \Delta \Omega_j) \cap B_{s_j}(\mathrm{Sing}(\Sigma))) + \mathrm{Vol}^\pm_g((\Omega \Delta \Omega_j) \cap (\Sigma \setminus B_{s_j}(\mathrm{Sing}(\Sigma)))).$$
    For some constant $C > 0$, depending on $g$, we have by inclusion that
    $$\mathrm{Vol}^\pm_g((\Omega \Delta \Omega_j) \cap B_{s_j}(\mathrm{Sing}(\Sigma))) \leq Cs_j^{8}.$$
    We note that the volume change in $B_{s_j}(\mathrm{Sing}(\Sigma))$ controls $\int_{\Sigma \setminus B_{s_j}(\mathrm{Sing}(\Sigma))} v_j$, since if $x \in \Sigma$ then $\exp_x(v_j(x)\nu_{\Sigma,g}(x)) \in \Sigma_j$ is the closest point on $\Sigma_j$ to $x$, and thus $\mathrm{Vol}^\pm_g((\Omega \Delta \Omega_j) \cap (\Sigma \setminus B_{s_j}(\mathrm{Sing}(\Sigma))))$ is given by
    $$\int_\Sigma\int_0^{\bar{v}_j(x)} \theta(x,t)\,dt\,d\mathcal{H}^n_g(x) = \int_\Sigma\int_0^{\bar{v}_j(x)} (1 - O(t^2))\,dt\,d\mathcal{H}^n_g(x) = \int_\Sigma \bar{v}_j(x)\,d\mathcal{H}^n_g(x)  - O(\Vert \bar{v}_j\Vert_{L^2_g(\Sigma)}^2),$$
    where we denote $\bar{v}_j = v_j \cdot \chi_{\Sigma \setminus B_{s_j}(\mathrm{Sing}(\Sigma))}$, $\theta(x,t)$ the Jacobian of the exponential map based at $x \in \Sigma$ at distance $t$, and we have applied Fubini's theorem. After rearranging and dividing by $a_j$ this gives
    \begin{equation}\label{eqn: integral in the ball}
        \bigg|\int_\Sigma \frac{\bar{v}_j}{a_j} \, dA_g\bigg| \leq C\frac{s_j^{8}}{a_j} + O(\Vert \bar{v}_j\Vert_{L^\infty_g(\Sigma)}).
    \end{equation}
    We will now show that $s_j^{8} \leq Ca_j^{1 + \beta}$ for some constants $C,\beta > 0$ independently of $j \geq 1$ which, by the fact that $\Vert \bar{v}_j\Vert_{L^2_g(\Sigma)} \rightarrow 0$ as $j \rightarrow \infty$ and the above, ensures that $\int_\Sigma \frac{\bar{v}_j}{a_j} \, dA_g\rightarrow 0$; moreover, by showing that $\frac{\bar{v}_j}{a_j} \rightarrow \phi$ in $L^1(\Sigma)$ we will conclude that $\int_\Sigma \phi = 0$ from the convergence in $L^1(\Sigma)$. 
    
    \bigskip
    
    We now deduce the estimate $s_j \leq C a_j^{\frac{1}{1-\widetilde{\gamma}}}$ for each $\widetilde{\gamma} < \gamma_p$ satisfying $\frac{8}{1-\widetilde{\gamma}} > 1$ for some constant $C > 0$, depending on $\widetilde{\gamma}$. By defining $b_j = \Vert v_j\Vert_{L^\infty(\Sigma \setminus B_{2K^{-1}}(\mathrm{Sing}(\Sigma))}$, we will in fact show that $s_j \leq b_j^{\frac{1}{1-\widetilde{\gamma}}}$ where $\widetilde{\gamma}$ is chosen as above; from which the above follows since by the reasoning following (\ref{eqn: Caccioppoli for cmc}) there exists some constant $C > 0$ independent of $j$, but depending on $\delta$, such that $b_j \leq Ca_j$. Assuming for a contradiction that $b_j < s_j^{1-\widetilde{\gamma}}$ for sufficiently large $j$, then by defining $\hat{v}_j(x) = \frac{v_j(s_j x)}{s_j}$ on $A(p;1,\frac{1}{Ks_j})$ we ensure in particular that
    $$\Vert \hat{v}_j\Vert_{L^\infty(A(p;\frac{1}{K^2s_j},\frac{1}{Ks_j}))} \leq \frac{b_j}{s_j} < s_j^{-\widetilde{\gamma}}.$$
    We then see that by Remark \ref{rem: J on cone to J on cmc} that for some constant $C > 0$ we have
    $$J^{\gamma_p}_{K;\Sigma,g}\left(\hat{v}_j;\frac{1}{Ks_j}\right)^2 \leq CJ^{\gamma_p}_{K;\mathbf{C}_p}\left(\hat{v}_j;\frac{1}{Ks_j}\right)^2 = C\int_{\mathbf{A}\left(\frac{1}{K^2s_j},\frac{1}{Ks_j}\right)} \hat{v}_j^2 |x|^{-n-2\gamma_p} \, d \Vert \mathbf{C}_p\Vert  < Cs_j^{2(\gamma_p - \widetilde{\gamma})},$$
    and thus $J^{\gamma_p}_{K;\Sigma,g}\left(\hat{v}_j;\frac{1}{Ks_j}\right) \rightarrow 0$ as $j \rightarrow \infty$. However, by writing (\ref{eqn: monotonicity of $J$ for induced Jacobi fields}) for $\hat{v}_j$ we deduce that $J^{\gamma_p}_{K;\Sigma,g}(\hat{v}_j;K^l)$ is increasing for integers $3 \leq l \leq \log_K(\frac{1}{s_j}) - 1$ and hence
    $$J^{\gamma_p}_{K;\Sigma,g}\left(\hat{v}_j;K^3\right) \leq J^{\gamma_p}_{K;\Sigma,g}\left(\hat{v}_j;\frac{1}{Ks_j}\right).$$
    Since $\hat{v}_j \rightarrow h \in C^2_{loc}(\mathbf{C}_p \setminus B_1)$ in $C^2_\mathrm{loc}$ where $h$ is the graph of a stationary varifold, $V$, which is asymptotic to but not equal to $\mathbf{C}_p$ (obtained as a limit of $s_j^{-1}\Sigma_j$),
    for all large $j$ we see that
    $$J^{\gamma_p}_{K;\Sigma,g}\left(\hat{v}_j;K^3\right) > \frac{1}{2}J^{\gamma_p}_{K;\mathbf{C}_p}\left(h;K^3\right)  > 0$$
    where the right hand term is strictly positive since $h \neq 0$ (since $V \neq \mathbf{C}_p$). This is a contradiction since we saw that $J^{\gamma_p}_{K;\Sigma,g}\left(\hat{v}_j;\frac{1}{Ks_j}\right) \rightarrow 0$ as $j \rightarrow \infty$. Thus, for each $\widetilde{\gamma} < \gamma$ such that $\frac{8}{1-\widetilde{\gamma}} > 1$, there is a constant $C > 0$ such that $s_j \leq C a_j^{\frac{1}{1-\widetilde{\gamma}}}$ for each $j$; we then immediately see that $\int_{\Sigma} \frac{\bar{v}_j}{a_j} \rightarrow 0$.

    \bigskip

    We now establish the existence of an integrable dominating function for $\frac{\bar{v}_j}{a_j}$ which, combined with the above and the dominated convergence theorem, will allow us to show that $\int_{\Sigma} \phi = 0$. Similarly to the above, we have that $J^{\gamma_p}_{K;\Sigma,g}(v_j;K^l)$ is increasing for integers $\log_K(s_j) - 1 \leq l \leq -1$; for this range of $l$ we thus have that
    \begin{equation}\label{eqn: monotonicity for dominating function}
        J^{\gamma_p}_{K;\Sigma,g}\left(\frac{v_j}{a_j};K^l\right) \leq J^{\gamma_p}_{K;\Sigma,g} \left(\frac{v_j}{a_j};\frac{1}{K}\right) \rightarrow J^{\gamma_p}_{K;\Sigma,g}\left(\phi;\frac{1}{K}\right) \text{ as } j \rightarrow \infty.
    \end{equation}
    Setting $v_j^{(r)}(x) = \frac{v_j(rx)}{r}$ on $A(p;\frac{s_j}{r},\frac{1}{Kr})$ for each $r \in (Ks_j,\frac{1}{K})$, which is the graph of the varifold $\frac{1}{r}\Sigma_j$ over $\Sigma$, again by the reasoning following (\ref{eqn: Caccioppoli for cmc}) we deduce that (since $A(p;\frac{1}{K},1) \subset A(p;\frac{s_j}{r},\frac{1}{Kr})$) we have some constant $C > 0$, depending on $\delta$, such that we have
    $$\Vert v_j^{(r)}\Vert_{L^\infty(A(p;\frac{1}{K},1))} \leq C\Vert v_j^{(r)}\Vert_{L^2(A(p;\frac{1}{2K},1))};$$
    relying in particular on the fact that $\Vert v_j\Vert_{C^2_*(\Sigma \setminus B_{s_j}(\mathrm{Sing}(\Sigma)))} \leq \delta$. After scaling back we see that this gives 
    $$\Vert v_j\Vert_{L^\infty(A(p;\frac{r}{K},r))} \leq Cr^{-\frac{n}{2}}\Vert v_j\Vert_{L^2(A(p;\frac{r}{2K},r))}.$$
    For sufficiently large $j \geq 1$ we consider $r = K^l$ for $\log_K(s_j) - 1 \leq l \leq -1$, which by (\ref{eqn: monotonicity for dominating function}) gives that for some constant $C > 0 $ independent of $j \geq 1$, but depending on $K$, we have
    \begin{align*}
        \bigg|\bigg|\frac{v_j}{a_j}\bigg|\bigg|_{L^2(A(p;K^{l-1},K^l))} \leq C \cdot J^{\gamma_p}_{K;\Sigma,g}\left(\frac{v_j}{a_j};K^l\right)  (K^{l})^{\gamma + \frac{n}{2}}  &\leq C\cdot J^{\gamma_p}_{K;\Sigma,g}\left(\phi;\frac{1}{K}\right)  (K^{l})^{\gamma + \frac{n}{2}}   = C(K^{l})^{\gamma + \frac{n}{2}}.
    \end{align*}
    Combining the above two estimates we deduce that 
    $$\bigg|\bigg|\frac{v_j}{a_j}\bigg|\bigg|_{L^\infty(A(p;K^{l-1},K^l))} \leq C \cdot (K^l)^\gamma,$$
    which shows that, since $x \in (K^{l-1},K^l)$ for some $l$, for each $x \in A(p;s_j,\frac{1}{K})$ we have
    $$\left|\frac{v_j}{a_j}\right|(x) \leq C |x|^\gamma,$$
    which is integrable (since $\gamma > -7$). Combining this integrable dominating function with the fact that $\int_\Sigma \frac{\bar{v}_j}{a_j}\, dA_g \rightarrow 0$ and $\frac{\bar{v}_j}{a_j} \rightarrow \phi$ pointwise, we see by the dominated convergence theorem that $\int_\Sigma \phi = 0$; this concludes case (i) as $\phi$ is then a twisted Jacobi field with $\mathcal{AR}_p(\phi) \geq \gamma_1^-(\mathbf{C}_p)$ for each $p \in \mathrm{Sing}(\Sigma)$.

    \bigskip
    
    The proof of case (ii) is similar but we alter our choice of $\kappa > 0$ when applying Lemma \ref{lem: dichotomy}, as we now explain. First fix a sufficiently large open set $U \subset \subset \Sigma$ such that $\nu_{\Sigma,g}(f)$ is not identically zero on $U$, which we can do by the assumption on $f$ and since $\Sigma \setminus B_{K^{-1}}(\mathrm{Sing}(\Sigma)) \subset U$, we now show that $\liminf_{j \rightarrow \infty} \frac{a_j}{c_j} > 0$ where we now instead set $a_j = \Vert v_j\Vert_{L^2(U)}$.

    \bigskip

    If the claim fails, then up to a subsequence (not relabelled) we have $\frac{a_j}{c_j} \rightarrow 0$; we will now get a contradiction from the assumption that $\nu_{\Sigma,g}(f)$ changes sign. Denoting $\mathcal{M}^{g_j}$ the mean curvature operator with respect to the metric $g_j$ and $H_\Sigma$ the constant mean curvature of $\Sigma$, we compute that, similarly to case (i), if $\Vert v_j\Vert_{C^2_*(U)} \leq \delta$ and $[c_jf_j]_{x,C^2_*} \leq \delta$ for each $x \in \Sigma$, where $\delta > 0$ is the dimensional constant from \cite[Appendix B]{LW25} (see also Subsection \ref{subsec: mean curvature operator}) then
    $$\mathcal{M}^{g_j}(v_j) - H = -L_{\Sigma,g}(v_j) + \frac{n}{2}\nu_{\Sigma,g}(c_jf_j) + \mathrm{div}_\Sigma(\mathcal{E}_1(v_j)) + r_S^{-1}\mathcal{E}_2(v_j).$$
    Now for each $j \geq 1$ and $\psi \in L^1_T(\Sigma) \cap C^1_c(\Sigma)$ we have, after dividing by $c_j$, that
    $$ 0 = \int_{\Sigma} \frac{(\mathcal{M}^{g_j}(v_j) - H)}{c_j} \cdot \psi \, dA_g= \int_\Sigma \left(-L_{\Sigma,g}\left(\frac{v_j}{c_j}\right) + \frac{n}{2}\nu_{\Sigma,g}(f_j) + \mathrm{div}_\Sigma\left(\frac{\mathcal{E}_1(v_j)}{c_j}\right) + r_S^{-1}\frac{\mathcal{E}_2(v_j)}{c_j}\right) \cdot \psi\, dA_g.$$
    By applying Proposition \ref{prop: caccioppoli inequality}, in the case that $f^+ = c_jf_j$, $f^-= 0$, $u = v_j$, and $v = 0$, along with the reasoning following (\ref{eqn: Caccioppoli for cmc}) above, we see that as $j \rightarrow \infty$ we have $\frac{v_j}{c_j} \rightarrow 0$ in $W^{1,2}_g(U)$ and so $\int_\Sigma \nu_{\Sigma,g}(f)\cdot \psi \, dA_g = 0$; thus $\nu_{\Sigma,g}(f)$ is constant, giving a contradiction. 
    
    \bigskip

    Since we have $\liminf_{j \rightarrow \infty} \frac{a_j}{c_j} > 0$ we know that there is some $\kappa > 0$ such that up to a subsequence (not relabelled) we have $\liminf_{j \rightarrow \infty}\frac{c_j\Vert f_j\Vert_{C^4(M)}}{a_j} \leq \frac{1}{\kappa}$ and thus there exists some constant $c \geq 0$ such that $\frac{c_j}{a_j} \rightarrow c$. We now use each of the same parameters as in case (i) above but with this choice of $\kappa > 0$ and apply Lemma \ref{lem: dichotomy}. By the same reasoning as for the claim just established, instead dividing now by $a_j$, we deduce that there exists some $\phi \in C^2_{loc}(\Sigma)$ with $\frac{v_j}{a_j} \rightarrow \phi$ in $C^2_{loc}(\Sigma)$ such that $\mathcal{AR}_p(\phi) \geq \gamma_1^-(\mathbf{C}_p)$ for each $p \in \mathrm{Sing}(\Sigma)$ with
    $$\widetilde{L}_{\Sigma,g}\phi = c\left(\nu_{\Sigma,g}(f) - \frac{1}{|\Sigma|_g}\int_\Sigma \nu_{\Sigma,g}(f)\, dA_g\right).$$
    Since we have $\mathrm{Vol}_g(\Omega) = \mathrm{Vol}_{g_j}(\Omega_j)$ we have
    $$\mathrm{Vol}_{g_j}(\Omega \cap \Omega_j) - \mathrm{Vol}_g(\Omega \cap \Omega_j) = \mathrm{Vol}_g(\Omega \setminus \Omega_j) - \mathrm{Vol}_{g_j}(\Omega_j \setminus \Omega);$$
    we now estimate each side of the above expression individually (notice in case 1 above that since $g = g_j$ the left hand side of the expression above is identically zero). We will show that
    \begin{equation}\label{eqn: limit of intersection}
        \frac{\mathrm{Vol}_{g_j}(\Omega \cap \Omega_j) - \mathrm{Vol}_g(\Omega \cap \Omega_j)}{a_j} \rightarrow c\left(\frac{n}{2}\int_{\Omega} f \, dV_g\right)
    \end{equation}
    and that
    \begin{equation}\label{eqn: limit of difference}
        \frac{\mathrm{Vol}_g(\Omega \setminus \Omega_j) - \mathrm{Vol}_{g_j}(\Omega_j \setminus \Omega)}{a_j} \rightarrow \int_{\Sigma}\phi \, dA_g.
    \end{equation}

    \bigskip

    First notice that we can write
    $$\mathrm{Vol}_{g_j}(\Omega \cap \Omega_j) - \mathrm{Vol}_g(\Omega \cap \Omega_j) = \int_{\Omega_j\cap\Omega} \left( (1+c_jf_j)^{n/2}-1\right) \, d\mathcal{H}^{n+1}_g(x) =  \int_{\Omega_j\cap\Omega} \left(\frac{n}{2} c_jf_j + O(c_j^2)\right) \, d\mathcal{H}^{n+1}_g(x)$$
    and thus by dividing by $a_j$ we see that (\ref{eqn: limit of intersection}) holds. Now, similarly to the derivation of (\ref{eqn: integral in the ball}), we see that if we write 
    $$\mathrm{Vol}^\pm((\Omega \Delta \Omega_j) \setminus B_{s_j}(\mathrm{Sing}(\Sigma)) = \mathrm{Vol}_g(\Omega \setminus (\Omega_j \cup B_{s_j}(\mathrm{Sing}(\Sigma))) - \mathrm{Vol}_{g_j}(\Omega_j \setminus (\Omega \cup B_{s_j}(\mathrm{Sing}(\Sigma)))$$ then we have that 
    $$|\mathrm{Vol}^\pm((\Omega \Delta \Omega_j) \setminus B_{s_j}(\mathrm{Sing}(\Sigma))| \leq Cs_j^8.$$
    Thus, we can compute similarly to case (i) that 
    $$\left| \int_\Sigma \frac{\bar{v}_j}{a_j} \, dA_g \right| \leq C\frac{s_j^8}{a_j} + \frac{O(c_j^2)}{a_j}+ O(\Vert \bar{v}_j\Vert_{L^2_g(\Sigma)}) .$$
    Then, using the fact that $\Vert \bar{v}_j\Vert_{L^2_g(\Sigma)} \rightarrow 0$ as $j \rightarrow \infty$, $\liminf_{j \rightarrow \infty} \frac{c_j}{a_j} < \infty$, and by arguing identically as in case (i) (i.e.~controlling $s_j$ by $Ca_j^{1+\beta}$ for some $\beta > 0$ and establishing the existence of a dominating function) we deduce that $\int_\Sigma \frac{\bar{v}_j}{a_j} \, dA_g \rightarrow \int_\Sigma \phi \, dA_g$ and so (\ref{eqn: limit of difference}) holds also; thus $\int_\Sigma \phi \, dA_g = c\left(\frac{n}{2}\int_\Omega f \, dV_g\right)$ as desired.
    \end{proof}

\begin{corollary}\label{corollary: perturbing singularities for not slow growth}
    With the same assumptions, if $\mathcal{AR}_p(\phi) < \gamma^+_2(\mathbf{C}_p)$ then there is an open neighbourhood, $U_p \subset M$, of $p$ such that for infinitely many $j \geq 1$ we have $\mathrm{Sing}(\Sigma_j) \cap U_p = \emptyset$.
\end{corollary}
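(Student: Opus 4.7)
The plan is to argue by contradiction using a blow-up at $p$. Suppose no such neighbourhood $U_p$ exists. Then, passing to a subsequence, one can extract $q_j \in \mathrm{Sing}(\Sigma_j)$ with $q_j \to p$. Since $\phi$ is a nontrivial function on $\Sigma$ with $\gamma_1^-(\mathbf{C}_p) \leq \mathcal{AR}_p(\phi) < \gamma_2^+(\mathbf{C}_p)$ and (by the case considered) satisfies $\widetilde{L}_{\Sigma,g}\phi = h$ for some bounded $h$, Lemma \ref{lemma: properties of slow growth} forces $\mathcal{AR}_p(\phi) \in \{\gamma_1^-(\mathbf{C}_p), \gamma_1^+(\mathbf{C}_p)\}$ (the only allowed values in this range), with leading asymptotic expansion $c\, r^\gamma \varphi_1(\omega)$ where $\varphi_1 > 0$ is the first eigenfunction and $c \neq 0$.

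Next I would perform the blow-up: set $r_j = 2\,\mathrm{dist}_g(q_j, p) \to 0$ and, in $g$-exponential coordinates at $p$, define the rescaled varifolds $V_j = (\eta_{0, r_j})_\# (\exp^g_p)^{-1}_\# |\Sigma_j|_g$ in $\mathbb{R}^8$. By Lemma \ref{Lemma: mean curvature bound} the mean curvatures of $\Sigma_j$ are uniformly bounded, so the mean curvatures of $V_j$ vanish in the limit; monotonicity provides uniform mass bounds on compact sets, and Allard's compactness yields (after passing to a subsequence) a stationary integral $7$-varifold $V_\infty$ in $\mathbb{R}^8$. The rescaled singular points $\tilde{q}_j = (\exp^g_p)^{-1}(q_j)/r_j$ have $|\tilde{q}_j| = 1/2$ and subconverge to some $\tilde{q} \in \mathbb{S}_{1/2}$; upper semicontinuity of density along the singular set under varifold convergence, combined with the density gap \eqref{eqn: discrete densities stable cones}, gives $\theta_{V_\infty}(\tilde{q}) \geq \theta_1 > 1$, so $\tilde{q} \in \mathrm{Sing}(V_\infty)$.

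The main step, and the principal obstacle, is to show that $V_\infty$ is asymptotic to $\mathbf{C}_p$ at infinity with $\mathcal{AR}_\infty(V_\infty) \leq \mathcal{AR}_p(\phi) < \gamma_2^+(\mathbf{C}_p)$. Asymptoticity to $\mathbf{C}_p$ follows from the conical coordinates for $\Sigma$ over $\mathbf{C}_p$ near $p$ together with the graphical description of $\Sigma_j$ over $\Sigma$ away from $B_{s_j}^g(\mathrm{Sing}(\Sigma))$. For the rate bound, choose $\gamma_p < \gamma_2^+(\mathbf{C}_p)$ above $\mathcal{AR}_p(\phi)$ and apply the monotonicity of $J^{\gamma_p}$ established in property (2) of Lemma \ref{lem: dichotomy} to the graphing functions $v_j$: passing to the limit in this monotonicity for $v_j$ at scales $r_j R$ (for fixed $R \to \infty$), and using the $C^2_{\mathrm{loc}}$ convergence $v_j/a_j \to \phi$, translates the rate bound on $\phi$ at $p$ into a rate bound at infinity for the graphing function of $V_\infty$ over $\mathbf{C}_p$. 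This transfer of the bound through the rescaling is the delicate point, since one must reconcile the two intrinsic scales (the Jacobi field normalization $a_j$ and the singularity scale $r_j$); this is handled by the same dominated-convergence and $J^\gamma$-monotonicity arguments already developed in the proof of Theorem \ref{thm: induced twisted jacobi fields}.

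Finally, I would apply Lemma \ref{lemma: lower growth rate implies smooth} to $V_\infty$: a stationary integral varifold in $\mathbb{R}^8$ asymptotic to $\mathbf{C}_p$ with $\mathcal{AR}_\infty < \gamma_2^+(\mathbf{C}_p)$ must either equal $\mathbf{C}_p$ or belong to a Hardt--Simon type one-sided foliation, hence be regular away from the origin. Either alternative contradicts $\tilde{q} \in \mathrm{Sing}(V_\infty)$ with $|\tilde{q}| = 1/2 \neq 0$, completing the proof of the corollary.
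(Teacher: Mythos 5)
Your proposal assembles the right ingredients --- a blow-up producing a stationary integral varifold $V_\infty$ asymptotic to $\mathbf{C}_p$ with $\mathcal{AR}_\infty(V_\infty) < \gamma_2^+(\mathbf{C}_p)$, followed by Lemma \ref{lemma: lower growth rate implies smooth} --- but the step you flag as the ``principal obstacle'' is where the argument genuinely fails, and it fails in a way that cannot be patched along the route you propose. You want to deduce the rate bound at infinity by applying the monotonicity $J^{\gamma_p}_{K;\Sigma,g}(v_j; K^{-1}t) \leq J^{\gamma_p}_{K;\Sigma,g}(v_j; t)$ from property (2) of Lemma \ref{lem: dichotomy}, with $\gamma_p > \mathcal{AR}_p(\phi)$. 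But for any such $\gamma_p$, property (2) cannot hold: dividing by $a_j$ and passing to the $C^2_{\mathrm{loc}}$ limit $\phi$ would force $\limsup_{t \to 0^+} J^{\gamma_p}_{K;\Sigma,g}(\phi;t) < \infty$, whereas Remark \ref{rem: property of J^gamma} gives $\limsup_{t \to 0^+} J^{\gamma_p}_{K;\Sigma,g}(\phi;t) = \infty$ whenever $\mathcal{AR}_p(\phi) < \gamma_p$. The monotonicity you lean on is precisely unavailable in the regime where you want to invoke it.

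The observation you are one step away from is that this failure \emph{is} the proof: Lemma \ref{lem: dichotomy} is a genuine dichotomy, so once property (2) is ruled out for $\gamma \in (\gamma_1^+(\mathbf{C}_p), \gamma_2^+(\mathbf{C}_p))$ by the above, property (1) must hold, and property (1) already hands you, with no further work, a stationary integral varifold $V_\infty \neq \mathbf{C}_p$ asymptotic to $\mathbf{C}_p$ at infinity with $\mathcal{AR}_\infty(V_\infty) < \gamma$ --- and it does so at the canonical graphing radius $\tau_j$, rather than your ad hoc scale $r_j = 2\,\mathrm{dist}_g(q_j,p)$, which would require a separate argument to reconcile with the scale at which $\Sigma_j$ stays graphical over $\Sigma$. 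From there Lemma \ref{lemma: properties of slow growth} part 1 upgrades the bound to $\mathcal{AR}_\infty(V_\infty) \leq \gamma_1^+(\mathbf{C}_p)$, Lemma \ref{lemma: lower growth rate implies smooth} forces $\mathrm{Sing}(V_\infty) = \emptyset$, and Allard's theorem gives regularity of $\Sigma_j$ near $p$ directly, without the contradiction argument via a persistent singular point that your proposal sets up. The paper's proof is exactly this short chain of implications.
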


\begin{proof}
    Since $\mathcal{AR}_p(\phi) \geq \gamma^-_1(\mathbf{C}_p)$, by Lemma \ref{lemma: properties of slow growth} part 1 we have $\mathcal{AR}_p(\phi) = \gamma^\pm_1(\mathbf{C}_p)$. Fixing some $\gamma \in (\gamma^+_1(\mathbf{C}_p),\gamma^+_2(\mathbf{C}_p))$ we know that $\mathcal{AR}_p(\phi) \leq\gamma^+_1(\mathbf{C}_p) < \gamma$ and hence by Remark \ref{rem: property of J^gamma} property (2) of Lemma \ref{lem: dichotomy} cannot occur for this choice of $\gamma$. Thus property (1) of Lemma \ref{lem: dichotomy} must occur and hence each stationary varifold, $V_\infty$, as in the conclusion of property (2) must satisfy $\mathcal{AR}_\infty(V_\infty) < \gamma$. Again by Lemma \ref{lemma: properties of slow growth} part 1,  we conclude that $\mathcal{AR}_\infty(V_\infty) \leq \gamma^+_1(\mathbf{C}_p)$ which by application of Lemma \ref{lemma: lower growth rate implies smooth} implies that $V_\infty$ is smooth; hence by Allard's theorem the $\Sigma_j$ are regular in a neighbourhood of $p$ for sufficiently large $j \geq 1$.
\end{proof}

\begin{proposition}[Perturbation of singularities with fast growth]\label{prop: perturbation of singularities with fast growth}
    Suppose that $\Sigma$ is semi-nondegenerate with $\mathrm{Sing}(\Sigma) \neq \emptyset$ and let $\mathcal{V}^{k,\alpha}$ be chosen as in Proposition \ref{prop: perturbation functions open and dense}, $(g_j,\Omega_j)\rightarrow (g,\Omega)$ in $\mathcal{P}^{k,\alpha}(t)$ for some $t \in \mathbb{R}$ with $\Sigma_j \neq \Sigma$ for all $j \geq 1$, and one of the following cases hold:
    \begin{enumerate}
        \item[(i)] $g_j = g$ for all $j \geq 1$.
        \item[(ii)] $g_j = (1 + c_jf_j)g$ where $f_j \rightarrow f$ in $C^4(M)$ for $f \in \mathcal{V}^{k,\alpha}$ with $\int_{\Omega}f \, dV_g= 0$, $\nu_{\Sigma,g}(f)$ is not constant on $\Sigma$, and $c_j \rightarrow 0$.
    \end{enumerate}
    
    Then there exists some $p \in \mathrm{Sing}(\Sigma)$, and an open neighbourhood, $U_p \subset M$, of $p$, both depending on the sequence, such that for infinitely many $j \geq 1$ we have $\mathrm{Sing}(\Sigma_j) \cap U_p = \emptyset$.
\end{proposition}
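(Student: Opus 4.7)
The plan is to invoke Theorem \ref{thm: induced twisted jacobi fields} to produce a non-zero function $\phi$ on $\Sigma$ satisfying a controlled PDE and asymptotic rate bounds, and then combine the semi-nondegeneracy hypothesis (and the defining property of $\mathcal{V}^{k,\alpha}$ in case (ii)) with Corollary \ref{corollary: perturbing singularities for not slow growth} to identify a singular point at which the singularities of $\Sigma_j$ get perturbed away.

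Concretely, first I apply Theorem \ref{thm: induced twisted jacobi fields} to the sequence $(g_j,\Omega_j)\to(g,\Omega)$ to obtain a non-zero $\phi\in C^2_{\mathrm{loc}}(\Sigma)$ with $\mathcal{AR}_p(\phi)\ge \gamma_1^-(\mathbf{C}_p\Sigma)$ for every $p\in\mathrm{Sing}(\Sigma)$, and satisfying either $\widetilde{L}_{\Sigma,g}\phi=0$ in case (i), or
\[
\widetilde{L}_{\Sigma,g}\phi=c\!\left(\nu_{\Sigma,g}(f)-\tfrac{1}{|\Sigma|_g}\int_\Sigma \nu_{\Sigma,g}(f)\,dA_g\right), \qquad \int_\Sigma \phi\, dA_g = c\!\left(\tfrac{n}{2}\int_\Omega f\, dV_g\right),
\]
for some $c\ge 0$ in case (ii). In case (ii), the assumption $\int_\Omega f\, dV_g=0$ immediately gives $\int_\Sigma \phi\, dA_g=0$, so in either case $\phi$ has integral zero.

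Next I argue by contradiction: suppose $\mathcal{AR}_p(\phi)\ge \gamma_2^+(\mathbf{C}_p\Sigma)$ at every $p\in\mathrm{Sing}(\Sigma)$, i.e. $\phi$ is of slow growth. Since $\gamma_2^+(\mathbf{C}_p\Sigma)>-\tfrac{n-2}{2}$, Lemma \ref{lemma: properties of slow growth}(4) gives $\phi\in W^{1,2}_g(\Sigma)$, and together with $\int_\Sigma\phi\,dA_g=0$ this places $\phi\in L^1_T(\Sigma)\cap C^2_{\mathrm{loc}}(\Sigma)$. In case (i), $\phi$ is then a non-zero twisted Jacobi field of slow growth, i.e.\ $\phi\in \mathrm{Ker}^+\widetilde{L}_{\Sigma,g}\setminus\{0\}$, contradicting the semi-nondegeneracy of $\Sigma$. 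In case (ii), if $c=0$ the same argument applies; if $c>0$, then $\phi/c\in L^1_T(\Sigma)\cap C^2_{\mathrm{loc}}(\Sigma)$ is a slow growth solution of
\[
\widetilde{L}_{\Sigma,g}(\phi/c)=\nu_{\Sigma,g}(f)-\tfrac{1}{|\Sigma|_g}\int_\Sigma \nu_{\Sigma,g}(f)\,dA_g,
\]
which contradicts the membership $f\in\mathcal{V}^{k,\alpha}$ by the very definition of $\mathcal{V}^{k,\alpha}$ in Proposition \ref{prop: perturbation functions open and dense}.

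Therefore there exists some $p\in\mathrm{Sing}(\Sigma)$ with $\mathcal{AR}_p(\phi)<\gamma_2^+(\mathbf{C}_p\Sigma)$, and applying Corollary \ref{corollary: perturbing singularities for not slow growth} at this $p$ produces the required open neighbourhood $U_p$ in which infinitely many $\Sigma_j$ are regular, completing the proof. The main subtlety is the bookkeeping around the integral constraint: one must verify $\phi\in L^1_T(\Sigma)$ to invoke semi-nondegeneracy (or the defining property of $\mathcal{V}^{k,\alpha}$), and this is precisely why Theorem \ref{thm: induced twisted jacobi fields} was engineered to track $\int_\Sigma\phi\,dA_g$ via the enclosed-volume condition — combined here with the hypothesis $\int_\Omega f\,dV_g=0$ in case (ii) to kill the integral in both cases.
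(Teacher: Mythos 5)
Your proof is correct and follows essentially the same route as the paper: invoke Theorem \ref{thm: induced twisted jacobi fields} to produce the induced function $\phi$, rule out slow growth via semi-nondegeneracy (case (i)) or the defining property of $\mathcal{V}^{k,\alpha}$ together with $\int_\Omega f\,dV_g = 0$ (case (ii)), and conclude with Corollary \ref{corollary: perturbing singularities for not slow growth}. Your explicit bookkeeping — verifying $\phi \in L^1_T(\Sigma)$ via Lemma \ref{lemma: properties of slow growth}(4) and splitting case (ii) into the subcases $c = 0$ (handled by semi-nondegeneracy) and $c > 0$ (handled by $f \in \mathcal{V}^{k,\alpha}$) — fills in details the paper leaves implicit, in particular the $c = 0$ subcase where the reference to Proposition \ref{prop: perturbation functions open and dense} alone would not suffice.
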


\begin{proof}
    In case (i), since $\Sigma$ is semi-nondegenerate we must have that any induced twisted Jacobi field, $\phi$, as in Theorem \ref{thm: induced twisted jacobi fields} cannot be of slow growth. In case (ii), as $f \in \mathcal{V}^{k,\alpha}$ and $\int_{\Omega}f = 0$, any induced Jacobi field, $\phi$, as in Theorem \ref{thm: induced twisted jacobi fields} is not of slow growth by Proposition \ref{prop: perturbation functions open and dense}. In either case, since $\phi$ is not of slow growth, there exists some $p \in \mathrm{Sing}(\Sigma)$ such that $\mathcal{AR}_p(\phi) < \gamma_2^+(\mathbf{C}_p)$; hence by Corollary \ref{corollary: perturbing singularities for not slow growth} there is a neighbourhood of $p$ in which $\Sigma_j$ is regular for sufficiently large $j \geq 1$.
\end{proof}

\section{Bumpy metric volume pairs}\label{sec: bumpy metric volume pairs}

In this section we will show that semi-nondegeneracy is a generic property for isoperimetric regions in dimension eight.

\subsection{Pseudo-neighbourhoods and three compactness lemmas}\label{subsec: pseudo-neighbourhoods and three compactness lemmas}

We first introduce a suitable notion to appropriately decompose the space of triples, and define the following:

\begin{definition}[Pseudo-neighbourhoods]\label{def: pseudo-neighbourhoods}
    Given $(g,t, \Omega) \in \mathcal{T}^{k,\alpha}$, $\Lambda \geq 1$, and $\delta > 0$ we define the \textbf{pseudo-neighbourhood}, denoted $\mathcal{L}^{k,\alpha}(g,t,\Omega;\Lambda,\delta)$, to be the set of all triples $(\bar{g},\bar{t},\bar{\Omega}) \in \mathcal{T}^{k,\alpha}$ such that:
    \begin{itemize}
        \item $\Vert \bar{g}\Vert_{C^{k,\alpha}} \leq \Lambda$.
        
        \item $\Vert g - \bar{g}\Vert_{C^{k-1,\alpha}} \leq \delta$, $|t - \bar{t}| \leq \delta$, and $|\Omega \Delta \bar{\Omega}|_g \leq \delta $.

        \item For each $p \in \mathrm{Sing}(\Sigma)$ there exists $\bar{p} \in \mathrm{Sing}(\bar{\Sigma}) \cap \mathrm{inj}(\Sigma,g)$ such that $\theta_{|\Sigma|_g}(p) = \theta_{|\bar{\Sigma}|_{\bar{g}}}(\bar{p})$.
    \end{itemize}

    We endow these spaces with the topology induced by the $C^{k - 1, \alpha}$ topology in the first factor, the standard topology on $\mathbb{R}$ in the second factor, and in the $L^1$ topology in the last factor. We denote by $\Pi : \mathcal{L}^{k,\alpha}(g,t,\Omega;\Lambda,\delta) \rightarrow \mathcal{G}^{k,\alpha} \times \mathbb{R}$ the smooth projection map taking $(\bar{g},\bar{t},\bar{\Omega}) \in \mathcal{L}^{k,\alpha}(g,t,\Omega;\Lambda,\delta)$ to $(\bar{g},\bar{t}) \in \mathcal{G}^{k,\alpha} \times \mathbb{R}$.
\end{definition}

Note that, in general, $\mathcal{L}^{k, \alpha}(g, t, \Omega; \Lambda, \delta)$ is not actual neighbourhood of $(g, t, \Omega)$ in $\mathcal{T}^{k,\alpha}$.  We now establish three compactness results for pseudo-neighbourhoods that will be utilised numerous times throughout Subsections \ref{subsec: sard-smale} and \ref{subsec: generic semi-nondegeneracy} in order to ultimately establish that semi-nondegeneracy is a generic property for metric volume pairs:

\begin{lemma}[Compactness of pseudo-neighbourhoods] \label{lemma: compactness of pseudo neighbourhoods}
    For each $(g, t, \Omega) \in \mathcal{T}^{k, \alpha}$ and $\Lambda \geq 1$, there is $\delta_0 \in (0, 1)$, depending on $g, t, \Omega, \Lambda, k$, and $\alpha$, such that for every $\delta \in (0, \delta_0)$ the space  $\mathcal{L}^{k, \alpha}(g, t, \Omega; \Lambda, \delta)$ is compact.
\end{lemma}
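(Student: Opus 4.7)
The plan is to proceed by sequential compactness. Given a sequence $\{(g_j, t_j, \Omega_j)\}_{j \geq 1} \subset \mathcal{L}^{k, \alpha}(g, t, \Omega; \Lambda, \delta)$, I would first extract subsequences (not relabelled) as follows: the uniform bound $\Vert g_j \Vert_{C^{k, \alpha}} \leq \Lambda$ together with Arzela-Ascoli yields $g_j \to \bar g$ in $C^{k - 1, \alpha}$, boundedness of $\{t_j\}$ in $\mathbb{R}$ gives $t_j \to \bar t$, and then Lemma \ref{lemma: compactness of isoperimetric regions} delivers $\Omega_j \to \bar \Omega$ in $L^1_g$ with $\bar \Omega \in \mathcal{I}(\bar g, \bar t)$ together with the varifold convergence $\vert \Sigma_j \vert_{g_j} \to \vert \bar \Sigma \vert_{\bar g}$.

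To verify that the limit $(\bar g, \bar t, \bar \Omega)$ lies in $\mathcal{L}^{k, \alpha}(g, t, \Omega; \Lambda, \delta)$, the first two bulleted conditions pass to the limit by lower semi-continuity of the $C^{k, \alpha}$ norm and by continuity or lower semi-continuity of the distances in $C^{k - 1, \alpha}$, $\mathbb{R}$, and $L^1_g$ respectively. For the singular set matching condition I would fix $p \in \mathrm{Sing}(\Sigma)$, select $\bar p_j \in \mathrm{Sing}(\Sigma_j)$ in the ball around $p$ with $\theta_{\vert \Sigma_j \vert_{g_j}}(\bar p_j) = \theta_{\vert \Sigma \vert_g}(p)$ provided by the pseudo-neighbourhood hypothesis, and extract $\bar p_j \to \bar p$ along a further subsequence using compactness of the closed ball. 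Upper semi-continuity of density under varifold convergence then gives $\theta_{\vert \bar \Sigma \vert_{\bar g}}(\bar p) \geq \theta_{\vert \Sigma \vert_g}(p) > 1$, hence $\bar p \in \mathrm{Sing}(\bar \Sigma)$. Since $\mathrm{Sing}(\Sigma)$ is finite this produces a candidate $\bar p$ for each $p$, and taking balls of radius $\mathrm{inj}(\Sigma, g)/2$ guarantees that candidates attached to distinct $p$'s remain disjoint.

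The main obstacle is upgrading the density inequality to equality, and this is precisely where the choice of $\delta_0$ enters. My plan is to exploit the discreteness of densities of stable minimal hypercones recorded in \eqref{eqn: discrete densities stable cones}: writing $\theta_{\vert \Sigma \vert_g}(p) = \theta_{k_0}$, the next admissible density $\theta_{k_0 + 1}$ sits at a definite positive gap. Combining the monotonicity formula (available since mean curvatures are uniformly bounded via Lemma \ref{Lemma: mean curvature bound}) with the tangent cone expansion of $\Sigma$ at $p$, the mass ratios of $\vert \bar \Sigma \vert_{\bar g}$ on small balls centred at $\bar p$ can be compared quantitatively with those of $\vert \Sigma \vert_g$ centred at $p$, with error controlled by the varifold/$L^1$ distance between the two and by the modulus of mass concentration of $\Sigma$ at $p$. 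Taking $\delta_0$ small enough (relative to this gap and modulus, and small enough that the fixed balls around distinct singular points of $\Sigma$ stay disjoint) rules out the possibility $\theta_{\vert \bar \Sigma \vert_{\bar g}}(\bar p) \geq \theta_{k_0 + 1}$, forcing $\theta_{\vert \bar \Sigma \vert_{\bar g}}(\bar p) = \theta_{k_0}$ and completing the verification.
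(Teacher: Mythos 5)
Your strategy shares the same core ingredients with the paper's proof (compactness of isoperimetric regions, upper semi-continuity of density under varifold convergence, discreteness of the densities of stable minimal hypercones), but the architecture is genuinely different: you attempt a direct sequential-compactness verification for a fixed $\delta$, while the paper runs a double contradiction over sequences $\delta_j \to 0$ of non-compact pseudo-neighbourhoods. The paper's structure is chosen precisely to avoid the quantitative step you need, and that step is where your argument has a real gap.

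The problematic sentence is the claim that the mass-ratio comparison has "error controlled by the varifold/$L^1$ distance between the two." The $L^1$ distance $|\Omega \Delta \bar\Omega|_g \leq \delta$ does \emph{not} directly control the varifold distance $\mathbf{F}(\vert \Sigma \vert_g, \vert \bar\Sigma \vert_{\bar g})$; passing from $L^1$ to varifold convergence goes through Lemma~\ref{lemma: compactness of isoperimetric regions}, which is a qualitative compactness statement, not an explicit modulus. To make "take $\delta_0$ small enough relative to the gap and modulus" rigorous, you would need a uniform statement of the form: for every $\epsilon > 0$ there is $\delta_1 > 0$ such that every triple in $\mathcal{L}^{k,\alpha}(g,t,\Omega;\Lambda,\delta_1)$ has $\mathbf{F}$-distance less than $\epsilon$ from $(g,\Omega)$. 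That statement is true, but proving it requires a contradiction argument over $\delta \to 0$ — exactly what the paper does. A second, related omission: your comparison of density ratios at $\bar p$ versus at $p$ silently assumes $\bar p$ lies within a small fraction of the comparison radius of $p$; otherwise the ball-inclusion $\mathbb{B}_{r}(\bar p) \subset \mathbb{B}_{r + |\bar p - p|}(p)$ introduces a multiplicative factor $(1 + |\bar p - p|/r)^{7}$ that can overwhelm the density gap. The pseudo-neighbourhood definition only forces $\bar p$ to lie within the fixed distance $\mathrm{inj}(\Sigma,g)$ of $p$; to get $\bar p$ \emph{close} to $p$ you again need varifold proximity plus Allard, i.e. another pass through compactness. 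Once both of these points are supplied, your monotonicity-plus-discreteness finish works; but as written the uniform quantitative control is asserted rather than established, and supplying it effectively reproduces the paper's contradiction argument in disguise.
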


\begin{proof}
    Assume for a contradiction that there exists a sequence $\delta_j \rightarrow 0$ so that for each $j \geq 1$ we have a sequence, $\{(g_{j}^{i}, t_{j}^{i}, \Omega_{j}^{i})\}_{i \geq 1} \subset \mathcal{L}^{k, \alpha}(g, t, \Omega; \Lambda, \delta_j)$, with no convergent subsequence. By Arzela--Ascoli and Lemma \ref{lemma: compactness of isoperimetric regions}, for each $j \geq 1$ there exists $(g_j,t_j,\Omega_j) \in \mathcal{T}^{k,\alpha} \setminus \mathcal{L}^{k,\alpha}(g,t,\Omega;\Lambda,\delta_j)$ such that, up to a subsequence (not relabelled), we have $g_j^i \rightarrow g_j$ in $C^{k-1,\alpha}(M)$, $t_j^i \rightarrow t_j$, and $|\Sigma_j^i|_{g^i_j} \rightarrow |\Sigma_j|_{g_j}$ as varifolds as $i \rightarrow \infty$. By the definition of $\mathcal{L}^{k,\alpha}(g,t,\Omega;\Lambda,\delta_j)$, Allard's theorem, and the upper semi-continuity of denisty, we ensure that for each $i,j \geq 1$ there are $p^i_j \in \mathrm{Sing}(\Sigma_j^i)$ and $p_j \in \mathrm{Sing}(\Sigma_j)$ such that both $\theta_{|\Sigma_j^i|_{g_j^i}}(p^i_j) < \theta_{|\Sigma_j|_{g_j}}(p_j)$ and $\theta_{|\Sigma_j^i|_{g_j^i}}(p^i_j) = \theta_{|\Sigma|_{g_j}}(p)$ for some $p \in \mathrm{Sing}(\Sigma)$. Since $\delta_j \rightarrow 0$ we ensure that $p_j \rightarrow p$ as $j \rightarrow \infty$ which implies that as the densities of stable minimal hypercones are discrete (as mentioned in Subsection \ref{subsec: notation}) we have that $\theta_{|\Sigma|_g}(p) < \limsup_{j \rightarrow \infty} \theta_{|\Sigma_j|_{g_j}}(p_j)$, contradicting the upper semi-continuity of density.
\end{proof}

\begin{lemma}[Compactness of twisted Jacobi fields]\label{lemma: compactness of twisted JF}
Let $(g,t,\Omega) \in \mathcal{T}^{k,\alpha}$ and $\delta_0 > 0$ be as in Lemma \ref{lemma: compactness of pseudo neighbourhoods}. Suppose that there is a sequence $\{(g_j, t_j, \Omega_j)\}_{j \geq 1} \subset \mathcal{L}^{k, \alpha}(g, t, \Omega; \Lambda, \delta_0)$ such that:
\begin{enumerate}
    \item[(i)] $(g_j,t_j, \Omega_j) \rightarrow (g_\infty, t_\infty,\Omega_\infty)$ in $\mathcal{L}^{k, \alpha}(g, \Omega, t; \Lambda, \delta)$. 
    \item[(ii)] For each $j \geq 1$ there exist non-zero twisted Jacobi field of slow growth, $u_j \in \mathrm{Ker}^{+}\widetilde{L}_{\Sigma_j, g_j}$, on $\Sigma_j$, such that $\Vert u_j \Vert_{L^2_{g_j}(\Sigma_j)} = 1$. 
\end{enumerate}
Then, up to a subsequence (not relabelled), the $u_j$ converge in $C_{\mathrm{loc}}^{2}(\Sigma_\infty)$ to a twisted Jacobi field of slow growth, $u_\infty \in \mathrm{Ker}^{+}\widetilde{L}_{\Sigma_{\infty}, g_\infty}$, such that $\Vert u_\infty \Vert_{L^2_{g_\infty}(\Sigma_\infty)} = 1$. In particular, there exists $\kappa_1 = \kappa_1(g, t, \Omega, \Lambda) \in (0, \delta_0)$ such that for every $(g^\prime,t', \Omega^\prime) \in \mathcal{L}^{k, \alpha}(g, t, \Omega; \Lambda, \kappa_1)$, we have 
\begin{equation}\label{eq: top kernel}
    \dim(\mathrm{ker}^{+} \widetilde{L}_{\Sigma^\prime, g^\prime}) \leq \dim(\mathrm{ker}^{+}\widetilde{L}_{\Sigma, g}). 
\end{equation}
\end{lemma}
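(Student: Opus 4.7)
The plan is to extract a subsequential limit by standard elliptic compactness on the regular part of $\Sigma_\infty$ and to then transfer $L^2$ mass, the integral zero constraint, and the growth rate across the singular set. Lemma \ref{lemma: compactness of pseudo neighbourhoods} together with Lemma \ref{lemma: compactness of isoperimetric regions} already gives (up to subsequence) $|\Sigma_j|_{g_j} \to |\Sigma_\infty|_{g_\infty}$ as varifolds inside the pseudo-neighbourhood. On any compact $K \subset \Sigma_\infty \setminus \mathrm{Sing}(\Sigma_\infty)$, Allard's theorem (see Remark \ref{remark: apply Allard}) and the uniform mean curvature bound of Lemma \ref{Lemma: mean curvature bound} express $\Sigma_j$ as a normal graph over $K$ with $C^{2,\alpha}$-small graphing function, so pulling $u_j$ back yields a function $\bar u_j$ on $K$. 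The twisted equation $\widetilde L_{\Sigma_j, g_j} u_j = 0$ becomes $L_{\Sigma_\infty, g_\infty} \bar u_j = c_j + o(1)$ with $c_j \in \mathbb{R}$ uniformly bounded by interior estimates, and the unit $L^2$ normalisation together with Schauder estimates yields uniform $C^{2,\alpha}$ control on compact subsets. A diagonal extraction then produces $u_\infty \in C^2_{\mathrm{loc}}(\Sigma_\infty \setminus \mathrm{Sing}(\Sigma_\infty))$ with $\bar u_j \to u_\infty$ and $L_{\Sigma_\infty, g_\infty} u_\infty = c_\infty$ along a further subsequence $c_j \to c_\infty$.

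The main obstacle is then non-concentration of mass and integral at every $p \in \mathrm{Sing}(\Sigma_\infty)$, precisely the integral control issue highlighted in the introduction. By the third clause of Definition \ref{def: pseudo-neighbourhoods}, applied to both $(g_j, t_j, \Omega_j)$ and $(g_\infty, t_\infty, \Omega_\infty)$ as elements of the pseudo-neighbourhood, the singular points of $\Sigma_j$ and of $\Sigma_\infty$ are in correspondence with those of $\Sigma$ at matching densities; this yields $p_j \in \mathrm{Sing}(\Sigma_j)$ with $p_j \to p$ and $\theta_{|\Sigma_j|_{g_j}}(p_j) = \theta_{|\Sigma_\infty|_{g_\infty}}(p)$. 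The tangent cones lie in $\mathscr{C}_\Lambda$ and subsequentially converge smoothly on the link to a cone of the same density, so continuity of the asymptotic spectrum together with the gap $\gamma_{\mathrm{gap}}(\Lambda) > 0$ gives a $j$-independent lower bound $\gamma_2^+(\mathbf{C}_{p_j} \Sigma_j) \geq \gamma$ with $\gamma > - \tfrac{n - 2}{2}$. The slow growth hypothesis $u_j \in \mathrm{Ker}^+\widetilde L_{\Sigma_j, g_j}$ combined with the $J$-functional monotonicity alternative (the argument underlying Lemma \ref{lem: dichotomy}, used identically in the proof of Theorem \ref{thm: induced twisted jacobi fields}) then produces a uniform estimate
\begin{equation*}
    \int_{B_r^{g_j}(p_j) \cap \Sigma_j} u_j^2 \, dA_{g_j} \leq C r^{n + 2\gamma},
\end{equation*}
with $C$ independent of $j$ and of small $r$, and Cauchy--Schwarz similarly controls $\int |u_j|$. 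These estimates upgrade the $C^2_{\mathrm{loc}}$ convergence on the regular part to $L^2$ and $L^1$ convergence on all of $\Sigma_\infty$, yielding $\Vert u_\infty \Vert_{L^2_{g_\infty}(\Sigma_\infty)} = 1$, $\int_{\Sigma_\infty} u_\infty \, dA_{g_\infty} = 0$, and convergence of the averaging constants $c_j$ to $\tfrac{1}{|\Sigma_\infty|_{g_\infty}} \int_{\Sigma_\infty} L_{\Sigma_\infty, g_\infty} u_\infty \, dA_{g_\infty}$, hence $\widetilde L_{\Sigma_\infty, g_\infty} u_\infty = 0$. Passing the $J$-functional bound itself to the limit gives $\mathcal{AR}_p(u_\infty) \geq \gamma_2^+(\mathbf{C}_p \Sigma_\infty)$ at every $p \in \mathrm{Sing}(\Sigma_\infty)$, so $u_\infty \in \mathrm{Ker}^+ \widetilde L_{\Sigma_\infty, g_\infty}$.

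The dimension estimate (\ref{eq: top kernel}) then follows by contradiction via an orthonormal basis argument. Let $D = \dim(\mathrm{Ker}^+ \widetilde L_{\Sigma, g})$, which is finite by Theorem \ref{thm: spectral theorem for twisted Jacobi operator}, and suppose no such $\kappa_1$ exists. Then there is a sequence $(g_j, t_j, \Omega_j) \in \mathcal{L}^{k,\alpha}(g, t, \Omega; \Lambda, 1/j)$ with $\dim(\mathrm{Ker}^+ \widetilde L_{\Sigma_j, g_j}) \geq D + 1$. Choosing an $L^2_{g_j}$-orthonormal family $u_j^1, \ldots, u_j^{D+1}$ inside the slow growth kernel and applying the compactness conclusion to each, a diagonal extraction yields limits $u_\infty^i \in \mathrm{Ker}^+ \widetilde L_{\Sigma, g}$ of unit $L^2_g$-norm whose pairwise orthogonality is preserved by the non-concentration estimates above. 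This produces $D + 1$ orthonormal elements in a $D$-dimensional space, the desired contradiction. I expect the chief technical step to be the $j$-independent $J$-functional control under the slow growth hypothesis: verifying that the parameters in Lemma \ref{lem: dichotomy} can be selected uniformly across the compact family $\mathscr{C}_\Lambda$, so that the monotonicity alternative forces the correct growth rate bound on every $u_j$ simultaneously.
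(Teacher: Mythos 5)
Your proposal follows the same route as the paper's proof: interior elliptic compactness on the regular part, a $j$-uniform $L^2$ non-concentration estimate near the singular set derived from $J$-functional monotonicity and the gap $\gamma_{\mathrm{gap}}(\Lambda)>0$, Cauchy--Schwarz to transfer the integral-zero constraint to the limit, and an orthonormal family argument for the dimension bound; this all matches. One point of precision: the non-concentration step for the Jacobi fields $u_j$ is obtained by writing $L_{\Sigma_j,g_j}u_j = c_j$ in conical coordinates and applying the monotonicity for divergence-form equations on cones (Lemma \ref{lemma: J monotone for divergence form}, the analogue of \cite[Corollary 6.2]{LW25}), not Lemma \ref{lem: dichotomy}, which concerns graphing functions of one hypersurface over another rather than solutions of a PDE on a single hypersurface; the underlying three-annulus principle is the same, but the latter lemma does not apply as stated to the $u_j$.
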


\begin{proof}
    For fixed $r > 0$ and $j \geq 1$ sufficiently large we have that there is some constant $C > 0$, depending on $\Sigma_\infty, g_\infty,$ and $r$, so that in particular $\Vert u_j\Vert_{W^{1,2}_g(\Sigma_\infty \setminus B^{g_\infty}_r(\mathrm{Sing}(\Sigma_\infty))} \leq C$ and thus up to a subsequence (not relabelled) there is some $u_\infty \in W^{1,2}_{g,\mathrm{loc}}(\Sigma_\infty)$ weak, and hence by by Remark \ref{rem: higher regularity for weak solutions} strong, solution  solving $\widetilde{L}_{\Sigma,g}u_\infty = 0$ with $u_j \rightarrow u_\infty$ in $C^2_\mathrm{loc}(\Sigma_\infty)$; so in particular $u_\infty \in \mathrm{Ker}\widetilde{L}_{\Sigma_\infty,g_\infty}$.
    
    \bigskip
    
    In order to show that $u_\infty$ is of slow growth with $\Vert u_\infty\Vert_{L^2_{g_\infty}(\Sigma_\infty)} = 1$, we note that for fixed $\sigma \in \left(0,\frac{\gamma_\mathrm{gap}(\Lambda)}{2}\right)$ and $K > 2$ chosen as in Lemma \ref{lemma: J monotone for divergence form}, by the same argument as in the proof of \cite[Lemma 8.2]{LW25}, utilising conical coordinates on the $\Sigma_j$ and $\Sigma_\infty$ and replacing the use of \cite[Corollary 6.2]{LW25} with Lemma \ref{lemma: J monotone for divergence form}, we deduce that there is some constant $C > 0$, depending on $\Sigma_\infty$ and $g_\infty$, such that whenever $p_j \in \mathrm{Sing}(\Sigma_j)$ and $\tau > 0$ is sufficiently small we have
    \begin{equation} \label{eqn: L2 nonconcentration for twisted jacobi}
        \Vert u_j \Vert_{L^2(B^{g_j}(p_j, \tau))} \leq C \tau^{n/2 + \gamma_{2}^{+}(\mathbf{C}_{p_j} \Sigma_j) - \sigma}.
    \end{equation}
    Then whenever $p_j \rightarrow p_\infty \in \mathrm{Sing}(
    \Sigma_\infty)$, by applying Allard's theorem and the varifold convergence implied by Lemma \ref{lemma: compactness of isoperimetric regions} part 2, we see that up to a subsequence (not relabelled) $\gamma_{2}^{+}(\mathbf{C}_{p_j}\Sigma_j) \rightarrow \gamma_{2}^{+}(\mathbf{C}_{p}\Sigma_\infty)$. Since Remark \ref{rem: property of J^gamma} implies
    $\gamma_{2}^{+}(\mathbf{C}_{p_j}\partial^{\ast}\Omega_j) - \sigma > \gamma_{1}^{+}(\mathbf{C}_{p_j}\partial^{\ast}\Omega_j) \geq - \frac{n - 2}{2}$, by (\ref{eqn: L2 nonconcentration for twisted jacobi})
    we ensure that $u_\infty$ is of slow growth with $\Vert u_\infty\Vert_{L^2_{g_\infty}(\Sigma_\infty)} = 1$.
    
    \bigskip
    
    Since $\int_{\Sigma_j} u_j = 0$ and $\Vert u_j\Vert_{L^2_{g_j}(\Sigma_j)} = 1$ for each $j \geq 1$, we have, for each $s > 0$, that
    $$\left\vert \int_{\Sigma_j \setminus B_{s}^{g_j}(\mathrm{Sing}(\Sigma_j))} u_j \, dA_{g_j} \right\vert = \left\vert \int_{B_s^{g_j}(\mathrm{Sing}(\Sigma_j))} u_j \, dA_{g_j}\right\vert \leq \Vert u_j \Vert_{L^2_{g_j}(\Sigma_j)} \vert B_s(p) \cap \Sigma_j \vert^{1/2}_{g_j} \leq C s^{7/2},$$
    where $C > 0$, independent of $j \geq 1$, arises from the monotonicity formula. Since $u_j \rightarrow u_\infty$ in $C^2_{\mathrm{loc}}(\Sigma)$ we may apply the dominated convergence theorem (e.g.~with dominating function $|u_\infty| + 1$ for large $j \geq 1$) to see that for each $s >  0$ we have 
    $$\left\vert \int_{\Sigma \setminus B_{s}(\mathrm{Sing}(\Sigma_j))} u_\infty \, dA_{g}\right\vert  \leq Cs^{\frac{7}{2}};$$
    sending $s \rightarrow 0$ we conclude that $\int_{\Sigma_\infty} u_\infty\, dA_{g} = 0$ and hence $u_\infty \in \mathrm{Ker}^+\widetilde{L}_{\Sigma_\infty,g}$ as desired.

    \bigskip

    For the final statement we argue by contradiction and assume that there is a sequence, $\{(g_j,t_j,\Omega_j)\} \subset \mathcal{L}^{k,\alpha}(g,t,\Omega;\Lambda,\delta_0)$ such that $(g_j,t_j,\Omega_j) \rightarrow (g,t,\Omega)$ in $\mathcal{L}^{k,\alpha}(g,t,\Omega;\Lambda,\delta_0)$ but with $\mathrm{dim}(\mathrm{Ker}^+\widetilde{L}_{\Sigma_j,g_j}) >\mathrm{dim}(\mathrm{Ker}^+\widetilde{L}_{\Sigma,g})$ for all $j \geq 1$. Letting $I = \mathrm{dim}(\mathrm{Ker}^+\widetilde{L}_{\Sigma,g})$ and for each $j \geq 1$ choosing $L^2$ orthonormal $u_j^1,\dots,u^{I+1} \in \mathrm{Ker}^+\widetilde{L}_{\Sigma_j,g_j}$, we can apply the first part of the lemma established above to see that up to a subsequence (not relabelled) we have $u_j^i \rightarrow u^i$ for some non-zero $u^i \in \mathrm{Ker}^+\widetilde{L}_{\Sigma,g}$ for each $i = 1, \dots, I+1$. Moreover, by (\ref{eqn: L2 nonconcentration for twisted jacobi}) we ensure that that the $u^1,\dots,u^{I+1}$ are also $L^2$ orthonormal, contradicting the assumption that $I = \mathrm{dim}(\mathrm{Ker}^+\widetilde{L}_{\Sigma,g})$.
\end{proof}

In order to state the final compactness lemma of this subsection we introduce notation (similar to that of \cite[Section 7]{LW25}) in order to define global graphs of isoperimetric regions over one another. Given $g,g^1,g^2 \in \mathcal{G}^{k,\alpha}$, $t \in \mathbb{R}$, $\Omega^1 \in \mathcal{I}(g^1,t)$, and $\Omega^2 \in \mathcal{I}(g^2,t)$, then we define the \textbf{graphing function}, $G^{\Sigma^2}_{\Sigma^1,g^1} \in L^\infty(\Sigma_1)$, of $\Sigma_2$ over $\Sigma_1$ with respect to the metric $g$ by setting for each $x \in \Sigma_1$
$$G^{\Sigma^2}_{\Sigma^1,g^1}(x) = \begin{cases}
    \sup\{ t \geq 0 \, | \, \exp^g_x(t\nu_{\Sigma^1,g^1}(x)) \in \Omega^1 \text{ for all } s \in [0,t]\} & \text{for } x \in \Omega^1\\
    \inf\{ t \leq 0 \, | \, \exp^g_x(t\nu_{\Sigma^1,g^1}(x)) \in \Omega^1 \text{ for all } s \in [t,0]\} & \text{for } x \in \Sigma \setminus \Omega^1
\end{cases},$$
where $\nu_{\Sigma^1,g^1}$ is the outward pointing unit normal to $\Sigma^1$ with respect to the metric $g^1$. Also, given $g \in \mathcal{G}^{k,\alpha}$, $t \in \mathbb{R}$, $(g,t,\Omega) \in \mathcal{T}^{k,\alpha}$, $\Lambda > 1$, $\delta > 0$ sufficiently small, and $(g^1,\bar{t},\Omega^1),(g^2,\bar{t},\Omega^2) \in \mathcal{L}^{k,\alpha}(g,t,\Omega;\Lambda,\delta)$ we define the following semi-metric
    $$\mathbf{D}[(g^1,\bar{t},\Omega^1),(g^2,\bar{t},\Omega^2)] = \Vert g^1-g^2\Vert_{L^\infty(M)} + \Vert G^{\Sigma^1}_{\Sigma^2,g^2}\Vert_{L^2_{g^2}(\Sigma^2)}+\Vert G^{\Sigma^2}_{\Sigma^1, g^1}\Vert_{L^1_{g^1}(\Sigma_1)}.$$
With this notation, we establish the following analogue of Theorem \ref{thm: induced twisted jacobi fields} for sequences of pairs in pseudo-neighbourhoods, which plays a key role in establishing the results of the next subsection:

\begin{lemma}[Compactness for pairs]\label{lemma: compactness for pairs}
    Let $(g,t,\Omega) \in \mathcal{T}^{k,\alpha}$, $\Lambda \geq 1$, $t_j \rightarrow t$, $r \in (0,\mathrm{inj}(\Sigma,g))$ and $\delta_0 > 0$ be as in Lemma \ref{lemma: compactness of pseudo neighbourhoods}. Suppose that:

    \begin{itemize}
        \item[(i)] For $i = 1,2$, there are distinct sequences $\{(g_j^i,t_j,\Omega_j^i)\}_{j \geq 1} \subset \mathcal{L}^{k,\alpha}(g,t,\Omega;\Lambda,\delta_0)$ and a further (not necessarily distinct) sequence $ \{(\bar{g}_j,t_j,\bar{\Omega}_j)\}_{j \geq 1} \subset \mathcal{L}^{k,\alpha}(g,t,\Omega;\Lambda,\delta_0)$ each of which converge to $(g,t,\Omega)$ in the pseudo-neighbourhood topology.

        \item[(ii)] For $i = 1,2$, we have $g_j^i = (1 + c_jf_j^i)\bar{g}_j$ where $f_j^i \in C^{k,\alpha}(M)$ are uniformly bounded in norm with $\mathrm{spt}
        (f_j^i) \subset M \setminus B_r^g(\mathrm{Sing}(\Sigma))$, $(f_j^1 - f_j^2) \rightarrow f_\infty$ in $C^4(M)$ with $\nu_{\Sigma,g}(f_\infty)$ is not constant on $\Sigma$ unless $f^1_j = f^2_j$ for all $j \geq 1$ sufficiently large in which case we set $f_\infty = 0$, and $c_j \rightarrow 0$.

    \end{itemize}
    If for $i = 1,2$ we denote $u_j^i = G_{\bar{\Sigma}_j,\bar{g}_j}^{\Sigma_j^i} \in L^2_{\bar{g}_j}({\bar{\Sigma}_j})$ and $d_j = \mathbf{D}((g_j^1,t_j,\Omega_j^1),(g_j^2,t_j,\Omega_j^2)) > 0$ then, up to a subsequence (not relabelled), we have:

    \begin{enumerate}
        \item $c_j\frac{f^1_j - f^2_j}{d_j} \rightarrow \hat{f}_\infty$ in $C^{k,\alpha}(M)$ and $\hat{f}_\infty  = cf_\infty$ for some constant $c \geq 0$.
        
        \item $\frac{u^1_j - u^2_j}{d_j} \rightarrow \hat{u}_\infty$ in $L^2_{g,\mathrm{loc}}(\Sigma)$ and $\hat{u}_\infty \in C^2_\mathrm{loc}(\Sigma)$ is a non-zero function of slow growth.
    \end{enumerate}
    Moreover, $\widetilde{L}_{\Sigma,g} \hat{u}_\infty = \frac{n}{2}\left( \nu_{\Sigma,g}(\hat{f}_{\infty}) - \frac{1}{|\Sigma|_g}\int_{\Sigma}\nu_{\Sigma,g}(\hat{f}_\infty)\, dA_g\right)$ and $\int_\Sigma \hat{u}_\infty \, dA_g= \frac{n}{2}\int_{\Omega}\hat{f}_\infty \, dV_g$.
\end{lemma}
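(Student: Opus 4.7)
The plan is to adapt the argument of Theorem \ref{thm: induced twisted jacobi fields} to the relative setting of two sequences, with the normalisation $a_j = \Vert v_j \Vert_{L^2}$ there replaced by the semi-metric $d_j$ here. I first extract $\hat f_\infty$. From the definition of $\mathbf{D}$ we have $d_j \geq \Vert g_j^1 - g_j^2 \Vert_{L^\infty}$, and since $g_j^i - \bar g_j = c_j f_j^i \bar g_j$ with $\bar g_j \to g$ in $C^{k-1,\alpha}$, this yields $\Vert c_j(f_j^1 - f_j^2)\Vert_{L^\infty} \leq C d_j$. Combined with the uniform $C^{k,\alpha}$ bound on the $f_j^i$ and Arzel\`a--Ascoli, up to a subsequence $c_j(f_j^1 - f_j^2)/d_j \to \hat f_\infty$ in $C^{k,\alpha^\prime}$ for any $\alpha^\prime < \alpha$, with the limit in $C^{k,\alpha}$. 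When $f_\infty \neq 0$, the lower bound $\liminf_j \Vert f_j^1 - f_j^2\Vert_{L^\infty} > 0$ forces $c_j/d_j$ to be bounded; extracting $c_j/d_j \to c \geq 0$ gives $\hat f_\infty = c f_\infty$. When $f_j^1 = f_j^2$ eventually, $\hat f_\infty = 0$ trivially.

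Next I derive a PDE for $w_j := (u_j^1 - u_j^2)/d_j$. Writing $\Sigma_j^i = \mathrm{graph}_{\bar \Sigma_j}^{\bar g_j}(u_j^i)$ away from $\mathrm{Sing}(\bar\Sigma_j)$, the constant mean curvature identity $\mathcal{M}^{g_j^i}(u_j^i) = H_j^i$ expands, exactly as in the proof of Theorem \ref{thm: induced twisted jacobi fields} case (ii), to
\[
H_j^i - H_{\bar\Sigma_j} = -L_{\bar\Sigma_j, \bar g_j}(u_j^i) + \tfrac{n}{2}\nu_{\bar\Sigma_j}(c_j f_j^i) + \mathrm{div}_{\bar\Sigma_j}\mathcal{E}_1(u_j^i) + r_S^{-1}\mathcal{E}_2(u_j^i).
\]
Subtracting the $i=1$ and $i=2$ equations, dividing by $d_j$, and testing against $\psi \in \mathcal{D}_T(\bar\Sigma_j)$ (which annihilates the constant $(H_j^1 - H_j^2)/d_j$) yields a weak identity relating $\widetilde L_{\bar\Sigma_j, \bar g_j} w_j$ to $\tfrac{n}{2}\nu_{\bar\Sigma_j}(c_j(f_j^1-f_j^2)/d_j)$ modulo errors vanishing with $j$. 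Applying Proposition \ref{prop: caccioppoli inequality} with $f^+ = c_j f_j^1$, $f^- = c_j f_j^2$, $u = u_j^1$, $v = u_j^2$ gives local $W^{1,2}$ control on $w_j$ by its local $L^2$ norm; the latter is uniformly bounded since, away from singularities, $\Vert u_j^1 - u_j^2\Vert_{L^2_\mathrm{loc}(\bar\Sigma_j)} \lesssim \Vert G^{\Sigma_j^1}_{\Sigma_j^2, g_j^2}\Vert_{L^2_{g_j^2}(\Sigma_j^2)} \leq d_j$. Elliptic bootstrapping then upgrades the convergence to $C^2_\mathrm{loc}(\Sigma)$, producing $\hat u_\infty$ satisfying the claimed PDE in the weak, hence strong, sense.

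Slow growth and the integral identity then follow the same scheme as in Theorem \ref{thm: induced twisted jacobi fields}. Since $\mathrm{spt}(f_j^i) \subset M \setminus B_r^g(\mathrm{Sing}(\Sigma))$, near each $p \in \mathrm{Sing}(\Sigma)$ the conformal perturbation is trivial, placing both sequences $\Sigma_j^1, \Sigma_j^2$ into case (i) of that theorem locally; Lemma \ref{lem: dichotomy} combined with Lemma \ref{lemma: lower growth rate implies smooth} excludes property (1), so the growth monotonicity $J^{\gamma_p}_{K;\bar\Sigma_j,\bar g_j}(u_j^i; K^{-1}t) \leq J^{\gamma_p}_{K;\bar\Sigma_j,\bar g_j}(u_j^i; t)$ holds for $\gamma_p \in (\gamma_2^-(\mathbf{C}_p\Sigma), \gamma_1^-(\mathbf{C}_p\Sigma))$. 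Linearity passes this to $w_j$, and the limit yields $\mathcal{AR}_p(\hat u_\infty) \geq \gamma_1^-(\mathbf{C}_p\Sigma)$. For the integral identity, I expand the volume constraint $\vert\Omega_j^i\vert_{g_j^i} = t_j$ as
\[
\vert\Omega_j^i\vert_{g_j^i} - \vert\bar\Omega_j\vert_{\bar g_j} = \int_{\bar\Sigma_j} u_j^i\, dA_{\bar g_j} + \tfrac{n}{2} c_j \int_{\Omega_j^i} f_j^i\, dV_{\bar g_j} + \text{higher order},
\]
subtract the two equations, divide by $d_j$, and use the rescaled graphing radii estimate $s_j^8 \lesssim d_j^{1+\beta}$ (applied uniformly to both sequences) to control the singular-ball contribution; in the limit this gives $\int_\Sigma \hat u_\infty\, dA_g = \tfrac{n}{2}\int_\Omega \hat f_\infty\, dV_g$.

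The delicate step I expect to be the main obstacle is non-triviality of $\hat u_\infty$, intertwined with ruling out integral concentration of the graph norms at $\mathrm{Sing}(\Sigma)$. Suppose $\hat u_\infty \equiv 0$; then the PDE forces $\nu_\Sigma(\hat f_\infty)$ to be constant on $\Sigma$, so the hypothesis on $f_\infty$ forces $\hat f_\infty = 0$, i.e.~$\Vert g_j^1 - g_j^2\Vert_{L^\infty}/d_j \to 0$. Combined with the forced decay $\Vert u_j^1 - u_j^2\Vert_{L^2_{\mathrm{loc}}(\Sigma \setminus U)}/d_j \to 0$ for each neighbourhood $U$ of $\mathrm{Sing}(\Sigma)$, this contradicts the partition
\[
\frac{\Vert g_j^1 - g_j^2\Vert_{L^\infty}}{d_j} + \frac{\Vert G^{\Sigma_j^1}_{\Sigma_j^2, g_j^2}\Vert_{L^2_{g_j^2}}}{d_j} + \frac{\Vert G^{\Sigma_j^2}_{\Sigma_j^1, g_j^1}\Vert_{L^1_{g_j^1}}}{d_j} = 1,
\]
provided we can exclude concentration of the graph norms at $\mathrm{Sing}(\Sigma)$; this exclusion is precisely what the bootstrap on rescaled graphing radii from Theorem \ref{thm: induced twisted jacobi fields} provides, now applied uniformly to both sequences $u_j^1$ and $u_j^2$.
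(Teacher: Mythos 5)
The core issue is the slow growth claim. You appeal to Lemma~\ref{lem: dichotomy} and Lemma~\ref{lemma: lower growth rate implies smooth}, as in Theorem~\ref{thm: induced twisted jacobi fields}, to obtain monotonicity of $J^{\gamma_p}$ for $\gamma_p \in (\gamma_2^-(\mathbf{C}_p\Sigma),\gamma_1^-(\mathbf{C}_p\Sigma))$, and conclude $\mathcal{AR}_p(\hat u_\infty) \geq \gamma_1^-(\mathbf{C}_p\Sigma)$. But slow growth in the sense of Definition~\ref{def: slow growth and semi-nondegenerate} requires $\mathcal{AR}_p(\hat u_\infty) \geq \gamma_2^+(\mathbf{C}_p\Sigma)$, which for a non-planar cone sits strictly above $\gamma_1^+ > -\frac{n-2}{2} > \gamma_1^-$. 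Your bound falls short by an entire order of the asymptotic spectrum. The reason the Theorem~\ref{thm: induced twisted jacobi fields} template cannot be pushed further is that, there, the rescaled comparison varifold $V_\infty$ may be smooth (it is $\Sigma_j$ desingularising $\mathbf{C}_p$), so $\mathcal{AR}_\infty(V_\infty) \in \{\gamma_1^\pm\}$ is genuinely attained and the $J$-monotonicity cannot be extended past $\gamma_1^-$.

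What makes Lemma~\ref{lemma: compactness for pairs} stronger is precisely the pseudo-neighbourhood hypothesis: each $\Sigma_j^i$ has a singular point near $p$ of the \emph{same density} as $\mathbf{C}_p\Sigma$, so one is comparing two singular sheets rather than a singular sheet against a smooth one. For the \emph{difference} $w_j = u_j^1 - u_j^2$ this allows the $J$-monotonicity argument, in the form of Lemma~\ref{lemma: J monotone for divergence form} fed into the analogues of \cite[Corollaries 7.2 and 7.3]{LW25}, to run for $\gamma$ all the way up to $\gamma_2^+(\mathbf{C}_{p_j^1}\Sigma_j^1) - \sigma$. This yields the non-concentration estimate
\begin{equation*}
\Vert w_j\Vert_{L^2_{\bar{g}_j}(A^{\bar{g}_j}(p;\tau/2,\tau))} \leq C\, d_j\, \tau^{\frac{n}{2} + \gamma_2^+(\mathbf{C}_{p^1_j}\Sigma^1_j) - \sigma},
\end{equation*}
from which, after dividing by $d_j$ and letting $j \to \infty$, the slow growth of $\hat u_\infty$ follows. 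This is also precisely what the remark after the lemma statement is flagging: the improvement over Theorem~\ref{thm: induced twisted jacobi fields} part~2 ``arises from the fact that the sequences of triples in the statement all lie in the same pseudo-neighbourhood.'' Your proposal never invokes the density-matching constraint, so it cannot produce the required rate.

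A secondary point: your non-triviality argument for $\hat u_\infty$ is correct in spirit but leans on the unestablished claim that the graph norms $\Vert G^{\Sigma_j^1}_{\Sigma_j^2,g_j^2}\Vert_{L^2}$ and $\Vert u_j^1 - u_j^2\Vert_{L^2(\bar\Sigma_j)}$ are uniformly comparable near $\mathrm{Sing}(\Sigma)$. The paper proves the non-degeneracy by a dichotomy on $\liminf_j d_j/d_j'$, where $d_j' = \Vert w_j\Vert_{L^2(\bar\Sigma_j \setminus B_{r_0}(\mathrm{Sing}(\bar\Sigma_j)))}$: both $\liminf = 0$ and $\liminf = \infty$ are ruled out by unique continuation (the latter also using $\mathrm{spt}(\hat f_\infty)$ avoiding $B_{2r_0}(\mathrm{Sing}(\Sigma))$), and for the $\liminf = \infty$ case the comparison of graph norms is supplied by \cite[Lemma 7.4]{LW25}, again relying on the same non-concentration estimate you have not established. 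So the missing ingredient for your non-triviality step is the same one as for slow growth.

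The remaining parts of your proposal --- extraction of $\hat f_\infty$ via $d_j \geq \Vert g_j^1 - g_j^2\Vert_{L^\infty}$, the PDE for $w_j$ from subtracting the mean curvature equations, the Caccioppoli-based interior estimates, and the volume-expansion derivation of $\int_\Sigma \hat u_\infty \,dA_g = \frac{n}{2}\int_\Omega \hat f_\infty\, dV_g$ --- are broadly aligned with the paper and would go through once the non-concentration estimate is in hand.
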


\begin{remark}
    The proof of Lemma \ref{lemma: compactness for pairs} parallels \cite[Section 7]{LW25} and is also similar in both its proof and conclusions to that of Theorem \ref{thm: induced twisted jacobi fields}. We note however that the conclusions of Theorem \ref{thm: induced twisted jacobi fields} part 2 differ from those of Lemma \ref{lemma: compactness for pairs} part 2 since the function produced is of slow growth; this arises from the fact that the sequences of triples in the statement all lie in the same pseudo-neighbourhood.
\end{remark}

\begin{proof}
    First, there is some constant $C > 0$, depending on $g$, such that (whenever $j \geq 1$ is sufficiently large if $f_j^1 \neq f_j^2$ and regardless if $f_j^1 = f_j^2$) we have
    $$d_j \geq \Vert g_j^1 - g_j^2\Vert_{L^\infty(M)} \geq C \cdot c_j \cdot \Vert f_\infty\Vert_{L^\infty(M)} \geq 0,$$
    thus up to a subsequence (not relabelled) $\frac{c_j}{d_j} \rightarrow c \geq 0$ and hence
    \begin{equation}\label{eqn: conformal functions converge}
        c_j\frac{f_j^1 - f_j^2}{d_j} \rightarrow c f_\infty = \hat{f}_\infty,
    \end{equation}
    in $C^{k,\alpha}(M)$; this establishes part 1.

    \bigskip

    Since $\Sigma_j^i \rightarrow \Sigma$ for $i = 1,2$ we ensure that for sufficiently large $j \geq 1$ there exist radii, $r_j \rightarrow 0$, such that $\Vert u^i_j\Vert_{C^2_*(\Sigma \setminus B^{g_j}_{r_j}(\mathrm{Sing}(\Sigma^i_j))} \rightarrow 0$ and with
        $$\Sigma^i_j  \setminus B^{g_j}_{r_j}(\mathrm{Sing}(\Sigma^i_j)) = \mathrm{graph}_{\Sigma_j^i}^{g_j^i}(u^i_j) \setminus B^{g_j}_{r_j}(\mathrm{Sing}(\Sigma^i_j)
        ).$$
    Moreover, the difference of the graphs, $w_j = u^1_j - u^2_j$, satisfies an equation of the form
    \begin{equation}\label{eqn: pde for difference}
    \mathcal{M}^{\bar{g}_j}(w_j) - H_j = -L_{\Sigma,g}(w_j) + \frac{c_jn}{2}\nu_{\Sigma,g}(f^1_j - f^2_j) + \mathrm{div}_{\Sigma}(\mathcal{E}_1(w_j)) + r_S^{-1}\mathcal{E}_2(w_j),
    \end{equation}
    in the notation of Appendix \ref{subsec: mean curvature operator}, so that in particular $\mathcal{M}^{\bar{g}_j}$ is the mean curvature operator with respect to the metric $\bar{g}_j$, and $H_j$ is the constant mean curvature of $\bar{\Sigma}_j$.

    \bigskip

    For fixed $\sigma \in (0,\frac{\gamma_{\mathrm{gap}}(\Lambda)}{2})$ and $K > 2$ chosen as in Lemma \ref{lemma: J monotone for divergence form} there is an $r_0 > 0$ and a constant $C > 0$, both depending on $\Sigma, g, \sigma$, and $\Lambda$, such that for the difference of the graphs, $w_j = u^1_j - u^2_j$, whenever $p \in \mathrm{Sing}(\bar{\Sigma}_j)$, $\tau \in [2r_j,2r_0]$, and $p_j^1 \in \mathrm{Sing}(\Sigma^1_j) \cap B_{r_j}^{g^1}(p)$ we deduce that, similarly to the estimate (\ref{eqn: L2 nonconcentration for twisted jacobi}) in the proof of Lemma \ref{lemma: compactness of twisted JF}, we have \begin{equation}\label{eqn: L2 nonconcentration for difference}
    \Vert w_j\Vert_{L^2_{\bar{g}_j}(A^{\bar{g}_j}(p;\frac{\tau}{2},\tau))} \leq C \cdot d_j \cdot \tau^{\frac{n}{2} + \gamma_2^+(\mathbf{C}_{p^1_j}\Sigma^1_j) - \sigma}.
    \end{equation}
    In order to deduce (\ref{eqn: L2 nonconcentration for difference}) above we observe that we can establish identical estimates to those obtained in \cite[Section 7.1]{LW25} in our setting. To see this we note that by utilising Lemma \ref{lemma: compactness of isoperimetric regions} in place of \cite[Theorem G.1]{LW25} we obtain a direct analogue of \cite[Lemma 7.1]{LW25}, with near identical computations leading to analogues of \cite[Corollaries 7.2 and 7.3]{LW25}; in particular, (\ref{eqn: L2 nonconcentration for difference}) follows by a direct application of an analogue of \cite[Corollary 7.3]{LW25}.
    
    \bigskip

    By writing $d_j^\prime = \Vert w_j\Vert_{L^2_{\bar{g}_j}(\bar{\Sigma}_j \setminus B_{r_0}^{\bar{g}_j}(\mathrm{Sing}(\bar{\Sigma}_j))}$ we now show that (compare with the proof of Theorem \ref{thm: induced twisted jacobi fields} part 2) that $\liminf_{j \rightarrow \infty} \frac{d_j}{d_j^\prime} \in (0,\infty)$. To see this, if $\liminf_{j \rightarrow \infty} \frac{d_j}{d_j^\prime} = 0$ then dividing (\ref{eqn: conformal functions converge}) by $d_j^\prime$ we see that up to a subsequence (not relabelled) we have
    $c_j\frac{f^1_j - f^2_j}{d_j^\prime} \rightarrow 0$
    in $C^{k,\alpha}(M)$, and
    $\frac{w_j}{d_j^\prime} \rightarrow w_\infty^\prime$
    in $L^2_{g,\mathrm{loc}}(\Sigma)$, with $\Vert w_\infty^\prime\Vert_{L^2_g(\Sigma \setminus B_{r_0}^g(\mathrm{Sing}(\Sigma))} = 1$.
    However, by dividing (\ref{eqn: pde for difference}) by $d_j^\prime$ we see that additionally $w_\infty^\prime \in W^{1,2}_{g,\mathrm{loc}}(\Sigma)$ weakly, and hence by Remark \ref{rem: higher regularity for weak solutions} strongly, solves $\widetilde{L}_{\Sigma,g}w_\infty^\prime = 0$ with $w_\infty^\prime = 0$ on $B_{r_0}(\mathrm{Sing}(\Sigma))$ by (\ref{eqn: L2 nonconcentration for difference}); this contradicts $\Vert w_\infty^\prime\Vert_{L^2_g(\Sigma \setminus B_{r_0}^g(\mathrm{Sing}(\Sigma))} = 1$ by unique continuation.
    
    \bigskip
    
    We now rule out the possibility that $\liminf_{j \rightarrow \infty} \frac{d_j}{d_j^\prime} = \infty$, first noting that up to a subsequence (not relabelled) we have that $\frac{w_j}{d_j} \rightarrow w_\infty$ in $L^2_{g,\mathrm{loc}}(\Sigma)$. Also, similarly to the above, by dividing (\ref{eqn: pde for difference}) by $d_j$ and using (\ref{eqn: L2 nonconcentration for difference}) we see that, by similar reasoning in the paragraph before (\ref{eqn: Caccioppoli for cmc}) in the proof of Theorem \ref{thm: induced twisted jacobi fields}, $w_\infty \in W^{1,2}_{g,\mathrm{loc}}(\Sigma)$ weakly, and hence by Remark \ref{rem: higher regularity for weak solutions} strongly, solves $\widetilde{L}_{\Sigma,g}w_\infty = \nu_{\Sigma,g}(\hat{f}_\infty) - \frac{1}{|\Sigma|_g}\int_{\Sigma} \nu_{\Sigma,g}(\hat{f}_\infty) \, dA_g$ with $w_\infty = 0$ on $\Sigma \setminus B_{r_0}(\mathrm{Sing}(\Sigma))$. Now since for $i = 1,2$ we have $\mathrm{spt}(f_j^i) \cap B_{r}^g(\mathrm{Sing}(\Sigma)) = \emptyset$ for all $r \in (0,\mathrm{inj}(\Sigma,g))$ we see that in particular $\mathrm{spt}(\hat{f}_\infty) \cap B^g_{2r_0}(\mathrm{Sing}(\Sigma)) = \emptyset$ also; thus by unique continuation again, we must have that $\omega_\infty = 0$. As $\nu_{\Sigma,g}(f_\infty)$ is not constant on $\Sigma$ (if it is non-zero) we therefore see that we must have $\hat{f}_\infty = 0$ also; hence by construction we see that
    \begin{equation}\label{eqn: metrics over distance for contradiction}
        \frac{\Vert g^1_j - g^2_j\Vert_{L^{\infty}(M)}}{d_j} \rightarrow 0;
    \end{equation}
    we will exploit the definition of $d_j$ to see that this yields a contradiction. To this end we define $v^{(1,2)}_j = G^{\Sigma^2_j}_{\Sigma_j^1, g_j^1} \in L_{g_j^1}^2(\Sigma_j^1)$, $v^{(2,1)}_j = G^{\Sigma^1_j}_{\Sigma_j^2, g_j^2} \in L_{g^2_j}^2(\Sigma_j^2)$, and set $$\tilde{d}_j = \Vert v^{(1,2)}_j\Vert_{L^2_{g^1_j}(\Sigma^1_j \setminus B_{\frac{r_0}{K}}^{g_j^1}(\mathrm{Sing}(\Sigma^1_j))} + \Vert v^{(2,1)}_j\Vert_{L^2_{g^2_j}(\Sigma^2_j \setminus B_{\frac{r_0}{K}}^{g_j^2}(\mathrm{Sing}(\Sigma^2_j))} \in (0, d_j),$$
    for $K > 2$ chosen as in Lemma \ref{lemma: J monotone for divergence form}. We can then apply \cite[Lemma 7.4]{LW25} (which requires no assumption on the mean curvature) which combined with the fact that $w_\infty = 0$ to see that up to a subsequence (not relabelled) both $\frac{v^{(1,2)}_j}{d_j}, \frac{v^{(2,1)}_j}{d_j} \rightarrow 0$ in $L^2_{g,\mathrm{loc}}(\Sigma)$ and thus 
    \begin{equation}\label{eqn: ratios of distances}
        \liminf_{j \rightarrow \infty} \frac{\tilde{d}_j}{d_j} = 0.
    \end{equation}
    By the reasoning leading to (\ref{eqn: L2 nonconcentration for difference}) above, we can establish an analogue of \cite[Corollary 7.2(ii)]{LW25} which ensures that, arguing identically to the derivation of \cite[(59)/(60)]{LW25}, we obtain the bounds
    $$\Vert v^{(1,2)}_j\Vert_{L_{g_j^1}^2(\Sigma_j^1)}, \Vert v^{(2,1)}_j\Vert_{L_{g_j^2}^2(\Sigma_j^2)} \leq O(\tilde{d}_j);$$
    combined with (\ref{eqn: metrics over distance for contradiction}), (\ref{eqn: ratios of distances}), and the definition of $d_j$ this is yields a contradiction.
    
    \bigskip
    
    With $\liminf_{j \rightarrow \infty} \frac{d_j}{d_j^\prime} \in (0,\infty)$ established, by sending $j \rightarrow \infty$ we deduce from (\ref{eqn: L2 nonconcentration for difference}) that up to a subsequence (not relabelled) we have the estimate
    $$\Vert w_j\Vert_{C^2(\bar{\Sigma}_j \setminus B^{\bar{g}_j}_{2r_0}(\mathrm{Sing}(\bar{\Sigma}_j)))} \leq C(d_j^\prime + d_j + \Vert f^1_j - f^2_j\Vert_{C^2(M)}),$$
    for a constant $C > 0$, depending on $g, \Sigma,$ and $r_0$. Then, by dividing (\ref{eqn: pde for difference}) by $d_j$ we conclude, by similar reasoning in the paragraph before (\ref{eqn: Caccioppoli for cmc}) in the proof of Theorem \ref{thm: induced twisted jacobi fields}, that $\frac{w_j}{d_j} \rightarrow \hat{u}_\infty$ in $C^3_\mathrm{loc}(\Sigma)$ with $\hat{u}_\infty \in C^2_\mathrm{loc}(\Sigma)$ which weakly, and hence by Remark \ref{rem: higher regularity for weak solutions} strongly, solves  
    $$\widetilde{L}_{\Sigma,g}\hat{u}_\infty = \frac{n}{2}\left( \nu_{\Sigma,g}(\hat{f}_{\infty}) - \frac{1}{|\Sigma|_g}\int_{\Sigma}\nu_{\Sigma,g}(\hat{f}_\infty)\, dA_g\right);$$
    moreover, by the definition of $d_j$ and the claim, we ensure that $\hat{u}_\infty$ is non-zero.
    
    \bigskip
    
    By dividing (\ref{eqn: L2 nonconcentration for difference}) by $d_j$ we see that, by the reasoning in the paragraph following (\ref{eqn: L2 nonconcentration for twisted jacobi}) in the proof of Lemma \ref{lemma: compactness of twisted JF}, for each $p \in \mathrm{Sing}(\Sigma)$ and $\tau \in (0,r_0)$ we have
    $$\Vert \hat{u}_\infty\Vert_{L^2_{g}(A^g(p;\frac{\tau}{2},\tau))} \leq C \tau^{\frac{n}{2} + \gamma_2^+(\mathbf{C}_{p}\Sigma) - \sigma},$$
    which by definition of the asymptotic rate at $p$ implies that $\mathcal{AR}_p(\Sigma) \geq \gamma_2^+(\mathbf{C}_p\Sigma)$; i.e.~that $\hat{u}_\infty$ of slow growth and so by Lemma \ref{lemma: properties of slow growth} part 4 we ensure that $\hat{u}_\infty \in W^{1,2}_g(\Sigma)$.
  
    \bigskip

    It remains to show that $\int_{\Sigma}\hat{u}_\infty \, dA_g = \frac{n}{2}\int_\Omega \hat{f}_\infty \, dV_g$, which follows by similar reasoning for the derivation of (\ref{eqn: limit of intersection}) and (\ref{eqn: limit of difference}) in proving Theorem \ref{thm: induced twisted jacobi fields} part 2 and the proof of Lemma \ref{lemma: compactness of twisted JF}. Namely, for each $s > 0$ and $j \geq 1$ sufficiently large we can compute similarly to as in the proof of Theorem \ref{thm: induced twisted jacobi fields} to deduce that for some constant $C > 0$ independent of $j \geq 1$ we have
    $$\left|\int_{\bar{\Sigma}_j \setminus B_s^{\bar{g}_j}(\mathrm{Sing}(\bar{\Sigma}_j))} w_j \, dA_{\bar{g}_j} - c_j\frac{n}{2}\int_{\Omega^1_j \cap \Omega^2_j} (f^1_j - f^2_j) \, dV_{\bar{g}_j}\right| \leq C d_j s^8 + O(d_j^2).$$
    Thus, after dividing by $d_j$ and sending $j \rightarrow \infty$ we see that
    $$\left| \int_{\Sigma \setminus B_s^g(\mathrm{Sing}(\Sigma))} \hat{u}_\infty \, dA_g - \frac{n}{2}\int_{\Omega } \hat{f}_\infty \, dV_g \right| \leq Cs^8;$$
    sending $s \rightarrow 0$ we conclude that $\int_{\Sigma}\hat{u}_\infty \, dA_g = \frac{n}{2}\int_\Omega \hat{f}_\infty \, dV_g$ as desired.
\end{proof}

\subsection{A Sard--Smale theorem for metric volume pairs}\label{subsec: sard-smale}

We define, for each $(g,t,\Omega) \in \mathcal{T}^{k,\alpha}$, the \textbf{top part} of the pseudo-neighbourhood by setting
\begin{equation*}
    \mathcal{L}_{\mathrm{top}}^{k, \alpha}(g,t,\Omega; \Lambda, \delta) = \{(g^\prime, t^\prime, \Omega^\prime) \in \mathcal{L}^{k, \alpha}(g, t, \Omega; \Lambda, \delta) \, | \, \dim(\mathrm{ker}^{+}\widetilde{L}_{\Sigma^\prime, g^\prime}) = \dim(\mathrm{ker}^{+}\widetilde{L}_{\Sigma, g})\}. 
\end{equation*}
Note that this set is closed in $\mathcal{L}^{k, \alpha}(g,t, \Omega; \Lambda, \delta)$ for $\delta \leq \kappa_1$, where $\kappa_1$ is as in Lemma \ref{lemma: compactness of twisted JF}. We now prove that we can parametrise slices of the top part of each pseudo-neighbourhood by compact subsets of the kernel of the twisted Jacobi operator:

\begin{lemma}\label{lemma: paramatrisation and projection}
    Fix $(g,t,\Omega) \in \mathcal{T}^{k,\alpha}$ and let $I = \dim(\mathrm{ker}^{+} \widetilde{L}_{\Sigma, g}) < \infty$. There exists constants $\kappa_2 \in (0, \kappa_1)$ (for $\kappa_1$ as in Lemma \ref{lemma: compactness of twisted JF}) and $r_0 > 0$, and an $I$-dimensional linear subspace $\mathcal{F} \subset C_{c}^{k, \alpha}(M \setminus B_{r_0}^{g}(\mathrm{Sing}(\Sigma)))$ with $\int_\Omega f \, dV_g = 0$ for each $f \in \mathcal{F}$, all depending on $g, t, \Omega,$ and $\Lambda$, for which the following holds: 
    \begin{enumerate}
        \item Given $(g^\prime,t',\Omega^\prime) \in \mathcal{L}^{k, \alpha}_{\mathrm{top}}(g,t, \Omega; \Lambda, \kappa_2)$ the map 
        \begin{equation*}
            A_{\mathcal{F}} : \mathcal{F} \rightarrow \mathrm{Ker}^+ \widetilde{L}_{\Sigma^\prime, g^\prime}, \quad f \mapsto \pi_{\Sigma^\prime, g^\prime}\left(\nu_{\Sigma^\prime, g^\prime}(f) - \frac{1}{|\Sigma^\prime|_{g^\prime}}\int_{\Sigma^\prime}\nu_{\Sigma^\prime,g^\prime}(f)\, dA_{g^\prime}\right)
        \end{equation*}
        is a linear isomorphism, where $\pi_{\Sigma^\prime, g^\prime} : L_T^2(\Sigma^\prime) \rightarrow \mathrm{Ker}^{+} \widetilde{L}_{\Sigma^\prime, g^\prime}$ denotes the orthogonal projection. Moreover, there is no solution, $u \in L^1_T(\Sigma^\prime) \cap C^2_\mathrm{loc}(\Sigma^\prime)$, of slow growth to $\widetilde{L}_{\Sigma^\prime,g^\prime}u = \nu_{\Sigma^\prime, g^\prime}(f) - \frac{1}{|\Sigma^\prime|_{g^\prime}}\int_{\Sigma^\prime}\nu_{\Sigma^\prime,g^\prime}(f)\, dA_{g^\prime}$ for $f \in \mathcal{F} \setminus \{0\}$.
        
        \item Let $\mathcal{F} \cdot \bar{g} = \{(1 + f)\bar{g}; f \in \mathcal{F}\}$. For every $(\bar{g},\bar{t}, \bar{\Omega}) \in \mathcal{L}_{\mathrm{top}}^{k, \alpha}(g, t,\Omega; \Lambda, \kappa_2)$, the map
        \begin{align*}
            P_{\bar{g},\bar{t},\bar{\Omega}} : \mathcal{L}_{\mathrm{top}}^{k, \alpha}(g, t,\Omega; \Lambda, \kappa_2) \cap \Pi^{-1}(\mathcal{F} \cdot \bar{g} \times \{\bar{t}\}) & \rightarrow \mathrm{Ker}^+ \widetilde{L}_{\bar{\Sigma}, \bar{g}}, \\
            (g^\prime,t^\prime,\Omega^\prime) & \mapsto \pi_{ \bar{\Sigma},\bar{g}}\left(G_{\bar{\Sigma}, \bar{g}}^{\Sigma^\prime} \zeta_{\bar{\Sigma}, \bar{g}, r_0} - \frac{1}{|\bar{\Sigma}|_{\bar{g}}}\int_{\bar{\Sigma}}G_{\bar{\Sigma}, \bar{g}}^{\Sigma^\prime} \zeta_{\bar{\Sigma}, \bar{g}, r_0} \, dA_{\bar{g}} \right), 
        \end{align*}
        where $\zeta_{\bar{\Sigma}, \bar{g}, r_0}(x) = \zeta(\mathrm{dist}_g(x, \mathrm{Sing}(\bar{\Sigma}))/r_0)$, for a fixed cutoff function $\zeta \in C^{\infty}(\mathbb{R}, [0, 1])$ such that $\zeta = 0$ on $(-\infty, 1]$, and $\zeta = 1$ on $[2, \infty)$, is uniformly bi-Lipschitz onto its image with a bi-Lipschitz constant $C > 0$, depending on $g, t, \Omega$, and $\Lambda$. 
    \end{enumerate}
\end{lemma}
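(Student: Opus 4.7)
The plan is to construct $\mathcal{F}$ at the base triple $(g,t,\Omega)$ with all required properties, and then transfer them to nearby triples in $\mathcal{L}^{k,\alpha}_{\mathrm{top}}$ using the compactness results of Subsection \ref{subsec: pseudo-neighbourhoods and three compactness lemmas}. The main obstacle is the bi-Lipschitz lower bound in part 2, which I plan to resolve by combining Lemma \ref{lemma: compactness for pairs}, the ``no slow growth solution'' conclusion of part 1, and a unique continuation argument.

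\textbf{Base construction (part 1 at $(g,t,\Omega)$).} Since $\mathrm{Ker}^{+}\widetilde{L}_{\Sigma,g}$ has finite dimension $I$, I would first show the linear map $A : C^{k,\alpha}(M) \to \mathrm{Ker}^{+}\widetilde{L}_{\Sigma,g}$, $A(f) = \pi_{\Sigma,g}(\nu_{\Sigma,g}(f) - \tfrac{1}{|\Sigma|_g}\int_\Sigma \nu_{\Sigma,g}(f)\,dA_g)$, is surjective. Indeed, if some $\omega \in \mathrm{Ker}^{+}\widetilde{L}_{\Sigma,g}$ were $L^2$-orthogonal to the image, then using $\int_\Sigma \omega\,dA_g = 0$ we would have $\langle \omega, \nu_{\Sigma,g}(f)\rangle_{L^2(\Sigma)} = 0$ for every $f \in C^{k,\alpha}(M)$; choosing cutoffs $\chi_k$ supported away from $\mathrm{Sing}(\Sigma)$ and increasing to $1$, extending each $\omega \chi_k$ from a tubular neighborhood of $\Sigma$ to $f_k \in C^{k,\alpha}(M)$ with $\nu_{\Sigma,g}(f_k) = \omega \chi_k$ on $\Sigma$, and passing $k\to \infty$ via dominated convergence (using $\omega \in L^2(\Sigma)$) would yield $\omega \equiv 0$. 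Picking $f_1,\ldots,f_I$ with $\{A(f_i)\}$ a basis, I would multiply each $f_i$ by a cutoff supported in $M \setminus B^g_{r_0}(\mathrm{Sing}(\Sigma))$ for $r_0$ small (leaving $\nu_{\Sigma,g}(f_i)|_\Sigma$ unchanged on the regular part of $\Sigma$), and subtract an appropriate multiple of a fixed bump $\phi_0 \in C^{k,\alpha}_c(M)$ with $\mathrm{spt}(\phi_0)\cap \Sigma = \emptyset$ and $\int_\Omega \phi_0\,dV_g = 1$ to enforce $\int_\Omega f_i\,dV_g = 0$ without altering $A(f_i)$. Set $\mathcal{F} = \mathrm{span}\{f_i\}$. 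The ``no slow growth solution'' property for $f \in \mathcal{F}\setminus\{0\}$ then follows from Lemma \ref{lemma: properties of slow growth}: any such $u$ lies in $W^{1,2}_g(\Sigma)$ by part 4, and integration by parts against any $\omega \in \mathrm{Ker}^{+}\widetilde{L}_{\Sigma,g}$ forces $\nu_{\Sigma,g}(f) - \mathrm{const}$ to be $L^2$-orthogonal to $\mathrm{Ker}^{+}\widetilde{L}_{\Sigma,g}$, contradicting $A(f) \neq 0$.

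\textbf{Extension to the top part.} For $(g',t',\Omega') \in \mathcal{L}^{k,\alpha}_{\mathrm{top}}$, the map $A_{\mathcal{F}}^{(g',t',\Omega')}$ goes between spaces of equal dimension $I$, so injectivity suffices for $\kappa_2$ small (the ``no slow growth'' clause then follows by the same integration-by-parts argument at $(g',t',\Omega')$). If injectivity failed along a sequence $(g_j,t_j,\Omega_j) \to (g,t,\Omega)$ in the top part, I would take $f_j \in \mathcal{F}$ with $\|f_j\|_{C^{k,\alpha}} = 1$ and $A_{\mathcal{F}}^{(j)}(f_j) = 0$, pass to a subsequential limit $f \in \mathcal{F}$ with $\|f\|_{C^{k,\alpha}} = 1$, and apply Lemma \ref{lemma: compactness of twisted JF} to $L^2$-orthonormal bases $\{\omega^i_j\}_{i=1}^I$ of $\mathrm{Ker}^{+}\widetilde{L}_{\Sigma_j,g_j}$ to obtain a limiting $L^2$-orthonormal basis $\{\omega^i\}$ of $\mathrm{Ker}^{+}\widetilde{L}_{\Sigma,g}$ (dimensions agreeing in the top part). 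Since $\mathrm{spt}(f_j) \subset M \setminus B^g_{r_0/2}(\mathrm{Sing}(\Sigma))$ and $\Sigma_j \to \Sigma$ smoothly on this compact set by Allard's theorem, the orthogonality relations $\langle \nu_{\Sigma_j,g_j}(f_j) - c_j, \omega^i_j\rangle_{L^2(\Sigma_j,g_j)} = 0$ pass to the limit and yield $A(f) = 0$ at the base, a contradiction.

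\textbf{Bi-Lipschitz (part 2).} The Lipschitz upper bound is routine: $P_{\bar g,\bar t,\bar\Omega}$ is an $L^2$-bounded orthogonal projection composed with the map $(g',t',\Omega') \mapsto G^{\Sigma'}_{\bar{\Sigma},\bar{g}}\zeta_{\bar{\Sigma},\bar{g},r_0}$, and standard graphing estimates bound $\|G^{\Sigma^1}_{\bar{\Sigma},\bar{g}}\zeta - G^{\Sigma^2}_{\bar{\Sigma},\bar{g}}\zeta\|_{L^2(\bar{\Sigma},\bar g)}$ by a constant times $\mathbf{D}((g^1,\bar{t},\Omega^1),(g^2,\bar{t},\Omega^2))$ via the triangle inequality over $\bar{\Sigma}$ and the definition of $\mathbf{D}$. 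The lower bound is the main obstacle, which I plan to argue by contradiction. Suppose sequences $(g^i_j,\bar{t},\Omega^i_j) \in \mathcal{L}^{k,\alpha}_{\mathrm{top}} \cap \Pi^{-1}(\mathcal{F}\cdot \bar g \times \{\bar t\})$, $i = 1,2$, satisfy $\|P_{\bar g,\bar t,\bar\Omega}(g^1_j,\bar t,\Omega^1_j) - P_{\bar g,\bar t,\bar\Omega}(g^2_j,\bar t,\Omega^2_j)\|_{L^2}/d_j \to 0$ with $d_j = \mathbf{D}((g^1_j,\bar t,\Omega^1_j),(g^2_j,\bar t,\Omega^2_j)) \to 0$. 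Writing $g^i_j = (1 + c_j f^i_j)\bar{g}$ with $f^i_j \in \mathcal{F}$ uniformly bounded (by finite-dimensionality of $\mathcal{F}$), after extracting subsequences and rescaling as needed to meet the hypotheses of Lemma \ref{lemma: compactness for pairs} (noting that the isomorphism property of $A_{\mathcal{F}}$ on $\mathcal{F}$ forces $\nu_{\bar\Sigma,\bar g}(e)$ to be non-constant for any nonzero $e \in \mathcal{F}$), Lemma \ref{lemma: compactness for pairs} produces a nonzero $\hat{u}_\infty \in C^2_{\mathrm{loc}}(\bar{\Sigma})$ of slow growth with $\widetilde{L}_{\bar{\Sigma},\bar{g}}\hat{u}_\infty = \tfrac{n}{2}(\nu_{\bar{\Sigma},\bar{g}}(\hat{f}_\infty) - \mathrm{const})$ and $\hat{f}_\infty \in \mathcal{F}$. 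The ``no slow growth solution'' property of part 1 then forces $\hat{f}_\infty = 0$, so $\hat{u}_\infty \in \mathrm{Ker}^{+}\widetilde{L}_{\bar{\Sigma},\bar{g}}$ and $\int_{\bar{\Sigma}} \hat{u}_\infty\,dA_{\bar g} = \tfrac{n}{2}\int_{\bar{\Omega}}\hat{f}_\infty\,dV_{\bar g} = 0$. Passing the divided difference of projections to the limit yields $\pi_{\bar{\Sigma},\bar{g}}(\hat{u}_\infty \zeta_{\bar{\Sigma},\bar{g},r_0} - c_\infty) = 0$ for some constant $c_\infty$; pairing against $\hat{u}_\infty \in \mathrm{Ker}^{+}\widetilde{L}_{\bar{\Sigma},\bar{g}}$ in $L^2$ and using $\int_{\bar{\Sigma}}\hat{u}_\infty = 0$ gives $\int_{\bar{\Sigma}}\hat{u}_\infty^2 \zeta_{\bar{\Sigma},\bar{g},r_0}\,dA_{\bar g} = 0$, forcing $\hat{u}_\infty = 0$ on $\{\zeta_{\bar{\Sigma},\bar{g},r_0} > 0\}$ and hence $\hat{u}_\infty \equiv 0$ on $\bar{\Sigma}$ by unique continuation for the equation $L_{\bar{\Sigma},\bar{g}} \hat{u}_\infty = \mathrm{const}$, contradicting $\hat{u}_\infty \neq 0$.
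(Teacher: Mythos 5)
Your overall strategy tracks the paper's proof closely: construct $\mathcal{F}$ at the base triple, pass to the top part of the pseudo-neighbourhood by contradiction and Lemma \ref{lemma: compactness of twisted JF}, and obtain the bi-Lipschitz bounds via the compactness of Lemma \ref{lemma: compactness for pairs} plus a unique continuation argument. The one real gap is in your construction of $\mathcal{F}$. You pick $f_1,\dots,f_I$ with $\{A(f_i)\}$ a basis, then ``multiply each $f_i$ by a cutoff supported in $M\setminus B^g_{r_0}(\mathrm{Sing}(\Sigma))$ \dots (leaving $\nu_{\Sigma,g}(f_i)\vert_\Sigma$ unchanged on the regular part of $\Sigma$).'' That parenthetical is false: the regular part of $\Sigma$ enters $B^g_{r_0}(\mathrm{Sing}(\Sigma))$, and the cutoff sends $f_i$ to zero there, so $\nu_{\Sigma,g}(\chi f_i)$ genuinely differs from $\nu_{\Sigma,g}(f_i)$ near the singular set. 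Consequently, you have not verified that $\{A(\chi f_i)\}$ is still a basis of $\mathrm{Ker}^+\widetilde{L}_{\Sigma,g}$, which is exactly what the isomorphism property of $A_\mathcal{F}$ hinges on. You would need to add an argument: either show the perturbation $A(\chi f_i)-A(f_i)$ tends to zero in $L^2$ as $r_0\to 0$ (so the basis property persists for $r_0$ small, since being a basis is an open condition in the finite-dimensional target), or — as the paper does — build the $f_i$ so that $\nu_{\Sigma,g}(f_i)=\zeta_{\Sigma,g,r_0}u_i$ for a kernel basis $\{u_i\}$ from the outset, and separately verify that $v\mapsto\pi_{\Sigma,g}(\zeta_{\Sigma,g,r_0}v-\tfrac{1}{|\Sigma|_g}\int_\Sigma\zeta_{\Sigma,g,r_0}v)$ is a self-isomorphism of $\mathrm{Ker}^+\widetilde{L}_{\Sigma,g}$ (which follows by testing with $v$ and unique continuation). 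The paper's route bypasses both your surjectivity lemma and the cutoff-stability question; it is noticeably cleaner.

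Two smaller remarks. You label the Lipschitz upper bound in part 2 as ``routine'' via direct graphing estimates. That can be made to work away from the singular set where $\zeta>0$, but the paper handles both directions in one stroke with the same compactness-by-contradiction machinery (the two cases \eqref{eqn: contradiction blowup} and \eqref{eqn: contradiction null}); if you want to keep your direct estimate you should actually write down the comparison between $G^{\Sigma^1}_{\bar{\Sigma},\bar{g}}-G^{\Sigma^2}_{\bar{\Sigma},\bar{g}}$ and the graphing functions of $\Sigma^1,\Sigma^2$ over each other that enter $\mathbf{D}$. Finally, your observation that the isomorphism property of $A_\mathcal{F}$ forces $\nu_{\bar{\Sigma},\bar{g}}(e)$ to be non-constant for nonzero $e\in\mathcal{F}$ is correct and needed to invoke Lemma \ref{lemma: compactness for pairs}; it is worth flagging since the paper leaves this point implicit.
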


\begin{proof}

    We first fix a cutoff function $\zeta$ and define $\zeta_{\Sigma,g,r}$ for sufficiently small $r > 0$ as in the statement of part 2 above. Choosing some $r_0 \in (0,\mathrm{inj}(\Sigma,g))$ sufficiently small, we ensure that the map $A_{\zeta_{\Sigma,g,r_0}} : \mathrm{Ker}^+\widetilde{L}_{\Sigma,g} \rightarrow \mathrm{Ker}^+\widetilde{L}_{\Sigma,g}$ defined by setting
    $$A_{\zeta_{\Sigma,g,r_0}}(v) = \pi_{\Sigma,g}\left(\zeta_{\Sigma,g,r_0} v - \frac{1}{|\Sigma|_g}\int_{\Sigma} \zeta_{\Sigma,g,r_0}v \, dA_g\right)$$
    is a linear isomorphism; to see this we note that if $A_{\zeta_{\Sigma,g,r_0}}(v) = 0$ then $\zeta_{\Sigma,g,r}v -  \frac{1}{|\Sigma|_g}\int_{\Sigma} \zeta_{\Sigma,g,r}v \, dA_g\in (\mathrm{Ker}^+\widetilde{L}_{\Sigma,g})^\perp$ and so by testing with $v$ itself we conclude that
    $0 = \int_{\Sigma}\zeta_{\Sigma,g,r}v^2 \, dA_g$ from which it follows that $v = 0$ whenever $r_0 > 0$ is chosen sufficiently small since $v \in \mathrm{Ker}^+\widetilde{L}_{\Sigma,g}$. We then choose an orthonormal (with respect to the $L^2$ inner product) basis, $u_1, \dots, u_I$, of $\mathrm{Ker}^+\widetilde{L}_{\Sigma,g}$, and for each $i = 1,\dots,I$ choose some $f_i \in C^{k,\alpha}(M \setminus B_{r_0}^g(\mathrm{Sing}(\Sigma))$ which satisfy both $\nu_{\Sigma,g}(f_i) = \zeta_{\Sigma,g,r_0} u_i$ (such functions exist since $\zeta_{\Sigma,g,r_0} \in C^\infty_c(\Sigma)$) and $\int_{\Omega} f_i  \, dV_g = 0$; we then define $\mathcal{F}$ to be the span of the $\{f_i\}_{i = 1}^I$. 
    
    \bigskip
    
    As $A_{\zeta_{\Sigma,g,r_0}}$ was shown to be a linear isomorphism we see that the linear map $\pi_{\Sigma,g} \circ \nu
    _{\Sigma,g}$ is invertible, and hence has bounded inverse, defined on $\mathrm{Ker}^+\widetilde{L}_{\Sigma,g}$, when restricted to $\mathcal{F}$; let us denote by $C > 0$ the operator norm of the inverse of this restriction. Since by construction $\mathcal{F}$ has the same dimension as $\mathrm{dim}(\mathrm{Ker}^+\widetilde{L}_{\Sigma,g})$, it suffices by the rank-nullity theorem to show that $A_\mathcal{F}$ is injective to conclude that it is a linear isomorphism; we will now show that in fact $\Vert f\Vert_{C^{k,\alpha}(M)} \leq (1 + C)\Vert A_\mathcal{F}(f)\Vert_{L^2_{g'}(\Sigma')}$ which implies the injectivity of $A_\mathcal{F}$. If not, then we can find a sequences $\{(g_j,t_j,\Omega_j)\}_{j \geq 1} \subset \mathcal{L}^{k,\alpha}_\mathrm{top}(g,t,\Omega;\Lambda,\kappa_1)$ such that $(g_j,t_j,\Omega_j) \rightarrow (g,t,\Omega)$ in $\mathcal{L}^{k,\alpha}(g,t,\Omega;\Lambda,\kappa_1)$ and $\{f_j\}_{j \geq 1} \subset \mathcal{F}$ with $\Vert f_j\Vert_{C^{k,\alpha}(M)} = 1$ but such that $\Vert A_\mathcal{F}(f_j)\Vert_{L^2_{g'}(\Sigma')} < \frac{1}{1+C}$ for all $j \geq 1$. Up to a subsequence (not relabelled) we have that $f_j \rightarrow f$ in $C^{k-1,\alpha}(M)$ for some $f \in \mathcal{F}$ with $\Vert f\Vert_{C^{k,\alpha}(M)} = 1$, thus by the boundedness of $A_\mathcal{F}$ we see that $\Vert A_\mathcal{F}(f)\Vert_{L^2_{g'}(\Sigma')} \leq \frac{1}{1+C}$. However, by Lemma \ref{lemma: compactness of twisted JF} we see that $\pi_{\Sigma,g}(\nu_{\Sigma,g}(f)) = A_\mathcal{F}(f)$ and hence 
    $$\Vert f\Vert_{C^{k,\alpha}(M)} = \Vert (\pi_{\Sigma,g}\circ \nu_{\Sigma,g})^{-1} (A_\mathcal{F}(f))\Vert_{C^{k,\alpha}(M)} \leq C \Vert A_{\mathcal{F}}(f)\Vert_{C^{k,\alpha}(M)} \leq \frac{C}{1+C} < 1,$$
    contradicting the fact that $\Vert f\Vert_{C^{k,\alpha}(M)} = 1$; hence $A_\mathcal{F}$ is a linear isomorphism.

    \bigskip

    The final assertion in part 1 follows since if we have a slow growth function, $u \in L^1_T(\Sigma^\prime) \cap C^2_\mathrm{loc}(\Sigma^\prime)$, satisfying $\widetilde{L}_{\Sigma^\prime,g^\prime}u = \nu_{\Sigma',g'}(f) - \frac{1}{|\Sigma^\prime|_{g^\prime}}\int_{\Sigma^\prime} \nu_{\Sigma',g'}(f) \, dA_{g^\prime}$ then, by applying integration by parts as in Lemma \ref{lemma: properties of slow growth}, it follows that $A_{\mathcal{F}}(f) = 0$ which implies that $f = 0$ since $A_\mathcal{F}$ is an isomorphism.

    \bigskip

    In order to establish part 2 we utilise the compactness result of Lemma \ref{lemma: compactness for pairs} above along with a contradiction argument. If there is no such $\kappa_2 \in (0,\kappa_1)$ such that part 2 holds, then there exists some sequence $(\bar{g}_j,\bar{t}_j,\bar{\Omega}_j)_j \subset \mathcal{L}^{k,\alpha}_{\mathrm{top}}(g,t,\Omega;\Lambda,\kappa_1)$ converging to $(g,t,\Omega)$ in $\mathcal{L}^{k,\alpha}(g,t,\Omega;\Lambda,\kappa_1)$ such that the maps $P_{\bar{g}_j,\bar{t}_j, \bar{\Omega}_j}$ are not uniformly bi-Lipschitz. Thus, for $i = 1,2$ there are distinct sequences $\{(\bar{g}_j^i,\bar{t}_j,\bar{\Omega}_j^i)\}_{j \geq 1} \subset \mathcal{L}_{\mathrm{top}}^{k,\alpha}(g,t,\Omega;\Lambda,\kappa_1)$ converging to $(g,t,\Omega)$ in $\mathcal{L}^{k,\alpha}(g,t,\Omega;\Lambda,\kappa_1)$ with $\bar{g}_j^i = (1 + c_jf_j^i)\bar{g}_j$ for $f_j^i \in \mathcal{F}$ and $c_j \rightarrow 0$. Since $\bar{g}_i^j \in \mathcal{F} \cdot \bar{g}_j$, by denoting $u_j^i = G_{\bar{\Sigma}_j,\bar{g}_j}^{\Sigma_j^i} \in L_{\bar{g}_j}^2({\bar{\Sigma}_j})$ and $d_j = \mathbf{D}((\bar{g}_j^1,\bar{t}_j,\bar{\Omega}_j^1),\bar{g}_j^2,\bar{t}_j,\bar{\Omega}_j^2)) > 0$ in the notation of Lemma \ref{lemma: compactness for pairs}, then exactly one of the following cases occurs for infinitely many $j \geq 1$:
    \begin{equation}\label{eqn: contradiction blowup}
        \,\left\vert\left\vert\pi_{\bar{\Sigma}_j,\bar{g}_j}\left((u^1_j - u^2_j)\zeta_{\bar{\Sigma}_j,\bar{g}_j,r_0} - \frac{1}{|\bar{\Sigma}_j|_{\bar{g}_j}}\int_{\bar{\Sigma}_j} (u^1_j - u^2_j)\zeta_{\bar{\Sigma}_j,\bar{g}_j,r_0} \, dA_{\bar{g}_j}\right)\right\vert\right\vert_{L^2(\bar{\Sigma}_j)} \geq jd_j
    \end{equation}
    \begin{equation}\label{eqn: contradiction null}
        \left\vert\left\vert\pi_{\bar{\Sigma}_j,\bar{g}_j}\left((u^1_j - u^2_j)\zeta_{\bar{\Sigma}_j,\bar{g}_j,r_0} - \frac{1}{|\bar{\Sigma}_j|_{\bar{g}_j}}\int_{\bar{\Sigma}_j} (u^1_j - u^2_j)\zeta_{\bar{\Sigma}_j,\bar{g}_j,r_0} \, dA_{\bar{g}_j}\right)\right\vert\right\vert_{L^2(\bar{\Sigma}_j)} \leq \frac{d_j}{j}
    \end{equation}
    As $f_j^1 - f_j^2 \in \mathcal{F}$ for each $j \geq 1$ (so that $f^1_j - f_j^2 \rightarrow f_\infty \in \mathcal{F}$ (so that either $f_\infty = 0$ or $\nu_{\Sigma,g}(f_\infty)$ is not constant), by applying Lemma \ref{lemma: compactness for pairs} we see that up to a subsequence (not relabelled) we have:
    \begin{enumerate}
        \item $c_j\frac{f^1_j - f^2_j}{d_j} \rightarrow \hat{f}_\infty$ in $C^{k,\alpha}(M)$ and $\hat{f}_\infty  = cf_\infty$ for some $c \geq 0$.
        
        \item $\frac{u^1_j - u^2_j}{d_j} \rightarrow \hat{u}_\infty$ in $L^2_{g,\mathrm{loc}}(\Sigma)$ and $\hat{u}_\infty \in C^2_\mathrm{loc}(\Sigma)$ is a non-zero function of slow growth.
    \end{enumerate}
    Moreover, $\widetilde{L}_{\Sigma,g} \hat{u}_\infty = \frac{n}{2}\left( \nu_{\Sigma,g}(\hat{f}_{\infty}) - \frac{1}{|\Sigma|_g}\int_{\Sigma}\nu_{\Sigma,g}(\hat{f}_\infty)\, dA_g\right)$ and $\int_\Sigma \hat{u}_\infty \, dA_g= 0 $ (since $\hat{f}_\infty \in \mathcal{F}$). Thus, by applying the part 1 established above (for $(g,t,\Omega) \in \mathcal{L}^{k,\alpha}_\mathrm{top}(g,t,\Omega;\Lambda,\kappa_2)$) we see that $\hat{f}_\infty = 0$, and so $\hat{u}_\infty \in \mathrm{Ker}^+\widetilde{L}_{\Sigma,g}$. 

    \bigskip

    First, by the convergence of $\frac{u^1_j - u^2_j}{d_j} \rightarrow \hat{u}_\infty$ and $\zeta_{\bar{\Sigma}_j,\bar{g}_j,r_0} \rightarrow \zeta_{\bar{\Sigma},\bar{g},r_0}$ in $L^2_{g,\mathrm{loc}}$ we see that up to  subsequence (not relabelled) we have
    $$\frac{u^1_j - u^2_j}{d_j}\zeta_{\bar{\Sigma}_j,\bar{g}_j,r_0} \rightarrow \hat{u}_\infty\zeta_{\bar{\Sigma},\bar{g},r_0}$$
    in $L^2_{g,\mathrm{loc}}$ also. Thus by applying Lemma \ref{lemma: compactness of twisted JF} we see that the left hand side of (\ref{eqn: contradiction blowup}) (divided by $d_j$) is finite for all $j \geq 1$, thus (\ref{eqn: contradiction blowup}) cannot hold for infinitely many $j \geq 1$.
    
    \bigskip
    
    Similarly, if (\ref{eqn: contradiction null}) above were to occur for infinitely many $j \geq 1$, then we would have
    $$A_{\zeta_{\Sigma,g,r_0}}(\hat{u}_\infty)  = \pi_{\Sigma,g} \left(\hat{u}_\infty \zeta_{\Sigma,g,r_0} - \frac{1}{|\Sigma|_g}\int_\Sigma \hat{u}_\infty \zeta_{\Sigma,g,r_0} \, dA_g\right) = 0.$$
    Now by the proof of part 1 above, we know that $A_{\zeta_{\Sigma,g,r_0}}$ is a linear isomorphism, hence $A_{\zeta_{\Sigma,g,r_0}}(\hat{u}_\infty) = 0$ implies that $\hat{u}_\infty = 0$, a contradiction. Thus, for some potentially smaller $\kappa_2$ and $C > 0$ depending on $g,t,\Omega,\Lambda$, we conclude that the map $P_{\bar{g},\bar{t},\bar{\Omega}}$ is uniformly bi-Lipschitz onto its image.
\end{proof}

By utilising the above parametrisation result for the top part of each pseudo-neighbourhoods we are able to establish the following ``local'' Sard-Smale theorem which, when combined with a decomposition of the space of triples in the next subsection based on Appendix \ref{sec: appendix B}, will allow us to conclude that semi-nondegeneracy is a generic property for isoperimetric regions in dimension eight:

\begin{lemma}\label{lemma: generic pairs for neighbourhoods}
    Given $(g, t, \Omega) \in \mathcal{T}^{k,\alpha}$, $\Lambda \geq 1$, and $\delta > 0$, let $\mathcal{G}^{k,\alpha}(g,t,\Omega;\Lambda,\delta)$ denote the set
    $$\{(\bar{g},\bar{t}) \in \mathcal{G}^{k,\alpha} \times \mathbb{R} \, | \, \text{every } \bar{\Omega} \in \mathcal{I}(\bar{g},\bar{t}) \text{ with } (\bar{g},\bar{t},\bar{\Omega}) \in \mathcal{L}^{k,\alpha}(g,t,\Omega;\Lambda,\delta) \text{ is semi-nondegenerate}\}.$$ Then there exists some $\kappa_0 > 0$, depending on $g,t,\Omega$, and $\Lambda$, such that $\mathcal{G}^{k,\alpha}(g,t,\Omega;\Lambda,\kappa_0)$ is open and dense in $\mathcal{G}^{k,\alpha} \times \mathbb{R}$. Moreover, if $\bar{g} \in \mathcal{G}^{k,\alpha}$ then $\mathcal{G}^{k,\alpha}(g,t,\Omega;\Lambda,\kappa_0) \cap ([\bar{g}] \times \mathbb{R})$ is open and dense in $[\bar{g}] \times \mathbb{R}$.
\end{lemma}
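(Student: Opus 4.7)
The plan is to set $\kappa_0 \leq \kappa_2$, with $\kappa_2$ as in Lemma \ref{lemma: paramatrisation and projection}, and establish openness directly from the compactness results of Subsection \ref{subsec: pseudo-neighbourhoods and three compactness lemmas} and density through an iterative perturbation argument exploiting Lemma \ref{lemma: paramatrisation and projection}. For openness, observe that the complement of $\mathcal{G}^{k,\alpha}(g,t,\Omega;\Lambda,\kappa_0)$ in $\mathcal{G}^{k,\alpha} \times \mathbb{R}$ is precisely $\Pi(\mathcal{L}^{k,\alpha}_{\geq 1})$, where $\mathcal{L}^{k,\alpha}_{\geq 1}$ denotes the triples in the pseudo-neighbourhood with $\dim \mathrm{Ker}^+ \widetilde L \geq 1$. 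The first conclusion of Lemma \ref{lemma: compactness of twisted JF} makes $\mathcal{L}^{k,\alpha}_{\geq 1}$ closed in the pseudo-neighbourhood, which is itself compact by Lemma \ref{lemma: compactness of pseudo neighbourhoods}; hence $\Pi(\mathcal{L}^{k,\alpha}_{\geq 1})$ is compact and therefore closed.

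For density, given $(\tilde g, \tilde t) \in \mathcal{G}^{k,\alpha} \times \mathbb{R}$ and a $C^{k,\alpha}$-neighbourhood $U$ of it, I would iteratively reduce the quantity $I_{\max}(\bar g, \bar t) := \max\{\dim \mathrm{Ker}^+ \widetilde L_{\bar \Sigma, \bar g} : (\bar g, \bar t, \bar\Omega) \in \mathcal{L}^{k,\alpha}(g,t,\Omega;\Lambda,\kappa_0)\}$, adopting the convention that this maximum is zero when the relevant set is empty. By Lemma \ref{lemma: compactness of twisted JF} the functional $I_{\max}$ is upper semi-continuous and bounded above by $I := \dim \mathrm{Ker}^+ \widetilde L_{\Sigma, g}$. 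If $I_{\max}(\tilde g, \tilde t) = 0$ we are done; otherwise select an attaining isoperimetric region $\tilde \Omega$ and apply Lemma \ref{lemma: paramatrisation and projection} with base triple $(\tilde g, \tilde t, \tilde\Omega)$ to obtain a finite-dimensional subspace $\mathcal{F} \subset C^{k,\alpha}(M)$. I would then conformally perturb via $\tilde g_s := (1+sf_0)\tilde g$ for $f_0 \in \mathcal{F} \setminus \{0\}$ (noting that $\nu_{\tilde\Sigma, \tilde g}(f_0)$ is non-constant, since $A_{\mathcal{F}}$ in Lemma \ref{lemma: paramatrisation and projection} part 1 is an isomorphism) and claim that for sufficiently small $s > 0$ one has $I_{\max}(\tilde g_s, \tilde t) < I_{\max}(\tilde g, \tilde t)$. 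Iterating at most $I$ times produces a pair in $U \cap \mathcal{G}^{k,\alpha}(g,t,\Omega;\Lambda,\kappa_0)$; the restriction to a conformal class follows since each step remains within $[\tilde g]$.

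To establish the claim, suppose for contradiction that $I_{\max}(\tilde g_{s_j}, \tilde t) = I_{\max}(\tilde g, \tilde t)$ along some $s_j \to 0^+$, with attaining regions $\Omega_j$. By Lemmas \ref{lemma: compactness of pseudo neighbourhoods} and \ref{lemma: compactness of twisted JF}, passing to a subsequence one has $\Omega_j \to \Omega_\infty$ with $(\tilde g, \tilde t, \Omega_\infty)$ in the original pseudo-neighbourhood attaining the same maximal kernel dimension, and $(\tilde g_{s_j}, \tilde t, \Omega_j)$, $(\tilde g, \tilde t, \Omega_\infty)$ both lying in the top stratum of the pseudo-neighbourhood based at $(\tilde g, \tilde t, \tilde\Omega)$ for all large $j$. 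Applying Lemma \ref{lemma: compactness for pairs} with $T_j^1 = (\tilde g_{s_j}, \tilde t, \Omega_j)$ and $T_j^2 = (\tilde g, \tilde t, \Omega_\infty)$ yields some $c \geq 0$ and a non-zero slow growth function $\hat u_\infty$ satisfying $\widetilde L_{\Omega_\infty, \tilde g} \hat u_\infty = \tfrac{n}{2}(\nu(cf_0) - \mathrm{avg})$ with $\int \hat u_\infty = 0$. When $c > 0$, this directly contradicts the non-solvability statement in Lemma \ref{lemma: paramatrisation and projection} part 1 for the triple $(\tilde g, \tilde t, \Omega_\infty)$ and the function $cf_0 \in \mathcal{F} \setminus \{0\}$.

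The hard part will be the degenerate case $c = 0$, in which the isoperimetric regions drift much faster than the conformal perturbation, so that $\hat u_\infty$ lies in $\mathrm{Ker}^+ \widetilde L_{\Omega_\infty, \tilde g}$ without producing an immediate contradiction. To handle this one should invoke the bi-Lipschitz parametrisation $P$ from Lemma \ref{lemma: paramatrisation and projection} part 2: since the $T_j^1$ lie in the top stratum over the slice $\mathcal{F} \cdot \tilde g \times \{\tilde t\}$, the ratios $P(T_j^1)/d_j$ (where $d_j = \mathbf{D}(T_j^1, T_j^2)$) converge after passing to a subsequence to a unit element $v \in \mathrm{Ker}^+$, which corresponds via $A_{\mathcal{F}}^{-1}$ to a non-zero direction $\tilde f \in \mathcal{F}$. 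A second application of Lemma \ref{lemma: compactness for pairs}, with the comparison sequences adjusted so that the induced metric variation aligns with $\tilde f$ rather than $f_0$, should force the resulting constant into the non-trivial regime $c > 0$ and thereby reduce to the previous case, producing the desired contradiction and completing the reduction step.
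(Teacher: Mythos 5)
Your openness argument is correct and matches the paper. The density argument, however, has two genuine gaps.

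\textbf{Gap 1: multiple attaining regions.} You define $I_{\max}(\bar g,\bar t)$ as a maximum over \emph{all} $\bar\Omega\in\mathcal{I}(\bar g,\bar t)$ with $(\bar g,\bar t,\bar\Omega)$ in the pseudo-neighbourhood, then pick a \emph{single} attaining region $\tilde\Omega$, build $\mathcal{F}$ for the triple $(\tilde g,\tilde t,\tilde\Omega)$, and perturb in a direction $f_0\in\mathcal{F}$. When you assume $I_{\max}(\tilde g_{s_j},\tilde t)=I_{\max}(\tilde g,\tilde t)$ and extract a subsequential limit $\Omega_\infty$ of the attaining regions $\Omega_j$ for $\tilde g_{s_j}$, there is no reason for $\Omega_\infty$ to lie anywhere near $\tilde\Omega$; $\mathcal{I}(\tilde g,\tilde t)$ can have several widely separated members all achieving the maximum, and the $\Omega_j$ can shadow a different one. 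If $|\tilde\Omega\,\Delta\,\Omega_\infty|$ exceeds $\kappa_2$, the triples $(\tilde g_{s_j},\tilde t,\Omega_j)$ and $(\tilde g,\tilde t,\Omega_\infty)$ do \emph{not} lie in $\mathcal{L}^{k,\alpha}_{\mathrm{top}}(\tilde g,\tilde t,\tilde\Omega;\Lambda,\kappa_2)$, so neither the non-solvability statement in Lemma \ref{lemma: paramatrisation and projection} part 1 nor the bi-Lipschitz parametrisation in part 2 applies to them, and both applications of Lemma \ref{lemma: compactness for pairs} in your argument are unjustified. The paper avoids this by inducting on $I=\dim\mathrm{Ker}^+\widetilde L_{\Sigma,g}$ for the \emph{fixed base triple} $(g,t,\Omega)$ rather than on a quantity maximised over all isoperimetric regions, and by covering $\Pi^{-1}(g',t')\cap\mathcal{L}^{k,\alpha}$ by finitely many sub-pseudo-neighbourhoods (via compactness of Lemma \ref{lemma: compactness of pseudo neighbourhoods}) to which the inductive hypothesis applies.

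\textbf{Gap 2: the degenerate case $c=0$.} You correctly identify this as the crux, but your resolution is not an argument. When $c=0$ the conformal perturbation is negligible compared to the drift of the regions, and Lemma \ref{lemma: compactness for pairs} only yields a non-zero $\hat u_\infty\in\mathrm{Ker}^+\widetilde L_{\Sigma_\infty,\tilde g}$, which is no contradiction since that kernel may well be non-trivial. Your proposal to ``adjust the comparison sequences so that the induced metric variation aligns with $\tilde f$'' is not well-posed: the input pairs $T_j^1,T_j^2$ to Lemma \ref{lemma: compactness for pairs} are fixed data, their metric difference is exactly $s_jf_0\tilde g$, and one cannot replace $f_0$ by an a posteriori limit direction $\tilde f$ and re-run the lemma. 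This is precisely where the paper invokes the Sard--Smale theorem for Lipschitz maps: the contradiction argument shows a neighbourhood of $0$ in $\mathcal{Z}$ consists of critical points of the Lipschitz map $\widetilde\Pi:\mathcal{Z}\to\mathcal{F}$, so the image of those degenerate directions has measure zero in $\mathcal{F}$, giving approximating metrics $(1+f_j)\bar g$ with $f_j\to 0$ lying \emph{outside} $\Pi(\mathcal{L}^{k,\alpha}_{\mathrm{top}})$. Without some mechanism of this kind there is simply no reason a single prescribed perturbation direction $f_0$ cannot be degenerate for every small $s$; your approach omits the measure-theoretic step that makes the reduction go through.
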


\begin{proof}
    For openness, if $\{(g_j,t_j)\}_{j \geq 1} \in (\mathcal{G}^{k,\alpha}\times \mathbb{R} )\setminus \mathcal{G}^{k,\alpha}(g,t,\Omega;\Lambda,\delta) $ with $g_j \rightarrow g_\infty$ in $C^{k-1,\alpha}$ and $t_j \rightarrow t_\infty$, then for each $j \geq 1$ there exist isoperimetric regions, $\Omega_j \in \mathcal{I}(g_j,t_j)$, and non-zero twisted Jacobi fields, $u_j \in \mathrm{Ker}^+\widetilde{L}_{\Sigma_j,g_j}$ (which without loss satisfy $\Vert u_j\Vert_{L^2(\Sigma_j)} = 1$), such that up to a subsequence (not relabelled) there is some $u_\infty \in \mathrm{Ker}^+\widetilde{L}_{\Sigma_\infty,g_\infty}$ by combining Lemmas \ref{lemma: compactness of pseudo neighbourhoods} and \ref{lemma: compactness of twisted JF}; hence the complement is closed.

    \bigskip

    For denseness, first notice that if $\dim(\mathrm{ker}^{+} \widetilde{L}_{\Sigma, g}) = 0$ then $\mathcal{G}^{k,\alpha}(g,t,\Omega;\Lambda,\kappa_0) = \mathcal{G}^{k,\alpha} \times \mathbb{R}$ for every $\kappa_0 \in (0,\kappa_2)$ by (\ref{eq: top kernel}) in Lemma \ref{lemma: compactness of twisted JF} part 1. We establish the denseness for the positive dimensions by induction and successive approximation; namely, we first show that $\mathcal{G}^{k,\alpha}(g,t,\Omega;\Lambda,\kappa_0)$ is dense in $(\mathcal{G}^{k,\alpha} \times \mathbb{R}) \setminus \Pi(\mathcal{L}^{k,\alpha}_{\mathrm{top}}(g,t,\Omega;\Lambda,\kappa_0))$, and then show that for each $(\bar{g},\bar{t}) \in \Pi(\mathcal{L}^{k,\alpha}_\mathrm{top}(g,t,\Omega;\Lambda,\kappa_0))$ there is a sequence of approximating pairs in $(\mathcal{G}^{k,\alpha} \times \mathbb{R}) \setminus \Pi(\mathcal{L}^{k,\alpha}_{\mathrm{top}}(g,t,\Omega;\Lambda,\kappa_0))$ by exploiting Lemma \ref{lemma: paramatrisation and projection}. Thus we assume, for $I \geq 1$, that the desired density holds whenever $\dim(\mathrm{ker}^{+} \widetilde{L}_{\Sigma, g}) \leq I - 1$.

    \bigskip

    Firstly, if $(g^\prime,t^\prime) \in (\mathcal{G}^{k,\alpha} \times \mathbb{R}) \setminus \Pi(\mathcal{L}^{k,\alpha}_{\mathrm{top}}(g,t,\Omega;\Lambda,\kappa_0))$ but $(g^\prime,t^\prime) \in (\mathcal{G}^{k,\alpha}\times \mathbb{R} )\setminus \mathcal{G}^{k,\alpha}(g,t,\Omega;\Lambda,\kappa_0)$ then $\Pi^{-1}(g^\prime,t^\prime) \cap \mathcal{L}^{k,\alpha}(g,t,\Omega;\Lambda,\kappa_0)$ is nonempty by definition and compact since $\Pi^{-1}(g^\prime,t^\prime)$ is closed and $\mathcal{L}^{k,\alpha}(g,t,\Omega;\Lambda,\kappa_0)$ is compact by Lemma \ref{lemma: compactness of pseudo neighbourhoods}. Thus, there are $\{(g^\prime,t^\prime,\Omega_i)\}_{i = 1}^n \subset \Pi^{-1}(g^\prime,t^\prime) \cap \mathcal{L}^{k,\alpha}(g,t,\Omega;\Lambda,\kappa_0)$ such that 
    \begin{equation}\label{eqn: covering argument for denseness}
    \Pi^{-1}(g^\prime,t^\prime) \cap \mathcal{L}^{k,\alpha}(g,t,\Omega;\Lambda,\kappa_0)  \subset \bigcup_{i = 1}^n \mathcal{L}^{k,\alpha}(g^\prime,t^\prime,\Omega_i;\Lambda,\kappa_0^i);
    \end{equation}
    where here we choose positive numbers, $\{\kappa_0^i\}_{i = 1}^n$, such that $\mathcal{G}^{k,\alpha}(g^\prime,t^\prime,\Omega_i;\Lambda,\kappa_0^i)$ is open and dense in $\mathcal{G}^{k,\alpha} \times \mathbb{R}$ by the inductive assumption since for each $i = 1,\dots, n$ we have $\mathrm{dim}(\mathrm{Ker}^+\widetilde{L}_{\Sigma_i,g^\prime}) \leq I-1$. 
    
    \bigskip
    
    We note then that there must be a sequence $\{(g^\prime_j,t^\prime_j)\}_{j \geq 1} \subset \bigcap_{i = 1}^n\mathcal{G}^{k,\alpha}(g_i,t_i,\Omega_i;\Lambda,\kappa_0^i)$ (which is still open and dense as the finite intersection of such sets) such that $(g^\prime_j,t^\prime_j) \rightarrow (g^\prime,t^\prime)$ as $j \rightarrow \infty$; we will now show that $(g^\prime_j,t^\prime_j) \in \mathcal{G}^{k,\alpha}(g,t,\Omega;\Lambda,\kappa_0)$ for all $j \geq 1$ sufficiently large. If not, then there exist $\Omega^\prime_j \in \mathcal{I}(g^\prime_j,t^\prime_j)$ with $\mathrm{Ker}^+\widetilde{L}_{\Sigma^\prime_j,g^\prime_j} \neq \{0\}$ such that $(g^\prime_j,t^\prime_j,\Omega^\prime_j) \rightarrow (g^\prime,t^\prime,\Omega^\prime)$ for some $\Omega^\prime \in \mathcal{I}(g^\prime,t^\prime)$ with $(g^\prime,t^\prime,\Omega^\prime) \in \Pi^{-1}(g^\prime,t^\prime) \cap\mathcal{L}^{k,\alpha}(g,t,\Omega;\Lambda,\kappa_0)$ by Lemma \ref{lemma: compactness of pseudo neighbourhoods}; however, by the covering above, we must have that $(g^\prime,t^\prime,\Omega^\prime) \in \mathcal{L}^{k,\alpha}(g^\prime,t^\prime,\Omega_i;\Lambda,\kappa_0^i)$ for some $i = 1,\dots,n$ and in particular since $(g^\prime_j,t^\prime_j) \in \mathcal{G}^{k,\alpha}(g^\prime,t^\prime,\Omega_i;\Lambda,\kappa_0^i)$ we also have $\mathrm{Ker}^+\widetilde{L}_{\Sigma_j^\prime,g_j^\prime} = \{0\}$, a contradiction. Thus we see that $\mathcal{G}^{k,\alpha}(g,t,\Omega;\Lambda,\kappa_0)$ is dense in $(\mathcal{G}^{k,\alpha} \times \mathbb{R}) \setminus \Pi(\mathcal{L}^{k,\alpha}_{\mathrm{top}}(g,t,\Omega;\Lambda,\kappa_0))$.
    
    \bigskip

    To conclude we will show that there is a sequence in $(\mathcal{G}^{k,\alpha} \times \mathbb{R}) \setminus \Pi(\mathcal{L}^{k,\alpha}_{\mathrm{top}}(g,t,\Omega;\Lambda,\kappa_0))$ that approximates each choice of $(\bar{g},\bar{t}) \in \Pi(\mathcal{L}^{k,\alpha}_\mathrm{top}(g,t,\Omega;\Lambda,\kappa_0))$; for each such pair we choose some $\bar{\Omega} \in \mathcal{I}(\bar{g},\bar{t})$. We fix $\mathcal{F}$ as in Lemma \ref{lemma: paramatrisation and projection} for the triple $(\bar{g},\bar{t},\bar{\Omega}) \in \mathcal{T}^{k,\alpha}$ so that the map
    $$\widetilde{\Pi} : \mathcal{Z} = P_{\bar{g},\bar{t},\bar{\Omega}}\left(\mathcal{L}^{k,\alpha}_{\mathrm{top}}(\bar{g},\bar{t},\bar{\Omega};\Lambda,\kappa_0) \cap \Pi^{-1}(\mathcal{F} \cdot \bar{g} \times \{\bar{t}\})\right) \subset \mathrm{Ker}^+\widetilde{L}_{\bar{\Sigma},\bar{g}} \rightarrow \mathcal{F},$$
    defined by $\widetilde{\Pi}(P_{\bar{g},\bar{t},\bar{\Omega}}((1+f)\bar{g},\bar{t},\cdot)) = f$ for each $f \in \mathcal{F}$, 
    is a Lipschitz map between two vector spaces of the same finite dimension with compact domain. More precisely, Lemma \ref{lemma: paramatrisation and projection} part 2 shows $\mathcal{L}^{k,\alpha}_{\mathrm{top}}(g,t,\Omega;\Lambda,\kappa_0) \cap \Pi^{-1}(\mathcal{F} \cdot \bar{g} \times \{\bar{t}\})$ is bi-Lipschitz to a compact subset of $\mathrm{Ker}^+\widetilde{L}_{\bar{\Sigma},\bar{g}}$ and Lemma \ref{lemma: paramatrisation and projection} part 1 shows that this vector space is bi-Lipschitz to $\mathcal{F}$; we then set $\widetilde{\Pi} = I_{\mathcal{F}} \circ \Pi \circ P_{\bar{g},\bar{t},\bar{\Omega}}^{-1}$ where $I_\mathcal{F}((1 +f)\bar{g},\bar{t}) = f$ for $I_\mathcal{F} : \mathcal{F} \cdot \bar{g} \times \mathbb{R} \rightarrow \mathcal{F}$ which is a Lipschitz map between vector spaces. 

    \bigskip

    We now show that in some neighbourhood of $0 \in \mathrm{Ker}^+\widetilde{L}_{\bar{\Sigma},\bar{g}}$ in the domain of $\widetilde{\Pi}$ consists of critical points, and hence by the Sard--Smale theorem for Lipschitz maps (see \cite[Lemma 8.5]{LW25}) this neighbourhood in the domain has image under $\widetilde{\Pi}$ of zero measure in $\mathcal{F}$; this ensures that there exists a sequence $\{(\bar{g}_j,\bar{t})\}_{j \geq 1} \subset (\mathcal{F} \cdot \bar{g}\times \{\bar{t}\}) \setminus \Pi(\mathcal{L}^{k,\alpha}_{\mathrm{top}}(g,t,\Omega;\Lambda,\kappa_0)) \subset (\mathcal{G}^{k,\alpha} \times \mathbb{R}) \setminus \Pi(\mathcal{L}^{k,\alpha}_{\mathrm{top}}(g,t,\Omega;\Lambda,\kappa_0))$ converging to $(\bar{g},\bar{t})$ in $\mathcal{G}^{k,\alpha} \times \mathbb{R}$ concluding the desired denseness.

    \bigskip

    Supposing this was not the case, we could find a sequence $\{u_i\}_{i \geq 1} \subset \mathcal{Z}$ converging to zero in $L^2(\Sigma)$ such that $u_i$ is not a critical point of $\widetilde{\Pi}$ for each $i \geq 1$. Let us denote $\widetilde{\Pi}(u_i) = f_i \in \mathcal{F}$, so that $f_i \rightarrow 0$ in $C^{k,\alpha}(M)$ (noting that the $L^\infty$ and $C^{k,\alpha}$ norms are equivalent since $\mathcal{F}$ is finite dimensional), for each $i \geq 1$ and by applying Lemma \ref{lemma: paramatrisation and projection} part 2 we have some $\bar{\Omega}_i \in \mathcal{I}((1+f_i)\bar{g},\bar{t})$ such that by Lemma \ref{lemma: compactness of isoperimetric regions} we have, up to a subsequence (not relabelled), $\bar{\Omega}_i \rightarrow \bar{\Omega}$ for some $\bar{\Omega} \in \mathcal{I}(\bar{g},\bar{t})$. By assumption we have that, for each fixed $i \geq 1$, whenever $\{h_j\}_{j \geq 1} \subset \mathcal{F}$ is such that $h_j \rightarrow f_i$ in $C^{k,\alpha}(M)$ we have that
    $$\frac{h_j - f_i}{d_j} \rightarrow \hat{f}_i \neq \{0\} \text{ and } (1 + f_i) \hat{f}_i \in \mathcal{F} \setminus \{0\}.$$
    Writing $\bar{g}_i = (1+f_i)\bar{g}$ for each $i \geq 1$, then by applying Lemma \ref{lemma: compactness for pairs} for each $i \geq 1$ we find non-zero solutions $\hat{u}_i \in C^2_\mathrm{loc}(\bar{\Sigma}_i)$ of slow growth to $\widetilde{L}_{\bar\Sigma_i,\bar{g}_i}\hat{u}_i = \nu_{\bar{\Sigma}_i,\bar{g}_i}(\hat{f}_i) - \frac{1}{|\bar\Sigma_i|_{\bar{g}_i}}\int_{\bar\Sigma_i}\nu_{\bar{\Sigma}_i,\bar{g}_i}(\hat{f}_i)$ 
    with $\int_{\bar\Sigma_i} \hat{u}_i \, dA_{\bar{g}_i} = \frac{n}{2}\int_{\bar\Omega_i} \hat{f}_i \, dV_{\bar{g}_i}$. Now, by considering  $\frac{\hat{f}_i}{\Vert \hat{f}_i\Vert_{C^{k,\alpha}(M)}} \rightarrow \hat{f} \in \mathcal{F} \setminus \{0\}$ (as $(1+f_i) \rightarrow 1$) and projecting $\hat{u}_i$ to $(\mathrm{Ker}\widetilde{L}_{\Sigma,g})^\perp$ we ensure by Theorem \ref{thm: spectral theorem for twisted Jacobi operator} part 3 that $\frac{\hat{u}_i}{\Vert \hat{f}_i\Vert_{C^{k,\alpha}(M)}} \rightarrow \hat{u} \in \mathscr{B}_T(\bar{\Sigma})$  (noting that $\int_{\bar{\Sigma}} \hat{u} \, dA_{\bar{g}} = \frac{n}{2}\int_{\bar{\Omega}} \hat{f} \, dV_{\bar{g}}= 0$ as $\hat{f} \in \mathcal{F}$) for some slow growth function which weakly, and hence by Remark \ref{rem: higher regularity for weak solutions} strongly, solves $\widetilde{L}_{\bar{\Sigma},\bar{g}}\hat{u} = \nu_{\bar{\Sigma},\bar{g}}(\hat{f}) - \frac{1}{|\bar{\Sigma}|_{\bar{g}}}\int_{\bar{\Sigma}}\nu_{\bar{\Sigma},\bar{g}}(\hat{f})$. Noting that then $\nu_{\bar{\Sigma},\bar{g}}(\hat{f}) - \frac{1}{|\bar{\Sigma}|_{\bar{g}}}\int_{\bar{\Sigma}}\nu_{\bar{\Sigma},\bar{g}}(\hat{f})$ is non-zero (since otherwise by Lemma \ref{lemma: paramatrisation and projection} part 1 we would have $\hat{f} = 0$), which also implies that $\hat{u} \neq 0$, we obtain a contradiction to Lemma \ref{lemma: paramatrisation and projection} part 1; hence some neighbourhood of zero in $\mathcal{Z}$ must consist of critical points of $\widetilde{\Pi}$ as desired.

    \bigskip

    As we have shown that for each $(\bar{g},\bar{t}) \in \mathcal{G}^{k,\alpha} \times \mathbb{R}$ there is an approximating sequence in $(\mathcal{F} \cdot \bar{g} \times \{\bar{t}\}) \setminus \Pi(\mathcal{L}^{k,\alpha}_{\mathrm{top}}(g,t,\Omega;\Lambda,\kappa_0))$ and, having previously showed that $\mathcal{G}^{k,\alpha}(g,t,\Omega;\Lambda,\kappa_0)$ was dense in $(\mathcal{G}^{k,\alpha} \times \mathbb{R}) \setminus \Pi(\mathcal{L}^{k,\alpha}_{\mathrm{top}}(g,t,\Omega;\Lambda,\kappa_0))$, we conclude that $\mathcal{G}^{k,\alpha}(g,t,\Omega;\Lambda,\kappa_0)$ is dense in $\mathcal{G}^{k,\alpha} \times \mathbb{R}$. 
    
    \bigskip

    For the final statement, if $\bar{g} \in \mathcal{G}^{k,\alpha}$ then, by intersecting each set with $[\bar{g}] \times \mathbb{R}$, the same covering argument using compactness proceeding and following (\ref{eqn: covering argument for denseness}) shows that $\mathcal{G}^{k,\alpha}(g,t,\Omega;\Lambda,\kappa_0) \cap ([\bar{g}] \times \mathbb{R})$ is in fact dense in $([\bar{g}] \times \mathbb{R}) \setminus \Pi(\mathcal{L}^{k,\alpha}_{\mathrm{top}}(g,t,\Omega;\Lambda,\delta))$. The arguments in the following paragraphs further show that there is an approximating sequence of metric volume pairs whose metric remains in a given conformal class; thus we see that $\mathcal{G}^{k,\alpha}(g,t,\Omega;\Lambda,\kappa_0) \cap ([\bar{g}] \times \mathbb{R})$ is in fact dense in $([\bar{g}] \times \mathbb{R})$. The openness follows since $\mathcal{G}^{k,\alpha}(g,t,\Omega;\Lambda,\kappa_0) \cap ([\bar{g}] \times \mathbb{R})$ is then relatively open as $\mathcal{G}^{k,\alpha}(g,t,\Omega;\Lambda,\kappa_0)$ was shown to be open.
\end{proof}

\subsection{Generic semi-nondegeneracy}\label{subsec: generic semi-nondegeneracy}

We now conclude the section by showing that semi-nondegeneracy is a generic property for isoperimetric regions. With the language and notation used in Appendix \ref{sec: appendix B} we have the following: 

\begin{proposition}\label{prop: cone decomposition for neighbourhoods}
    For any given $(g, t, \Omega) \in \mathcal{T}^{k, \alpha}$, $\beta \in (0, 1/100)$, and $\Lambda \in \mathbb{N}$, there exists a $\delta > 0$, depending on $g, t, \Omega, \beta$, and $\Lambda$, and $l > 0$, depending on $g$, $t$ and $\Omega$, such that for the following $\delta$-neighbourhood 
    \begin{align*}
        \mathcal{T}^{k, \alpha}&(g, t, \Omega; \Lambda, \delta) = \left\{(g^\prime, t^\prime, \Omega^\prime) \in \mathcal{T}^{k, \alpha} \left\vert \begin{array}{l}
			   \Vert g^\prime \Vert_{C^{k, \alpha}} \leq \Lambda, |\Sigma^\prime|_{g^\prime} \leq \Lambda;\\
            \Vert g - g^\prime \Vert_{C^{k-1, \alpha}} \leq \delta, \vert t - t^\prime \vert \leq \delta, |\Omega \Delta \Omega^\prime|_g \leq \delta, 
		\end{array}
         \right. \right\},
    \end{align*}
    we have:
    \begin{enumerate}
        \item There exists a finite collection of $(\theta, \sigma, \beta)$-models $\mathcal{S}$ and an integer $N$, so that any $(g^\prime, t^\prime, \Omega^\prime) \in \mathcal{T}^{k, \alpha}(g, t, \Omega; \Lambda, \delta)$ admits a large scale $(\theta_l, \beta, g, t, \Omega, \mathcal{S}, N)$-cone decomposition. 
        \item There exists a countable collection $\{(g_v, t_v, \Omega_v)\}_{v \in \mathbb{N}} \subset \mathcal{T}^{k, \alpha}(g, t, \Omega; \Lambda, \beta)$ with fixed large scale $(\theta_l, \beta, g, t, \Omega, \mathcal{S}, N)$-cone decomposition, such that every $(g^\prime, t^\prime, \Omega^\prime) \in \mathcal{T}^{k, \alpha}(g, t, \Omega; \Lambda, \beta)$ admits a large scale $(\theta_l, \beta, g, t, \Omega, \mathcal{S}, N)$-cone decomposition whose tree representation is $\beta$-close to that of some $(g_v, t_v, \Omega_v)$. 
    \end{enumerate}
\end{proposition}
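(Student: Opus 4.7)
The plan is to deduce both parts from the cone decomposition theorem for almost minimisers of perimeter (Theorem \ref{thm: cone decomposition}) stated in Appendix \ref{sec: appendix B}, which adapts the cone decomposition of \cite{E24} to our setting. The applicability rests on two facts: isoperimetric regions are $(\widetilde{\Lambda}, r_0)$-almost minimisers with parameters controlled by the mean curvature (as discussed in Subsection \ref{subsec: preliminaries}), and Lemma \ref{Lemma: mean curvature bound} provides uniform mean curvature bounds throughout the $\delta$-neighbourhood for $\delta$ sufficiently small.

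First I would fix $l > 0$ small enough that on each $B^g_l(p)$ with $p \in \mathrm{Sing}(\Sigma)$, $\Sigma$ is represented in conical coordinates over $\mathbf{C}_p\Sigma$ with $C^2_*$-norm much less than $\beta$; this $l$ depends only on $(g, t, \Omega)$. For part 1, the cone decomposition associates to each triple a rooted tree whose nodes correspond to (iteratively rescaled) singular points and carry their tangent cones as $(\theta,\sigma,\beta)$-models. By the uniform perimeter and mean curvature bounds on the neighbourhood, together with the monotonicity formula, the total number of singular points (and hence the depth of the tree) of any $\Omega^\prime$ with $(g',t',\Omega') \in \mathcal{T}^{k,\alpha}(g,t,\Omega;\Lambda,\delta)$ is bounded by a constant $N$ depending only on $\Lambda$ and the lower density $\theta_1$ from (\ref{eqn: discrete densities stable cones}). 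Moreover all tangent cones appearing in the decomposition lie in $\mathscr{C}_{\Lambda^\prime}$ for some $\Lambda^\prime = \Lambda^\prime(\Lambda)$, and this class is compact under smooth link convergence by \cite[Theorem 5.1]{E24}; hence only finitely many models are needed to $\beta$-approximate all tangent cones, yielding the finite collection $\mathcal{S}$.

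For part 2, once $N$ and $\mathcal{S}$ are fixed, and the attachment scales are discretised on a dyadic net in $(0, l]$ up to a factor $1 + \beta$, the possible tree representations form a countable combinatorial set. I would pick one representative triple $(g_v, t_v, \Omega_v) \in \mathcal{T}^{k,\alpha}(g,t,\Omega;\Lambda,\beta)$ in each realised equivalence class to produce the required countable family, and the $\beta$-closeness in tree representation to one of them is then automatic by construction.

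The main obstacle is ensuring that $\mathcal{S}$, $N$, and $l$ can be chosen uniformly over the entire $\delta$-neighbourhood, rather than depending on the particular triple. Were this to fail, one would obtain a sequence $\{(g_j, t_j, \Omega_j)\} \subset \mathcal{T}^{k,\alpha}(g,t,\Omega;\Lambda,\delta_j)$ with $\delta_j \to 0$ producing tangent cones escaping every finite collection of $(\theta, \sigma, \beta)$-models, or decomposition trees of arbitrarily large depth. Extracting a subsequential limit via Lemma \ref{lemma: compactness of isoperimetric regions} and applying Allard's theorem on the smooth part (as in Remark \ref{remark: apply Allard}) would then yield a tangent cone or limiting configuration of $(g,t,\Omega)$ violating either the discreteness (\ref{eqn: discrete densities stable cones}) of stable cone densities or the smooth link compactness of $\mathscr{C}_{\Lambda^\prime}$; this contradiction gives the uniformity and completes the proof.
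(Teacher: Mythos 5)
Your overall strategy matches the paper: apply Theorem~\ref{thm: cone decomposition} at each singular point of $\Sigma$, verify its hypotheses are met uniformly near $(g,t,\Omega)$ via the almost-minimiser property and the mean curvature bound of Lemma~\ref{Lemma: mean curvature bound}, then discretise the continuous parameters in the tree data to get a countable family of representatives for part~2. However there are a few points where the proposal drifts from what the statement and the supporting machinery actually require.

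The most substantive issue is your treatment of $l$. You declare $l$ to be a small radius such that $\Sigma$ is well-approximated in conical coordinates on $B^g_l(p)$, but in the statement $l$ indexes the density $\theta_l$ appearing in the ``$(\theta_l, \beta, g, t, \Omega, \mathcal{S}, N)$-cone decomposition''; it is a threshold in the discrete ladder of densities \eqref{eqn: discrete densities stable cones}, not a length scale. The paper's choice is $\theta_l = \sup_{p \in \mathrm{Sing}(\Sigma)} \theta_{|\Sigma|_g}(p)$, which then feeds directly into the constants $\delta_{l,\gamma,\beta,\sigma}$, $\mathcal{S}$, and $N$ supplied by Theorem~\ref{thm: cone decomposition}. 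Your radius-valued $l$ leaves $\theta_l$ undefined and so the conclusion you are aiming at is never properly set up. Relatedly, your assertion that the number of singular points controls the \emph{depth} of the decomposition tree is not accurate: depth reflects the nesting of scales at a single singularity (governed by the density hierarchy and $\beta$), while the count of singularities controls branching at one level; the bound $N$ comes from Theorem~\ref{thm: cone decomposition} and does not need to be re-derived by a monotonicity count.

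A secondary point: once $\delta$ is shrunk so that each triple in $\mathcal{T}^{k,\alpha}(g,t,\Omega;\Lambda,\delta)$ satisfies (after rescaling at each $p \in \mathrm{Sing}(\Sigma)$) the Hausdorff closeness, density pinching and metric smallness required by Theorem~\ref{thm: cone decomposition}, the uniformity of $N$ and $\mathcal{S}$ over the neighbourhood comes for free from that theorem; the closing compactness-by-contradiction argument you sketch is a heavier detour than necessary. For part~2, your discretisation idea is the right one, but the $\beta$-closeness of Definition~\ref{definition: gamma-close tree representations of cone dec} involves \emph{relative} comparisons of radii and centres at each node, which is precisely the subtlety that the paper handles by invoking the construction of \cite[Theorem 9.6]{LW25}; a flat dyadic net on attachment scales does not by itself produce that relative control, so this step needs the more careful bookkeeping.
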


\begin{proof}
For the first part, fix $(g, t, \Omega) \in \mathcal{T}^{k, \alpha}$, $\beta \in (0, 1/100)$, $\gamma \in [\beta, 1]$, and $\sigma \in (0, 1/200]$. Let $l \in \mathbb{N}$ be such that, by the discreteness of the densities in (\ref{eqn: discrete densities stable cones}), we have
\begin{equation*}
    \theta_l = \sup_{p \in \mathrm{Sing}(\Sigma)} \theta_{|\Sigma|_g}(p). 
\end{equation*}
In particular, for each $p \in \mathrm{Sing} (\Sigma)$ there exists a radius $r_{p} \in (0, 1)$, such that the balls $\{B^g_{2 r_{p}}(p)\}_{p \in \mathrm{Sing}(\Sigma)}$ are pairwise disjoint, the rescaled pullback of the metric satisfies
\begin{equation} \label{eq 1}
    \vert (2 r_{p})^{-2}(\eta_{p, 2 r_p}^{-1})^* g  - g_\mathrm{eucl} \vert \leq \delta_{l, \gamma, \beta, \sigma}/2, 
\end{equation}
where $\delta_{l, \gamma, \beta, \sigma} > 0$ is as in Theorem \ref{thm: cone decomposition}, and such that there exists a cone $\mathbf{C}_{p} \in \mathscr{C}$ for which, after expressing $\Sigma$ in conical coordinates over it, there holds 
\begin{equation} \label{eq 2}
    d_\mathcal{H}((\eta_{p, 2 r_{p}})( \Sigma) \cap \mathbb{B}_1, \mathbf{C}_{p} \cap \mathbb{B}_1) \leq \delta_{l, \gamma, \beta, \sigma}/2 \qquad \text{and} \qquad \theta_{|\mathbf{C}_{p}|}(0) \leq \theta_l, 
\end{equation}
and such that 
\begin{equation} \label{eq 3}
    \frac{3}{4} \theta_{|\mathbf{C}_{p}|}(0) \leq \theta_{(\eta_{p, 2 r_{p}})_\#\vert \Sigma \vert}(0, 1/2) \qquad \text{and} \qquad \theta_{(\eta_{p, 2 r_{p}})_\# \vert \Sigma \vert}(0, 1) \leq \frac{5}{4} \theta_{|\mathbf{C}_{p}|}(0). 
\end{equation} 
Finally, by potentially taking $r_{p}$ smaller, we ensure that $(\eta_{p,2r_p})(\Sigma)$ is the $C^2$ part of the boundary of a $(\delta_{l, \gamma, \beta, \sigma}/2, 1)$-almost minimiser in $\mathbb{B}_1$. In particular, by taking $\delta > 0$ smaller if necessary in the definition of $\mathcal{T}^{k, \alpha}(g, t, \Omega; \Lambda, \delta)$, we can ensure that each $(g^\prime, t^\prime, \Omega^\prime) \in \mathcal{T}^{k, \alpha}(g, t, \Omega; \Lambda, \delta)$ satisfies \eqref{eq 1}, \eqref{eq 2}, and \eqref{eq 3} with $\Sigma^\prime$ in place of $\Sigma$, and $\delta_{l, \gamma, \beta, \sigma}$ replacing $\delta_{l, \gamma, \beta, \sigma}/2$. Furthermore, since $\Omega' \in \mathcal{I}(g',t')$, we have that $(\eta_{p, r_{p}})(\Sigma^\prime)$ is the $C^2$ part of the boundary of a $(\delta_{l, \gamma, \beta, \sigma}, 1)$-almost minimiser in $\mathbb{B}_1$, while Allard's theorem implies the existence of $C^2$-functions $u : \Sigma \setminus \bigcup_{p \in \mathrm{Sing}(\Sigma)} B^g_{r_p/2}(p)\rightarrow \Sigma^\perp$ so that $\Sigma^\prime \setminus \bigcup_{p} B^g_{r_p}(p) = \mathrm{graph}_\Sigma(u) \setminus B^g_{r_p}(p)$. We can then apply Theorem \ref{thm: cone decomposition} for the parameters  to infer the existence of a $(\theta_l, \beta, \mathcal{S}_{p}, N_{p})$-cone decomposition for $\vert \Sigma \vert_g \mres B^g_{r_p}(p)$, as well as a large scale $(\theta_l, \beta, g, t, \Omega, \mathcal{S}, N)$-cone decomposition of $(g^\prime, t^\prime, \Omega^\prime)$. 

\bigskip

By part 1 above, for each $(g^\prime, t^\prime, \Omega^\prime) \in \mathcal{T}^{k, \alpha}(g, t, \Omega; \Lambda, \delta)$ there exists a large scale cone decomposition as in Definition \ref{definition: large scale cone dec}, which then has a corresponding tree representation as in Definition \ref{definition: tree representation of cone dec}. Finiteness of $N, \mathcal{S}$, and the discreteness of the set of densities of stable minimal hypercones in \eqref{eqn: discrete densities stable cones}, imply that there are only finitely many coarse tree representations as in Definition \ref{definition: tree rep of large scale cone dec}. For a fixed coarse tree representation, $T = (V, E)$, one then considers the set of all triples associated to this coarse tree representation, $\mathcal{L}^{\prime}(T)$, and finds a countable covering of this space; to do so we cover each node of the coarse tree. By definition of coarse tree representation, as in Definition \ref{definition: coarse tree representation of cone dec}), we have that all tree representations of triples in $\mathcal{L}^\prime(T)$ have the same root node. Arguing exactly as in the proof of \cite[Theorem 9.6 part 2]{LW25}, we see that all tree representations of triples in $\mathcal{L}^\prime(T)$ are contained in the countable union $\mathrm{PC}(T)$, where the notation is as in \cite[(74) in the proof of Theorem 9.6]{LW25}. Thus, we can then find a countable subset, $\mathcal{L}^{\prime \prime}(T)$, of $\mathcal{L}^\prime(T)$ such that each element of $\mathrm{PC}(T)$ contains at least one tree representation of a triple in $\mathcal{L}^\prime(T)$, and contains exactly one tree representation of a triple in $\mathcal{L}^{\prime \prime}(T)$. Taking a union over the finitely many coarse tree representations allows us to conclude, with the $\beta$-closeness property following by construction of the coverings. In particular, we note that no change is needed in order to account for enclosed volumes (since they play no role in the tree representation of a cone decomposition), and moreover one can take the multiplicities to be one throughout, when reasoning as in the proof of \cite[Theorem 9.6]{LW25}.
\end{proof}

\begin{definition}
    Given $(g,t,\Omega) \in \mathcal{T}^{k,\alpha}$, $\beta \in (0,1/100)$, $\Lambda \geq 1$, $\delta > 0$ and $v \in \mathbb{N}$ as in Proposition \ref{prop: cone decomposition for neighbourhoods}, we define the \textbf{intermediate neighbourhoods}, denoted $\mathcal{T}^{k,\alpha}_{v}(g,t,\Omega; \Lambda,\delta,\beta)$,  to be the set of triples $(g^\prime, t^\prime, \Omega^\prime) \in  \mathcal{T}^{k, \alpha}(g, t, \Omega; \Lambda, \delta)$ admitting a large scale $(\theta_l, \beta, g, t, \Omega, \mathcal{S}, N)$-cone decomposition whose tree representation is $\beta$-close to that of $(g_v, t_v, \Omega_v)$.
\end{definition}

In particular, with this notation Proposition \ref{prop: cone decomposition for neighbourhoods} implies that we have the following decomposition of the space of triples:
    \begin{equation}\label{eqn: covering of neighbourhoods by intermediates}
        \mathcal{T}^{k, \alpha} (g, t, \Omega; \Lambda, \delta) = \bigcup_{v \in \mathbb{N}}  \mathcal{T}^{k,\alpha}_{v}(g,t,\Omega; \Lambda,\delta,\beta).
    \end{equation}
    
These intermediate neighbourhoods satisfy the following:

\begin{lemma}[Properties of intermediate neighbourhoods]\label{lemma: properties of intermediate neighbourhoods}
    Given $(g,t,\Omega) \in \mathcal{T}^{k,\alpha}$, $\beta \in (0,1/100)$, $\Lambda \geq 1$, $\delta > 0$, and $v \in \mathbb{N}$, then, with the notation in Proposition \ref{prop: cone decomposition for neighbourhoods}, we have that:
    \begin{enumerate}
        \item The space $\mathcal{T}^{k,\alpha}_{v}(g,t,\Omega; \Lambda,\delta,\beta)$ is compact.

        \item Given any function $\kappa : \mathcal{T}^{k,\alpha}\times (0,\infty) \rightarrow (0,\infty)$, for each $v \in \mathbb{N}$ there exists some $N_v \in \mathbb{N}$ and $\{(g_{v, l}, t_{v, l}, \Omega_{v, l})\}_{l = 1, \ldots, N_v} \subset \mathcal{T}^{k,\alpha}_{v}(g,t,\Omega; \Lambda,\delta,\beta)$ such that   
   \begin{equation*}
        \mathcal{T}^{k,\alpha}_{v}(g,t,\Omega; \Lambda,\delta,\beta) \subset \bigcup_{l = 1}^{N_v} \mathcal{L}^{k, \alpha}(g_{v, l}, t_{v, l}, \Omega_{v, l}; \Lambda, \kappa_{v,l}), 
   \end{equation*}
   where $\kappa_{v,l} = \kappa(g_{v, l}, t_{v, l}, \Omega_{v, l},\Lambda) > 0$ for each $l = 1,\dots, N_v$.
    \end{enumerate}
\end{lemma}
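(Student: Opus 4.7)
The plan is to establish part 1 by a standard diagonal extraction combined with Lemma \ref{lemma: compactness of isoperimetric regions} and the stability of cone decomposition tree representations under convergence in $\mathcal{T}^{k,\alpha}$, and then to deduce part 2 as a finite subcover argument built on the compactness from part 1.

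For part 1, given a sequence $\{(g_j, t_j, \Omega_j)\}_{j \geq 1} \subset \mathcal{T}^{k,\alpha}_v(g,t,\Omega;\Lambda,\delta,\beta)$, I would first exploit the uniform bound $\Vert g_j \Vert_{C^{k,\alpha}} \leq \Lambda$ and Arzel\`a--Ascoli to extract a subsequence (not relabelled) with $g_j \rightarrow g_\infty$ in $C^{k-1,\alpha}$ for some $g_\infty \in \mathcal{G}^{k,\alpha}$ satisfying $\Vert g_\infty \Vert_{C^{k,\alpha}} \leq \Lambda$, and then pass to a further subsequence so that $t_j \rightarrow t_\infty$. Lemma \ref{lemma: compactness of isoperimetric regions} then produces $\Omega_\infty \in \mathcal{I}(g_\infty, t_\infty)$ with $\Omega_j \rightarrow \Omega_\infty$ in $L^1_g$ and $|\Sigma_j|_{g_j} \rightarrow |\Sigma_\infty|_{g_\infty}$ as varifolds (whence $|\Sigma_\infty|_{g_\infty} \leq \Lambda$ by lower semi-continuity). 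The $\delta$-closeness conditions of $\mathcal{T}^{k,\alpha}(g,t,\Omega;\Lambda,\delta)$ are preserved in the limit by continuity and lower semi-continuity. To verify that $(g_\infty, t_\infty, \Omega_\infty)$ inherits a large scale $(\theta_l, \beta, g, t, \Omega, \mathcal{S}, N)$-cone decomposition with tree representation $\beta$-close to that of $(g_v, t_v, \Omega_v)$, I would use finiteness of the coarse tree structure (owing to the bounded branching $N$, finiteness of $\mathcal{S}$, and discreteness of stable minimal hypercone densities in \eqref{eqn: discrete densities stable cones}) to pass to a further subsequence along which all $(g_j, t_j, \Omega_j)$ share a common coarse tree. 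Varifold convergence, Allard's theorem at regular points (as in Remark \ref{remark: apply Allard}), and the smooth convergence of cone links for stable minimal hypercones in $\mathscr{C}_\Lambda$ then transfer the cone decomposition to the limit while preserving $\beta$-closeness to the fixed reference.

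For part 2, fix the function $\kappa$. For each $(g', t', \Omega') \in \mathcal{T}^{k,\alpha}_v(g,t,\Omega;\Lambda,\delta,\beta)$, set $\kappa' = \kappa((g',t',\Omega'),\Lambda)$ and choose an open neighbourhood $V_{g',t',\Omega'}$ of $(g',t',\Omega')$ in the subspace topology of $\mathcal{T}^{k,\alpha}_v(g,t,\Omega;\Lambda,\delta,\beta)$, of sufficiently small radius (in terms of $\kappa'$ and the reference tree of $(g_v, t_v, \Omega_v)$), so that every $(\bar g, \bar t, \bar \Omega) \in V_{g',t',\Omega'}$ lies in $\mathcal{L}^{k,\alpha}(g',t',\Omega';\Lambda,\kappa')$. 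The norm bound $\Vert \bar g \Vert_{C^{k,\alpha}} \leq \Lambda$ is automatic, the $\kappa'$-closeness requirements in the first two bullets of Definition \ref{def: pseudo-neighbourhoods} follow directly from the radius of $V_{g',t',\Omega'}$, and the density-matching of singularities follows since both $(g',t',\Omega')$ and $(\bar g, \bar t, \bar \Omega)$ have tree representations $\beta$-close to the common reference, which pins down a correspondence between their singular sets with equal densities; the $C^{k-1,\alpha}$- and $L^1$-closeness of $\bar g$ to $g'$ and $\bar \Omega$ to $\Omega'$ respectively then forces these corresponding points to lie within $\mathrm{inj}(\Sigma', g')$ of one another. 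Compactness of $\mathcal{T}^{k,\alpha}_v(g,t,\Omega;\Lambda,\delta,\beta)$ from part 1 then yields a finite subcover $\{V_{g_{v,l},t_{v,l},\Omega_{v,l}}\}_{l=1}^{N_v}$, delivering the desired collection of pseudo-neighbourhoods.

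The main obstacle is the density-matching condition for singularities in the definition of pseudo-neighbourhood: neither topological closeness in $\mathcal{T}^{k,\alpha}$ alone, nor mere membership in $\mathcal{T}^{k,\alpha}_v(g,t,\Omega;\Lambda,\delta,\beta)$, suffices to produce singularities with matching densities on its own. The argument must exploit both simultaneously, using the $\beta$-close tree representations to obtain a canonical correspondence of singular points with exactly matching densities (thanks to the discreteness in \eqref{eqn: discrete densities stable cones}) and the topological closeness to ensure these corresponding points are geometrically near one another. A secondary delicate point lies in the stability of the tree representation under limits in part 1, which requires reconciling varifold compactness with the discrete combinatorial data carried by the tree; here the finiteness of both $N$ and $\mathcal{S}$ allows the argument to reduce to finitely many coarse cases, after which continuity of cone parameters under smooth link convergence handles the remaining geometric data.
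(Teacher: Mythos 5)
Your proposal follows essentially the same approach as the paper's proof. For part 1 you correctly extract limits via Arzel\`a--Ascoli and Lemma~\ref{lemma: compactness of isoperimetric regions} and then transfer the cone decomposition to the limit via varifold convergence, Allard's theorem, and smooth convergence of cone links, exactly as the paper does; note that your intermediate step of passing to a subsequence with a common coarse tree is actually superfluous, since by definition every element of $\mathcal{T}^{k,\alpha}_v(g,t,\Omega;\Lambda,\delta,\beta)$ already has the same coarse tree as $(g_v,t_v,\Omega_v)$ by virtue of $\beta$-closeness. For part 2 the paper phrases the key intermediate claim (that each pseudo-neighbourhood of a point of $\mathcal{T}^{k,\alpha}_v$ contains a genuine open neighbourhood) as a contradiction argument via a sequence converging to the centre, while you assert it directly by taking a sufficiently small neighbourhood; these are equivalent, and your identification of the crux --- that the density-matching requirement in the definition of pseudo-neighbourhood is fulfilled by the shared coarse tree together with topological closeness pinning down the correspondence of singular points --- matches the paper's reasoning. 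Both arguments then conclude with the finite subcover from part 1's compactness.
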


\begin{proof} For part 1, we show sequential compactness in a similar manner to the proof of Lemma \ref{lemma: compactness of pseudo neighbourhoods}. Supposing that we had a sequence $\{(g_j,t_j,\Omega_j)\}_{j \geq 1} \subset \mathcal{T}^{k,\alpha}_{v}(g,t,\Omega; \Lambda,\delta,\beta)$, the uniform bounds, $\Vert g_j \Vert_{C^{k, \alpha}} \leq \Lambda$, $\Vert g - g_j \Vert_{C^{k - 1, \alpha}} \leq \delta$, $\vert \Omega \Delta \Omega_j \vert \leq \delta$, and $\vert t - t_j \vert \leq \delta$, along with the compactness of the embedding $C^{k, \alpha}(M) \hookrightarrow C^{k - 1, \alpha}(M)$ and Lemma \ref{lemma: compactness of isoperimetric regions}, imply the existence $(g_\infty, t_\infty, \Omega_\infty) \in \mathcal{T}^{k,\alpha}(g,t,\Omega; \Lambda,\delta)$, such that up to a subsequence (not relabelled) we have $(g_j,t_j,\Omega_j) \rightarrow (g_\infty,t_\infty,\Omega_\infty)$.

\bigskip

To conclude we need to show that $(g_\infty,t_\infty,\Omega_\infty) \in \mathcal{T}^{k,\alpha}_{v}(g,t,\Omega; \Lambda,\delta,\beta)$. The existence of a large scale $(\Lambda, \beta, g, t, \Omega, \mathcal{S}, N)$-cone decomposition follows from Proposition \ref{prop: cone decomposition for neighbourhoods} part 1, we now show that the tree representations of the large-scale cone decompositions of $(g_\infty, t_\infty, \Omega_\infty)$ and $(g_v, t_v, \Omega_v)$ are $\beta$-close. Consider a subtree rooted at an arbitrary $\alpha$ child of the root node, $(\Sigma, g, \{p_{\alpha}\}, \{r_{\alpha}\})$. Each $\vert \Sigma_j \vert_{g_j} \mres B^g_{r_{\alpha}}(p_{\alpha})$ admits a cone decomposition whose tree representation is $\beta$-close to the one of $\vert \Sigma_v \vert_{g_v} \mres B^g_{r_{\alpha}}(p_{\alpha})$, hence the number of strong-cone regions, and the number of smooth regions stay constant along the sequence. Furthermore, the densities of the corresponding cones are the same, as well as the smooth models $S_{s_b}$, and they also stay constant along the sequence. 

\bigskip

Consider then a strong cone region of the cone decomposition of $\vert \Sigma_v \vert \mres B^g_{r_{\alpha}}(p_{\alpha})$, as well as the corresponding sequence of strong cone regions $\beta$-close to it, arising from the sequence of the $\vert \Sigma_j \vert_{g_j}$; let us denote by $\mathbf{C}_v$ the cone for the former, while $\mathbf{C}_j$ the cones for the latter. In particular, the densities of the cones $\mathbf{C}_j$ are bounded from above and from below uniformly, in terms of $\theta_{\mathbf{C}_v}(0)$ and $\beta$, thus implying that we can extract, by the compactness of stable minimal hypercones discussed in Subsection \ref{subsec: notation}, a limiting cone, $\mathbf{C}_\infty$, having the same density as the sequence, and such that the corresponding links converge smoothly and with multiplicity one; this in turn implies $d_\mathcal{H}(\mathbf{C}_\infty \cap \partial \mathbb{B}_1, \mathbf{C}_v \cap \partial \mathbb{B}_1) \leq \beta$. The sequences of centres $\{x_j\}_{j \geq 1}$, and radii $\{\rho_j\}_{j \geq 1}, \{R_j\}_{j \geq 1}$ are also uniformly bounded, and we can therefore extract further subsequences (not relabelled) converging to limit points $x_\infty, \rho_\infty, R_\infty$ respectively. These limit points are $\beta$-close to $x_v, \rho_v,$ and $R_v$ as the sequences were $\beta$-close. A combination of Allard's theorem and unique continuation implies that the limiting varifold, $\vert \Sigma_\infty \vert_{g_\infty}$, restricted to the limiting annulus is a strong cone region with cone $\mathbf{C}_\infty$. This is then a node of the tree representation of the cone decomposition of $\vert \Sigma_\infty \vert_{g_\infty} \mres B^g_{ r_{\alpha}}(p_{\alpha}).$ One can argue similarly for smooth regions which, by definition of being $\beta$-close, have the same smooth models. Thus, every element of the cone decomposition of $\vert \Sigma_j \vert_{g_j}$ converges to the corresponding element of the cone decomposition of $\vert \Sigma_\infty \vert_{g_\infty}$. To conclude, we need to ensure that no further node (of either type I or type II) arises from the limiting procedure, however, as cone regions and smooth regions cover the whole domain of every $\vert \Sigma_j \vert_{g_j}$, we obtain a full covering of the domain of $\vert \Sigma_\infty \vert_{g_\infty}$, thus implying that no extra node is created and concluding the proof that $(g_\infty,t_\infty,\Omega_\infty) \in \mathcal{T}^{k,\alpha}_{v}(g,t,\Omega; \Lambda,\delta,\beta)$; hence $\mathcal{T}^{k,\alpha}_{v}(g,t,\Omega; \Lambda,\delta,\beta)$ is compact as desired.   

\bigskip

For part 2, we utilise the compactness established in part 1 to show that, for any $(g^\prime, t^\prime, \Omega^\prime) \in  \mathcal{T}_{v}^{k, \alpha}(g, t, \Omega; \Lambda, \delta, \beta)$ and $\delta^\prime > 0$, the pseudo-neighbourhood $\mathcal{L}^{k, \alpha}(g^\prime, t^\prime, \Omega^\prime; \Lambda^\prime, \delta^\prime)$ contains an open neighbourhood of $(g^\prime, t^\prime, \Omega^\prime)$ in $\mathcal{T}_{v}^{k, \alpha}(g, t, \Omega; \Lambda, \delta, \beta)$. Assuming for a contradiction this failed, then in particular there is $(g^\prime, t^\prime, \Omega^\prime) \in \mathcal{T}_{v}^{k, \alpha}(g, t, \Omega; \Lambda, \delta, \beta)$ and $\delta^\prime > 0$ such that the corresponding pseudo-neighbourhood $\mathcal{L}^{k, \alpha}(g^\prime, t^\prime, \Omega^\prime; \Lambda, \delta^\prime)$ does not contain any open neighbourhood. Thus, considering a countable neighbourhood basis around $(g^\prime, t^\prime, \Omega^\prime)$, there exists a sequence $$\{(g_j, t_j, \Omega_j)\}_{j \geq 1} \subset \mathcal{T}_{v}^{k, \alpha}(g, t, \Omega; \Lambda, \delta, \beta) \setminus \mathcal{L}^{k, \alpha}(g^\prime, t^\prime, \Omega^\prime; \Lambda, \delta^\prime)$$ converging to $(g^\prime, t^\prime, \Omega^\prime)$. In particular, for $j \geq 1$ sufficiently large, we have that: 
    \begin{itemize}
        \item $g_j$ is a $C^{k, \alpha}$ metric on $M$ satisfying $\Vert g_j \Vert_{C^{k, \alpha}} \leq \Lambda$, and $\Vert g_j - g^\prime \Vert_{C^{k - 1, \alpha}} \leq \delta^\prime$.
        \item $\Omega_j \in \mathcal{I}(g_j, t_j)$ with $\vert \Omega_j \Delta \Omega^\prime \vert_g \leq \delta^\prime$ and $\vert t^\prime - t_j \vert \leq \delta^\prime$. 
        \item Every $(g_j, t_j, \Omega_j)$ admits a large scale $(\Lambda, \beta, g, t, \Omega, \mathcal{S}, N)$-cone decomposition whose tree representation is $\beta$-close to that of $(g_v, t_v, \Omega_v)$. The compactness in part 1 of this lemma implies that $(g^\prime, t^\prime, \Omega^\prime)$ satisfies the same property. Thus, for every $x^\prime \in \mathrm{Sing}(\Sigma^\prime)$ there exists $x_j \in \mathrm{Sing}(\Sigma_j)$ with $\theta_{\vert \Sigma^\prime \vert_{g'}}(p^\prime) = \theta_{\vert \Sigma_j \vert_{g_j}}(p_j)$. This follows from the definition of being $\beta$-close for two $(\theta, \beta, \mathcal{S}, N)$-tree representation being close, see Definition \ref{definition: gamma-close tree representations of cone dec}, as the labels of the coarse tree representation have to coincide. 
    \end{itemize}
    In particular, these conditions above imply that by definition of the pseudo-neighbourhoods we have $(g_j, t_j, \Omega_j) \in \mathcal{L}^{k, \alpha}(g^\prime, t^\prime, \Omega^\prime; \Lambda, \delta^\prime)$ for sufficiently large $j \geq 1$, giving the desired contradiction.

    \bigskip

    To conclude part 2, given $\kappa : \mathcal{T}^{k,\alpha}\times (0,\infty) \rightarrow (0,\infty)$ we consider the cover of $\mathcal{T}_v^{k,\alpha}(g,t,\Omega;\Lambda,\delta,\beta)$ formed by open neighbourhoods of each $(g',t',\Omega')$ of radius at most $\kappa(g',t',\Omega',\Lambda) > 0$ (the existence of which is guaranteed by the previous paragraph). The compactness established in part 1 above then guarantees the desired conclusion.
\end{proof}

\begin{lemma}\label{lem: covering of triples by pseudo neighbourhoods}
    Given any function $\kappa : \mathcal{T}^{k,\alpha}\times (0,\infty) \rightarrow (0,\infty)$, there is a countable collection of tuples, $\{(g_j,t_j,\Omega_j,\Lambda_j)\}_{j \geq 1} \subset \mathcal{T}^{k,\alpha} \times (0,\infty)$, such that $$\mathcal{T}^{k,\alpha} = \bigcup_{j \geq 1} \mathcal{L}^{k,\alpha}(g_j,t_j, \Omega_j;\Lambda_j,\kappa_j),$$
    where $\kappa_j = \kappa(g_j,t_j,\Omega_j,\Lambda_j)$.
\end{lemma}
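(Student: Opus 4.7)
The plan is to cover $\mathcal{T}^{k,\alpha}$ by countably many pseudo-neighbourhoods in three nested steps: (i) cover $\mathcal{T}^{k,\alpha}$ by countably many $\delta$-neighbourhoods $\mathcal{T}^{k,\alpha}(g,t,\Omega;\Lambda,\delta)$ from Proposition \ref{prop: cone decomposition for neighbourhoods}; (ii) decompose each such $\delta$-neighbourhood into countably many intermediate neighbourhoods via \eqref{eqn: covering of neighbourhoods by intermediates}; and (iii) cover each intermediate neighbourhood by finitely many pseudo-neighbourhoods using Lemma \ref{lemma: properties of intermediate neighbourhoods} part 2. Steps (ii) and (iii) are immediate from the cited results, so the main work is in step (i), which is where the hard part lies — reducing an a priori uncountable family of $\delta$-neighbourhoods to a countable subfamily.

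To carry out (i), I would fix $\beta \in (0,1/100)$ and exhaust $\mathcal{T}^{k,\alpha} = \bigcup_{\Lambda \in \mathbb{N}} \mathcal{T}^{k,\alpha}_\Lambda$, where $\mathcal{T}^{k,\alpha}_\Lambda = \{(g,t,\Omega) \in \mathcal{T}^{k,\alpha} \, | \, \Vert g\Vert_{C^{k,\alpha}} \leq \Lambda, \, |\Sigma|_g \leq \Lambda\}$; every triple sits in some $\mathcal{T}^{k,\alpha}_\Lambda$ since $\Vert g\Vert_{C^{k,\alpha}}$ and $|\Sigma|_g$ are finite. With the subspace topology, each $\mathcal{T}^{k,\alpha}_\Lambda$ is second countable: the closed $C^{k,\alpha}$-ball of radius $\Lambda$ embeds compactly into $C^{k-1,\alpha}$ by Arzela--Ascoli, the admissible range of enclosed volumes is a bounded interval in $\mathbb{R}$, and the set of Caccioppoli sets of perimeter at most $\Lambda$ is compact in $L^1$ by \cite[Corollary 12.27]{M12}. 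For each $(g,t,\Omega) \in \mathcal{T}^{k,\alpha}_\Lambda$, Proposition \ref{prop: cone decomposition for neighbourhoods} supplies some $\delta = \delta(g,t,\Omega,\beta,\Lambda) > 0$ for which $\mathcal{T}^{k,\alpha}(g,t,\Omega;\Lambda,\delta)$ contains the open $\delta$-ball around $(g,t,\Omega)$ in $\mathcal{T}^{k,\alpha}_\Lambda$; invoking the Lindel\"of property of the second countable space $\mathcal{T}^{k,\alpha}_\Lambda$ then yields a countable subfamily $\{\mathcal{T}^{k,\alpha}(g^\Lambda_i,t^\Lambda_i,\Omega^\Lambda_i;\Lambda,\delta^\Lambda_i)\}_{i \in \mathbb{N}}$ whose union covers $\mathcal{T}^{k,\alpha}_\Lambda$.

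Combining the three layers: for each $(\Lambda, i) \in \mathbb{N}^2$, Proposition \ref{prop: cone decomposition for neighbourhoods} part 2 together with \eqref{eqn: covering of neighbourhoods by intermediates} writes $\mathcal{T}^{k,\alpha}(g^\Lambda_i,t^\Lambda_i,\Omega^\Lambda_i;\Lambda,\delta^\Lambda_i)$ as a countable union of intermediate neighbourhoods $\mathcal{T}^{k,\alpha}_v(g^\Lambda_i,t^\Lambda_i,\Omega^\Lambda_i;\Lambda,\delta^\Lambda_i,\beta)$ indexed by $v \in \mathbb{N}$, and Lemma \ref{lemma: properties of intermediate neighbourhoods} part 2 applied to each such intermediate neighbourhood with the prescribed function $\kappa$ produces a finite cover by pseudo-neighbourhoods of the form $\mathcal{L}^{k,\alpha}(g_{v,l},t_{v,l},\Omega_{v,l};\Lambda,\kappa(g_{v,l},t_{v,l},\Omega_{v,l},\Lambda))$. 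Reindexing these pseudo-neighbourhoods over the countable index set $(\Lambda, i, v, l) \in \mathbb{N}^4$ as $\mathcal{L}^{k,\alpha}(g_j,t_j,\Omega_j;\Lambda_j,\kappa_j)$ with $\Lambda_j = \Lambda$ and $\kappa_j = \kappa(g_j,t_j,\Omega_j,\Lambda_j)$ yields the desired countable collection of tuples, whose associated pseudo-neighbourhoods cover $\mathcal{T}^{k,\alpha}$. The main subtlety, beyond a clean chaining of already-established results, is the verification of second countability of $\mathcal{T}^{k,\alpha}_\Lambda$, which is precisely what enables the passage from the uncountable family of $\delta$-neighbourhoods to a countable one.
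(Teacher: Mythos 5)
Your proposal is correct and follows the same three-layer decomposition as the paper: exhaust the space of triples by mass-bounded slices, reduce to $\delta$-neighbourhoods via Proposition \ref{prop: cone decomposition for neighbourhoods}, break those into intermediate neighbourhoods via \eqref{eqn: covering of neighbourhoods by intermediates}, and cover each by finitely many pseudo-neighbourhoods via Lemma \ref{lemma: properties of intermediate neighbourhoods} part 2. The only substantive deviation is in the first layer: the paper defines $\mathcal{T}^{k,\alpha}(\Lambda)$ with an additional volume constraint $t \in [(2\Lambda)^{-1}, \vert M\vert_g - (2\Lambda)^{-1}]$, verifies that $\mathcal{T}^{k,\alpha}(\Lambda)$ is \emph{compact} (using Arzel\`a--Ascoli, Lemma \ref{lemma: compactness of isoperimetric regions}, and the closedness of the $C^{k,\alpha}$-ball under $C^{k-1,\alpha}$ limits), and extracts a \emph{finite} subcover; you instead drop the volume constraint, argue only that $\mathcal{T}^{k,\alpha}_\Lambda$ is \emph{second countable} (hence Lindel\"of), and extract a \emph{countable} subcover. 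Since the lemma only asks for a countable cover, the two are interchangeable here, and your version is marginally more economical: it avoids the volume restriction and the need to check that subsequential limits stay in the slice (closedness of the $C^{k,\alpha}$ ball under $C^{k-1,\alpha}$ convergence, perimeter upper bound in the limit, endpoint behaviour of $t$), at the cost of invoking a soft topological fact about separability. Your inclusion $\mathcal{T}^{k,\alpha}(g,t,\Omega;\Lambda,\delta) \supset (\text{open } \delta\text{-ball in } \mathcal{T}^{k,\alpha}_\Lambda)$ is also the correct bridge between the open cover and the closed $\delta$-neighbourhoods, which the paper leaves implicit.
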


\begin{proof}
    For $\Lambda  \geq 1 $ we let 
    \begin{align*}
        \mathcal{T}^{k, \alpha}(\Lambda) = \{(g, t, \Omega) \in \mathcal{T}^{k, \alpha}; \Vert g \Vert_{C^{k, \alpha}} \leq \Lambda,\vert \Sigma \vert_g \leq \Lambda, t \in [(2\Lambda)^{-1}, \vert M \vert_g - (2\Lambda)^{-1}] \}, 
    \end{align*} 
    which is compact; indeed, for a sequence $\{(g_j, t_j, \Omega_j)\}_{j \geq 1} \subset \mathcal{T}^{k, \alpha}(\Lambda)$ the compactness of the embedding $C^{k, \alpha}(M) \hookrightarrow C^{k - 1, \alpha}(M)$ and the  set $[(2\Lambda)^{-1}, \vert M \vert_g - (2\Lambda)^{-1}]$ as well as Lemma \ref{lemma: compactness of isoperimetric regions}, imply the existence of a subsequential limit. In particular, there exist some finite collection of triples, $\{(g_i,t_i,\Omega_i)\}_{i = 1}^{K_\Lambda}\subset \mathcal{T}^{k,\alpha}(\Lambda)$, such that
    \begin{equation}\label{eqn: mass bound triple equality}
        \mathcal{T}^{k, \alpha}(\Lambda) = \bigcup_{i = 1}^{K_\Lambda} \mathcal{T}^{k, \alpha}(g_i, t_i, \Omega_i; \Lambda, \delta_i)
    \end{equation}
    for some $K_\Lambda \in \mathbb{N}$ and with $\delta_i > 0$ for each $i = 1,\dots, K_\Lambda$. Consequently, we infer the following decomposition of the space of triples
    \begin{align*}
        \mathcal{T}^{k, \alpha} & = \bigcup_{\Lambda \in \mathbb{N}} \mathcal{T}^{k, \alpha}(\Lambda) \\
        & = \bigcup_{\Lambda \in \mathbb{N}}\bigcup_{i = 1}^{K_\Lambda} \mathcal{T}^{k, \alpha}(g_i, t_i, \Omega_i; \Lambda, \delta_i) \\
        & = \bigcup_{\Lambda \in \mathbb{N}} \bigcup_{i = 1}^{K_\Lambda} \bigcup_{v = 1}^{\infty}  \mathcal{T}_{v}^{k, \alpha}(g_i, t_i, \Omega_i; \Lambda, \delta_i,\beta) \\
         & \subset \bigcup_{\Lambda \in \mathbb{N}} \bigcup_{i = 1}^{K_\Lambda} \bigcup_{v = 1}^{\infty} \bigcup_{l = 1}^{N_v} \mathcal{L}^{k, \alpha}(g_{v, i, l}, t_{v, i, l}, \Omega_{v, i, l}; \Lambda, \kappa_{v,i,l}); 
    \end{align*}
    here the second equality follows from (\ref{eqn: mass bound triple equality}), the third equality from (\ref{eqn: covering of neighbourhoods by intermediates}) after applying Proposition \ref{prop: cone decomposition for neighbourhoods} part 2, and the final inclusion from Lemma \ref{lemma: properties of intermediate neighbourhoods} part 2 where we set $\kappa_{v,i,l} = \kappa(g_{v,i,l},t_{v,i,l},\Omega_{v,i,l},\Lambda) > 0$. The desired result then follows by reindexing, after noting that the final inclusion must be an equality since the pseudo-neighbourhoods are subsets of $\mathcal{T}^{k,\alpha}$. 
\end{proof}

\begin{theorem}\label{thm: generic semi-nondegeneracy}
    Let $\mathcal{U}_0^{k,\alpha}$ be the set of $(g,t) \in \mathcal{G}^{k,\alpha} \times \mathbb{R}$ such that every isoperimetric region with respect to the metric $g$ of enclosed volume $t$ is semi-nondegenerate, then $\mathcal{U}_0^{k,\alpha}$ is a generic subset of $\mathcal{G}^{k,\alpha} \times \mathbb{R}$; namely, semi-nondegeneracy is a generic property for metric volume pairs. Moreover, if $\bar{g} \in \mathcal{G}^{k,\alpha}$ then $\mathcal{U}^{k,\alpha}_0 \cap ([\bar{g}] \times \mathbb{R})$ is generic in $[\bar{g}] \times \mathbb{R}$.
\end{theorem}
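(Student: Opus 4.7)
The plan is to assemble the two main tools developed in this section: the Sard--Smale type statement in Lemma \ref{lemma: generic pairs for neighbourhoods}, which for each triple produces an open and dense set of metric volume pairs forcing semi-nondegeneracy of every associated isoperimetric region contained in some pseudo-neighbourhood of the triple, and the covering statement in Lemma \ref{lem: covering of triples by pseudo neighbourhoods}, which allows us to cover the whole space of triples by countably many pseudo-neighbourhoods with radii of our choice.

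First, I would define a function $\kappa \colon \mathcal{T}^{k,\alpha} \times (0,\infty) \to (0,\infty)$ by setting $\kappa(g,t,\Omega,\Lambda)$ equal to the constant $\kappa_0 > 0$ produced by Lemma \ref{lemma: generic pairs for neighbourhoods} for the input $(g,t,\Omega,\Lambda)$. Applying Lemma \ref{lem: covering of triples by pseudo neighbourhoods} with this choice of $\kappa$ yields a countable collection $\{(g_j,t_j,\Omega_j,\Lambda_j)\}_{j\ge 1} \subset \mathcal{T}^{k,\alpha} \times (0,\infty)$ with
\begin{equation*}
    \mathcal{T}^{k,\alpha} = \bigcup_{j \geq 1}\mathcal{L}^{k,\alpha}(g_j,t_j,\Omega_j;\Lambda_j,\kappa_j),
\end{equation*}
where $\kappa_j = \kappa(g_j,t_j,\Omega_j,\Lambda_j)$. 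For each $j \geq 1$ let $\mathcal{G}_j = \mathcal{G}^{k,\alpha}(g_j,t_j,\Omega_j;\Lambda_j,\kappa_j)$, which is open and dense in $\mathcal{G}^{k,\alpha} \times \mathbb{R}$ by Lemma \ref{lemma: generic pairs for neighbourhoods}, and set $\mathcal{U}^{k,\alpha} = \bigcap_{j \geq 1}\mathcal{G}_j$, a countable intersection of open and dense sets and hence a generic subset of $\mathcal{G}^{k,\alpha} \times \mathbb{R}$.

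To finish, I would verify $\mathcal{U}^{k,\alpha} \subset \mathcal{U}_0^{k,\alpha}$. Given $(g,t) \in \mathcal{U}^{k,\alpha}$ and an arbitrary $\Omega \in \mathcal{I}(g,t)$, the triple $(g,t,\Omega)$ lies in $\mathcal{T}^{k,\alpha}$, so by the covering above there is some $j \ge 1$ with $(g,t,\Omega) \in \mathcal{L}^{k,\alpha}(g_j,t_j,\Omega_j;\Lambda_j,\kappa_j)$. Since $(g,t) \in \mathcal{G}_j$, the very definition of $\mathcal{G}^{k,\alpha}(g_j,t_j,\Omega_j;\Lambda_j,\kappa_j)$ forces $\Omega$ to be semi-nondegenerate; as $\Omega$ was arbitrary, $(g,t) \in \mathcal{U}_0^{k,\alpha}$. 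Being a superset of the generic set $\mathcal{U}^{k,\alpha}$, the set $\mathcal{U}_0^{k,\alpha}$ is itself generic in $\mathcal{G}^{k,\alpha} \times \mathbb{R}$.

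For the conformal moreover clause, fix $\bar{g} \in \mathcal{G}^{k,\alpha}$. The moreover clause of Lemma \ref{lemma: generic pairs for neighbourhoods} ensures that each $\mathcal{G}_j \cap ([\bar{g}] \times \mathbb{R})$ is open and dense in $[\bar{g}] \times \mathbb{R}$, so $\mathcal{U}^{k,\alpha} \cap ([\bar{g}] \times \mathbb{R})$ is generic in $[\bar{g}] \times \mathbb{R}$, and the same inclusion argument as above yields $\mathcal{U}^{k,\alpha} \cap ([\bar{g}] \times \mathbb{R}) \subset \mathcal{U}_0^{k,\alpha} \cap ([\bar{g}] \times \mathbb{R})$, giving the claim. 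At this stage the heavy technical work has already been absorbed into Lemmas \ref{lem: covering of triples by pseudo neighbourhoods} and \ref{lemma: generic pairs for neighbourhoods}, so there is no real obstacle left; the only subtlety is ensuring that the pseudo-neighbourhood radii used in the covering are precisely those produced by the local Sard--Smale lemma, which is exactly what the freedom to choose an arbitrary function $\kappa$ in Lemma \ref{lem: covering of triples by pseudo neighbourhoods} is designed to accommodate.
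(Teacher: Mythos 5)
Your proof is correct and takes essentially the same approach as the paper: you feed the constant $\kappa_0$ from Lemma \ref{lemma: generic pairs for neighbourhoods} into the covering Lemma \ref{lem: covering of triples by pseudo neighbourhoods}, intersect the resulting countable family of open dense sets, and verify the inclusion into $\mathcal{U}_0^{k,\alpha}$ via the covering, with the conformal clause handled the same way. No meaningful differences from the paper's argument.
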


\begin{proof}
    Let $\kappa : \mathcal{T}^{k,\alpha} \times (0,\infty) \rightarrow (0,\infty)$ be the function taking $(g,t,\Omega,\Lambda) \in \mathcal{T}^{k,\alpha} \times (0,\infty)$ to $\kappa_0$ as determined by Lemma \ref{lemma: generic pairs for neighbourhoods}; applying Lemma \ref{lem: covering of triples by pseudo neighbourhoods} for this function guarantees the existence of a countable collection $\{(g_j,t_j,\Omega_j,\Lambda_j)\}_{j \geq 1} \subset \mathcal{T}^{k,\alpha} \times (0,\infty)$ such that $\mathcal{T}^{k,\alpha} = \bigcup_{j \geq 1} \mathcal{L}^{k,\alpha}(g_j,t_j, \Omega_j;\Lambda_j,\kappa_j),$
    where $\kappa_j = \kappa(g_j,t_j,\Omega_j,\Lambda_j)$. By Lemma \ref{lemma: generic pairs for neighbourhoods}, for each $(g_j,t_j,\Omega_j)$ as above there is some open and dense subset, $\mathcal{G}^{k,\alpha}(g
    _j,t_j,\Omega_j;\Lambda_j,\kappa_j)$, of $\mathcal{G}^{k,\alpha} \times \mathbb{R}$ and hence $\cap_{j \geq 1}\mathcal{G}^{k,\alpha}(g
    _j,t_j,\Omega_j;\Lambda_j,\kappa_j)$ is a countable intersection of open and dense sets in $\mathcal{G}^{k,\alpha} \times \mathbb{R}$. Moreover, if $(g,t) \in \cap_{j \geq 1}\mathcal{G}^{k,\alpha}(g
    _j,t_j,\Omega_j;\Lambda_j,\kappa_j)$ and $\Omega \in \mathcal{I}(g,t)$ then we have $(g,t,\Omega) \in \mathcal{L}^{k,\alpha}(g_j,t_j,\Omega_j;\Lambda_j,\kappa_j)$ for some $j \geq 1$, and thus $\Omega$ is semi-nondegenerate since $(g,t) \in \mathcal{G}^{k,\alpha}(g_j,t_j,\Omega_j;\Lambda_j,\kappa_j)$; hence we see that $\cap_{j \geq 1}\mathcal{G}^{k,\alpha}(g
    _j,t_j,\Omega_j;\Lambda_j,\kappa_j) \subset \mathcal{U}^{k,\alpha}_0$ as desired. 

    \bigskip

    For the final statement, if $\bar{g} \in \mathcal{G}^{k,\alpha}$ then by Lemma \ref{lemma: generic pairs for neighbourhoods} we have that $\mathcal{G}^{k,\alpha}(g
    _j,t_j,\Omega_j;\Lambda_j,\kappa_j) \cap ([\bar{g}] \times \mathbb{R})$ is open and dense in $[\bar{g}] \times \mathbb{R}$ for each $j \geq 1$, and thus $\cap_{j \geq 1} \mathcal{G}^{k,\alpha}(g
    _j,t_j,\Omega_j;\Lambda_j,\kappa_j) \cap ([\bar{g}] \times \mathbb{R)} \subset \mathcal{U}^{k,\alpha}_0 \cap ([\bar{g}] \times \mathbb{R})$ is a countable of intersection of open and dense sets in $[\bar{g}] \times \mathbb{R}$ as desired.
\end{proof}

\begin{remark}
    We note that, by Remark \ref{rem: slow growth remarks}, if $(g,t) \in \mathcal{U}_0^{k,\alpha}$ and $\Omega \in \mathcal{I}(g,t)$ is such that $\mathrm{Sing}(\Sigma) = \emptyset$, then in particular $\Sigma$ is nondegenerate.
\end{remark}

\section{Singular capacity for isoperimetric regions}\label{sec: singular capacity for isoperimetric regions}

We now introduce a notion to count the number of potential singularities that can arise along a sequence of converging isoperimetric regions.

\subsection{Definitions and properties}

The following definitions are made inductively over the densities of stable minimal hypercones which, as noted in (\ref{eqn: discrete densities stable cones}) in Subsection \ref{subsec: notation}, are discrete: 

\begin{definition}[Singular capacity for volume-constrained minimisers]\label{def: singular capacity}
Given $\delta > 0$, a Riemannian metric, $g$, on $\mathbb{B}_\delta$, and $\Omega\in \mathrm{VCM}(\delta,g)$, we define the \textbf{singular capacity} of $\Omega$ in an open set $U \subset \mathbb{B}_\delta$ by setting 
        $${\bf SCap} (\Omega,U,g) = \sum_{p \in \mathrm{Sing}(\Sigma) \cap U} {\bf SCap} ({\bf C}_p \Sigma) \in [0,\infty],$$
        where $\mathbf{C}_p\Sigma$ is the unique tangent cone to $\Sigma = \partial \Omega$ at $p \in \mathrm{Sing}(\Sigma)$, and the singular capacity, $\mathbf{SCap}(\mathbf{C})$, of a stable minimal hypercone, $\mathbf{C} \in \mathscr{C}$, is defined inductively by the following:
        
    \begin{enumerate}
        \item For a hyperplane, ${\bf P} \in \mathscr{C}_{\omega_7}$, we set $\mathbf{SCap}({\bf P}) = 0$.

        \item For all non-planar cones, ${\bf C }\in \mathscr{C} \setminus \mathscr{C}_{\omega_7}$, we set
\begin{equation*}
    \mathbf{SCap}({\bf C}) = 1 + \sup \left\{ \limsup_{j \to \infty} 
 {\bf SCap} (\Omega_j,\mathbb{B}_1,g_j)\right\},
\end{equation*}
    where the supremum is taken over all sequences of pairs, $\{(g_j, \Omega_j)\}_{j \geq 1}$, for $g_j$ a $C^{k,\alpha}$ metric on $\mathbb{B}_2$ and $\Omega_j \in\mathrm{VCM}(2,0,g_j)$ such that both:
\begin{enumerate}[(i)]
    \item  $g_j \to g_{\mathrm{eucl}}$ in $C^4(\mathbb{B}_1)$, 
          and $\Omega_j \to \mathbf{E}^+$ as $j \to \infty$, where $\mathbf{E}^+\in \mathcal{C}(\mathbb{B}_2)$ with $\partial  \mathbf{E}^+= {\bf C}$.
          
    \item $\theta_{|\Sigma_j|}(p_j) < \theta_C(0)$ for any $p_j \in \mathrm{Sing}(\Sigma_j) \cap \mathbb{B}_1$.
    \end{enumerate}
    If there is no such sequence of pairs satisfying the above for ${\bf C}$, we define $\mathbf{SCap}({\bf C}) = 1$.
\end{enumerate}
\end{definition}

As observed in Subsection \ref{subsec: preliminaries}, isoperimetric regions are locally volume constrained minimisers and so we make the following definition:

\begin{definition}[Singular Capacity for isoperimetric regions]\label{def: singular capacity for isoperimetric regions}
   Given $(g,t) \in \mathcal{G}^{k,\alpha} \times \mathbb{R}$ and an open set $U \subset M$, we define the \textbf{singular capacity} of $\Omega\in \mathcal{I}(g,t)$ in $U$ by setting 
        $${\bf SCap} (\Omega,U,g) = \sum_{p\in \mathrm{Sing}(\Sigma) \cap U} {\bf SCap} ({\bf C}_p \Sigma) \in [0,\infty],$$
        where $\mathbf{C}_p\Sigma$ denotes the unique tangent cone to $\Sigma$ at $p \in \mathrm{Sing}(\Sigma)$. We then define
        $$ {\bf SCap} (g,t) = \sup \{ {\bf SCap} (\Omega,M,g) \, | \, \Omega\in \mathcal{I}(g,t) \}.$$
\end{definition}

\begin{remark}\label{rem: singcap zero for smooth}
    We note that, according to the definition above, for any ${\bf C}\in \mathscr{C}_{\theta_1},$ we have ${\bf SCap}({\bf C}) = 1 $, and for any ${\bf C}\in \mathscr{C}_{\theta_2},$ we have $
    \mathbf{SCap}({\bf C}) = 1 + \sup \left\{ \limsup_{j \to \infty} 
\#(\mathrm{Sing}(\Omega_j)\cap \mathbb{B}_1)\right\},$ where the supremum is taken over all sequences as in Definition \ref{def: singular capacity} part 2 and $\#(\mathrm{Sing}(\Omega_j)\cap \mathbb{B}_1)$ denotes the number of elements in the set $(\mathrm{Sing}(\Omega_j)\cap \mathbb{B}_1)$. Also, if $|\Sigma|_g$ is sufficiently close to either zero or $|M|_g$, then by Remark \ref{rem: Morgan-Johnson} we have that $\mathbf{SCap}(\Omega,M,g) = 0$ since $\mathrm{Sing}(\Sigma) = \emptyset$.
\end{remark}

We now establish two technical lemmas to aid us in working with the singular capacity:

\begin{lemma}\label{Lemma: Density drop}
        Given a Riemannian metric, $g$, on $\mathbb{B}_5$, $\Omega \in\mathrm{VCM}(5,g)$, and ${\bf C}\in \mathscr{C}_{\Lambda}$ for some $\Lambda > 1$ with ${\bf E}^+\in \mathcal{C}(\mathbb{B}_5)$ such that $\partial {\bf E}^+ = {\bf C}$, then for each $\epsilon\in (0,1)$ there exists $\delta > 0$, depending on $\varepsilon$ and $\Lambda$, such that if both $\Vert g-g_{\mathrm{eucl}}\Vert_{C^4} \le \delta$ and $|\Omega\Delta{\bf E^+}|\le \delta$, then $\mathrm{Sing}(\Sigma) \cap \mathbb{B}_4 \subset \mathbb{B}_\epsilon$, and for each $x\in \mathbb{B}_1\cap \overline\Sigma$ at least one of the following holds:
        \begin{enumerate}
            \item $\theta_{|\Sigma|_g}(x)\le \theta_{\bf C}(0) - 2 \delta$.
            \item $\mathrm{Sing}(\Sigma)\cap \mathbb{B}_4 \subset \{x\}$.
        \end{enumerate}
    \end{lemma}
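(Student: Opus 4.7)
The plan is a blow-up and compactness argument, using the discrete density spectrum of stable minimal hypercones together with the classification of singularities for area-minimising $7$-boundaries in $\mathbb{R}^{8}$. For the inclusion $\mathrm{Sing}(\Sigma) \cap \mathbb{B}_4 \subset \mathbb{B}_\epsilon$: the cone $\mathbf{C}$ is smooth on $\mathbb{B}_4 \setminus \overline{\mathbb{B}_\epsilon}$ and, as recalled in Subsection \ref{subsec: preliminaries}, $\Omega$ is a $(\Lambda, r_0)$-almost minimiser with constants uniformly controlled under small $C^4$-perturbations of $g$. Since $\Omega \to \mathbf{E}^+$ in $L^1$ as $\delta \to 0$ and $g \to g_{\mathrm{eucl}}$ in $C^4$, Allard's regularity theorem gives smooth graphical convergence of $\Sigma$ to $\mathbf{C}$ on $\mathbb{B}_4 \setminus \overline{\mathbb{B}_\epsilon}$ once $\delta$ is small enough, so no singular point of $\Sigma$ lies outside $\mathbb{B}_\epsilon$.

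For the main statement I would argue by contradiction. Suppose there exist $\epsilon_0 > 0$ and sequences $\delta_j \to 0$, $g_j$, $\Omega_j$, and $x_j \in \mathbb{B}_1 \cap \overline{\Sigma_j}$ violating both conclusions, so $\theta_{|\Sigma_j|_{g_j}}(x_j) > \theta_{\mathbf{C}}(0) - 2\delta_j$ and there exists $p_j \in \mathrm{Sing}(\Sigma_j) \cap \mathbb{B}_4$ with $p_j \ne x_j$. The planar case $\theta_\mathbf{C}(0) = 1$ is ruled out immediately: Allard then forces $\mathrm{Sing}(\Sigma_j) \cap \mathbb{B}_4 = \emptyset$ for large $j$. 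So $\theta_\mathbf{C}(0) \ge \theta_1 > 1$, and the same reasoning shows $x_j$ cannot be a regular point for large $j$ (otherwise $\theta = 1$ would violate $\theta > \theta_\mathbf{C}(0) - 2\delta_j$); hence $x_j$ is singular. By the first part, $x_j, p_j \to 0$, and combining upper semi-continuity of density under varifold convergence with the discrete density spectrum recalled in (\ref{eqn: discrete densities stable cones}) yields $\theta_{|\Sigma_j|_{g_j}}(x_j) = \theta_\mathbf{C}(0)$ for all $j$ large.

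The key step is then to blow up at $x_j$ at scale $r_j = |p_j - x_j|/2 \to 0$. Set $\widetilde{\Sigma}_j = \eta_{x_j, r_j}(\Sigma_j)$ and $\widetilde{g}_j = r_j^{-2}(\eta_{x_j, r_j})_{\ast} g_j \to g_{\mathrm{eucl}}$ on compacts. By the rescaling behaviour of $(\Lambda, r_0)$-almost minimisers, the rescaled $\widetilde{\Omega}_j$ have almost-minimisation constants of order $\Lambda r_j \to 0$, so up to a subsequence $\widetilde{\Omega}_j \to \widetilde{\Omega}_\infty$ in $L^1_{\mathrm{loc}}(\mathbb{R}^8)$ with $\widetilde{\Omega}_\infty$ a global perimeter minimiser in $(\mathbb{R}^8, g_{\mathrm{eucl}})$, and $\widetilde{p}_j := (p_j - x_j)/r_j \to \widetilde{p}_\infty$ with $|\widetilde{p}_\infty| = 2$. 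I would then sandwich the density ratios of $\widetilde{\Sigma}_\infty$ at the origin: approximate monotonicity for almost minimisers together with $\theta_{|\Sigma_j|_{g_j}}(x_j) = \theta_\mathbf{C}(0)$ gives $\theta_{|\Sigma_j|_{g_j}}(x_j, R r_j) \ge \theta_\mathbf{C}(0) - o(1)$ for any fixed $R > 0$, while varifold convergence $|\Sigma_j|_{g_j} \to |\mathbf{C}|$ and $x_j \to 0$ give $\theta_{|\Sigma_j|_{g_j}}(x_j, \rho) \to \theta_\mathbf{C}(0)$ for a.e.\ fixed $\rho$. Passing to the limit yields $\theta_{|\widetilde{\Sigma}_\infty|}(0, R) = \theta_\mathbf{C}(0)$ for every $R > 0$, so the equality case of the monotonicity formula forces $\widetilde{\Sigma}_\infty$ to be a cone with vertex at $0$. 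Upper semi-continuity gives $\theta_{|\widetilde{\Sigma}_\infty|}(\widetilde{p}_\infty) \ge \theta_1 > 1$, so $\widetilde{p}_\infty \ne 0$ is singular, and the conical structure then forces the entire ray through $\widetilde{p}_\infty$ to be singular, contradicting the isolated (zero-dimensional) singularities of area-minimising boundaries in $\mathbb{R}^{8}$.

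The main obstacle is the density sandwich in the blow-up: one must carefully combine approximate monotonicity (with vanishing almost-minimisation constants) with varifold convergence along moving centres $x_j \to 0$ and shrinking scales $Rr_j \to 0$, and control the almost-minimisation constants $\Lambda_j$ uniformly (which follows since the mean curvatures of $\Sigma_j$ must vanish in the limit because $\mathbf{C}$ is minimal). Once this sandwich is in place, the rigidity step is standard.
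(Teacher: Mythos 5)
Your proof is correct and follows essentially the same blow-up strategy as the paper's: rescale at the separation scale of the two singular points, use density rigidity (approximate monotonicity sandwiched against varifold convergence) to show the blow-up limit is conical, and derive a contradiction from the presence of a second singular point. The one structural difference is how the contradiction is closed: you show the limit is a global perimeter-minimising cone with vertex at $0$ and singular at $\widetilde{p}_\infty \neq 0$, then invoke the singular-ray argument against the zero-dimensionality of $\mathrm{Sing}$ for area-minimising boundaries in $\mathbb{R}^8$; the paper instead isolates a self-contained claim (hypotheses: metric near Euclidean, uniform mass bound, small density-ratio drop $\theta(0,4)-\theta(0,1)$, small mean curvature) whose conclusion is that $\Sigma_j$ is regular near a cone in the annulus $\mathbb{A}(2,3)$ for large $j$, and then notes the rescaled singular point $\widetilde y_j$ sits at radius $5/2 \in (2,3)$, yielding the contradiction. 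The paper's formulation is slightly more modular — the same claim is reapplied in Lemma~\ref{lemma: N (Lambda)} to locate and shrink the singular set — whereas your version is more direct but less immediately reusable. Both are sound, and you correctly flag the one non-trivial technical point (the density sandwich along moving centres and shrinking scales, and the vanishing of mean curvature under rescaling) that the paper also glosses somewhat quickly.
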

     \begin{proof}
        Assume for a contradiction that there exists $\varepsilon > 0$, 
        $\Omega_j \subset\mathrm{VCM}(5,g_j)$ for each $j \geq 1$, and $\{{\bf E}_j^+\}_{j \geq 1}\subset \mathcal{C}(\mathbb{B}_5)$ with $\partial \mathbf{E}_j^+ = \mathbf{C}_j$ for $\mathbf{C}_j \in \mathscr{C}_\Lambda$ such that $g_j \rightarrow g_\mathrm{eucl}$ in $C^4$ and $|\Omega_j\Delta{
        {\bf E}_j^+
        }|\to 0$ as $j \rightarrow \infty$, but $\mathrm{Sing}(\Sigma_j) \cap (\mathbb{B}_4 \setminus \mathbb{B}_\epsilon ) \neq \emptyset$ for sufficiently large $j \geq 1$. By the compactness of $\mathscr{C}_\Lambda$, as noted in Subsection \ref{subsec: notation}, there exists ${\bf E}^+ \in \mathcal{C}(\mathbb{B}_5)$ such that ${\bf E}_j^+\to {\bf E}^+$ with $\partial \mathbf{E}^+ = \mathbf{C} \in \mathscr{C}_\Lambda$. Thus, by applying Allard's theorem, $\mathrm{Sing}(\Sigma_j) \subset \mathbb{B}_\epsilon$ for sufficiently large $j \geq 1$, contradicting our assumption.

        \bigskip
        
        Let $x_j\in \overline{\Sigma}_j \cap \mathbb{B}_1$ and suppose that for a sequence as in the above paragraph we have for infinitely many $j \geq 1$ the existence of $y_j \in \mathrm{Sing}(\Sigma_j) \cap \mathbb{B}_4 \setminus \{x_j\}$ with
        $$ \limsup_{j \rightarrow \infty} \left[ \theta_{|\Sigma_j|_{g_j}}(x_j)- \theta_{\bf C}(0) \right]\ge 0.$$
        If ${\bf C} $ is a hyperplane, then by applying Allard's theorem, the $\Sigma_j$ are regular for sufficiently large $j \geq 1$, contradicting the assumption that the $y_j$ exist. Thus, $\bf C$ is not a hyperplane and so we have that by Allard's theorem and the upper semi-continuity of the density that
        $$ \limsup_{j \rightarrow \infty} \theta_{|\Sigma_j|_{g_j}}(x_j)\ge \theta_{\bf C}(0) > 1;$$
        hence $x_j\in \mathrm{Sing}(\Sigma_j)$ for sufficiently large $j \geq 1$.
        
        \bigskip
        
        By the first part of the lemma we know that both $x_j,y_j\to 0$ as $j \rightarrow \infty$. Now consider for each $j \geq 1$ the rescaled varifolds $ \widetilde{\Sigma}_j =\left(
        \eta_{x_j,r_j}\right)_\# |\Sigma_j|_{g_j}$ where $r_j= \frac{2}{5}\mathrm{dist}_{g_j}(x_j,y_j)$. Note that as $j\to \infty$, by writing $\tilde{g}_j = (r_j)^{-2}  (\eta_{x_j,r_j}^{-1})^*g_j$, we have that the following properties hold:
        \[
        \begin{cases}
            \Vert \tilde{g}_j- g_{eucl}\Vert_{C^4(\mathbb{B}_5)}\to 0\\
            \Vert \widetilde{\Sigma}_j\Vert (\mathbb{B}_5)\le 2\cdot 5^n \omega_n\Lambda\\
            \theta_{|\Sigma_j|_{\tilde{g}_j}}(0, 4) - \theta_{|\Sigma_j|_{\tilde{g}_j}}(0, 1)\to 0\\
        \end{cases}.
        \] 
        We now claim that the above properties imply that $y_j \notin \mathrm{Sing}(\Sigma_j)$ for sufficiently large $j \geq 1$, yielding a contradiction. Precisely, we now show that 
        there exists $\delta > 0$, depending on $\Lambda > 0$, such that if $\Omega\in\mathrm{VCM}(5,g)$ satisfies the following properties:
        \begin{equation}\label{eqn: claim in lemma}
        \begin{cases}
            \Vert g- g_{eucl}\Vert_{C^4(\mathbb{B}_5)}\le \delta\\
            \Vert \Sigma\Vert (\mathbb{B}_5)\le 2\cdot 5^n \omega_n\Lambda\\
            \theta_{|\Sigma|}(0, 4) - \theta_{|\Sigma|}(0, 1)\le \delta\\
            H_{\Sigma} \leq \delta
        \end{cases},
        \end{equation}
        then there is some ${\bf C}\in \mathscr{C}$ such that $\Sigma$ is regular near ${\bf C}\cap\mathbb{A}(2,3)$. If not, then there exist $\Lambda >0$ and a sequence $\{\Omega_j\}\subset\mathrm{VCM}(5,g_j)$ satisfying (\ref{eqn: claim in lemma}) for some $\delta_j \rightarrow 0$ but such that $\mathrm{Sing}(\Sigma_j) \cap \mathbb{A}(2,3) \neq \emptyset$. However, by the compactness of volume constrained minimisers, as discussed in Subsection \ref{subsec: preliminaries}, up to a subsequence (not relabelled) we have that $\Omega_j \rightarrow \Omega \in\mathrm{VCM}(4,g_\mathrm{eucl})$, which will then in fact be locally area minimising (since $H_{\Sigma_j} \rightarrow 0$),  with $\theta_{|\Sigma|_{g_{\mathrm{eucl}}}}(0, 4) - \theta_{|\Sigma|_{g_{\mathrm{eucl}}}}(0, 1) = 0$, but then the monotonicity formula ensures that $|\Sigma|_{g_{\mathrm{eucl}}}$ is a minimal hypercone in $\mathbb{B}_4$; hence by Allard's theorem $\Sigma_j$ is regular in $\mathbb{A}(2,3)$ for sufficiently large $j \geq 1$, a contradiction. 
        \end{proof}

    By using the claim in the proof of Lemma \ref{Lemma: Density drop}, we can prove the following upper bound on the number of singularities for singularities that approach a cone with a given density bound:
    
    \begin{lemma}\label{lemma: N (Lambda)}
    For each $\Lambda > 1$ there exists a constant $N(\Lambda) \geq 1$ such that, for every 
    ${\bf C} \in \mathscr{C}_{\Lambda}$ and all sequences $\Omega_j \in\mathrm{VCM}(5,g_j)$ such that   $g_j \to g_{\mathrm{eucl}}$ in $C^4(\mathbb{B}_5)$, 
    and $\Omega_j \to {\bf E}^+$, where ${\bf E}^+\in \mathcal{C}(\mathbb{B}_5)$ with $\partial  {\bf E}^+= {\bf C} \cap \mathbb{B}_5$, as $j \to \infty$ we have that
\[  
\limsup_{j \to \infty} \# \big( \mathrm{Sing}(\Sigma_j) \cap \mathbb{B}_4 \big) 
\leq N(\Lambda).
\]
\end{lemma}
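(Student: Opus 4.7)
The plan is to proceed by strong induction on the index $k \geq 0$ for which $\theta_\mathbf{C}(0) \leq \theta_k$ in the discrete list (\ref{eqn: discrete densities stable cones}). For each $k$ and $\Lambda > 1$, I will set
\[
N(k, \Lambda) := \sup \limsup_{j \to \infty} \#\big(\mathrm{Sing}(\Sigma_j) \cap \mathbb{B}_4\big),
\]
where the supremum ranges over all admissible sequences with limit cone $\mathbf{C} \in \mathscr{C}_\Lambda$ satisfying $\theta_\mathbf{C}(0) \leq \theta_k$. Since the mass bound $\Vert \mathbf{C} \Vert(\mathbb{B}_1) \leq \Lambda$ enforces $\theta_\mathbf{C}(0) \leq \Lambda/\omega_7$, only finitely many $k$ contribute to $N(\Lambda)$, and so it suffices to show that $N(k, \Lambda) < \infty$ for all $k, \Lambda$. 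The base case $k = 0$ is immediate, since any cone of density $1$ is a hyperplane, and Allard's theorem applied to the varifold convergence $\vert \Sigma_j \vert_{g_j} \to \vert \mathbf{C} \vert$ guarantees smoothness of $\Sigma_j$ in $\mathbb{B}_4$ for large $j$.

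For the inductive step, I assume $N(l, \Lambda') < \infty$ for all $l < k$ and $\Lambda' > 1$, and suppose for contradiction that $N(k, \Lambda) = \infty$. A diagonal extraction, combined with the compactness of $\mathscr{C}_\Lambda$ recalled in Subsection \ref{subsec: notation}, produces a single admissible sequence $(g_j, \Omega_j)$ with limit cone of density exactly $\theta_k$ for which $\#S_j \to \infty$, where $S_j := \mathrm{Sing}(\Sigma_j) \cap \mathbb{B}_4$. Iterating Lemma \ref{Lemma: Density drop} with $\varepsilon \to 0$ and invoking the dichotomy in its conclusion (noting that $\#S_j \geq 2$ eventually), the discreteness of (\ref{eqn: discrete densities stable cones}) gives $S_j \subset \mathbb{B}_{\rho_j}$ for some $\rho_j \to 0$ and $\theta_{\vert \Sigma_j \vert_{g_j}}(p) \leq \theta_{k - 1}$ for every $p \in S_j$.

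The main step is a single blow-up at $0$ at scale $\varepsilon_j := 4\rho_j \to 0$. Setting $\tilde\Omega_j := \eta_{0, \varepsilon_j}(\Omega_j)$ with rescaled metric $\tilde g_j := \varepsilon_j^{-2}(\eta_{0, \varepsilon_j}^{-1})^{\ast} g_j$, we have $\tilde g_j \to g_{\mathrm{eucl}}$ in $C^4$ on compact sets, and the almost-minimization constant of $\tilde\Omega_j$ restricted to any fixed ball tends to $0$. The compactness theory for almost minimizers recalled in Subsection \ref{subsec: preliminaries} then yields, up to a subsequence, an $L^1_{\mathrm{loc}}(\mathbb{R}^8)$ limit $\tilde\Omega_\infty$ which is a genuine local perimeter minimizer in $\mathbb{R}^8$. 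Since the singular set has codimension seven, $\mathrm{Sing}(\tilde\Sigma_\infty)$ consists of isolated points, so $\mathrm{Sing}(\tilde\Sigma_\infty) \cap \overline{\mathbb{B}_2} = \{\tilde p_1, \ldots, \tilde p_M\}$ is finite. Upper semicontinuity of density, applied to the uniform bound $\theta_{\vert \tilde\Sigma_j \vert_{\tilde g_j}}(\tilde p) \leq \theta_{k - 1}$ for $\tilde p \in \eta_{0, \varepsilon_j}(S_j) \subset \overline{\mathbb{B}_{1/4}}$, ensures that each tangent cone $\mathbf{C}_{\tilde p_i}\tilde \Sigma_\infty$ lies in $\mathscr{C}_{\Lambda''}$, with $\Lambda'' = \Lambda''(\Lambda)$ controlled by monotonicity, and has density at most $\theta_{k-1}$.

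The proof would conclude as follows. Around each $\tilde p_i$, I would select scales $s_i^j \to 0$ by a standard diagonal argument so that $\eta_{\tilde p_i, s_i^j}(\tilde\Omega_j)$ converges to the half-space bounded by $\mathbf{C}_{\tilde p_i}\tilde\Sigma_\infty$, using $\tilde\Omega_j \to \tilde\Omega_\infty$ in $L^1_{\mathrm{loc}}$ together with the fact that $\eta_{\tilde p_i, s}(\tilde \Omega_\infty)$ converges to this half-space as $s \to 0$. The inductive hypothesis then gives $\#(\mathrm{Sing}(\tilde\Sigma_j) \cap \mathbb{B}_{4 s_i^j}(\tilde p_i)) \leq N(k - 1, \Lambda'')$ for all large $j$, while Allard's theorem applied away from the isolated points $\tilde p_i$ of $\tilde\Sigma_\infty$ ensures that $\tilde\Sigma_j$ is regular in $\overline{\mathbb{B}_2} \setminus \bigcup_i \mathbb{B}_{s_i^j}(\tilde p_i)$ for all $j$ sufficiently large. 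Summing therefore yields $\#S_j \leq M \cdot N(k - 1, \Lambda'')$ for large $j$, contradicting $\#S_j \to \infty$. I expect the main technical hurdle to be verifying carefully that the blow-up at scale $\varepsilon_j$ yields a bona fide local perimeter minimizer with a discrete singular set; this relies on the vanishing of the almost-minimization constant under rescaling by $\varepsilon_j \to 0$ and on a careful application of the compactness and regularity theory for almost minimizers recalled in Subsection \ref{subsec: preliminaries}.
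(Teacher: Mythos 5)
Your overall framework — induction on the discrete density scale from \eqref{eqn: discrete densities stable cones}, base case via Allard, a blow-up plus compactness to a local perimeter minimiser with finite singular set, and an inductive count around each limit singularity — is sound and close to the paper's. But there is a genuine gap at the decisive step: you conclude that each tangent cone $\mathbf{C}_{\tilde p_i}\tilde\Sigma_\infty$ has density at most $\theta_{k-1}$ by invoking ``upper semicontinuity of density, applied to the uniform bound $\theta_{|\tilde\Sigma_j|}(\tilde p) \leq \theta_{k-1}$.'' Upper semicontinuity under varifold convergence gives $\theta_{\tilde\Sigma_\infty}(\tilde p_i) \geq \limsup_j \theta_{\tilde\Sigma_j}(\tilde p_j)$, i.e., a \emph{lower} bound on the limit density, not an upper bound. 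The scenario this lemma must rule out — many singular points of density $\leq\theta_{k-1}$ collapsing to a single limit singularity of density $\theta_k$ — is exactly what this argument fails to exclude. If that collapse occurs, monotonicity rigidity forces $\tilde\Sigma_\infty$ to be a stable cone of density $\theta_k$ centred at that point, and the induction stalls at level $k$.

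The paper's proof is designed to eliminate this: it centres the blow-up at a singular point $x_j \in \mathrm{Sing}(\Sigma_j)$ (so $0 \in \mathrm{Sing}(\widetilde\Sigma)$ by upper semicontinuity) and chooses the scale $r_j = \inf\{r : \theta_{|\Sigma_j|}(x_j,4) - \theta_{|\Sigma_j|}(x_j,r) \leq \delta\}$, which guarantees $\theta_{|\widetilde\Sigma|}(0,1) \leq \theta_k - \delta < \theta_k$. Then if some $p \in \mathrm{Sing}(\widetilde\Sigma)$ had density $\theta_k$, rigidity would make $\widetilde\Sigma$ a cone centred at $p$, whose only singular point is $p$; since $0 \in \mathrm{Sing}(\widetilde\Sigma)$ this forces $p = 0$, contradicting $\theta_{|\widetilde\Sigma|}(0,1) < \theta_k$. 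Your geometric scale $\varepsilon_j = 4\rho_j$ centred at the origin provides neither ingredient, so the density bound you need is not established. (Your counting strategy also differs — you sum the inductive bound over all $M$ limit singularities, whereas the paper re-blows-up at one accumulating singularity and contradicts the inductive hypothesis directly; both work once the density bound is in hand, but the summation incurs extra bookkeeping in the simultaneous choice of scales $s_i^j$ for Allard regularity in the complementary annuli.)
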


\begin{proof}
The proof is similar \cite[Lemma C.6]{LW25}, except that we use the compactness theorem for volume constrained minimisers and apply our Lemma \ref{Lemma: Density drop}. Precisely, we can establish the corollary by induction on the densities of stable minimal hypercones, which by (\ref{eqn: discrete densities stable cones}) are discrete.

\bigskip

Let $\Lambda_k = \omega_7\theta_k$ for $k \in \mathbb{N}$, and note that $\Lambda_0 = \omega_7$ and hence any $\mathbf{C} \in \mathscr{C}_{\Lambda_0}$ is a hyperplane, thus by Allard's theorem we can set $N(\Lambda_0) = 0$. Now we suppose for a contradiction that $N(\Lambda_{k-1}) < \infty$ for some $k \in \mathbb{N}$ but there exist $\Omega_j \in \mathrm{VCM}(5,g_j)$ for each $j \geq 1$ as in the statement but such that
$\limsup_{j \to \infty} \# \big( \mathrm{Sing}(\Sigma_j) \cap \mathbb{B}_4 \big) \rightarrow \infty$. Now let $\delta > 0$ be chosen sufficiently small in Lemma \ref{Lemma: Density drop} for the choice of $\Lambda = \Lambda_k$ so that $\mathrm{Sing}(\Sigma_j) \subset \mathbb{B}_1$ for sufficiently large $j \geq 1$.

\bigskip

Fix $x_j \in \mathrm{Sing}(\Sigma_j) \cap \mathbb{B}_1$ and let 
\[
r_j= \inf \{ r>0 \, | \, \theta_{|\Sigma_j|_{g_j}}(x_j,4)-\theta_{|\Sigma_j|_{g_j}}(x_j,r)\le \delta\},
\]
we then have that, since $\Sigma_j \rightarrow \mathbf{C} \in \mathscr{C}_{\Lambda_k}$ in $\mathbb{B}_5$, both $x_j \rightarrow 0 \in \mathbb{R}^8$ and $r_j \rightarrow 0$ as $j \rightarrow \infty$. Moreover, as we assume that $\limsup_{j \to \infty} \# \big( \mathrm{Sing}(\Sigma_j) \cap \mathbb{B}_4 \big) \rightarrow \infty$, for sufficiently large $j \geq 1$ we have that $r_j > 0$ as eventually $\#\mathrm{Sing}(\Sigma_j) \cap \mathbb{B}_4 \geq 2$ since if $r_j = 0$ we would violate the dichotomy of Lemma \ref{Lemma: Density drop}. Moreover since we have that for each $s \in (r_j,1]$ that
$$\theta_{|\Sigma_j|_{g_j}}(x_j,4s)-\theta_{|\Sigma_j|_{g_j}}(x_j,s) \leq \theta_{|\Sigma_j|_{g_j}}(x_j,4)-\theta_{|\Sigma_j|_{g_j}}(x_j,r_j) = \delta,$$
we can apply the claim in the proof of Lemma \ref{Lemma: Density drop} to see that in particular $\mathrm{Sing}(\Sigma_j) \subset B^{g_j}_{r_j}(x_j)$.

\bigskip

Now consider for each $j \geq 1$ the rescaled sequence $ \widetilde{\Omega}_j =\left(\eta_{x_j,r_j}\right)_\# \Omega_j$ with $r_j > 0$ as above and by writing $\tilde{g}_j = (r_j)^{-2}  (\eta_{x_j,r_j}^{-1})^*g_j$, we have that as in the proof of Lemma \ref{Lemma: Density drop} that up to a subsequence (not relabelled) we have that $\widetilde{\Omega}_j \rightarrow \widetilde{\Omega} \in \mathrm{VCM}(5,g_{\mathrm{eucl}})$, which again is in fact locally area minimising, with $\mathrm{Sing}(\widetilde{\Sigma}) \subset \mathbb{B}_1$ (since $\mathrm{Sing}(\Sigma_j) \subset B^{g_j}_{r_j}(x_j)$ for large $j \geq 1$). Also since the choice of $r_j$ ensures that $\theta_{|\Sigma_j|_{g_j}}(x_j,r_j) = \theta_{|\Sigma_j|_{g_j}}(x_j,4) - \delta$ we see that in particular by the monotonicity formula that $\theta_{|\widetilde{\Sigma}|}(0,1) \leq \theta_{\bf C}(0) - \delta = \theta_{k} - \delta$.

\bigskip

By the induction assumption, we see $\# \mathrm{Sing}(\widetilde{\Sigma}_j \cap \mathbb{B}_4)\to \infty$, and $\widetilde{\Sigma}$ has only finitely many isolated singularities. There must then exist $p \in \mathrm{Sing}(\widetilde{\Sigma})$ and $s_j \rightarrow 0 $ such that $\# [\mathrm{Sing}(\widetilde{\Sigma}_j) \cap \mathbb{B}_{s_j}(p)] \to \infty$. Note that since $\theta_{|\widetilde{\Sigma}|}(0,1) \leq \theta_{k} - \delta$, the tangent cone to $\widetilde{\Sigma}$ at $p$ has density strictly less than $\theta_k$, and thus at most $\theta_{k-1}$. By rescaling around $p$ we thus produce a sequence contradicting the assumption that the statement held for $\Lambda = \Lambda_{k-1}$; thus we have that there exists some $N(\Lambda_k) < \infty$ for which the statement holds.
\end{proof}

Using Lemma \ref{lemma: N (Lambda)}, we are able to deduce the following:

\begin{proposition}[Properties of the singular capacity]\label{prop: upper semi-continuity of singular capacity} We have that:
\begin{enumerate}

    \item For any ${\bf C}\in \mathscr{C}$, ${\bf SCap} ({\bf C})<\infty$. 
    
    \item  Whenever $(g_j, t_j,\Omega_j) \to  (g_\infty,t_\infty,\Omega_\infty)$ in $\mathcal{T}^{k,\alpha}$ and $U\subset M$ is open with $ \partial U \cap \mathrm{Sing}(\Sigma_\infty) = \emptyset$, then
        \begin{align*}
            \limsup_{j \rightarrow \infty}  {\bf SCap} (\Omega_j,U,g_j) \le  {\bf SCap} (\Omega_\infty,U,g_\infty);
        \end{align*}
    namely, the singular capacity is upper semi-continuous.

    \item For $(g,t) \in \mathcal{G}^{k,\alpha} \times \mathbb{R}$ there is some $\Omega \in \mathcal{I}(g,t)$ with $$\mathbf{SCap}(g,t) =  \mathbf{SCap}(\Omega,M,t);$$
    and in particular $ {\bf SCap} (g,t) < \infty$.
    
\end{enumerate}
\end{proposition}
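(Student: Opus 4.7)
I would prove the three parts in order, establishing Part 1 first (as both remaining parts rely on it), then Part 2 by a careful localisation argument around each singular point of the limit, and finally deducing Part 3 from the compactness theorem together with the upper semi-continuity proved in Part 2.

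For Part 1, the plan is to induct on the discrete set of densities in \eqref{eqn: discrete densities stable cones}. The base case is immediate: if $\theta_{\mathbf{C}}(0) = \theta_0 = 1$, then $\mathbf{C}$ is a hyperplane and $\mathbf{SCap}(\mathbf{C}) = 0$ by definition. For the inductive step, suppose $\mathbf{SCap}(\mathbf{C}') < \infty$ for every $\mathbf{C}' \in \mathscr{C}$ with $\theta_{\mathbf{C}'}(0) \leq \theta_{k-1}$, and let $\mathbf{C} \in \mathscr{C}$ with $\theta_{\mathbf{C}}(0) = \theta_k$. Fix $\Lambda > 0$ so that $\mathbf{C} \in \mathscr{C}_\Lambda$; then for any admissible sequence $\{(g_j, \Omega_j)\}_{j\ge 1}$ in Definition \ref{def: singular capacity} (part 2), Lemma \ref{lemma: N (Lambda)} bounds $\#(\mathrm{Sing}(\Sigma_j) \cap \mathbb{B}_1)$ by $N(\Lambda)$ for all $j$ sufficiently large. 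The constraint $\theta_{|\Sigma_j|_{g_j}}(p_j) < \theta_k$ forces each tangent cone $\mathbf{C}_{p_j}\Sigma_j$ to have density at most $\theta_{k-1}$, so by the inductive hypothesis $\mathbf{SCap}(\mathbf{C}_{p_j}\Sigma_j) \leq C_{k-1}$ for some constant $C_{k-1} < \infty$. Combining these two bounds gives $\mathbf{SCap}(\mathbf{C}) \leq 1 + N(\Lambda) \cdot C_{k-1} < \infty$.

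For Part 2, I would argue as follows. Write $\mathrm{Sing}(\Sigma_\infty) \cap U = \{p_1, \ldots, p_m\}$ (finite by Remark \ref{rem: Morgan-Johnson}), fix disjoint geodesic balls $B^{g_\infty}_{r_i}(p_i) \subset U$ avoiding the closure of $\partial U$, and apply Allard's regularity theorem to the smooth part of $\Sigma_\infty$ inside $\overline{U} \setminus \bigcup_i B^{g_\infty}_{r_i/2}(p_i)$ to conclude that, for all $j$ sufficiently large, $\mathrm{Sing}(\Sigma_j) \cap U \subset \bigcup_i B^{g_\infty}_{r_i}(p_i)$. It then suffices to show that for each $i$
\begin{equation*}
    \limsup_{j \to \infty} \sum_{p \in \mathrm{Sing}(\Sigma_j) \cap B^{g_\infty}_{r_i}(p_i)} \mathbf{SCap}(\mathbf{C}_p \Sigma_j) \leq \mathbf{SCap}(\mathbf{C}_{p_i}\Sigma_\infty).
\end{equation*}
After blowing up at the scale $2r_i$ around $p_i$ in normal coordinates and using Lemma \ref{lemma: compactness of isoperimetric regions} to extract a subsequential limit, we obtain (for $r_i$ small) rescaled volume constrained minimisers $\widetilde{\Omega}_j \to \mathbf{E}^+$ with $\partial \mathbf{E}^+ = \mathbf{C}_{p_i}\Sigma_\infty$. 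Applying Lemma \ref{Lemma: Density drop} to this rescaled sequence with $\mathbf{C} = \mathbf{C}_{p_i}\Sigma_\infty$ and $\varepsilon, \delta$ sufficiently small, we are in one of two alternatives: either every $p \in \mathrm{Sing}(\Sigma_j) \cap B^{g_\infty}_{r_i}(p_i)$ satisfies $\theta_{|\Sigma_j|_{g_j}}(p) < \theta_{\mathbf{C}_{p_i}\Sigma_\infty}(0)$, in which case the rescaled sequence is admissible in Definition \ref{def: singular capacity} (part 2) and hence
\begin{equation*}
    \limsup_j \sum_p \mathbf{SCap}(\mathbf{C}_p\Sigma_j) \leq \mathbf{SCap}(\mathbf{C}_{p_i}\Sigma_\infty) - 1 < \mathbf{SCap}(\mathbf{C}_{p_i}\Sigma_\infty);
\end{equation*}
or for infinitely many $j$ there is a unique $p_{i,j} \in \mathrm{Sing}(\Sigma_j) \cap B^{g_\infty}_{r_i}(p_i)$ with $\theta_{|\Sigma_j|_{g_j}}(p_{i,j}) = \theta_{\mathbf{C}_{p_i}\Sigma_\infty}(0)$ and no other singularities nearby. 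In this second alternative, by the smooth, multiplicity-one convergence of links for stable hypercones of fixed density (as recalled in Subsection \ref{subsec: notation}), the cones $\mathbf{C}_{p_{i,j}}\Sigma_j$ converge to $\mathbf{C}_{p_i}\Sigma_\infty$; the required bound $\limsup_j \mathbf{SCap}(\mathbf{C}_{p_{i,j}}\Sigma_j) \leq \mathbf{SCap}(\mathbf{C}_{p_i}\Sigma_\infty)$ then follows by a diagonal argument, which is the main obstacle: for each $j$ choose a nearly maximising sequence $\{(g^j_m, \Omega^j_m)\}_{m\ge 1}$ realising (up to $1/j$) the supremum defining $\mathbf{SCap}(\mathbf{C}_{p_{i,j}}\Sigma_j)$, and extract a diagonal subsequence that converges to the relevant $\mathbf{E}^+$ bounded by $\mathbf{C}_{p_i}\Sigma_\infty$ while preserving the density-drop constraint, using that $\mathbf{C}_{p_{i,j}}\Sigma_j \to \mathbf{C}_{p_i}\Sigma_\infty$ smoothly and hence densities at any singular points of the $\Omega^j_m$ remain strictly below $\theta_{\mathbf{C}_{p_i}\Sigma_\infty}(0)$.

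For Part 3, I would first apply Remark \ref{rem: Morgan-Johnson} to obtain that each $\Omega \in \mathcal{I}(g,t)$ has only finitely many boundary singularities, so $\mathbf{SCap}(\Omega, M, g)$ is a finite sum of terms, each finite by Part 1. To attain the supremum, pick a maximising sequence $\{\Omega_j\}_{j\ge 1} \subset \mathcal{I}(g,t)$ with $\mathbf{SCap}(\Omega_j, M, g) \to \mathbf{SCap}(g,t)$; Lemma \ref{lemma: compactness of isoperimetric regions} yields (up to subsequence) a limit $\Omega \in \mathcal{I}(g,t)$, and Part 2 applied with $U = M$ (whose boundary is empty, so the hypothesis $\partial U \cap \mathrm{Sing}(\Sigma) = \emptyset$ holds vacuously) gives $\mathbf{SCap}(\Omega, M, g) \geq \limsup_j \mathbf{SCap}(\Omega_j, M, g) = \mathbf{SCap}(g,t)$. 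The reverse inequality holds by definition of the supremum, so equality is achieved by $\Omega$, and in particular $\mathbf{SCap}(g,t) = \mathbf{SCap}(\Omega, M, g) < \infty$.
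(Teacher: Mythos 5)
Your overall structure tracks the paper's proof closely: induction over the discrete densities for Part 1, localisation/rescaling and Lemma \ref{Lemma: Density drop} for Part 2, and compactness combined with Parts 1 and 2 for Part 3. However, Part 1 as written has a genuine gap. Your inductive hypothesis is pointwise — ``$\mathbf{SCap}(\mathbf{C}') < \infty$ for every $\mathbf{C}'$ with $\theta_{\mathbf{C}'}(0) \le \theta_{k-1}$'' — but you then deduce a uniform bound $\mathbf{SCap}(\mathbf{C}_{p_j}\Sigma_j) \le C_{k-1}$ over all cones of density at most $\theta_{k-1}$. Pointwise finiteness does not imply the supremum is finite; that is precisely the content one must prove. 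The paper fills this by showing $\mathbf{SCap}$ is upper semi-continuous on $\mathscr{C}_{\theta_k}$ via a diagonal subsequence argument and then invoking compactness of $\mathscr{C}_{\theta_k}$ under the $\mathbf{F}$-metric to extract a maximiser $\bar{\mathbf{C}}$, whose finiteness is then guaranteed by the (pointwise) inductive hypothesis. A cleaner fix, compatible with your proposal, is simply to strengthen the inductive hypothesis to ``$\sup_{\mathbf{C}' \in \mathscr{C}_{\theta_{k-1}}} \mathbf{SCap}(\mathbf{C}') < \infty$'': this propagates since your estimate gives $\sup_{\mathbf{C} \in \mathscr{C}_{\theta_k}} \mathbf{SCap}(\mathbf{C}) \le 1 + N(\omega_7\theta_k) \cdot C_{k-1}$, and this version avoids the upper semi-continuity argument in Part 1 entirely.

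Note, however, that the upper semi-continuity of $\mathbf{SCap}$ on cones is still needed — you use it (via your ``diagonal argument'') in the second alternative of Part 2, exactly where the paper invokes it. The paper establishes this once, inside Part 1, and then reuses it in Part 2; you establish it only in Part 2 and yet implicitly rely on its consequence in Part 1. Either move the diagonal argument up to Part 1 (matching the paper) or strengthen the inductive hypothesis as above so Part 1 no longer needs it. One minor presentational point: in Part 2 you rescale at a fixed scale $2r_i$, but the rescaled limit is then only a scaled copy of $\Omega_\infty$, not the tangent cone; the paper rescales at a sequence of radii $r_j \to 0$ so that both $\Sigma_j/r_j$ and $\Sigma_\infty/r_j$ converge to $\mathbf{C}_{p_i}\Sigma_\infty$. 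Your ``(for $r_i$ small)'' hedge suggests you have this in mind, but it should be made explicit so that the hypotheses of Lemma \ref{Lemma: Density drop} are genuinely satisfied. Part 3 is identical to the paper's argument.
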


\begin{proof}
    For part 1, as noted in (\ref{eqn: discrete densities stable cones}) in Subsection \ref{subsec: notation} the densities,
    $\{1 = \theta_0 < \theta_1 < \theta_2 < \ldots \nearrow + \infty\}$, of stable minimal hypercones with isolated singularities are discrete. By Definition \ref{def: singular capacity}, for any ${\bf C} \in \mathcal{C}_{\theta_1}$ we have that ${\bf SCap} ({\bf C}) = 1$. We now show part 1 by induction on the densities. Suppose for some $k \geq 1$ we have ${\bf SCap} ({\bf C})<\infty$ for every $\mathbf{C} \in \mathscr{C}_{\theta_k}$, and note that for any $\{{\bf C}_j \}_{j \geq 1} \subset \mathscr{C}_{\theta_k} $, there is a ${\bf C} \in  \mathscr{C}_{\theta_k}  $ such that $|{\bf C}_j| \to |{\bf C}|$ as varifolds by the compactness result for minimal hypercones mentioned in Subsection \ref{subsec: notation}. If for each $j \geq 1$ we have a sequence $\{(g_i^{(j)},\Omega_i^{(j)})\}_{i \geq 1}$ as in Definition \ref{def: singular capacity} part 2, then by the upper semi-continuity of density and a diagonal subsequence argument, we observe that ${\bf SCap}$ is upper semi-continuous on $\mathscr{C}_{\theta_k}$. By the compactness of $\mathscr{C}_{\theta_k}$ again, there exists some $\bar{\mathbf{C}} \in \mathscr{C}_{\theta_k}$ such that
    \[
        {\bf SCap} (\bar {\bf C}) = \sup_{{\bf C}\in \mathcal{C}_{\theta_k}} {\bf SCap}({\bf C}).
    \]
    and so by the inductive assumption we have
     that $\sup_{{\bf C}\in \mathcal{C}_{\theta_k}} {\bf SCap}({\bf C})<\infty$. 
    Now if ${\bf C} \in \mathscr{C}_{\theta_{k+1}}$, and 
    $\{(g_j,\Omega_j)\}_{j \geq 1}$ is a sequence which attaining the supremum in Definition \ref{def: singular capacity} part 2, then by Lemma \ref{lemma: N (Lambda)} we have that
\begin{align*}
{\bf SCap}({\bf C})
&\le 1
  + \Bigl(\limsup_{j\to\infty}
     \#\mathrm{Sing}(\Sigma_j)\cap \mathbb{B}_1\Bigr)
     \cdot \sup_{{\bf C'}\in \mathcal{C}_{\theta_k}} {\bf SCap}({\bf C}')\\
&\le 1 + N\!\bigl(\omega_7\,\theta_{k+1}\bigr)\cdot
           \sup_{{\bf C'}\in \mathcal{C}_{\theta_k}}  {\bf SCap}({\bf C}')<+\infty;
\end{align*}
this proves part 1.

\bigskip
    Note that by Remarks \ref{rem: Morgan-Johnson} and \ref{rem: singcap zero for smooth} part 2 follows immediately if $t_\infty$ is close to zero or volume of manifold (since then for large $j \geq 1$ the $\Sigma_j$ are regular. For part 2, by Allard's regularity theorem and the assumption that $\Omega_j \rightarrow \Omega_\infty$, we have a sequence of $r_j  \rightarrow 0$ such that $\Sigma_j$ is regular away from from $B_{r_j} (\mathrm{Sing}(\Sigma_\infty))$; we then choose $r_0 \in (0,\mathrm{inj}(\Sigma_\infty,g_\infty))$, so that in particular $\Sigma_j ,\Sigma$ are regular away from $\bigcup_{p\in \mathrm{Sing}(
    \Sigma_\infty
    )} B_{r_0}^{g_\infty}(p)$ for $j \geq 1$ sufficiently large. By the definition of singular capacity, it suffices to show that for any $p\in \mathrm{Sing}(\Sigma_\infty)$, we have
    \[
        \limsup_{j \rightarrow \infty}  {\bf SCap} (\Omega_j,B_{r_0}^{g_\infty}(p),g_j) \le  {\bf SCap} (\Omega_\infty, B_{r_0}^{g_\infty}(p),g_\infty);
    \]
    noting that, by definition, for $j \geq 1$ sufficiently large we have that both
    \[
    \begin{cases}
        {\bf SCap} (\Omega_j,B_{2r_j}^{g_\infty}(p),g_j) = {\bf SCap} (\Omega_j,B_{r_0}^{g_\infty}(p),g_j)\\
        {\bf SCap} (\Omega_\infty,B_{2r_j}^{g_\infty}(p),g_\infty) = {\bf SCap} (\Omega_\infty,B_{r_0}^{g_\infty}(p),g_\infty)
    \end{cases}.
    \]

    Rescaling $\Omega_j$ by $1/r_j$ at $p$, we have, up to a subsequence (not relabelled), that $\Sigma_j/r_j $ and $\Sigma_\infty/r_j$ converge as varifolds to ${\bf C}_p(\Sigma_\infty)$. By the definition of the singular capacity, it is therefore sufficient by the above arguments to show that if $(\Omega_j, \mathbb{B}_5,g_{j})\to ({\bf E}^+,\mathbb{B}_5, g_\mathrm{eucl}) $ with $\Omega_j \in\mathrm{VCM}(5,g_j)$ for each $j \geq 1$, ${\bf E}^+  \in\mathrm{VCM}(5,g)$ and $\partial {\bf E}^+ = {\bf C}$, then 
    $$\limsup_{j \rightarrow \infty}  {\bf SCap} (\Omega_j,\mathbb{B}_5,g_j) \le  {\bf SCap} ({\bf E}^+,\mathbb{B}_5,g_\mathrm{eucl}).$$
     
     We only need to consider the case such that there exists some $p_j \in \mathrm{Sing}(\Sigma_j)\cap
\mathbb{B}_1$ such that $\theta_{|\Sigma_j|}(p_j) = \theta_{\bf C}(0)$; since otherwise the upper semi-continuity follows by the definition. We claim that in this scenario, we have $\mathrm{Sing}(\Sigma_j)\cap
\mathbb{B}_1 = \{p_j\}$. By the monotonicity formula, since the mean curvature is bounded along the sequence, and the upper semi-continuity of density we have
\[
\limsup_{j\to\infty} (\theta_{|\Sigma_j|}(p_j,1)
- \theta_{|\Sigma_j|}(p_j)) \leq \theta_{\bf C}(0,1) - \theta_{\bf C}(0) = 0;
\]
therefore by Lemma \ref{Lemma: Density drop} we see that $\mathrm{Sing}(\Sigma_j)\cap
\mathbb{B}_1 = \{p_j\}$. Then, as noted in the proof of part 1 above, the singular capacity is upper semi-continuous on cones so we have
\[
\limsup_{j\to\infty} {\bf SCap}(\Omega_j,\mathbb{B}_1,g_j)
= \limsup_{j\to\infty} {\bf SCap}({\bf C}_{p_j}(\Sigma_j))
\le {\bf SCap} ( {\bf C}) = \mathbf{SCap}(\mathbf{E}^+,\mathbb{B}_5,g_\mathrm{eucl}),
\]
as desired.

\bigskip

    For part 3, similarly to the proof of part 1 above, by Lemma \ref{lemma: compactness of isoperimetric regions} we have that $\mathbf{SCap}(g,t)$ is attained for some $\Omega \in \mathcal{I}(g,t)$ and hence part 3 follows by combining parts 1 and 2.
\end{proof}

\subsection{Singular capacity and semi-nondegeneracy}\label{subsec: Singular capacity and semi-nondegeneracy}

We now show that the metric volume pairs for which every isoperimetric region is semi-nondegenerate, namely the set $\mathcal{U}_0^{k,\alpha}$ from Theorem \ref{thm: generic semi-nondegeneracy}, are such that only finitely many isoperimetric regions maximise the singular capacity:

\begin{lemma}\label{lem: max singular capacity triples are finite}
Given $(g,t) \in \mathcal{U}_0^{k,\alpha} \subset \mathcal{G}^{k,\alpha} \times \mathbb{R}$ we denote the collection of  $(g,t,\Omega)\in \mathcal{T}^{k,\alpha}$ which achieves the most singularities by 
        $$\mathcal{S}_\mathrm{max}(g,t) =\{ (g,t,\Omega)\in \mathcal{T}^{k,\alpha} \, | \, {\bf SCap}(\Omega,M,g) = {\bf SCap}(g,t)   \},$$
then $\mathcal{S}_{\mathrm{max}}(g,t)$ is a finite set.
\end{lemma}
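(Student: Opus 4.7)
The plan is to argue by contradiction. Suppose $\mathcal{S}_\mathrm{max}(g,t)$ contains a sequence of pairwise distinct triples $\{(g, t, \Omega_j)\}_{j \geq 1}$. By Lemma \ref{lemma: compactness of isoperimetric regions} we may pass to a subsequence (not relabelled) with $\Omega_j \to \Omega_\infty$ for some $\Omega_\infty \in \mathcal{I}(g, t)$, and then Proposition \ref{prop: upper semi-continuity of singular capacity} part 2 applied with $U = M$ gives $\mathbf{SCap}(\Omega_\infty, M, g) \geq \limsup_j \mathbf{SCap}(\Omega_j, M, g) = \mathbf{SCap}(g, t)$; the reverse inequality is immediate, so $(g, t, \Omega_\infty) \in \mathcal{S}_\mathrm{max}(g, t)$. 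Since the $\Omega_j$ are pairwise distinct, after discarding at most one we may assume $\Sigma_j \neq \Sigma_\infty$ for every $j$.

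Applying Theorem \ref{thm: induced twisted jacobi fields} case (i) (valid since $g_j = g$ for all $j$) yields a non-zero induced twisted Jacobi field $\phi \in C^2_\mathrm{loc}(\Sigma_\infty)$ satisfying $\mathcal{AR}_p(\phi) \geq \gamma_1^-(\mathbf{C}_p \Sigma_\infty)$ at every $p \in \mathrm{Sing}(\Sigma_\infty)$. Since $(g, t) \in \mathcal{U}_0^{k, \alpha}$ the region $\Omega_\infty$ is semi-nondegenerate, so $\phi \notin \mathrm{Ker}^+ \widetilde{L}_{\Sigma_\infty, g}$. In particular $\mathrm{Sing}(\Sigma_\infty) \neq \emptyset$, for otherwise the slow growth condition would be vacuous and $\phi$ itself would contradict semi-nondegeneracy. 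Thus there exists $p^\ast \in \mathrm{Sing}(\Sigma_\infty)$ with $\mathcal{AR}_{p^\ast}(\phi) < \gamma_2^+(\mathbf{C}_{p^\ast} \Sigma_\infty)$, and Corollary \ref{corollary: perturbing singularities for not slow growth} then supplies an open neighbourhood $V \ni p^\ast$ such that $\mathrm{Sing}(\Sigma_j) \cap V = \emptyset$ for infinitely many $j$; restrict to this further subsequence.

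To close the argument I will localize the singular capacity around the points of $\mathrm{Sing}(\Sigma_\infty) = \{p^\ast = p_1, p_2, \ldots, p_N\}$, choosing pairwise disjoint geodesic balls $U_i \ni p_i$ with $U_1 \subset V$ and with $\partial U_i \cap \mathrm{Sing}(\Sigma_\infty) = \emptyset$. Allard's theorem, together with the varifold convergence in Lemma \ref{lemma: compactness of isoperimetric regions} part 2 and the upper semi-continuity of density, implies that any subsequential limit of points in $\mathrm{Sing}(\Sigma_j)$ has density strictly greater than one, hence lies in $\mathrm{Sing}(\Sigma_\infty)$; consequently $\mathrm{Sing}(\Sigma_j) \subset \bigcup_{i=1}^N U_i$ for every sufficiently large $j$. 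Along the chosen subsequence we therefore have $\mathbf{SCap}(\Omega_j, U_1, g) = 0$, and so
\begin{equation*}
\mathbf{SCap}(g, t) = \mathbf{SCap}(\Omega_j, M, g) = \sum_{i = 2}^{N} \mathbf{SCap}(\Omega_j, U_i, g).
\end{equation*}
Taking $\limsup_j$ and applying Proposition \ref{prop: upper semi-continuity of singular capacity} part 2 to each $U_i$ individually (legitimate since $\partial U_i \cap \mathrm{Sing}(\Sigma_\infty) = \emptyset$) gives
\begin{equation*}
\mathbf{SCap}(g, t) \leq \sum_{i = 2}^{N} \mathbf{SCap}(\Omega_\infty, U_i, g) = \mathbf{SCap}(\Omega_\infty, M, g) - \mathbf{SCap}(\mathbf{C}_{p^\ast} \Sigma_\infty) \leq \mathbf{SCap}(g, t) - 1,
\end{equation*}
where the final inequality uses that $\mathbf{C}_{p^\ast} \Sigma_\infty$ is non-planar so that $\mathbf{SCap}(\mathbf{C}_{p^\ast} \Sigma_\infty) \geq 1$ by Definition \ref{def: singular capacity}, yielding the desired contradiction. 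The only real delicacy is the localization step: the balls $U_i$ must be chosen small enough that the singular sets of the $\Sigma_j$ are genuinely confined within their union for all large $j$, which rests on Allard's theorem applied to the regular portion of $\Sigma_\infty$ together with the discreteness of densities of stable minimal hypercones recorded in \eqref{eqn: discrete densities stable cones}.
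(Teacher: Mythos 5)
Your proof is correct and takes essentially the same route as the paper's: pass to a convergent subsequence, use the hypothesis $(g,t)\in\mathcal{U}_0^{k,\alpha}$ to force $\mathrm{Sing}(\Sigma_\infty)\neq\emptyset$ and to produce a singular point of the limit near which the approximating boundaries are eventually regular, then invoke upper semi-continuity of $\mathbf{SCap}$ to drop the singular capacity by at least one and reach a contradiction. Where the paper invokes Proposition \ref{prop: perturbation of singularities with fast growth} part (i) as a single citation, you unpack it into Theorem \ref{thm: induced twisted jacobi fields} plus Corollary \ref{corollary: perturbing singularities for not slow growth}, and you also spell out the localization to disjoint balls $U_i$ with $\partial U_i\cap\mathrm{Sing}(\Sigma_\infty)=\emptyset$ that the paper leaves implicit when summing the capacity over singular points — useful added detail, but no new idea.
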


\begin{proof}
Suppose not, then by Lemma \ref{lemma: compactness of isoperimetric regions} we may suppose that there exists a sequence $\{(g,t,\Omega_j)\}_{j \geq 1} \subset \mathcal{S}_{\max}(g,t)$ such that $\Omega_j \rightarrow \Omega$ in $L^1_g(M)$ with $\Omega \in \mathcal{I}(g,t)$. Then by Proposition \ref{prop: perturbation of singularities with fast growth} part 1 we see that, since $(g,t) \in \mathcal{U}_0^{k,\alpha}$, $\Omega$ is semi-nondegenerate and thus $\mathrm{Sing}(\Sigma) \neq \emptyset$ (since if $\mathrm{Sing}(\Sigma) = \emptyset$ then we would be able to produce some non-zero twisted Jacobi field on $\Sigma$ by Theorem \ref{thm: induced twisted jacobi fields}, contradicting the nondegeneracy of $\Sigma$ implied by Remark \ref{rem: slow growth remarks}) so that in particular we have 
\[
   \mathbf{SCap}(\Omega_1, M, g) 
   = \limsup_{j \to \infty} \mathbf{SCap}(\Omega_j, M, g) \leq \mathbf{SCap}(\Omega, M, g) - 1.
\]
On the other hand, by definition of ${\bf SCap}(g,t)$ we have
\[
   \mathbf{SCap}(\Omega, M, g) \leq \mathbf{SCap}(g,t) = \mathbf{SCap}(\Omega_1, M, g),
\]
a contradiction; hence $\mathcal{S}_{\max}(g,t)$ is a finite set.
\end{proof}

By combining this finiteness result with the results of section \ref{sec: twisted jacobi fields}, we are able to reduce the singular capacity by metric perturbation. To state this we first introduce the following notation and given a subset $U \subset \mathcal{G}^{k,\alpha} \times \mathbb{R}$ we define the conformal closure and interior, analogously to \cite[(26)]{LW25}, as:
\begin{align}\label{eqn: closure/interior definition}
\begin{split}
\mathrm{Clos}^{\mathrm{conf}}(U) &= \left\{(g,t) \in \mathcal{G}^{k,\alpha} \times \mathbb{R} \,\, \bigg| \begin{array}{l}
\text{ for each } \varepsilon > 0, ((1+f)g,t')  \in U \text{ for some } f \in C^{k,\alpha}(M) \\ \text{ with } \Vert f\Vert_{C^{k,\alpha}(M)} < \varepsilon \text{ and } |t - t'| < \varepsilon
\end{array}\right\},\\
\mathrm{Int}^{\mathrm{conf}}(U) &= \left\{(g,t) \in U \,\, \bigg|\begin{array}{l}
\text{ for some } \delta > 0, ((1+f)g,t')  \in U \text{ for all } f \in C^{k,\alpha}(M)\\  \text{ with } \Vert f\Vert_{C^{k,\alpha}(M)} < \delta \text{ and } |t - t'| < \delta \end{array} \right\}.
\end{split}
\end{align}

\begin{proposition}[Reducing the singular capacity]\label{prop: reducing singular capacity} 
    Given $(g,t)\in\mathcal{U}^{k,\alpha}_0 \cap \mathrm{Int}^{\mathrm{conf}}(\mathrm{Clos}^{\mathrm{conf}}(\mathcal{U}^{k,\alpha}_0))$ with ${\bf SCap} (g,t) \geq 1$, there exists a sequence, $\{(g_j,t_j)\}_{j \geq 1} \subset ([g] \times \mathbb{R}) \cap \mathcal{U}^{k,\alpha}_0\cap \mathrm{Int}^{\mathrm{conf}}(\mathrm{Clos}^{\mathrm{conf}}(\mathcal{U}^{k,\alpha}_0))$, with $g_j\to g$ in $C^{k,\alpha}$, $t_j \rightarrow t$, and such that
        $$\limsup_{j\to \infty} {\bf SCap} (g_j,t_j) \le {\bf SCap} (g,t)-1. $$
        Moreover, one can in fact choose $t_j = t$ for each $j \geq 1$.
\end{proposition}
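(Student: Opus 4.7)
The plan is to exploit the finiteness of $\mathcal{S}_{\max}(g,t)$ from Lemma \ref{lem: max singular capacity triples are finite} to find a single conformal direction that simultaneously removes a singular point from every isoperimetric region achieving the maximum singular capacity. Since $\mathbf{SCap}(g,t) \geq 1$ and $(g,t) \in \mathcal{U}_0^{k,\alpha}$, one writes $\mathcal{S}_{\max}(g,t) = \{\Omega_1, \dots, \Omega_N\}$ where each $\Omega_i$ is semi-nondegenerate with $\mathrm{Sing}(\Sigma_i) \neq \emptyset$.

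Next I would construct $f \in C^{k,\alpha}(M)$ satisfying, for every $i$: (a) $f \in \mathcal{V}_i^{k,\alpha}$ from Proposition \ref{prop: perturbation functions open and dense} applied to $\Omega_i$; (b) $\int_{\Omega_i} f \, dV_g = 0$; and (c) $\nu_{\Sigma_i, g}(f)$ is not constant on $\Sigma_i$. Each $\mathcal{V}_i^{k,\alpha}$ contains an open and dense subset whose complement has empty interior at the origin, so one can pick a small $f_0 \in \bigcap_i \mathcal{V}_i^{k,\alpha}$ also satisfying (c), and then add an even smaller correction $h$ that enforces the finitely many linear constraints in (b) using the linear independence of the functionals $f \mapsto \int_{\Omega_i} f \, dV_g$ (after discarding duplicates if necessary). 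For $\|f_0\|$ sufficiently small the correction $h$ remains in the open conditions (a) and (c) by openness. Condition (c) is crucial downstream: by a direct computation of the first variation of mean curvature under $g \mapsto (1+cf)g$, non-constancy of $\nu_{\Sigma_i, g}(f)$ forces $\Sigma_j^\star \neq \Sigma_i$ along the perturbation, supplying the hypothesis required in Proposition \ref{prop: perturbation of singularities with fast growth}.

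I would then show, by contradiction, that for some sequence $c_j \to 0$ one has $\mathbf{SCap}((1+c_j f)g, t) \leq \mathbf{SCap}(g,t) - 1$. If instead $\Omega_j^\star \in \mathcal{I}((1+c_j f)g, t)$ satisfies $\mathbf{SCap}(\Omega_j^\star, M, (1+c_j f)g) \geq \mathbf{SCap}(g,t)$, Lemma \ref{lemma: compactness of isoperimetric regions} and Proposition \ref{prop: upper semi-continuity of singular capacity} yield a subsequential limit $\Omega_j^\star \to \Omega = \Omega_i$ for some $i$. Proposition \ref{prop: perturbation of singularities with fast growth} case (ii), applicable by (a)--(c), then produces $p \in \mathrm{Sing}(\Sigma_i)$ and a neighbourhood $U_p$ with $\mathrm{Sing}(\Sigma_j^\star) \cap U_p = \emptyset$ infinitely often. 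Upper semi-continuity of singular capacity applied on $M \setminus U_p$ then gives, for such $j$,
\[
\mathbf{SCap}(\Omega_j^\star, M, (1+c_j f)g) \leq \mathbf{SCap}(\Omega_i, M \setminus U_p, g) \leq \mathbf{SCap}(g,t) - \mathbf{SCap}(\mathbf{C}_p \Sigma_i) \leq \mathbf{SCap}(g,t) - 1,
\]
contradicting the assumption.

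Finally, to land in $\mathcal{U}_0^{k,\alpha} \cap \mathrm{Int}^{\mathrm{conf}}(\mathrm{Clos}^{\mathrm{conf}}(\mathcal{U}_0^{k,\alpha}))$ along $[g]$, I would use the conformal density of $\mathcal{U}_0^{k,\alpha}$ in $[g] \times \mathbb{R}$ from Theorem \ref{thm: generic semi-nondegeneracy} (which forces $\mathrm{Clos}^{\mathrm{conf}}(\mathcal{U}_0^{k,\alpha}) = \mathcal{G}^{k,\alpha} \times \mathbb{R}$, so the outer $\mathrm{Int}^{\mathrm{conf}}$ condition is automatic) to select $(g_j', t_j) \in \mathcal{U}_0^{k,\alpha} \cap ([g] \times \mathbb{R})$ arbitrarily close to $((1+c_j f)g, t)$; integrality and upper semi-continuity of $\mathbf{SCap}$ preserve the capacity drop. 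For the \emph{moreover} conclusion $t_j = t$, I would further apply the constant conformal rescaling $\tilde g_j = \lambda_j^2 g_j' \in [g]$ with $\lambda_j^2 = (t/t_j)^{1/4}$: the scaling of volume and perimeter in dimension eight yields $\mathcal{I}(\tilde g_j, t) = \mathcal{I}(g_j', t_j)$, and both semi-nondegeneracy and singular capacity are invariant under this constant conformal scaling. The main obstacle is the construction of $f$ in the second step, which requires a careful analysis of the explicit linear structure of $\mathcal{V}_i^{k,\alpha}$ obtained in the proof of Proposition \ref{prop: perturbation functions open and dense} in order to verify the relevant transversality with the finitely many volume constraints.
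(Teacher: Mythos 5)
Your overall architecture matches the paper's proof: finiteness of $\mathcal{S}_{\max}(g,t)$ from Lemma \ref{lem: max singular capacity triples are finite}, a single conformal direction $f$ working simultaneously for all $\Omega_1,\dots,\Omega_N$, a contradiction argument via compactness and Proposition \ref{prop: perturbation of singularities with fast growth}, and a constant conformal rescaling to absorb the volume change and produce $t_j = t$. However, the step you flag as the ``main obstacle'' --- constructing $f$ that simultaneously lies in $\bigcap_i G_i$ and satisfies $\int_{\Omega_i} f\,dV_g = 0$ for all $i$ --- is a genuine gap, and your proposed fix does not work as stated. You suggest taking $f_0$ small in $\bigcap_i \mathcal{V}_i^{k,\alpha}$ and adding an ``even smaller'' correction $h$ to enforce the integral constraints, then appealing to openness. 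But any $h$ with $\int_{\Omega_i} h = -\int_{\Omega_i} f_0$ satisfies $\|h\|_{C^{k,\alpha}} \gtrsim \max_i|\int_{\Omega_i} f_0|$, which is comparable to $\|f_0\|$, not much smaller. Worse, the conditions (a) and (c) are invariant under rescaling $f_0 \mapsto \lambda f_0$ (both $\mathcal{V}_i^{k,\alpha}$ and the non-constancy of $\nu_{\Sigma_i,g}(f)$ are cones with $0$ in their complements), so the radius of openness at $f_0$ shrinks proportionally to $\|f_0\|$; taking $f_0$ small buys you nothing. Transversality of the open dense sets $G_i$ with the finite-codimension subspace $W = \{f : \int_{\Omega_i} f = 0\ \forall i\}$ genuinely needs to be verified: an open dense set need not meet a fixed closed proper subspace.

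The paper closes this gap with a construction you should note. Starting from an arbitrary $f \in G = \bigcap_i G_i$, one produces the correction $h$ supported in $\bigcup_i \Omega_i$ and strictly \emph{away} from every $\Sigma_i$. Because $h$ and its derivatives vanish on each $\Sigma_i$, one has $\nu_{\Sigma_i,g}(f+h) = \nu_{\Sigma_i,g}(f)$, so membership in $G_i$ (which depends only on the normal derivative data $\nu_{\Sigma_i,g}(\cdot)$, see the definition of $\mathcal{V}^{k,\alpha}$ and of $G$ in \eqref{eqn: perturbation sets}) and the non-constancy condition (c) are preserved \emph{automatically}, with no smallness needed. The integral constraints are then enforced by inclusion-exclusion: setting $V_J = (\bigcap_{j\in J}\Omega_j)\setminus\bigcup_{k\notin J}\Omega_k$ for nonempty $J \subset \{1,\dots,N\}$, fixing $\varphi_J \in C_c^\infty(V_J)$ of unit integral, and choosing $c_J$ via
\[
c_J = \sum_{J\subset K\subset\{1,\dots,N\}} (-1)^{|K|-|J|+1}\int_{\Omega_K} f\,dV_g, \qquad h = \sum_J c_J\varphi_J,
\]
yields $\int_{\Omega_i}(f+h)\,dV_g = 0$ for every $i$. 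This bypasses the transversality question entirely, since the whole affine slice $\{f + h : h \text{ supported away from every }\Sigma_i\}$ lies in $G$. Your remaining steps (the contradiction argument, the application of upper semi-continuity from Proposition \ref{prop: upper semi-continuity of singular capacity}, the use of conformal density of $\mathcal{U}_0^{k,\alpha}$ to make $\mathrm{Int}^{\mathrm{conf}}(\mathrm{Clos}^{\mathrm{conf}}(\mathcal{U}_0^{k,\alpha}))$ automatic, and the scale-invariance argument for the ``moreover'' conclusion) all track the paper correctly.
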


\begin{proof}    
    Note we have by Lemma \ref{lem: max singular capacity triples are finite} we have that $\mathcal{S}_{max}(g,t) = \{(g,t,\Omega_1),\dots (g,t,\Omega_N)\}$ for some $\{\Omega_i\}_{i = 1}^N \subset \mathcal{I}(g,t)$.
    For each $(g,\Omega_i) \in \mathcal{P}^{k,\alpha}(t)$ for $i = 1,\dots, N$ there is an open and dense subset $G_i\subset C^{k,\alpha}(M)$ (namely as defined in (\ref{eqn: perturbation sets})) from Proposition \ref{prop: perturbation functions open and dense}; hence $ G= \bigcap_{i=1}^N G_i$ is open and dense in $C^{k,\alpha} (M)$ also.
    
    \bigskip
    
    Fixing $f \in G$, we claim that without loss of generality we can assume that $\int_{\Omega_i} f \, dV_g = 0$ for each $i = 1,\dots,N$. To see this we note that if $N = 1$, then by considering $h \in C^\infty_c(\Omega_1)$ with $\int_{\Omega_1} h \, dV_g = -\int_{\Omega_1} f \, dV_g$ we have that $f + h = f$ on $\Sigma_1$ (hence $f + h \in G$ by definition of the $G_i$ in Proposition \ref{prop: perturbation functions open and dense}). For $N > 1$ and non-empty $J \subset \{1,\dots, N\}$ we denote $V_J = \left(\bigcap_{j \in J} \Omega_j \right) \setminus \bigcup_{k \notin J} \Omega_k$, and fix $\varphi_J \in C^\infty_c(V_J)$ with $\int_{V_J} \varphi_J\, dV_g = 1$. By the inclusion-exclusion principle, if we denote $\Omega_K = \bigcap_{k \in K} \Omega_k$ and for $J \subset \{1,\dots,N\}$ set
    \[
    c_J = \sum_{\{K \, | \, J \subset K \subset \{1,\dots,   N\}\}} (-1)^{|K| - |J| + 1} \int_{\Omega_K} f\, dV_g,
    \]
    we ensure that $\int_{\Omega_i} f \, dV_g = -\sum_{\{J \, | \, i \in J\}} c_J$ for each $i = 1,\dots,N$. Then, setting $h = \sum_{J} c_J \varphi_J$ so that both $h \in C_c^\infty(\bigcup_{i = 1}^N \Omega_i)$ and $h$ is zero on $\Sigma_i$ (which ensures $f + h \in G_i$) with $\int_{\Omega_i} (f+h)\, dV_g = 0$ for each $i=1, \dots, N$.

    \bigskip
    
    Thus, choosing such an $f\in G$ with $\int_{\Omega_i} f \, dV_g = 0$ for each $i = 1,\dots, N$, for any sequences $f_j \to f$ in $C^{k,\alpha}$ and $t_j \rightarrow t$ we have that for sufficiently large $j \geq 1$, for $g_j= (1+\frac{f_j}{j})g$ we have $(g_j,t_j)\in \mathcal{U}^{k,\alpha}_0 \cap \mathrm{Int}^{\mathrm{conf}}(\mathrm{Clos}^{\mathrm{conf}}(\mathcal{U}^{k,\alpha}_0))$; where here we use definition (\ref{eqn: closure/interior definition}) since $(g,t) \in \mathcal{U}^{k,\alpha}_0 \cap \mathrm{Int}^{\mathrm{conf}}(\mathrm{Clos}^{\mathrm{conf}}(\mathcal{U}^{k,\alpha}_0))$.

    \bigskip
    
    Assume for a contradiction that we have a sequence, $\{\Omega_j \}_{j \geq 1}$, such that $ \Omega_j \in \mathcal{I}(g_j,t
    _j)$ for each $j \geq 1$ with ${\bf SCap} (\Omega_j,M,g_j)={\bf SCap} (g_j,t_j)$
    but such that
    ${\bf SCap} (\Omega_j,M,g_j)\ge {\bf SCap} (g,t)$ for all large $j \geq 1$. By Lemma \ref{lemma: compactness of isoperimetric regions} there exists $\Omega_\infty \in \mathcal{I}(g,t)$ such that, up to a subsequence (not relabelled),  $\Omega_j\to\Omega_\infty$ in $L_g^1(M)$, $\mathrm{Per}_g(\Omega_j) \rightarrow \mathrm{Per}_g(\Omega_\infty)$, and $|\Sigma_j|\to |\Sigma_\infty|$ as varifolds. By Proposition \ref{prop: upper semi-continuity of singular capacity} part 2 and the contradiction assumption, we have
    \begin{equation}\label{eqn: contradiction for singcap reduction}
        {\bf SCap} (\Omega_\infty,M,g) \ge \limsup_{j \rightarrow \infty}  {\bf SCap} (\Omega_j,M,g_j) \ge  {\bf SCap} (g,t). 
    \end{equation}
    Therefore, ${\bf SCap} (\Omega_\infty,M,g) = {\bf SCap} (g,t)$ and so $(g,\Omega_\infty)\in \mathcal{S}_{max}(g,t)$; thus $\Omega_\infty \in\{\Omega_1,\dots, \Omega_N\}$. We renormalise $g_j$ to $\bar{g}_j$ so that $\Omega_j \in \mathcal{I}(\bar{g}_j,t)$ for each $j \geq 1$ and such that $(\bar{g}_j,t)\in \mathcal{U}^{k,\alpha}_0 \cap \mathrm{Int}^{\mathrm{conf}}(\mathrm{Clos}^{\mathrm{conf}}(\mathcal{U}^{k,\alpha}_0))$; namely, we have $\bar{g}_j=(1+ \bar{c}_j\bar{f}_j)g$, for sequences $\bar{c}_j \rightarrow 0$ and $\bar{f}_j \to f$ in $C^{k,\alpha}$. The fact that we can choose $t_j = t$ for each $j \geq 1$ for the sequence in the statement follows by this above renormalisation; namely by absorbing the volume change into the metric factor.

    \bigskip
    
    Since $\Omega_\infty$ is semi-nondegenerate we can apply Proposition \ref{prop: perturbation of singularities with fast growth} case (ii) (noting that $\int_{\Omega_\infty} f \, dV_g = 0$ and $\nu_{\Sigma,g}(f)$ is not constant by construction) to see that for some $p \in \mathrm{Sing}(\Sigma_\infty)$ we have
    \begin{align*}
        {\bf SCap} (\Omega_\infty,M,g)-1
        &\ge {\bf SCap} (\Omega_\infty,M,g) - {\bf SCap} (\mathbf{C}_p \Sigma_\infty)\\
        &\ge \limsup_j  {\bf SCap} (\Omega_j,M,\bar{g}_j) \\
        &= \limsup_j  {\bf SCap} (\Omega_j,M,g_j) \\
        &\ge  {\bf SCap} (g,t) \\
        &={\bf SCap} (\Omega_\infty,M,g).
    \end{align*}
    For the first inequality above we use the fact that $\mathbf{SCap}(\mathbf{C}_p\Sigma_\infty) \geq 1$, the second inequality is from the application of Proposition \ref{prop: perturbation of singularities with fast growth} and Proposition \ref{prop: upper semi-continuity of singular capacity} part 2, the first equality follows since rescaling does not affect the definition of the singular capacity, and both the third inequality and the second equality follow from (\ref{eqn: contradiction for singcap reduction}). This is a contradiction, and thus $\limsup_{j \rightarrow \infty}\mathbf{SCap}(g_j,t_j) \leq \mathbf{SCap}(g,t) - 1$ as desired. 
\end{proof}

\section{Proof of Theorems \ref{thm: generic regularity metrics and volumes} \& \ref{thm: generic regularity fixed volume}}\label{sec: proof of theorems 1 & 2}

We can now establish Theorem \ref{thm: generic regularity metrics and volumes} by iteratively reducing the singular capacity in combination with the genericity of semi-nondegeneracy by proving:

\begin{theorem}[Regular metric volume pairs are generic]\label{thm: implies theorem 1}
    Let $\mathcal{U}_{\mathrm{reg}}^{k,\alpha}$ be the set of $(g,t) \in \mathcal{G}^{k,\alpha} \times \mathbb{R}$ such that every isoperimetric region with respect to the metric $g$ of enclosed volume $t$ is regular, then $\mathcal{U}_{\mathrm{reg}}^{k,\alpha}$ is open and dense in $\mathcal{U}_0^{k,\alpha}$; in particular, $\mathcal{U}_{\mathrm{reg}}^{k,\alpha}$ is generic in $\mathcal{G}^{k,\alpha} \times \mathbb{R}$ and Theorem \ref{thm: generic regularity metrics and volumes} holds. Moreover, if $\bar{g} \in \mathcal{G}^{k,\alpha}$ then $\mathcal{U}^{k,\alpha}_\mathrm{reg} \cap ([\bar{g}] \times \mathbb{R})$ is open and dense in $\mathcal{U}^{k,\alpha}_0 \cap ([\bar{g}] \times \mathbb{R})$ and generic in $[\bar{g}] \times \mathbb{R}$. 
\end{theorem}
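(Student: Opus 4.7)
The plan is to show that $\mathcal{U}_\mathrm{reg}^{k,\alpha}$ is open and dense in $\mathcal{U}_0^{k,\alpha}$, from which the genericity of $\mathcal{U}_\mathrm{reg}^{k,\alpha}$ in $\mathcal{G}^{k,\alpha} \times \mathbb{R}$ will follow, since $\mathcal{U}_0^{k,\alpha}$ is itself generic by Theorem \ref{thm: generic semi-nondegeneracy}. The main inputs are the upper semi-continuity and finiteness of the singular capacity (Proposition \ref{prop: upper semi-continuity of singular capacity}) for openness, and the iterated perturbation statement of Proposition \ref{prop: reducing singular capacity} for density. As a preliminary reduction, I would first note that since $\mathcal{U}_0^{k,\alpha}$ is dense in $\mathcal{G}^{k,\alpha} \times \mathbb{R}$ and has dense intersection with each conformal class $[\bar{g}] \times \mathbb{R}$ (by the final statement of Theorem \ref{thm: generic semi-nondegeneracy}), one has $\mathrm{Int}^{\mathrm{conf}}(\mathrm{Clos}^{\mathrm{conf}}(\mathcal{U}_0^{k,\alpha})) = \mathcal{G}^{k,\alpha} \times \mathbb{R}$, so the hypothesis of Proposition \ref{prop: reducing singular capacity} reduces to simply $(g,t) \in \mathcal{U}_0^{k,\alpha}$; moreover the approximating sequence it produces also lies in $\mathcal{U}_0^{k,\alpha}$, which makes iterating the proposition well-defined.

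For openness, I would suppose $(g,t) \in \mathcal{U}_\mathrm{reg}^{k,\alpha}$ and argue by contradiction: given a sequence $(g_j,t_j) \to (g,t)$ in $\mathcal{G}^{k,\alpha} \times \mathbb{R}$ together with nonregular $\Omega_j \in \mathcal{I}(g_j, t_j)$, each $\Sigma_j$ has a singular point and hence $\mathbf{SCap}(\Omega_j,M,g_j) \geq 1$ by Definition \ref{def: singular capacity for isoperimetric regions}. Lemma \ref{lemma: compactness of isoperimetric regions} then produces a subsequential limit $\Omega \in \mathcal{I}(g,t)$, which by hypothesis is regular, and so $\mathbf{SCap}(\Omega,M,g) = 0$. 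Applying the upper semi-continuity of Proposition \ref{prop: upper semi-continuity of singular capacity} part 2 with $U = M$ yields the desired contradiction. This argument in fact shows that $\mathcal{U}_\mathrm{reg}^{k,\alpha}$ is open in all of $\mathcal{G}^{k,\alpha} \times \mathbb{R}$, and in particular open in $\mathcal{U}_0^{k,\alpha}$.

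For density, I would take $(g,t) \in \mathcal{U}_0^{k,\alpha}$ and set $N = \mathbf{SCap}(g,t)$, which is finite by Proposition \ref{prop: upper semi-continuity of singular capacity} part 3. If $N = 0$, then every $\Omega \in \mathcal{I}(g,t)$ is already smooth, so the semi-nondegeneracy of $\Omega$ upgrades to nondegeneracy by Remark \ref{rem: slow growth remarks}, placing $(g,t) \in \mathcal{U}_\mathrm{reg}^{k,\alpha}$ and also recovering the nondegeneracy asserted in Theorem \ref{thm: generic regularity metrics and volumes}. If $N \geq 1$, I would iterate Proposition \ref{prop: reducing singular capacity} at most $N$ times, obtaining at each stage a nearby pair in $\mathcal{U}_0^{k,\alpha}$ whose singular capacity drops by at least one, and concluding with the triangle inequality. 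Combining this density with the openness established above, I can write $\mathcal{U}_\mathrm{reg}^{k,\alpha} = V \cap \mathcal{U}_0^{k,\alpha}$ for some open $V \subset \mathcal{G}^{k,\alpha} \times \mathbb{R}$; since $\mathcal{U}_\mathrm{reg}^{k,\alpha}$ is dense in $\mathcal{G}^{k,\alpha} \times \mathbb{R}$ (by composition with density of $\mathcal{U}_0^{k,\alpha}$), this $V$ is open and dense, and intersecting it with the countable family of open dense sets whose intersection defines $\mathcal{U}_0^{k,\alpha}$ (from the proof of Theorem \ref{thm: generic semi-nondegeneracy}) exhibits $\mathcal{U}_\mathrm{reg}^{k,\alpha}$ as a countable intersection of open dense sets, hence generic. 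The conformal assertion follows by running the same argument after intersecting every open dense set with $[\bar{g}] \times \mathbb{R}$, using the final assertion of Theorem \ref{thm: generic semi-nondegeneracy} together with the fact that the sequence produced by Proposition \ref{prop: reducing singular capacity} is already confined to $[g] \times \mathbb{R}$. The main subtlety will be ensuring termination of the iteration, which is secured by the strict integer decrease of singular capacity at each step and its finiteness at the starting pair.
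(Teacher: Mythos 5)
Your proposal is correct and follows essentially the same structure as the paper's proof: openness via the compactness of isoperimetric regions under convergence of metric volume pairs, and density via iterating Proposition~\ref{prop: reducing singular capacity}, terminated by the finiteness of the singular capacity (Proposition~\ref{prop: upper semi-continuity of singular capacity} part~3). Your observation that $\mathrm{Int}^{\mathrm{conf}}(\mathrm{Clos}^{\mathrm{conf}}(\mathcal{U}_0^{k,\alpha})) = \mathcal{G}^{k,\alpha}\times\mathbb{R}$ is exactly the reduction the paper makes before applying Proposition~\ref{prop: reducing singular capacity}.

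The one genuinely different detail is your openness argument: where the paper applies Allard's theorem directly to the regular limit $\Omega_\infty$ (varifold convergence forces $\Omega_j$ to be regular for large $j$), you instead invoke the upper semi-continuity of $\mathbf{SCap}(\cdot,M,\cdot)$. Both are valid; the Allard route is marginally more elementary since it does not route through the inductive definition of singular capacity, whereas your route arguably unifies the openness and density arguments around a single quantity. Your final paragraph (writing $\mathcal{U}_\mathrm{reg}^{k,\alpha}=V\cap\mathcal{U}_0^{k,\alpha}$ and intersecting with the countable family) is a more explicit version of the paper's assertion that ``open and dense in a generic set $\Rightarrow$ generic,'' though as you note the set is in fact open in all of $\mathcal{G}^{k,\alpha}\times\mathbb{R}$, which simplifies matters. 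The only point you do not address is the reduction from $C^{k,\alpha}$ metrics to smooth metrics needed for the final assertion that Theorem~\ref{thm: generic regularity metrics and volumes} (stated for smooth metrics) follows; the paper invokes the standard observation from White's work that a generic statement in each $\mathcal{G}^{k,\alpha}$ for all finite $k\geq 4$ yields the smooth case.
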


\begin{proof} 
    It is sufficient to prove the result for finite $k \geq 4$ since, as observed in \cite[Theorem 2.10]{W17}, the case for smooth metrics then follows immediately.

    \bigskip
    
    For openness, we show that the complement $\mathcal{U}^{k,\alpha}_0\setminus \mathcal{U}^{k,\alpha}_\mathrm{reg}$ is closed. Thus, we consider a sequence, $\{(g_j,t_j)\}_{j \geq 1} \subset \mathcal{U}^{k,\alpha}_0\setminus \mathcal{U}^{k,\alpha}_\mathrm{reg}$, with 
    $g_j \to g_\infty$ in $C^{k,\alpha}(M)$, $t_j\to t_\infty$, and $(g_\infty,t_\infty) \in \mathcal{U}^{k,\alpha}_0$. By assumption, for each $j \geq 1$ there exists 
    $\Omega_j \in \mathcal{I}(g_j,t_j)$ such that $\mathrm{Sing}(\Sigma_j) \neq \emptyset$. By Lemma \ref{lemma: compactness of isoperimetric regions} 
    there exists $\Omega_\infty \in \mathcal{I}(g_\infty, t_\infty)$ so that in particular,
    up to a subsequence (not relabelled), we have $|\Sigma_j| \rightarrow |\Sigma_\infty|$ as varifolds (using Lemma \ref{lemma: compactness of isoperimetric regions} part 2). If $(g_\infty,t_\infty) \in \mathcal{U}^{k,\alpha}_\mathrm{reg}$ then $\Omega_\infty$ is regular, and so by Allard's theorem, for sufficiently large $j \geq 1$ we have that $\Omega_j$ is also regular, contradicting the assumption that $\mathrm{Sing}(\Sigma_j)\neq \emptyset$; hence $(g_\infty ,t_\infty) \in \mathcal{U}^{k,\alpha}_0 \setminus \mathcal{U}^{k,\alpha}_\mathrm{reg}$ and so $\mathcal{U}^{k,\alpha}_\mathrm{reg}$ is open in $\mathcal{U}^{k,\alpha}_0$.  

    \bigskip
    
    For denseness, we fix $(g,t) \in \mathcal{U}^{k,\alpha}_0$, $\varepsilon>0$, and note that by Proposition \ref{prop: upper semi-continuity of singular capacity} part 3 we have that $\mathbf{SCap}(g,t) < \infty$. By Theorem \ref{thm: generic semi-nondegeneracy}, in particular by the genericity in the space of metric volume pairs for a fixed conformal class, and (\ref{eqn: closure/interior definition}) we have that $([g] \times \mathbb{R}) \cap \mathrm{Clos}^{\mathrm{conf}}(\mathcal{U}^{k,\alpha}_0) = [g] \times \mathbb{R}$ and so we have
    $$([g] \times \mathbb{R}) \cap \mathcal{U}^{k,\alpha}_0 = ([g] \times \mathbb{R}) \cap \mathcal{U}^{k,\alpha}_0 \cap \mathrm{Int}^\mathrm{conf}(\mathrm{Clos}^{\mathrm{conf}}(\mathcal{U}^{k,\alpha}_0)).$$

    Hence, by repeatedly applying Proposition \ref{prop: reducing singular capacity}, we obtain $(g_\varepsilon, t_\varepsilon) \in ([g] \times \mathbb{R}) \cap  \, \mathcal{U}^{k,\alpha}_0 \cap \mathrm{Int}^\mathrm{conf}(\mathrm{Clos}^{\mathrm{conf}}(\mathcal{U}^{k,\alpha}_0))$ with $\mathbf{SCap}(g_\varepsilon,t_\varepsilon) = 0$; thus we have that $(g_\epsilon,t_\epsilon) \in \mathcal{U}^{k,\alpha}_\mathrm{reg}$ with $\|g_\varepsilon-g\|_{C^{k,\alpha}}<\varepsilon$ and $|t-t_\epsilon| <\epsilon$. This proves that $\mathcal{U}^{k,\alpha}_\mathrm{reg}$ is dense in $\mathcal{U}^{k,\alpha}_0$ as desired. Theorem \ref{thm: generic regularity metrics and volumes} then follows by Theorem \ref{thm: generic semi-nondegeneracy}; in other words it follows since semi-nondegeneracy is a generic property for metric volume pairs.

    \bigskip

    For the final statement, if $\bar{g} \in \mathcal{G}^{k,\alpha}$ then by the genericity of $\mathcal{U}^{k,\alpha}_0 \cap ([\bar{g}] \times \mathbb{R})$ in $[\bar{g}] \times \mathbb{R}$ as shown in Theorem \ref{thm: generic semi-nondegeneracy}, we see that $\mathcal{U}^{k,\alpha}_\mathrm{reg} \cap ([\bar{g}] \times \mathbb{R})$ is open and dense in $\mathcal{U}^{k,\alpha}_0 \cap ([\bar{g}] \times \mathbb{R})$ (since in particular the metric perturbations of Proposition \ref{prop: reducing singular capacity} remain in the given conformal class) and generic in $[\bar{g}] \times \mathbb{R}$.
\end{proof}

We similarly establish Theorem \ref{thm: generic regularity fixed volume} by proving:

\begin{theorem}[Regular metrics are generic for fixed volume]\label{thm: implies theorem 2} Given $t \in \mathbb{R}$, let $\mathcal{G}^{k,\alpha}_\mathrm{reg}(t)$ be the set of $g \in \mathcal{G}^{k,\alpha}$ such that every isoperimetric region with respect to $g$ of enclosed volume $t$ is regular, then $\mathcal{G}^{k,\alpha}_\mathrm{reg}(t)$ is open and dense in $\mathcal{U}_{0}^{k,\alpha} \cap (\mathcal{G}^{k,\alpha} \times \{t\})$; in particular, $\mathcal{G}^{k,\alpha}_\mathrm{reg}(t)$ is generic in $\mathcal{G}^{k,\alpha}$ and Theorem \ref{thm: generic regularity fixed volume} holds. Moreover, if $\bar{g} \in \mathcal{G}^{k,\alpha}$ then $\mathcal{G}^{k,\alpha}_\mathrm{reg}(t) \cap ([\bar{g}] \times t)$ is open and dense in $\mathcal{U}^{k,\alpha}_0 \cap ([\bar{g}] \times \{t\})$ and generic in $[\bar{g}] \times t$. 
\end{theorem}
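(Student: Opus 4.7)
The plan is to mirror the proof of Theorem \ref{thm: implies theorem 1}, exploiting the fact (explicitly stated in the conclusion of Proposition \ref{prop: reducing singular capacity}) that the singular-capacity-reducing perturbations can be chosen with fixed enclosed volume $t_j \equiv t$. As before, it suffices to treat finite $k \geq 4$, the smooth case then following from \cite[Theorem 2.10]{W17}. Openness of $\mathcal{G}^{k,\alpha}_\mathrm{reg}(t)$ in $\mathcal{U}^{k,\alpha}_0 \cap (\mathcal{G}^{k,\alpha} \times \{t\})$ follows exactly as in Theorem \ref{thm: implies theorem 1}: a sequence of non-regular metrics in this set converging in $C^{k,\alpha}$ to some $g_\infty \in \mathcal{G}^{k,\alpha}_\mathrm{reg}(t)$ yields, by Lemma \ref{lemma: compactness of isoperimetric regions}, a subsequential varifold limit $\Omega_\infty \in \mathcal{I}(g_\infty, t)$, which is regular by hypothesis; Allard's theorem then forces the approximating $\Omega_j$ to be regular for large $j$, giving a contradiction.

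For denseness, fix $g \in \mathcal{U}^{k,\alpha}_0 \cap (\mathcal{G}^{k,\alpha} \times \{t\})$ and $\varepsilon > 0$. By Proposition \ref{prop: upper semi-continuity of singular capacity} part 3 we have $\mathbf{SCap}(g,t) < \infty$, and the fixed-conformal-class genericity in Theorem \ref{thm: generic semi-nondegeneracy} ensures $[g] \times \mathbb{R} \subset \mathrm{Clos}^{\mathrm{conf}}(\mathcal{U}^{k,\alpha}_0)$, so that $(g,t) \in \mathrm{Int}^{\mathrm{conf}}(\mathrm{Clos}^{\mathrm{conf}}(\mathcal{U}^{k,\alpha}_0))$. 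Repeated application of Proposition \ref{prop: reducing singular capacity}, at each step keeping the volume fixed as permitted by that proposition, produces in at most $\mathbf{SCap}(g,t)$ steps a metric $g_\varepsilon \in [g] \cap \mathcal{U}^{k,\alpha}_0$ with $\Vert g_\varepsilon - g\Vert_{C^{k,\alpha}} < \varepsilon$ and $\mathbf{SCap}(g_\varepsilon,t) = 0$; by Remark \ref{rem: singcap zero for smooth}, every $\Omega \in \mathcal{I}(g_\varepsilon,t)$ then has $\mathrm{Sing}(\Sigma) = \emptyset$, so $g_\varepsilon \in \mathcal{G}^{k,\alpha}_\mathrm{reg}(t)$. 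The same argument restricted to $[\bar{g}] \times \{t\}$ yields the open-denseness of $\mathcal{G}^{k,\alpha}_\mathrm{reg}(t) \cap ([\bar{g}] \times \{t\})$ in $\mathcal{U}^{k,\alpha}_0 \cap ([\bar{g}] \times \{t\})$, since Proposition \ref{prop: reducing singular capacity} already produces its perturbations within the conformal class $[g]$.

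To conclude that $\mathcal{G}^{k,\alpha}_\mathrm{reg}(t)$ is generic in $\mathcal{G}^{k,\alpha}$, and that $\mathcal{G}^{k,\alpha}_\mathrm{reg}(t) \cap ([\bar{g}] \times \{t\})$ is generic in $[\bar{g}] \times \{t\}$, it remains to establish the fixed-volume (resp.~fixed-volume fixed-conformal-class) analogue of Theorem \ref{thm: generic semi-nondegeneracy}, namely that $\mathcal{U}^{k,\alpha}_0 \cap (\mathcal{G}^{k,\alpha} \times \{t\})$ is residual in $\mathcal{G}^{k,\alpha} \times \{t\}$ (resp.~in $[\bar{g}] \times \{t\}$). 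This is the principal point requiring verification, and is obtained by a slice-wise adaptation of the proof of Theorem \ref{thm: generic semi-nondegeneracy}: the countable cover from Lemma \ref{lem: covering of triples by pseudo neighbourhoods} restricts naturally to the slice $\mathcal{G}^{k,\alpha} \times \{t\}$, and the denseness of each $\mathcal{G}^{k,\alpha}(g_j, t_j, \Omega_j; \Lambda_j, \kappa_j)$ in the slice follows by inspecting the proof of Lemma \ref{lemma: generic pairs for neighbourhoods} — the covering-by-compactness step preserves the volume coordinate, while the final Sard--Smale perturbation is explicitly constructed within $\mathcal{F} \cdot \bar{g} \times \{\bar{t}\}$ at fixed $\bar{t}$, so by induction on $\dim(\mathrm{Ker}^{+}\widetilde{L}_{\Sigma, g})$ each such set remains open and dense after intersection with $\mathcal{G}^{k,\alpha} \times \{t\}$ (and, by the identical reasoning, with $[\bar{g}] \times \{t\}$).
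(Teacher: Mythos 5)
Your proposal is correct and follows essentially the same route as the paper: the paper also observes that fixing $t$ throughout the pseudo-neighbourhood machinery of Subsections \ref{subsec: pseudo-neighbourhoods and three compactness lemmas}--\ref{subsec: generic semi-nondegeneracy} leaves all the proofs unchanged (giving the fixed-volume analogue of Theorem \ref{thm: generic semi-nondegeneracy}), and then invokes the final clause of Proposition \ref{prop: reducing singular capacity} to reduce singular capacity at fixed volume. Your slice-wise framing (restricting the cover of $\mathcal{T}^{k,\alpha}$ and tracing through the fixed-$\bar{t}$ Sard--Smale step in Lemma \ref{lemma: generic pairs for neighbourhoods}) is an equivalent description of the paper's wholesale re-definition of pseudo-neighbourhoods with $t$ fixed, and the appeal to $\mathbf{SCap}(g_\varepsilon,t)=0 \Rightarrow \mathrm{Sing}(\Sigma)=\emptyset$ is immediate from Definition \ref{def: singular capacity for isoperimetric regions} rather than Remark \ref{rem: singcap zero for smooth}, but this is a harmless mislabelling.
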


\begin{proof}
    We observe first that if one re-defines pseudo-neighbourhoods in Definition \ref{def: pseudo-neighbourhoods} by fixing $t \in \mathbb{R}$, the proofs of Lemmas \ref{lemma: compactness of pseudo neighbourhoods}, \ref{lemma: compactness of twisted JF}, and \ref{lemma: compactness for pairs} in Subsection \ref{subsec: pseudo-neighbourhoods and three compactness lemmas} are all unchanged (since one is then just considering a constant sequence of enclosed volumes). With this in hand, the results of Subsections \ref{subsec: sard-smale} and \ref{subsec: generic semi-nondegeneracy} go through identically for a fixed $t \in \mathbb{R}$; in particular, we obtain the genericity of $\mathcal{U}_0^{k,\alpha} \cap (\mathcal{G}^{k,\alpha} \times \{t\})$ in $\mathcal{G}^{k,\alpha} \times \{t\}$ (and similarly if we restrict to a given conformal class in the metric factor). Moreover, the final statement in Proposition \ref{prop: reducing singular capacity} shows that we can fix $t$ in order to find a sequence of metric volume pairs that reduce the singular capacity. Thus, by combining all of the above, we can proceed exactly as in the proof of Theorem \ref{thm: implies theorem 1} above, now fixing $t$ in place of $\mathbb{R}$ in the volume factor, to conclude the desired results.
\end{proof}

\appendix

\counterwithin{remark}{section}
\counterwithin{definition}{section}
\counterwithin{proposition}{section}
\counterwithin{lemma}{section}
\counterwithin{theorem}{section}
\counterwithin{corollary}{section}

\section{Results for minimal cones and hypersurfaces}\label{sec: appendix A}

We collect here some results from \cite{LW25} on the asymptotic and growth rates of cones, which should be compared with Definition \ref{definition: asymptotic and growth rates}, introduce notation for the mean curvature operator of hypersurfaces, and record a Caccioppoli type inequality on smooth subsets of the boundary of an isoperimetric region. 

\subsection{Asymptotic rates for cones}

\begin{lemma}\label{lemma: J monotone for divergence form} 
    Given $\sigma >0, \Lambda >1$ there exists $K >2$, $\delta_0\in (0,1/2)$, and $H_0>0$, all depending on $\sigma$ and $\Lambda$, such that for $H$ a bounded continuous function with $|H|\le H_0,\mathbf{C}\in \mathscr{C}_{\Lambda}$ and $\gamma\in (-\Lambda,\Lambda)$ with
     $$\mathrm{dist}_{\mathbb{R}} (\gamma, \Gamma(\mathbf{C}) \cup \{-(n - 2)/2\}) \ge \sigma ,$$
    if $u\in W^{1,2} (\mathbb{A}(K^{-3}, 1)) \cap L^2(\mathbb{A}(K^{-3}, 1))$ is a non-zero weak solution of 
        \begin{align}\label{align: divergence equation}
        \mathrm{div}_\mathbf{C} (\nabla_\mathbf{C} u + \Vec{B}_0(x)) +|\mathrm{I\!I}_\mathbf{C}|^2 u + |x|^{-1} B_1(x) =H,
    \end{align}
    where $\Vec{B}_0 ,B_1$ satisfy the following estimate:
    \begin{align}\label{align: B_0,B_1}
        |\vec{B}_0 |(x) +|B_1|(x) \le \delta_0 \left(|x|^{-1} |u|(x) +|\nabla u|(x) +\Vert u\Vert_{L^2(\mathbb{A}(K^{-3}, 1))}\right), 
    \end{align}
    on $\mathbb{A}(K^{-3}, 1)$.
    Then we have 
     \[
    J_{K,\mathbf{C}}^\gamma (u; K^{-2}) -2(1+\delta_0)J_{K,\mathbf{C}}^\gamma (u; K^{-1})+J_{K,\mathbf{C}}^\gamma (u; 1)> 0. 
    \]
\end{lemma}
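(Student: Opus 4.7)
The plan is a standard three-annulus (Almgren-style) monotonicity scheme, established first for exact Jacobi fields on $\mathbf{C}$ and then extended to the perturbed equation \eqref{align: divergence equation} by compactness and contradiction. In the unperturbed case ($\vec{B}_0 = 0$, $B_1 = 0$, $H = 0$), a weak solution of $L_\mathbf{C} u = 0$ on the annular region admits the separation-of-variables expansion $u(r, \omega) = \sum_{j \geq 1} (a_j^+ r^{\gamma_j^+} + a_j^- r^{\gamma_j^-}) \varphi_j(\omega)$ recorded in Subsection~\ref{subsec: notation}, with a log correction for the modes satisfying $\mu_j = -(n-2)^2/4$. The $L^2(S)$-orthonormality of $\{\varphi_j\}$ makes $J^\gamma_{K, \mathbf{C}}(u; r)^2$ decouple into a sum over spectral modes, each an explicit combination of power monomials in $r^{2(\gamma_j^\pm - \gamma)}$. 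Since $\gamma$ is at distance at least $\sigma$ from $\Gamma(\mathbf{C})$ and from $-(n-2)/2$, all exponents $\gamma_j^\pm - \gamma$ are uniformly bounded away from zero for $\mathbf{C} \in \mathscr{C}_\Lambda$, so the discrete profile $\log r \mapsto \log J^\gamma_{K, \mathbf{C}}(u; r)$ is strictly convex at a rate controlled by $\sigma$. Choosing $K$ large, depending only on $\sigma$ and $\Lambda$, upgrades this to a quantitative three-scale inequality of the form
\begin{equation*}
    J^\gamma_{K, \mathbf{C}}(u; K^{-2}) - 2 J^\gamma_{K, \mathbf{C}}(u; K^{-1}) + J^\gamma_{K, \mathbf{C}}(u; 1) \geq \eta_0\, J^\gamma_{K, \mathbf{C}}(u; 1),
\end{equation*}
with $\eta_0 = \eta_0(\sigma, \Lambda) > 0$, which can be made larger than any prescribed multiple of $\delta_0$ by further enlarging $K$.

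To pass to perturbed solutions I would argue by contradiction. If the lemma failed, there would exist sequences $\delta_m \to 0$, $H_m \to 0$, cones $\mathbf{C}_m \in \mathscr{C}_\Lambda$, exponents $\gamma_m$ retaining the $\sigma$-gap from $\Gamma(\mathbf{C}_m) \cup \{-(n-2)/2\}$, and nonzero $u_m$ solving the perturbed equation for which the desired inequality fails. Normalizing $\Vert u_m \Vert_{L^2(\mathbb{A}(K^{-3}, 1))} = 1$, the smooth compactness of $\mathscr{C}_\Lambda$ and the continuity of the asymptotic spectrum under varifold convergence (both noted in Subsection~\ref{subsec: notation}) furnish subsequential limits $\mathbf{C}_\infty \in \mathscr{C}_\Lambda$ and $\gamma_\infty$ with the same $\sigma$-gap. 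Standard Caccioppoli and De~Giorgi--Nash--Moser estimates, applied on compact subsets of $\mathbb{A}(K^{-3}, 1) \setminus \{0\}$ to divergence-form operators with small perturbation coefficients controlled by \eqref{align: B_0,B_1}, yield uniform $W^{1,2}_{\mathrm{loc}}$ control of the $u_m$. Extracting a $W^{1,2}_{\mathrm{loc}}$-weak and $L^2_{\mathrm{loc}}$-strong limit $u_\infty$, one sees that $u_\infty$ solves $L_{\mathbf{C}_\infty} u_\infty = 0$ weakly on the annulus. Passing to the limit in the failed inequality and comparing with the unperturbed gap estimate yields $0 \geq \eta_0\, J^\gamma_{K, \mathbf{C}_\infty}(u_\infty; 1)$; once $K$ is chosen so that $\eta_0 > 4\delta_0$, this contradicts $u_\infty \not\equiv 0$, the nontriviality of the limit being inherited from the $L^2$ normalization once nonconcentration is verified.

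The hard part is precisely this nonconcentration step, together with the fact that the perturbation bound \eqref{align: B_0,B_1} contains a non--scale-invariant tail term $\Vert u \Vert_{L^2(\mathbb{A}(K^{-3}, 1))}$. To prevent the $u_m$ from concentrating $L^2$ mass near the inner boundary of the annulus, one should use an interior Caccioppoli estimate tailored to the perturbed operator to promote the $L^2$ normalization into a weighted $W^{1,2}$ bound on the subannulus $\mathbb{A}(K^{-2}, 1)$; the specific choice of inner radius $K^{-3}$, strictly smaller than $K^{-2}$, is precisely what makes this interior cutoff available. The global $L^2$ term in \eqref{align: B_0,B_1} is of size $O(\delta_0)$ after normalization and can be treated as an effective forcing and swept into $H_0$ when comparing with the unperturbed limit. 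Modulo these technical points the argument parallels the three-annulus lemmas in \cite{LW25} and \cite{W20}, which the text explicitly identifies as the template; the only new ingredient is carrying the bounded mean curvature term $H$ through the estimates, which is harmless once $H_0$ is taken small depending on $\sigma$ and $\Lambda$.
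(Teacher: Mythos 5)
Your proposal follows the same contradiction-by-compactness strategy as the paper's proof. The one structural difference is that you re-derive the three-annulus inequality for exact Jacobi fields on cones from the expansion $u = \sum_j (a_j^+ r^{\gamma_j^+} + a_j^- r^{\gamma_j^-})\varphi_j$ together with the $\sigma$-gap, whereas the paper cites this as \cite[Lemma 6.1]{LW25}; your derivation is correct (the gap gives a $\cosh$-type surplus mode by mode, Minkowski's inequality in the mode index propagates it to $J$, and enlarging $K$ makes the surplus quantitative), and it makes the argument self-contained, but it is not a genuinely different route.

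You also misplace the difficulty. Your normalization $\Vert u_m\Vert_{L^2(\mathbb{A}(K^{-3},1))}=1$ already resolves the nonconcentration worry with no interior Caccioppoli repair needed: since each $J(u_m; K^{-l})$ is comparable to the $L^2$ norm over the corresponding dyadic annulus, the assumed failure $J(K^{-2}) + J(1) \leq 2(1+\delta_m)J(K^{-1})$ forces $J(u_m; K^{-1})$ to stay bounded away from zero, so the strong $L^2$ limit on the compactly contained middle annulus $\mathbb{A}(K^{-2},K^{-1})$ is automatically nonzero. The step you actually gloss over is a real one: $H$ is a constant forcing and therefore \emph{not} homogeneous of degree one in $u$, so under your normalization the effective forcing becomes $H_m/\Vert u_m\Vert_{L^2}$, which may diverge if the unnormalized $L^2$ norms decay faster than $H_m$. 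In that event the Caccioppoli estimate blows up and you cannot extract a $W^{1,2}_{\mathrm{loc}}$ limit. The paper handles precisely this by splitting on whether $c_j = J^{\gamma_j}_{K,\mathbf{C}_j}(u_j;K^{-1})$ is bounded (no normalization, so $H_j \to 0$ directly) or tends to infinity (divide by $c_j$, so $H_j/c_j \to 0$). In your framework the missing subcase is: if $H_m/\Vert u_m\Vert_{L^2}\to\infty$, rescale by $H_m$ instead, obtaining a sequence with forcing $\equiv 1$ and $L^2$ norms tending to zero, whose weak $W^{1,2}_{\mathrm{loc}}$ limit would force $L_{\mathbf{C}_\infty}0 = 1$, a contradiction. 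The phrase ``harmless once $H_0$ is taken small'' does not close this: $H_0$ is fixed in advance whereas $\Vert u\Vert_{L^2}$ has no lower bound built into the hypotheses.
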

    \begin{proof}
    Assume the result is not true, then there exists $ \sigma > 0$, $\Lambda > 1$, a sequence of stable hypercones $\{\mathbf{C}_j\} \in \mathcal{C}_{\Lambda} $, a sequence $\{\gamma_j \}_{j \geq 1} \subset (-\Lambda, \Lambda)$ with $\mathrm{dist}_{\mathbb{R}} (\gamma_j, \Gamma(\mathbf{C}_j) \cup \{-(n - 2)/2\}) \ge \sigma $, and finally a sequence $\{u_j\}_{j \geq 1}\subset  W^{1,2}_{\mathrm{loc}} (\mathbb{A}(K^{-3}, 1)) \cap L^2(\mathbb{A}(K^{-3}, 1))$ of non-zero weak solutions of \eqref{align: divergence equation} over $\mathbf{C}_j$ satisfying \eqref{align: B_0,B_1} with $H_j=1/j$, and $\delta_j=1/j$, but such that for all $j$, we have
    \begin{align}\label{align: J_K for gamma_j}
    J_{K,\mathbf{C}_j}^{\gamma_j} (u_j; K^{-2}) -2(1+1/j)J_{K,\mathbf{C}_j}^{\gamma_j} (u_j; K^{-1})+J_{K,\mathbf{C}_j}^{\gamma_j}(u_j; 1)\le 0. 
    \end{align}
    Suppose $\gamma_j$ converges, up to subsequence, to $\gamma_\infty$, and $\mathbf{C}_j$ to some $\mathbf{C}_\infty \in \mathscr{C}_{\Lambda}$. Then, by the continuity of the spectrum of cones under varifold convergence, we have
    $ \mathrm{dist}_{\mathbb{R}} (\gamma_\infty, \Gamma(\mathbf{C}_\infty) \cup \{-(n - 2)/2\}) \ge \sigma.$ Denote $c_j =J_{K,\mathbf{C}_j}^{\gamma_j} (u_j; K^{-1})$, we then have two cases to consider, whether  the sequence $\{c_j\}_{j \geq 1}$ is bounded, or not. We start with the former. By the definition of $J_{K,\mathbf{C}}^{\gamma} (u; r)$, there exists a constant $C > 0$, depending on $K$ and $\Lambda$, such that
    \[
    C^{-1} \le \frac{J_{K,\mathbf{C}_j}^{\gamma} (u_j; r) r^{\gamma+n/2}}{\Vert u_j\Vert_{L^2(\mathbb{A}(K^{-1}r, r))}}  \le C, 
    \]
    for all $r \in (K^{-2}, 1)$. In particular, \eqref{align: J_K for gamma_j} implies a uniform $L^2$ bound for the sequence $\{u_j\}_{j \geq 1}$. On the other hand, because the $\{u_j\}_{j \geq 1}$ are weak solutions of \eqref{align: divergence equation} with $H_j\to 0$, for any open set $\Omega\subset \subset \mathbb{A}(K^{-3}, 1)$, there exists $C > 0$, depending on $K$ and $\Omega$, such that
    \begin{equation} \label{equation: pre Caccioppoli inequality}
        \int_\Omega \Vert \nabla_{\mathbf{C}_j} u_j \Vert^2 \, d\Vert \mathbf{C}_j\Vert  \le C \int_{\mathbb{A}(K^{-3}, 1)}  (1+|\mathrm{I\!I}_{\mathbf{C}_j}|^2)u_j^2 \, d\Vert \mathbf{C}_j\Vert . 
    \end{equation}
    This inequality follows by multiplying \eqref{align: divergence equation} by $u_j \eta^2$ for some $\eta \in C_{c}^{\infty}(\mathbb{A}(K^{-3}, 1))$ satisfying $\eta = 1$ on $\Omega$, integrating by parts, and appealing to \eqref{align: B_0,B_1}. Thus, by the Sobolev embedding theorem, there is $u_\infty \in W^{1,2}_{\mathrm{loc}} (\mathbb{A}(K^{-3}, 1)) \cap L^2(\mathbb{A}(K^{-3}, 1))$ such that  $u_j \rightharpoonup u_\infty $ weakly in $W^{1,2}_{\mathrm{loc}} (\mathbb{A}(K^{-3}, 1)) $ and strongly in $L^2(\mathbb{A}(K^{-3}, 1))$. Therefore, $ u_\infty$ is a weak solution of 
    \[
    \Delta_{\mathbf{C}_\infty} u_\infty +|\mathrm{I\!I}_{\mathbf{C}_\infty}|^2 u_\infty =0,
    \]
    on $\mathbb{A}(K^{-3}, 1)$. However, from \eqref{align: J_K for gamma_j} we infer
    \[
        J_{K,\mathbf{C}_\infty}^{\gamma_\infty} (u_\infty; K^{-2}) -2(1+1/j)J_{K,\mathbf{C}_\infty}^{\gamma_\infty} (u_\infty; K^{-1})+J_{K,\mathbf{C}_\infty}^{\gamma_\infty}(u_\infty; 1)\le 0,
    \]
    contradicting \cite[Lemma 6.1]{LW25}. For the case $c_j \rightarrow \infty$, denote $\hat{u}_j = u_j/c_j$. As the $\hat{u}_j$'s are weak solutions of \eqref{align: divergence equation} with $\hat{H}_j= H_j/c_j$, we can infer an $L^2$ bound for $\hat{u}_j$ from \eqref{align: J_K for gamma_j}. Arguing as in the previous case, we again get a contradiction to \cite[Lemma 6.1]{LW25}.
\end{proof}

We will frequently exploit the following dichotomy result:

\begin{lemma}\label{lem: dichotomy}
    For $\sigma,\kappa > 0$, $\Lambda \geq 1$ and $K > 2$ as in Lemma \ref{lemma: J monotone for divergence form}, then there exists some $\delta > 0$, depending on $\sigma,\kappa,\Lambda$ and $K$, for which the following holds. Consider $\mathbf{C} \in \mathcal{C}_\Lambda$ and  $\gamma \in (-\Lambda,1)$ such that
    \[
    \mathrm{dist}_{\mathbb{R}}\left(\gamma, \Gamma(\mathbf{C}) \cup \{-(n - 2)/2\}\right) \geq \sigma. 
    \]
    Then, if $\{(g_j, t_j, \Omega_j)\}$ and $\{(\bar{g}_j, \bar{t}_j, \bar{\Omega}_j)\}_j$, with $V_j = \vert \Sigma_j \vert_{g_j}$ and $\bar{V}_j = \vert \bar{\Sigma}_j \vert_{\bar{g}_j}$, are sequences in $\mathcal{T}^{k,\alpha}$ such that $|\Omega_j \Delta \bar{\Omega}_j|_{g_j} \rightarrow 0$ as $j \rightarrow \infty$, with normal coordinates on $\mathbb{B}_2$ such that
    \begin{itemize}
        \item[(i)] The metrics are conformally equivalent, with $\Vert g_j - \bar{g}_j\Vert_{C^4} \rightarrow 0 \text{ and } \sup_{j \geq 1}\Vert g_j - g_{\mathrm{eucl}}\Vert_{C^4} \leq \delta$.
        \item[(ii)] For each $j \geq 1$ we have $\mathrm{Sing}(V_j) \cap \mathbb{B}_2 = \{0\}$, and there exist $w_j \in C^4(\mathbf{C})$ with 
        \[
        |V_j|_{g_j} \mres \mathbb{B}_2 = |\mathrm{graph}_{\mathbf{C}}^{g_{\mathrm{eucl}}}(w_j)|_{g_\mathrm{eucl}} \mres \mathbb{B}_2 \quad \text{ and } \quad \Vert w_j\Vert_{C^2_*(\mathbb{B}_2)} \leq \delta.
        \]
    \end{itemize}
    Then, if we define the \textbf{graphing radius} to be
    \begin{equation}\label{eqn: graphing radius}
        \tau_j = \inf\{s \in (0,1) \, | \, \bar{V}_j \mres \mathbb{A}( s,1) = \mathrm{graph}_{V_j}^{g_j}(v_j) \text{ and } \Vert v_j\Vert_{C^2_*(\bar{V}_j \cap 
 \mathbb{A}(s,1))} \leq \delta \}
    \end{equation}
    we have, up to a subsequence, that either:
    \begin{itemize}
        \item[(1)] For each $j \geq 1$, the graphing radius, $\tau_j$, is strictly positive and $\tau_j^{-1}\bar{V}_j$ converges to a stationary integral varifold, $V_\infty$, in $\mathbb{R}^{8}$ which is not $\mathbf{C}$ but is asymptotic to it at infinity, with $\mathcal{AR}_\infty(V_\infty) < \gamma$.

        \item[(2)] For $v_j$ in the definition of the graphing radius and $t \in (K^3\tau_j,1)$ we have
        $$J^\gamma_{K;V_j,g_j}(v_j;K^{-1}t) \leq \max\{J^\gamma_{K;V_j,g_j}(v_j,t), \kappa\Vert g_j - \bar{g}_j\Vert_{C^4}\}.$$
    \end{itemize}
\end{lemma}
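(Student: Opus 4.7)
My approach is to prove the dichotomy by contradiction: assume along some subsequence (not relabelled) that property (2) fails, i.e., for each $j$ there is a scale $t_j \in (K^3\tau_j, 1)$ with
\[
J^\gamma_{K;V_j,g_j}(v_j;K^{-1}t_j) > \max\{J^\gamma_{K;V_j,g_j}(v_j;t_j), \kappa\Vert g_j - \bar{g}_j\Vert_{C^4}\},
\]
and deduce that property (1) must hold. The plan is to first show that such a violating scale cannot lie much above the graphing radius. Indeed, if $t_j/\tau_j \to \infty$ (in particular if $\tau_j = 0$ for infinitely many $j$), then the rescaled varifolds $t_j^{-1}\bar V_j$ are globally graphical over $\mathbf{C}$ on $\mathbb{A}(K^{-3},1)$ with uniformly small $C^2_*$ norm. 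After translating the constant-mean-curvature constraint to a divergence-form equation on $\mathbf{C}$ via the conical coordinate $w_j$, the rescaled graphing function $\hat v_j(x)=v_j(t_jx)/t_j$ satisfies the equation of Lemma \ref{lemma: J monotone for divergence form}, with $\vec B_0, B_1$ controlled by $\delta$ (from hypothesis (ii)) and right-hand side $|H|$ controlled by the difference of the (uniformly bounded, by Lemma \ref{Lemma: mean curvature bound}) mean curvatures of $\Sigma_j, \bar\Sigma_j$ times $t_j$. The three-point $J$-inequality from that lemma, once scaled back, yields $J^\gamma_{K;V_j,g_j}(v_j;K^{-1}t_j)\le J^\gamma_{K;V_j,g_j}(v_j;t_j) + C\Vert g_j - \bar g_j\Vert_{C^4}$, contradicting the assumed violation provided $\kappa$ is chosen large enough.

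Hence $t_j/\tau_j$ is bounded along a subsequence, and in particular $\tau_j > 0$. I would then rescale $\bar V_j$ by $\tau_j$ and pass to a varifold limit $V_\infty$ in $\mathbb{R}^8$, using compactness of almost minimisers (Subsection \ref{subsec: preliminaries}) together with the $C^2_*$ control of the graphs outside $\mathbb{B}_{t_j/\tau_j}$ given by hypothesis (ii). Stationarity of $V_\infty$ follows as the rescaled mean curvatures $\tau_j \bar H_j$ vanish in the limit (again by Lemma \ref{Lemma: mean curvature bound}). The graph bound transfers to the limit, so that $V_\infty$ is asymptotic to $\mathbf{C}$ at infinity, while the minimality of $\tau_j$ in its definition produces a nontrivial difference at the unit scale in the rescaled picture, forcing $V_\infty \neq \mathbf{C}$. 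Finally, the failure of (2) at $t_j$ transfers to a positive lower bound on $J^\gamma_{K;\mathbf{C}}$ of the limiting graphing function at a fixed positive scale of order $t_j/\tau_j$, which via Remark \ref{rem: property of J^gamma} gives $\mathcal{AR}_\infty(V_\infty) < \gamma$, as required by alternative (1).

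The main obstacle, in my view, will be the reduction of the constant-mean-curvature equation for $\bar V_j$, viewed as a graph of $v_j$ over $V_j$ (itself a conical-coordinate graph over $\mathbf{C}$ through $w_j$), to the precise divergence-form equation required by Lemma \ref{lemma: J monotone for divergence form}, with the error bound $|\vec B_0| + |B_1| \le \delta_0(|x|^{-1}|v_j| + |\nabla v_j| + \Vert v_j\Vert_{L^2})$ and $|H|\le H_0$. The iterated graphing expansion, together with the fact that both $\Sigma_j$ and $\bar \Sigma_j$ satisfy constant mean curvature equations in possibly different metrics and with possibly different constants, introduces several nonlinear error terms that must each be absorbed into this structure. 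Controlling these will rely on the $C^2_*$ smallness of $w_j$ from hypothesis (ii), the $C^4$-closeness of $g_j, \bar g_j,$ and $g_\mathrm{eucl}$ from hypothesis (i), and the uniform mean curvature bound of Lemma \ref{Lemma: mean curvature bound}. Once this reduction is made cleanly, both the direct application of Lemma \ref{lemma: J monotone for divergence form} and the blow-up extraction proceed in parallel with the analogous arguments for minimal hypersurfaces in \cite[Section 6]{LW25}.
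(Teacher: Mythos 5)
Your overall strategy is exactly the one the paper intends: the paper's own proof is a single sentence, invoking \cite[Lemma F.5]{LW25} with the $J$-monotonicity lemma replaced by Lemma~\ref{lemma: J monotone for divergence form}, and what you have written out is precisely the skeleton of that argument adapted to the CMC/divergence-form setting. The contradiction-and-blowup structure (assume a violating scale $t_j$, rule out $t_j/\tau_j \to \infty$, rescale by $\tau_j$, extract $V_\infty \ne \mathbf{C}$ asymptotic to $\mathbf{C}$) is correct, and you also correctly identify the genuine technical obstacle, namely rewriting the difference of constant mean curvature equations for $v_j$ (a graph over $V_j$, itself a conical graph over $\mathbf{C}$ via $w_j$) in the precise divergence form required by Lemma~\ref{lemma: J monotone for divergence form}, with errors dominated by the $C^2_*$-smallness of $w_j$ and the $C^4$-proximity of the metrics.

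There are, however, a few places where the reasoning is compressed in a way that would not survive being written out. First, the three-point inequality $J(K^{-2}) - 2(1+\delta_0)J(K^{-1}) + J(1) > 0$ from Lemma~\ref{lemma: J monotone for divergence form} is a convexity-type inequality, not a monotonicity statement; it does not ``once scaled back, yield'' $J(K^{-1}t_j) \le J(t_j) + C\Vert g_j - \bar g_j\Vert_{C^4}$. To get a contradiction from the assumed violation at $t_j$ one must iterate the three-point inequality downward in scale, observe that this forces $J$ to grow geometrically, and then play this against the a priori bound $J^\gamma_{K;V_j,g_j}(v_j;s) \le C\delta s^{1-\gamma}$ (which holds since $\gamma < 1$ and $\Vert v_j \Vert_{C^2_*} \le \delta$ down to the graphing radius). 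Second, in your final step a positive lower bound on $J^\gamma_{K;\mathbf{C}}$ of the limiting graphing function at a single fixed scale does not by itself imply $\mathcal{AR}_\infty(V_\infty) < \gamma$: the asymptotic rate at infinity is governed by the behaviour of $J^\gamma$ as the scale tends to infinity, so one again needs to propagate the lower bound outward using the three-point inequality on the limit. Third, and minor, $\kappa$ is a datum of the lemma (on which the output $\delta$ is permitted to depend), not a parameter one may ``choose large enough'' during the proof; the freedom you want is in the choice of $\delta$.

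None of these is a wrong turn --- they are exactly the iteration steps that \cite[Lemma F.5]{LW25} carries out in detail --- but as written they are gaps, and the monotonicity and asymptotic-rate conclusions do not follow from the single applications of Lemma~\ref{lemma: J monotone for divergence form} that you invoke.
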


\begin{proof}
This follows exactly as in the proof of \cite[Lemma F.5]{LW25} by appealing to Lemma \ref{lem: dichotomy} in place of \cite[Corollary 6.2]{LW25}.
\end{proof}

\begin{lemma}\label{lemma: lower growth rate implies smooth}
  Suppose $\Sigma \subset \mathbb{R}^8$ be a stable minimal hypersurface which is asymptotic to $\mathbf{C}$ at infinity and does not equal $\mathbf{C}$. Then the asymptotic rates satisfy one of the following holds:
\begin{enumerate}[(1)]
    \item $\mathcal{AR}_{\infty}(\Sigma) \ge \gamma_2^{+}(\mathbf{C});$
    \item $\mathcal{AR}_{\infty}(\Sigma) \in \{\gamma_1^{\pm}(\mathbf{C})\}$ and $\mathrm{Sing}(\Sigma) = \emptyset.$
\end{enumerate}
\end{lemma}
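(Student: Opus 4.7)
The plan is to combine an asymptotic expansion of $\Sigma$ at infinity (which restricts $\mathcal{AR}_\infty(\Sigma)$ to a discrete set of possible values) with a sign/foliation argument that yields smoothness in the slow-decay case. Since $\Sigma$ is asymptotic to $\mathbf{C}$, I would begin by writing $\Sigma \setminus \mathbb{B}_{R_0}= \mathrm{graph}^{g_{\mathrm{eucl}}}_\mathbf{C}(w)$ for a graphing function $w$ with $\Vert w \Vert_{C^2_*(\mathbf{A}(R,2R))} \to 0$ as $R \to \infty$. The minimal surface equation for $w$ is asymptotically $L_\mathbf{C} w = 0$ modulo a quadratic remainder in $w$ and its derivatives. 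A Simon--Lojasiewicz type analysis (as in the asymptotic expansion results of \cite{S83B}), combined with the Fourier decomposition $w(r,\omega) = \sum_{j \geq 1} w_j(r)\varphi_j(\omega)$ in the link eigenbasis, then yields a unique leading-order expansion $w \sim c\, r^{\gamma_{j_0}^\varepsilon}\varphi_{j_0}$ (with a possible logarithmic factor when $\gamma_1^+=\gamma_1^-$), where $c \neq 0$ since $\Sigma \neq \mathbf{C}$. In particular $\mathcal{AR}_\infty(\Sigma) = \gamma_{j_0}^\varepsilon$ lies in $\Gamma(\mathbf{C})$.

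I would then classify the leading mode. If $j_0 \geq 2$ and $\varepsilon = +$, then $\gamma_{j_0}^+ \geq \gamma_2^+$ and case (1) holds. The apparent fast-decay possibility $j_0 \geq 2$ with $\varepsilon=-$, which would give $\gamma_{j_0}^-$ well below $\gamma_1^-$, is ruled out by applying the stability inequality on $\Sigma$ to a cutoff of $w$ projected onto the orthogonal complement of $\varphi_1$: the spectral gap $\mu_{j_0} > \mu_1$ forces the mode $w_{j_0}$ either to decay at the slower rate $\gamma_{j_0}^+ \geq \gamma_2^+$ (again giving case (1)) or to vanish identically. Hence, if we are not in case (1), we must have $j_0 = 1$ and so $\mathcal{AR}_\infty(\Sigma) = \gamma_1^\pm$, as claimed in case (2).

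To conclude $\mathrm{Sing}(\Sigma) = \emptyset$ in this remaining case, note that since $\varphi_1 > 0$ on the link, the leading term $c\,r^{\gamma_1^\pm}\varphi_1$ has a definite sign, so (WLOG) $w > 0$ near infinity and $\Sigma$ lies strictly on the positive side of $\mathbf{C}$ at infinity. Invoke the Hardt--Simon type foliation $\{\lambda S_+\}_{\lambda > 0}$ of the open half-space bounded by $\mathbf{C}$ by smooth, stable minimal hypersurfaces $\lambda S_+$ all asymptotic to $\mathbf{C}$. By matching leading-order coefficients one picks $\lambda_0 > 0$ so that the asymptotic expansions of $\Sigma$ and $\lambda_0 S_+$ agree to leading order; the difference then has strictly faster decay at infinity, and iterating the leading-order expansion (equivalently, appealing to Simon--Lojasiewicz uniqueness of tangents at infinity) gives $\Sigma = \lambda_0 S_+$ outside a compact set. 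Unique continuation at regular points, augmented by a singular maximum principle for stationary integral varifolds to rule out first contact at a point of $\mathrm{Sing}(\Sigma)$, then extends the identity $\Sigma = \lambda_0 S_+$ globally; since $\lambda_0 S_+$ is smooth, $\mathrm{Sing}(\Sigma) = \emptyset$.

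The main technical obstacle is the leading-order expansion step: one must rigorously derive the Simon--Lojasiewicz asymptotics in the presence of the nonlinear error terms in the minimal surface equation on $\mathbf{C}$, and in particular rule out the spurious fast-decay rates $\gamma_j^-$ for $j \geq 2$. Ruling these out requires combining the stability of $\Sigma$ with the spectral gap above $\mu_1$ to force higher-mode contributions into the slower $\gamma_j^+$ decay rate. A secondary obstacle is the appeal to the Hardt--Simon foliation when $\mathbf{C}$ is only stable (not strictly stable), and the singular maximum principle needed to upgrade the contact of $\Sigma$ with $\lambda_0 S_+$ to a global identity in the possible presence of singular points of $\Sigma$.
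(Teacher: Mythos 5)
The paper does not supply its own proof of this lemma; it cites it directly as \cite[Lemma~F.3]{LW25}. So I am comparing your proposal against what a correct proof must accomplish rather than against an argument given in the text.

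Your overall architecture — Simon-type asymptotic expansion of the graphing function $w$ at infinity to restrict $\mathcal{AR}_\infty(\Sigma)$ to the asymptotic spectrum $\Gamma(\mathbf{C})$, followed by a Hardt--Simon foliation/sliding argument to conclude smoothness in the $\gamma_1^\pm$ case — is the right shape, and you correctly flag the two serious technical obstacles (Simon asymptotics with the nonlinear remainder, and existence/use of the foliation for a stable but not necessarily strictly-stable or minimizing cone). However, there is a genuine gap in the key exclusion step.

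The step that does not work is ruling out a leading mode $\gamma_{j_0}^-$ with $j_0\ge 2$ by ``applying the stability inequality on $\Sigma$ to a cutoff of $w$ projected onto $\varphi_1^\perp$, using the spectral gap $\mu_{j_0}>\mu_1$.'' Stability of $\Sigma$ is the nonnegativity statement $Q_\Sigma(\psi,\psi)\ge 0$ for all compactly supported $\psi$; feeding in any particular test function yields a true inequality but imposes no constraint on the asymptotics of $\Sigma$. Separately, the linearised equation in mode $j_0$,
\begin{equation*}
w_{j_0}'' + \tfrac{n-1}{r}\,w_{j_0}' - \tfrac{\mu_{j_0}}{r^2}\,w_{j_0} = \text{(nonlinear error)},
\end{equation*}
admits \emph{both} $r^{\gamma_{j_0}^+}$ and $r^{\gamma_{j_0}^-}$ as homogeneous solutions for every $j_0$; stability of $\mathbf{C}$ (i.e.\ $\mu_1 \ge -(n-2)^2/4$) only makes the exponents real and does not force the coefficient $c_{j_0}^-$ to vanish. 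Since $\Sigma$ is only a graph over $\mathbf{C}$ \emph{outside} a ball, there is no boundary condition at $r=0$ to kill the $r^{\gamma_{j_0}^-}$ branch. So the spectral gap plus stability, as invoked, cannot eliminate the fast-decay modes.

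This gap also feeds back into your smoothness argument in the $j_0=1$ case: after matching the leading coefficient of $w$ with that of the foliation leaf $\lambda_0 S_+$, you want to conclude that the difference is identically zero outside a compact set because it ``has strictly faster decay.'' But the difference of two solutions asymptotic to $\mathbf{C}$ is itself (after subtraction) a quantity whose possible decay rates are again governed by the same spectrum, so concluding that faster-than-$\gamma_1^-$ decay forces identical vanishing is precisely the exclusion you have not established. As written, step D relies on step C, which is the gap. The correct mechanism for both should be genuinely geometric — a sliding/foliation comparison combined with the strong maximum principle for stationary integral varifolds, taking into account that a $\varphi_{j_0}$ leading mode ($j_0\ge 2$) makes $\Sigma$ oscillate across $\mathbf{C}$ at infinity — rather than a spectral-gap argument on the stability form.
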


\begin{proof}
This is precisely the statement of \cite[Lemma F.3]{LW25}.
\end{proof}

\subsection{The mean curvature operator on graphs}\label{subsec: mean curvature operator}

We will adopt the notation from \cite[Appendix B]{LW25}, and suppose that we have Riemannian metrics $g,g^\pm \in \mathcal{G}^{k,\alpha}$ on $M$, and $f^\pm \in C^2(M)$ with
\begin{equation} \label{equation: condition fpm}
   [ f^\pm ]_{x,g,C^2_*} \le \delta \qquad \text{and} \qquad 
   g^\pm = (1+f^\pm) g,
\end{equation}
for some dimensional constant $\delta > 0$. Denote by $\mathcal{M}^f : C^2_*(\Sigma) \to C_{\mathrm{loc}}(\Sigma) $ the mean curvature operator on $\Sigma$ under the metric $(1+f)g$, i.e.~the operator defined by 
\begin{equation*}
    \int_\Sigma \mathcal{M}^f(u) \cdot \varphi \, dA_g = \left. \frac{d}{dt} \right\vert_{t = 0} \int_\Sigma F^f(x, u + t\varphi, d(u + t \varphi)) \, dA_g, 
\end{equation*}
for $\varphi \in C_{c}^{1}(\Sigma)$, and where $F^f$ is the $C^1$ area density function $F^f = F^f(x, z, \xi)$ giving, for any $\varphi \in C_c(M \setminus \mathrm{Sing}(\Sigma))$, the identity
\begin{equation*}
    \int_M \varphi(x) \, d\Vert \mathrm{graph}_\Sigma(u) \Vert_{(1 + f)g}(x) = \int_M \varphi \circ \Phi^u(x) \cdot F^f(x, u(x), du(x)) \, d\Vert \Sigma \Vert_g, 
\end{equation*}
where $\Phi^u(x) = \exp_x^g(u(x) \cdot \nu(x))$, 
provided $u \in C^2(\Sigma)$ satisfies $\Vert u \Vert_{C_{*}^{2}(\Sigma)} \leq \delta$, and $f \in C^2(M)$ is such that $[f]_{x,g, C_*^2} \leq \delta$ for every $x \in \Sigma$; see \cite[Theorem B.1 (ii)]{LW25} for a proof of existence of such $F$. In particular, for every pair, $f^\pm$, as in \eqref{equation: condition fpm}, and every pair, $u^\pm$, with $\Vert u^\pm \Vert_{C_{*}^{2}(\Sigma)} \leq \delta$, the difference of the mean curvature operators takes the form 
\begin{equation} \label{equation: M(u+) - M(u-)}
    \mathcal{M}^{f^+}(u^+) - \mathcal{M}^{f^-}(u^-) = - L_{\Sigma, g}(u^+ - u^-) + \frac{n}{2}\nu(f^+ - f^-) + \mathrm{div}_{\Sigma, g}(\mathcal{E}_1) + r_S^{-1} \mathcal{E}_2, 
\end{equation}
where $\mathcal{E}_1$ and $\mathcal{E}_2$ are functions on $\Sigma$ satisfying, for each $x \in \Sigma$, the following pointwise bound:
\begin{align}\label{eqn: bounds on errors}
\begin{split}
    |\mathcal{E}_1(x)| + |\mathcal{E}_2(x)| \leq C \left( \sum_{i = \pm} [f^i]_{x,g, C_*^2}  + \frac{1}{r_S(x)}\vert (u + v)(x) \vert + |d(u + v)(x)| \right) \\
    \cdot  \left([f^+-f^-]_{x, g,C_{*}^2(M)}+|(u - v)(x)|+|d(u - v)(x)|\right). 
\end{split}
\end{align}
where $C > 0$ is a constant depending on $g$.

\begin{proposition}[Caccioppoli inequality]\label{prop: caccioppoli inequality}
      Suppose that $u,v\in C_{\mathrm{loc}}^2(\Sigma)$ solve $\mathcal{M}^{f+} u -\mathcal{M}^{f^-}v= h$, where $h$ is a constant. Then, for any open sets $\Omega' \subset\subset \Omega \subset\subset \Sigma$ there exists a constant $C >0$, depending on $\Omega',\Omega, g, f^\pm,$ and $\Vert u-v\Vert_{C^1(\Omega)}$, such that 
 \begin{equation*}
        \int_{\Omega'} |\nabla (u-v)|^2 \,dA_g \leq C \left( \int_{\Omega} (u-v)^2 \,dA_g+ \Vert f^+-f^-\Vert_{C^2(M)}\cdot \Vert u-v\Vert_{L^1(\Omega)}\right).
    \end{equation*}   
\end{proposition}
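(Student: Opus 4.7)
The argument is a standard Caccioppoli-type test-function-and-absorption computation based on the linearisation identity (\ref{equation: M(u+) - M(u-)}). Setting $w = u - v$, the hypothesis $\mathcal{M}^{f^+}(u) - \mathcal{M}^{f^-}(v) = h$ rewrites as
\begin{equation*}
-L_{\Sigma, g} w = h - \tfrac{n}{2}\nu(f^+ - f^-) - \mathrm{div}_\Sigma \mathcal{E}_1 - r_S^{-1}\mathcal{E}_2 \qquad \text{on } \Sigma,
\end{equation*}
with $\mathcal{E}_1, \mathcal{E}_2$ subject to (\ref{eqn: bounds on errors}). Over the precompact set $\Omega \subset\subset \Sigma$ the regularity scale $r_S$, the intrinsic second fundamental form $|\mathrm{I\!I}_\Sigma|$, and the ambient Ricci curvature are uniformly bounded, and the admissibility of $u, v$ as inputs of $\mathcal{M}^{f^\pm}$ ensures $\|u\|_{C^2_*(\Omega)}, \|v\|_{C^2_*(\Omega)} \le \delta$; combined with the hypothesised $C^1$-bound on $w$, (\ref{eqn: bounds on errors}) becomes the pointwise estimate
\begin{equation*}
|\mathcal{E}_1(x)| + |\mathcal{E}_2(x)| \le A\bigl(\|f^+ - f^-\|_{C^2(M)} + |w(x)| + |\nabla w(x)|\bigr) \qquad \text{on } \Omega,
\end{equation*}
for a constant $A$ depending only on $\Omega, g, f^\pm,$ and $\|u-v\|_{C^1(\Omega)}$.

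Fix a cutoff $\eta \in C_c^\infty(\Omega)$ with $\eta \equiv 1$ on $\Omega'$ and $|\nabla \eta| \le C(\Omega', \Omega)$, multiply the above equation by $\eta^2 w$, and integrate over $\Sigma$. Integration by parts is legitimate since $\eta^2 w$ is compactly supported in $\Sigma \setminus \mathrm{Sing}(\Sigma)$, and produces the identity
\begin{align*}
\int_\Sigma \eta^2 |\nabla w|^2 \, dA_g = & -2\int \eta w\, \nabla\eta \cdot \nabla w + \int \eta^2 w^2\bigl(|\mathrm{I\!I}_\Sigma|^2 + \mathrm{Ric}_g(\nu,\nu)\bigr) + h\int \eta^2 w \\
& - \tfrac{n}{2}\int \eta^2 w\, \nu(f^+ - f^-) + \int \nabla(\eta^2 w)\cdot \mathcal{E}_1 - \int \eta^2 w\, r_S^{-1}\mathcal{E}_2.
\end{align*}
The first term on the right is controlled by $\tfrac12 \int \eta^2 |\nabla w|^2 + C\int |\nabla \eta|^2 w^2$ via $\varepsilon$-Cauchy--Schwarz; the curvature and constant-$h$ contributions by $C\int_\Omega w^2 + C\|w\|_{L^1(\Omega)}$, where $|h|$ is itself controlled by the admissible data; and the $\nu(f^+ - f^-)$ term by $C\|f^+-f^-\|_{C^2(M)}\|w\|_{L^1(\Omega)}$.

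The decisive contribution is the $\mathcal{E}_1$-term. Expanding $\int \nabla(\eta^2 w)\cdot \mathcal{E}_1 = 2\int \eta w\, \nabla\eta \cdot \mathcal{E}_1 + \int \eta^2 \nabla w \cdot \mathcal{E}_1$ and inserting the pointwise bound on $\mathcal{E}_1$ generates terms of the following types: a term $A\int \eta^2 |\nabla w|^2$, which is exactly the Caccioppoli obstruction and must be absorbed into the left-hand side by taking $\varepsilon$ small; mixed terms of the form $\int \eta^2 |\nabla w| \cdot |w|$ and $\int \eta |\nabla\eta| |w|^2$, handled by $\varepsilon$-Cauchy--Schwarz at the cost of further $\int_\Omega w^2$ contributions; and terms of the form $\|f^+-f^-\|_{C^2(M)}$ paired with $|w|$ or $|\nabla w|$, which, using the pointwise $C^1$-bound on $w$ to replace excess $|\nabla w|$-factors by a constant, are all dominated by $C\|f^+-f^-\|_{C^2(M)}\|w\|_{L^1(\Omega)}$. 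The $\mathcal{E}_2$-term admits an analogous but simpler analysis without producing a $|\nabla w|^2$ obstruction. The principal (though standard) technical point is precisely the absorption step: one must choose $\varepsilon$ sufficiently small, depending on $A$ and the fixed universal constants, so that every $|\nabla w|^2$ contribution on the right is dominated by $\int_\Sigma \eta^2 |\nabla w|^2$ on the left; once this is done, the stated inequality follows with constant $C$ depending only on the permitted data.
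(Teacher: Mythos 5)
Your general strategy — multiply the linearisation (\ref{equation: M(u+) - M(u-)}) by a cutoff squared times $w = u-v$, integrate by parts, and absorb $|\nabla w|^2$-contributions — is the natural one, but you mishandle the constant $h$, and this is precisely the point the paper's own proof is organised around.

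You test against $\eta^2 w$, which has no reason to have zero integral, and so the $h$-contribution $h\int \eta^2 w$ survives. You propose to bound this by $C\Vert w\Vert_{L^1(\Omega)}$ ``where $|h|$ is itself controlled by the admissible data''. This produces a free term $C\Vert w\Vert_{L^1(\Omega)}$ in your final right-hand side, and that term is \emph{not} dominated by the claimed right-hand side $C(\int_\Omega w^2 + \Vert f^+ - f^-\Vert_{C^2}\Vert w\Vert_{L^1})$: for instance with $f^+ = f^-$ and $w$ uniformly small but nonzero, $\Vert w\Vert_{L^1}$ is of order $\Vert w\Vert_{L^2}$, not $\Vert w\Vert_{L^2}^2$, so one can make the claimed right-hand side arbitrarily small compared with $\Vert w\Vert_{L^1}$. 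In fact the hypothesis that $\mathcal M^{f^+}u - \mathcal M^{f^-}v$ is constant forces $h$ to scale with $w$ and $f^+ - f^-$ (pairing with a fixed $\varphi \in C_c^\infty(\Omega')$ with $\int\varphi = 1$ and integrating by parts twice yields $|h| \lesssim \Vert w\Vert_{L^1(\Omega)} + \Vert f^+ - f^-\Vert_{C^2} + \Vert\nabla w\Vert_{L^2(\Omega')}$, after which the $\Vert\nabla w\Vert_{L^2(\Omega')}$ factor is absorbed by Young and the left-hand side); but you never establish this, and ``controlled by the admissible data'' as written only gives a bound by a constant, which does not close the argument. The paper avoids the issue entirely by choosing the test function to have integral zero: it sets $\psi = \phi^2 w - C_0\eta$ with $C_0 = \int\phi^2 w$ and $\int\eta = 1$, so that $\int h\psi = 0$ identically, and there is nothing to estimate.

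A secondary point: the absorption of the term $A\int\eta^2|\nabla w|^2$ arising from $\mathcal E_1$ does \emph{not} follow from ``taking $\varepsilon$ small''. There is no free Cauchy--Schwarz parameter in front of that term; the quantity $A$ is the coefficient furnished by the structural estimate \eqref{eqn: bounds on errors}, whose first factor $\sum_{\pm}[f^\pm]_{x,g,C_*^2} + r_S^{-1}|u+v| + |d(u+v)|$ is $\lesssim \delta$ precisely because of the $C_*^2$-admissibility of $u, v, f^\pm$ from \eqref{equation: condition fpm}. The absorption works because the dimensional constant $\delta$ is small, and this fact should be invoked explicitly.
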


\begin{proof}
As $h$ is a constant, for any volume preserving test function $\phi \in C_{c}^1(\Omega) \cap L_{T}^{1}(\Omega)$, we have 
\begin{equation*}
    \int_\Omega (\mathcal{M}^{f^+}u - \mathcal{M}^{f^-}v) \phi \, dA_g= 0. 
\end{equation*}
In particular, appealing to \eqref{equation: M(u+) - M(u-)} and integrating by parts, the above becomes
\begin{equation*}
    \int_\Omega \nabla (u - v) \cdot \nabla \phi - \left( \left( \vert \sff_\Sigma \vert^2 + \mathrm{Ric}(\nu) \right)(u - v) - \frac{n}{2}\nu(f^+ - f^-) + \mathrm{div}_{\Sigma, g}(\mathcal{E}_1) + r_S^{-1}\mathcal{E}_2 \right) \phi \, dA_g = 0, 
\end{equation*}
which can then be rewritten as 
\begin{align} \label{equation: Caccioppoli identity}
    \int_\Omega \nabla (u - v) \cdot \nabla \phi \, dA_g= \int_\Omega B(x) \cdot \phi + \mathcal{E}_1 \cdot \nabla \phi \, dA_g
\end{align}
with $B(x)=(|\sff_\Sigma|^2+\mathrm{Ric}(\nu))(u - v) -\frac{n}{2}\nu(f^+ - f^-)- r_S^{-1}\mathcal{E}_2$. 
Consider then a function $\phi\in C^1_c (\Omega)$ with $\phi = 1$ on $\Omega^\prime$, and fix another non-negative function $\eta \in C^1_c (\Omega')$ with $\int_{\Omega^\prime} \eta \, dA_g= 1$. Denote $C_0= \int_\Omega \phi^2 (u - v) \, dA_g$, so that $\int_{\Omega} (\phi^2(u - v) - C_0\eta) \, dA_g = 0$; applying \eqref{equation: Caccioppoli identity} with this function and using the bounds in (\ref{eqn: bounds on errors}) one can derive the desired inequality (noting that $r_S$ is uniformly bounded away from zero on $\Omega$ since $\Omega \subset \subset \Sigma$). 
\end{proof}

\section{The space of almost minimisers in dimension eight}\label{sec: appendix B}

In \cite{E24} the definition and existence of a cone decomposition for seven dimensional minimal hypersurfaces with bounded mass and index was established, and prove that such hypersurfaces belong to a finite collection diffeomorphism types. In this appendix we adapt this notion, and the corresponding existence result, to the setting of almost minimisers in dimension eight. As a consequence, one can deduce that almost minimisers (with prescribed $\Lambda \geq 0$ and bounded mass) also belong to a finite collection of diffeomorphism types; this result was recorded in \cite[Theorem 5.5]{ESV24}. We also record some definitions from \cite[Section 9.1]{LW25}, adapted to the setting of isoperimetric regions, that will be of use in Subsection \ref{subsec: generic semi-nondegeneracy} when decomposing the space of triples.

\subsection{Cone decomposition}

We start by recalling several notions from \cite{E24}, adapted to the setting of almost minimisers of perimeter, that will be used in defining the cone decomposition. We note that some of the definitions below could be simplified (for instance, by avoiding the framework of varifolds), but we have chosen to present them in this form for consistency with the exposition in \cite{E24}.

\begin{definition}[Strong cone region] \label{def: strong cone region}

    Let $g$ be a $C^2$ metric on $\mathbb{B}_R(a) \subset \mathbb{R}^8$, and $E$ a set of finite perimeter on $(\mathbb{B}_R(a), g)$. Given $\mathbf{C} \in \mathscr{C}$, $\beta, \tau, \sigma \in [0, 1/4]$, $\rho \in [0, R]$, we say that $\vert \partial E \vert_g \mres {(\mathbb{A}(a; \rho/8, R), g)}$ is a $(\mathbf{C}, 1, \beta)$-\textbf{strong cone region} provided there is some $C^1$ function $u : (a + \mathbf{C}) \cap \mathbb{A}(a; \rho/8, R) \rightarrow \mathbf{C}^\perp$ such that, for any $r \in [\rho, R] \cap (0, \infty)$ we have 
    \begin{enumerate}
        \item Small $C^1_*$-norms: $r^{-1}\vert u \vert + \vert \nabla u \vert \leq \beta$. 
        \item Almost constant density ratios: $ \theta_{|\mathbf{C}|}(0) - \beta \leq \theta_{\vert \partial E \vert}(a, r) \leq \theta_{|\mathbf{C}|}(0) + \beta$. 
        \item Graphicality: $\vert \partial E \vert \mres { \mathbb{A}(a; \rho/8, R)} = \vert \mathrm{graph}_{a + \mathbf{C}}(u) \cap \mathbb{A}(a; \rho/8, R) \vert$. 
    \end{enumerate}
\end{definition}

\begin{remark}
    Since for almost minimisers of perimeter one can in general only expect $C^{1,\alpha}$ (for $\alpha \in (0,\frac{1}{2})$) regular part for their boundary, see \cite[Theorem 21.8]{M12}, we weaken the required norm control in the definition of strong cone regions compared to \cite[Definition 6.0.3]{E24}. See also Remark \ref{rem: C^2 for isoperimetric regions} for isoperimetric regions, for which one can upgrade this regularity to $C^2$. 
\end{remark}

\begin{remark} \label{remark: weak, standard, strong cone regions}
    In \cite{E24}, further refinements of the above definition are introduced. More precisely, one starts with the notion of weak cone region in which the cones $\mathbf{C}$ and the centres are allowed to change from scale to scale. One then proceeds with cone regions, in which the centre is fixed for every scale, and finally concludes with the strong cone regions defined above. It is then a consequence of \cite[Lemma 6.2 \& Theorem 6.3]{E24} that all these notions are effectively equivalent.  
\end{remark}

\begin{definition}[Smooth model]
    Given $\Lambda, \gamma \geq 0$, and $\sigma \in (0, 1/3)$, a tuple $(S, \mathbf{C}, \{(\mathbf{C}_\alpha, \mathbb{B}(y_\alpha, r_\alpha))\}_\alpha)$ is called a $(\Lambda, \sigma, \gamma)$-\textbf{smooth model} if $S$ is a $7$-dimensional local perimeter minimiser in $(\mathbb{R}^8, g_{\mathrm{eucl}})$ with $\theta_S(0, \infty) \leq \Lambda$, and $\mathbf{C}, \{\mathbf{C}_\alpha\}_\alpha \subset \mathscr{C}_\Lambda$, and  $\{\mathbb{B}_{r_\alpha}(y_\alpha) \}_\alpha$ is a finite collection of disjoint balls in $\mathbb{B}_{1 - 3\sigma}$, provided that the following is satisfied
    \begin{enumerate}
        \item $S$ can be represented by a union of disjoint closed, smooth, smooth, embedded, minimal hypersurfaces in $\mathbb{R}^8 \setminus \{y_\alpha\}_\alpha$, i.e.~$\vert S \vert = \sum_{j = 1}^k \vert S_j \vert$.
        \item $\vert S \vert \mres {(\mathbb{A}(1, \infty), g)}$ is a $(\mathbf{C}, 1, \gamma)$-strong cone region.
        \item For each $\alpha,$ there is a $j = 1,\dots, k$ so that $\mathrm{spt}\Vert S \Vert \cap \mathbb{A}(y_\alpha; 0, 2 r_\alpha) = S_j \cap \mathbb{A}(y_\alpha; 0, 2 r_\alpha)$ and it is a $(\mathbf{C}, 1, \gamma)$-strong cone region. 
    \end{enumerate}
\end{definition}

\begin{remark}
    Note that we do not need to change the variational hypothesis (i.e.~the assumption of zero mean curvature) in the definition of smooth models, for instance by requiring them to to have constant mean curvature or almost minimisers, as these arise from blow-up arguments. In particular, we only adapted the definition to our setting by requiring the stronger condition of being a perimeter minimiser, instead of a stationary integral varifold. 
\end{remark}

\begin{definition}[Smooth model scale constant]
    Given a $(\Lambda, \sigma, \gamma)$-smooth model $S$, we let $\epsilon_S$ be the largest number smaller than $\min(1, \min_\alpha \{r_\alpha\})$ for which the graph map 
    \begin{equation*}
        \mathrm{graph}_S : T^{\perp}\Big(\bigcup_j S_j \Big) \rightarrow \mathbb{R}^8, \quad \mathrm{graph}_S(x, v) = x + v, 
    \end{equation*}
    is a diffeomorphism from 
    \begin{align*}
        \left\{ (x, v) \in T^{\perp}\Big(\bigcup_j S_j \Big) \, \bigg| \, x \in \mathbb{B}_2 \setminus \bigcup_\alpha \mathbb{B}_{ \frac{r_\alpha}{8}}(y_\alpha), \vert v \vert < 2 \epsilon_S \right\} 
    \end{align*}
    onto its image, and satisfies $\vert \Vert D \mathrm{graph}_S \vert_{(x, v)}  - \mathrm{Id}  \Vert  \leq \vert \epsilon_S \vert^{-1} \vert v \vert$. 
\end{definition}

\begin{definition}[Smooth region]
    Given a smooth model $S$, a $C^2$ metric $g$ on $\mathbb{B}_R(a) \subset \mathbb{R}^8$, and $\beta \in (0, 1)$, we say that (the varifold associated with) the boundary of a set of finite perimeter $\vert \partial E \vert_g \mres {\mathbb{B}_R(a)}$ is a $(S, \beta)$-\textbf{smooth region} if for each $i = 1, 2, \ldots, k$, there is a $C^2$ function $u_{i} : S_i \rightarrow S_{i}^{\perp}$ so that 
    \begin{align*}
        ((\eta_{a, R})_{\#} \vert \partial E \vert_g) \mres {\mathbb{B}_1 \setminus \bigcup_\alpha \mathbb{B}_{\frac{r_\alpha}{4}}(y_\alpha)} = \sum_{i = 1}^{k} \left|\mathrm{graph}_{S_i}(u_{i}) \cap \mathbb{B}_1 \setminus \bigcup_\alpha \mathbb{B}_{\frac{r_\alpha}{4}}(y_\alpha)\right|_{R^{-2}(\eta_{a, R}^{-1})^\ast g}
    \end{align*}
    and $\Vert u_{i} \Vert_{C^2(S_i)} \leq \beta \epsilon_S$ for all $i$, where $\epsilon_S$ is the scale constant in the previous definition.  
\end{definition}

\begin{definition}[Cone decomposition] \label{def: cone decomposition}
Given $\theta, \gamma, \beta \in \mathbb{R}$, $\sigma \in (0, 1/3)$, $R > 0$, and $N \in \mathbb{N}$, we let $g$ be a $C^{k, \alpha}$ metric on $\mathbb{B}_R(x) \subset \mathbb{R}^8$, $E \in \mathcal{C}(\mathbb{B}_R(x))$, and $\mathcal{S} = \{S_s\}_s$ be a finite collection of $(\theta, \sigma, \gamma)$-smooth models.  
A $(\theta, \beta, \mathcal{S}, N)$-\textbf{cone decomposition} of $\vert \partial E \vert_g \mres \mathbb{B}_R(x)$ consists of the following parameters: 
\begin{itemize}
    \item Integers $N_C, N_S$ satisfying $N_C + N_S \leq N$, where $N_C$ is the number of strong-cone regions, while $N_S$ is the number of smooth regions. 
    \item Points $\{x_a\}_a, \{x_b\}_b \subset \mathbb{B}_R(x)$, where the $\{x_a\}_a$ are the centres of the strong-cone regions, and $\{x_b\}_b$ are the centres of the smooth regions.
    \item Radii $\{R_a, \rho_a \, | \, R_a \geq 2 \rho_a\}_a$, respectively $\{R_b\}_b$ corresponding to radii of annuli in the definition of strong-cone region, respectively of balls in the definition of smooth regions.
    \item Cones $\{\mathbf{C}_a\}_a \subset \mathscr{C}$.
    \item Indices $\{s_b\}_b$ corresponding to the smooth models $S_{s_b}$.
\end{itemize}
Where in the above $a = 1, \ldots, N_C$ and $b = 1, \ldots, N_S$. Furthermore, these parameters determine a covering of balls and annuli satisfying 
\begin{enumerate}
    \item Every $\vert \partial E \vert_g \mres \mathbb{A}(x_a;\rho_a, R_a)$ is a $(\mathbf{C}_a, 1, \beta)$-strong cone region and every $\vert \partial E \vert_g \mres \mathbb{B}_{R_b}(x_b)$ is a $(S_{s_b}, \beta)$-smooth region. 
    \item In the previous point, there is either a strong-cone region $\mathbb{A}(x_a; \rho_a, R_a)$ for $\vert \partial E \vert_g$ with $R_a = R$ and $x_a = x$, or a smooth region $B_{R_b}(x_b)$ for $\vert \partial E \vert_g$ with $R_b = R$ and $x_b = x$.  
    \item If $\vert \partial E \vert_g \mres \mathbb{A}(x_a; \rho_a, R_a)$ is a $(\mathbf{C}_a, 1, \beta)$-strong cone region and $\rho_a > 0$, then there exists either a smooth region $\mathbb{B}_{R_b}(x_b)$ for $\vert \partial E \vert_g$ with $R_b = \rho_a$, or another cone region $\mathbb{A}(x_{a'}; \rho_{a'}, R_{a'})$ for $\vert \partial E \vert_g$ with $R_{a'} = \rho_a, x_{a'} = x_a$. If $\rho_a = 0$, then $\theta_{\mathbf{C}_a}(0) > 1$.  
    \item If $\vert \partial E \vert \mres (\mathbb{B}(x_b, R_b), g)$ is a smooth region with $(S, \mathbf{C}, \{\mathbf{C}_{\alpha}, B(y_{\alpha}, r_{\alpha})\}_{\alpha}) \in \mathcal{S}$, then for any index $\alpha$, there exists a point $x_{b, \alpha}$, and a radius $R_{b, \alpha}$ satisfying 
    \begin{equation*}
        \vert x_{b, \alpha} - y_{\alpha} \vert \leq \beta R_b r_{\alpha} \qquad \text{and} \qquad \frac{1}{2} \leq \frac{R_{b, \alpha}}{R_b r_{\alpha}} \leq 1 + \beta, 
    \end{equation*}
    and either a strong-cone region $\mathbb{A}(x_{a'}, \rho_{a'}, R_{a'})$ for $\vert \partial E \vert$ with $R_a = R_{b, \alpha}$ and $x_a = x_{b, \alpha}$, or another smooth region $\mathbb{B}(x_{b'}, R_{b'})$ with $R_{b'} = R_{b, \alpha}$, and $x_{b'} = x_{b, \alpha}$. 
\end{enumerate}
\end{definition}

In light of \eqref{eqn: discrete densities stable cones}, we have an enumeration of $\{\theta_{|\mathbf{C}_i|}(0) \, | \, \mathbf{C} \in \mathcal{C}\} = \{\theta_l\}_{l \in \mathbb{N}}$; compared to \cite{E24}, we do not have to enumerate the set of densities-with-multiplicity as our varifolds are of multiplicity one. The strategy introduced in \cite{E24} to prove the existence of a cone decomposition is general and relies on good compactness and partial regularity theorems (sheeting), a monotonicity formula, and a \L ojasiewicz-Simon (or epiperimetric type) inequality for singular models, all of which are available for almost minimisers, see \cite{M12} and \cite{ESV24}.

\begin{theorem}[Existence of cone decomposition] \label{thm: cone decomposition}
    Given $l \in \mathbb{N}$, and $0 < \beta \leq \gamma \leq 1$, and $\sigma \in (0, 1/200]$, there are constants $\delta_{l, \gamma, \beta, \sigma} > 0$ and $N \in \mathbb{N}$, as well as a finite collection of $(\theta_l, \sigma, \beta)$-smooth models $\mathcal
    S = \{S_s\}_s$, all depending on $(l, \gamma, \beta$, and $\sigma$, so that the following holds.
    Let $g$ be a $C^3$ metric on $\mathbb{B}_1$ satisfying $\Vert g - g_{\mathrm{eucl}} \Vert_{C^3} \leq \delta_{l, \gamma, \beta, \sigma}$. Consider $\Omega$ a $(\delta_{l, \gamma, \beta, \sigma}, 1)$-almost minimiser of perimeter in $\mathbb{B}_1(0)$, and $\mathbf{C} \in \mathscr{C}$ such that $\theta_{|\mathbf{C}_i|}(0) \leq \theta_l$. Denote $\Sigma \subset \partial \Omega$ the $C^{1, \alpha}$ part of the boundary with $\overline{\Sigma} = \partial \Omega$, and suppose that 
    \begin{equation*}
        d_\mathcal{H}(\mathrm{spt} \vert \Sigma \vert \cap \mathbb{B}_1, \mathbf{C} \cap \mathbb{B}_1) \leq \delta_{l, \gamma, \beta, \sigma}, 
    \end{equation*}
    and
    \begin{equation*}
        \frac{1}{2} \theta_{|\mathbf{C}_i|}(0) \leq \theta_{\vert \Sigma \vert}(0, 1/2), \qquad \text{and} \qquad \theta_{\vert \Sigma \vert}(0, 1) \leq \frac{3}{2}\theta_{|\mathbf{C}_i|}(0).
    \end{equation*}
    Then there exists $r \in (1 - 40\sigma, 1)$, so that $\vert \Sigma \vert \mres (\mathbb{B}_r, g)$ admits a $(\theta_l, \beta, \mathcal{S}, N)$-cone decomposition.
\end{theorem}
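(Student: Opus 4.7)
The plan is to adapt the proof of the analogous cone decomposition result for seven-dimensional minimal hypersurfaces in \cite[Theorem 8.9 and Section 8]{E24} to the setting of $(\delta,1)$-almost minimisers of perimeter in dimension eight, proceeding by induction on the density level $l \in \mathbb{N}$ in the enumeration \eqref{eqn: discrete densities stable cones}. The base case $l = 0$ is immediate: if $\theta_{|\mathbf{C}|}(0) = 1$ then $\mathbf{C}$ is a hyperplane, and for $\delta_{0, \gamma, \beta, \sigma} > 0$ sufficiently small, Allard's regularity theorem (in the form available for almost minimisers, see \cite[Chapter 26]{M12}) upgrades the Hausdorff-closeness of $\Sigma$ to $\mathbf{C}$ to a uniform $C^{1, \alpha}$ graph representation, giving a single $(S, \beta)$-smooth region with $S$ the flat model, and hence a cone decomposition consisting of exactly one smooth region with $N_C = 0$ and $N_S = 1$.

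For the inductive step, assume the result has been established for all density levels up to $\theta_{l-1}$. Fix $\sigma \in (0, 1/200]$, $\beta \leq \gamma$, and $\mathbf{C} \in \mathscr{C}$ with $\theta_{|\mathbf{C}|}(0) \leq \theta_l$; without loss $\theta_{|\mathbf{C}|}(0) = \theta_l$ (otherwise induction applies directly). Given the small excess and Hausdorff-closeness hypotheses, one carries out the standard scale-by-scale analysis: examining the monotone quantity $\theta_{|\Sigma|}(0, r)$ on $r \in (0, 1)$, one identifies a maximal (outer) annulus $\mathbb{A}(0; \rho, R)$ on which the density ratios remain within $\beta$ of $\theta_l$, and on which, by the partial regularity and the equivalence of weak/standard/strong cone regions recorded in Remark \ref{remark: weak, standard, strong cone regions} (adapted using the $C^{1,\alpha}$ regularity of \cite[Theorem 21.8]{M12} in place of \cite[Section 6]{E24}), $\vert \Sigma \vert$ can be expressed as a $(\mathbf{C}, 1, \beta)$-strong cone region over $\mathbf{C}$. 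The existence of this outer strong cone region rests on three ingredients available for almost minimisers: the monotonicity formula (\cite[Theorem 21.11]{M12}), the Allard-type $\varepsilon$-regularity theorem, and the Łojasiewicz--Simon/epiperimetric inequality at cylindrical/conical singular models established in \cite{ESV24} which quantifies how fast $\Sigma$ collapses back to the cone on regions of near-constant density.

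At the inner scale $\rho > 0$ one applies a dichotomy: either the density at the centre has strictly dropped, and one invokes the induction hypothesis on $\mathbb{B}_\rho$ after rescaling by $\rho^{-1}$ (the rescaled object remains a $(\delta^\prime, 1)$-almost minimiser in $(\mathbb{B}_1, \rho^{-2}(\eta_{0,\rho}^{-1})^\ast g)$ with $\delta^\prime \to 0$ under the required smallness of $\delta_{l,\gamma,\beta,\sigma}$, and the metric satisfies $\Vert \rho^{-2}(\eta_{0,\rho}^{-1})^\ast g - g_\mathrm{eucl} \Vert_{C^3} \to 0$); or the density remains at $\theta_l$ but the configuration has pinched into finitely many bad points, and a compactness argument using \cite[Section 21.5]{M12} extracts a blow-up which is a local perimeter minimiser $S \subset \mathbb{R}^8$ asymptotic to $\mathbf{C}$ and smooth away from finitely many points $\{y_\alpha\}$ with tangent cones $\mathbf{C}_\alpha$ of density $\leq \theta_l$. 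By Hausdorff-closeness of $\Sigma$ to this $S$ at the relevant scale, the ball $\mathbb{B}_\rho$ is recorded as a $(S, \beta)$-smooth region with $S \in \mathcal{S}$, and one recurses into each $\mathbb{B}_{R_{b,\alpha}}(x_{b, \alpha})$ around the singular points, where the strict density drop at $y_\alpha$ again permits invocation of the inductive hypothesis. Finiteness of the family $\mathcal{S}$ and of the total count $N$ follows because: the possible limit models $S$ form a compact family (by compactness of local perimeter minimisers with bounded density at infinity), so a finite subfamily $\mathcal{S} = \{S_s\}_s$ suffices to handle all cases up to $\beta$-closeness; and each recursion strictly reduces the density label, so the recursion tree has depth at most $l$ and branching bounded by $N(\theta_l)$ from Lemma \ref{lemma: N (Lambda)}.

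The hardest technical step will be verifying the Łojasiewicz--Simon/epiperimetric type inequality for volume-constrained almost minimisers at isolated singular models in dimension eight with the uniform constants needed for the iteration; while \cite{ESV24} provides such inequalities in the almost-minimising setting, one must track that the constants depend only on $l, \gamma, \beta,\sigma$ and not on the specific almost minimiser, which is what permits the choice of a single $\delta_{l, \gamma, \beta, \sigma}$ uniformly across the induction. Once this uniformity is confirmed, the scale-by-scale decomposition combined with the strict density drop in each recursion step produces the finite collection $\{x_a, R_a, \rho_a, \mathbf{C}_a\}$ and $\{x_b, R_b, s_b\}$ satisfying conditions (1)--(4) of Definition \ref{def: cone decomposition}, and the outer radius $r \in (1 - 40\sigma, 1)$ arises from the need to discard a thin collar near $\partial \mathbb{B}_1$ where the boundary graphicality cannot be controlled by the interior data alone.
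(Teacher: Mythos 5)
Your proposal proceeds by induction on the density level as in the paper, and the base case and the role of Allard-type regularity, strong cone regions, and the compactness of limiting models are all correctly identified. However, you take a genuinely different route from the paper's proof for the inductive step: you propose a direct, constructive scale-by-scale decomposition, tracking the density-ratio drop across annuli and explicitly applying the L{\'o}jasiewicz--Simon/epiperimetric inequality to produce the outer strong cone region and the recursive branching. The paper instead argues by contradiction-and-compactness, closely following the proof of \cite[Theorem 7.1]{E24}: one assumes the theorem fails for a sequence of almost minimisers with $\delta_i \to 0$, extracts a varifold limit, introduces a precisely ordered hierarchy of parameters $\beta'' \ll \tau \ll \beta' \ll \beta \leq \gamma \ll \sigma$, passes through the intermediate notions of weak and cone regions (Remark \ref{remark: weak, standard, strong cone regions}), defines the infimal graphing radius $\rho_i$, rescales by $\rho_i^{-1}$ at the annulus centre $a_i$, and reaches a contradiction via the two subcases of \cite[Theorem 7.1]{E24}. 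The key advantage of the contradiction argument is precisely what you flag as ``the hardest technical step'' in your constructive approach: the uniformity of $\delta_{l,\gamma,\beta,\sigma}$ across all almost minimisers falls out automatically from compactness, rather than having to be verified by hand at each scale. Two smaller points worth correcting: the theorem is about plain $(\delta,1)$-almost minimisers, not volume-constrained minimisers, so the concern about L{\'o}jasiewicz--Simon for ``volume-constrained almost minimisers'' is misplaced; and Lemma \ref{lemma: N (Lambda)}, which you cite to bound the branching, is stated in the paper for the class $\mathrm{VCM}$ rather than almost minimisers --- the paper's proof of the cone decomposition does not invoke it, instead obtaining finiteness of $N$ and $\mathcal{S}$ directly from the contradiction assumption and the compactness of $\mathscr{C}_\Lambda$.
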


\begin{remark}\label{rem: C^2 for isoperimetric regions}
    In Subsection \ref{subsec: generic semi-nondegeneracy} we will apply Theorem \ref{thm: cone decomposition} to isoperimetric regions which, since our Riemannian metrics are at least $C^{4, \alpha}$ regular, have at least $C^2$ regular part of their boundary. As a consequence, the cone decomposition for the boundary of an isoperimetric region has improved regularity, analogous to the case of minimal hypersurfaces with bounded mass and index as considered in \cite{E24}. In particular one can assume the various notions of cone regions have $C^2_*$ control, as opposed to just $C^1_*$ control, exactly as in \cite[Section 6]{E24}. Moreover, if one is only considering isoperimetric regions in the proof below, one can bypass the general regularity and compactness theory for almost minimisers and instead invoke Allard’s theorem directly (since its hypotheses are satisfied in the situations under consideration).
\end{remark}
\begin{proof}
    We will only sketch the proof here, pointing out the relevant alterations required to adapt to the setting of almost minimisers of perimeter, and refer to \cite[Theorem 7.1]{E24} for precise details. The proof will proceed by induction on $l \in \mathbb{N}$ and contradiction. Constants will be chosen throughout, but the following hierarchy should be kept in mind:
    \begin{equation*}
        \beta^{\prime \prime} \ll \tau \ll \beta^\prime \ll \beta \leq \gamma \ll \sigma < 1, 
    \end{equation*}
    where the parameters $\beta''$ and $\beta'$ will correspond to notions in \cite{E24} we will give reference to. 

    \bigskip

    Suppose now for a contradiction that the theorem fails. Then, there are sequences $\delta_i \rightarrow 0$, $C^3$ metrics $g_i$, $(\delta_i, 1)$-almost minimisers of perimeter $\Omega_i$, and cones, $\mathbf{C}_i$, such that $g_i, \Omega_i$ satisfy the hypothesis of the theorem with $\delta_i, g_i$, and $\mathbf{C}_i$ in place of $\delta, g$, and $\mathbf{C}$, but with the property that for any finite collection of $\mathcal{S}^\prime$ of $(\theta_l, \sigma, \beta)$-smooth models, and any $N^\prime \in \mathbb{N}$, there is some $i_0 \geq 1$ such that $\vert \Sigma_i \vert_{g_i} \mres (\mathbb{B}_r, g_i)$ does not admit a $(\theta_l, \beta, \mathcal{S}^\prime, N^\prime)$-strong cone decomposition for all $i > i_0$, and $r \in (1 - 40 \sigma, 1)$. Passing to a subsequence (not relabelled), by the compactness of stable minimal regular cones (see the discussion preceding \eqref{eqn: discrete densities stable cones}), we ensure that $\mathbf{C}_i \rightarrow \mathbf{C} \in \mathscr{C}$ smoothly, with multiplicity one away from the origin, and $\theta_{|\mathbf{C}_i|}(0) = \theta_{|\mathbf{C}|}(0)$ for all $i \geq 1$ sufficiently large. Appealing to the compactness of $(\Lambda, r_0)$-almost minimisers, \cite[Section 21.5]{M12} (or alternatively to \cite[Lemma 5.6]{ESV24}), by passing to a further subsequence (not relabelled), we ensure that $\vert \Sigma_i \vert_{g_i} \rightarrow \vert \mathbf{C} \vert_{g_{\mathrm{eucl}}}$ as varifolds in $\mathbb{B}_1$ and in $C^{1, \alpha}$ on compact subsets of the complement of $\mathrm{Sing}(\mathbf{C}) \subset \{0\}$, with $\theta_{|\mathbf{C}_i|}(0) \leq \theta_l$. 

    \bigskip
    
    Note that if $l = 0$, corresponding to $\vert \mathbf{C} \vert$ being a multiplicity one hyperplane then, provided $i \geq 1$ is large enough, we can appeal to the regularity theory of $(\Lambda, r_0)$-almost minimisers \cite[Theorem 21.8]{M12} to infer that each $\vert \Sigma_i \vert_{g_i} \mres (\mathbb{B}_{1 - 5\sigma}, g_i)$ is a $(S, \beta)$-smooth region, with $S$ the smooth model $(\vert \mathbf{C} \vert, \mathbf{C}, \emptyset)$. 

    \bigskip
    
    By the inductive hypothesis, we can assume the theorem holds for $l^\prime < l$. For each $i \geq 1$, we define 
    \[
    \rho_i = \inf \{ \rho \, | \, \vert \Sigma_i \vert_{g_i} \mres (\mathbb{A}(\rho, 1), g_i)  \text{ is a $(\mathbf{C}, 1, \beta^{\prime \prime}, \tau, \sigma)$-weak cone region}\};
    \]
    see \cite[Definition 6.0.1]{E24} for the definition of weak cone region, where $\beta^{\prime \prime}$ and $\tau$ will be be chosen later, depending on $l, \beta$, and $\gamma).$ In addition, let $a_i = a_{\rho_i}(V_i)$ be the annulus centre at radius $\rho_i$ as appearing in \cite[Definition 6.0.1]{E24}. Because of convergence $\vert \Sigma_i \vert_{g_i}  \rightarrow \vert \mathbf{C} \vert$ in both the varifold sense and in $C^{1, \alpha}$ on compact subsets of $B_1 \setminus \{0\}$, we necessarily have $a_i \rightarrow 0$, as well as $\rho_i \rightarrow 0$. Assuming $\beta^{\prime \prime}$ and $\tau$ small enough, we have that $\vert \Sigma_i \vert_{g_i} \mres (\mathbb{A}(a_i; \rho_i, 1 - 3 \sigma), g_i)$ is a $(\mathbf{C}, 1, \beta^\prime)$-cone region as defined in \cite[Definition 6.0.2]{E24}. Using the propagation of graphicality for almost minimisers appearing in \cite[Theorem 5.2]{ESV24}, which can be applied due to the hypothesis $\delta_i \rightarrow 0$, we see that $\vert \Sigma_i \vert_{g_i} \mres (\mathbb{A}(a_i; \rho_i, 1 - 3 \sigma), g_i)$ is a $(\mathbf{C}, 1, \beta^\prime)$-strong cone region as in Definition \ref{def: strong cone region} for $i \geq 1$ sufficiently large.  

    \bigskip
    
    Suppose now $\rho_i = 0$ for infinitely many $i \geq 1$ then if $\theta_{|\mathbf{C}|}(0) = 1$, again by using the regularity theory of $(\Lambda, r_0)$-almost minimisers \cite[Theorem 21.8]{M12}, we conclude that $\vert \Sigma_i \vert \mres (\mathbb{B}_{1 - 5 \sigma}, g_i)$ is a $(S, \beta)$-smooth region for $S$ the smooth model given by $(\vert \mathbf{C} \vert, \mathbf{C}, \emptyset)$. But this means that infinitely many $\vert \Sigma_i \vert_{g_i} \mres (\mathbb{B}_{1 - 5 \sigma}, g_i)$ admit a $(\theta_l, \beta, \{S\}, 1)$-cone decomposition, giving a contradiction. Analogously, if $\theta_{|\mathbf{C}_i|}(0) > 1$, we have that $\vert \Sigma_i \vert_{g_i} \mres (\mathbb{B}_r, g_i)$ admit a strong cone decomposition for sufficiently large $i \geq 1$, giving another contradiction. Thus, $\rho_i > 0$ for sufficiently large $i \geq 1$, and we can consider the rescaled varifolds $V_i^\prime = (\eta_{a_i, \rho_i})_\# \vert \Sigma_i \vert_{g_i}$ under the corresponding rescaled metrics $g_i^\prime = \rho_i^{-2} (\eta_{a_i, \rho_i}^{-1})^* g_i$. In particular we see that
    \[
        \inf\{\rho \, | \,  V_i^\prime \mres (\mathbb{A}(\rho, R), g_i^\prime) \text{ is a $(\mathbf{C}, 1, \beta^{\prime \prime}, \tau, \sigma)$-weak cone region}\} = 1, 
    \]
    and $a_1(V_i^\prime) = 0$, where again the notation is as in \cite[Definition 6.0.1]{E24}. Appealing once more to the compactness theory of $(\Lambda, r_0)$-almost minimisers, \cite[Section 21.5]{M12} or \cite[Lemma 5.6]{ESV24}, we can now extract a subsequence (not relabelled) converging to a limiting minimiser $V^\prime$ in both the varifold topology as well as in $C^{1, \alpha}$ on compact subsets of $\mathbb{R}^8 \setminus \mathrm{Sing}(V^\prime)$; for the monotonicity argument appearing in the paragraph below \cite[Theorem 7.1, (64)]{E24}, we instead apply the monotonicity formula for almost minimisers \cite[(5.2)]{ESV24} in place of of \cite[(18)]{E24}. Provided $\beta$ is sufficiently small, using Arzelà--Ascoli, and \cite[Theorem 5.1]{E24}, we know that $V^\prime \mres (\mathbb{A}(1, \infty), g_{\mathrm{eucl}})$ is a $(\mathbf{C}, 1, \beta)$-strong cone region,  with $\vert \Sigma_i^\prime \vert \rightarrow V^\prime$ in $C^{1, \alpha}$ on compact subsets of $\mathbb{R}^8 \setminus \overline{\mathbb{B}}_{1/8}$. In particular, we have that $\mathrm{Sing}(V^\prime) \subset \overline{\mathbb{B}}_{1/8}$ is a finite set, and one can check that any tangent cone to $V^\prime$ at infinity is of the form $\vert \mathbf{C}^\prime \vert$ for some $\mathbf{C}^\prime \in \mathscr{C}$. We now have two cases to analyse:
    \begin{enumerate}[(a)]
        \item $\theta_{V^\prime}(a) \geq \theta_l$ for some $a \in \mathrm{spt}(V^\prime)$. By the monotonicity formula, we infer that $V^\prime = \vert a + \mathbf{C}^\prime \vert$ for some $\mathbf{C}^\prime \in \mathscr{C}$. The rest of the argument goes unchanged with respect to \cite[Theorem 7.1, Subcase 1A]{E24}, with the simplifications arising from the fact that we have no points of index concentration, and that we are in a multiplicity one setting. 
        \item $\theta_{V^\prime}(a) \leq \theta_l$ for all $a \in \mathrm{spt}(V^\prime)$. This case follows verbatim as in \cite[Theorem 7.1, Subcase 1B]{E24}, replacing the application of \cite[Theorem 6.3]{E24} there with \cite[Theorem 5.2]{ESV24}. 
    \end{enumerate}
    In either case, we reach an contradiction under the assumption that the theorem fails.
\end{proof}

\subsection{Tree representations and large-scale cone decompositions}\label{subsec: tree representations}

Here we record several definitions from \cite[Section 9.1]{LW25} adapted to the setting of isoperimetric regions for use in Subsection \ref{subsec: generic semi-nondegeneracy}:

\begin{definition}[Tree representation of a cone decomposition] \label{definition: tree representation of cone dec}
    Given a $(\theta, \beta, \mathcal{S}, N)$-cone decomposition of $\vert \partial E \vert_g \mres (\mathbb{B}_R(x), g)$ with parameters as labelled as in Definition \ref{def: cone decomposition}, the corresponding \textbf{tree representation} is a rooted tree uniquely defined by: 
    \begin{itemize}
        \item There are two types of nodes: every of \textbf{type I} is labelled $(\mathbf{C}_a, 1, x_a, R_a, \rho_a)$, while every node of \textbf{type II} is labelled $(S_{s_b}, x_b, R_b)$. 
        \item The root is labelled with either $(\mathbf{C}_a, 1, x_a = x, R_a = R, \rho_a)$, or $(S_{s_b}, x_b = x, R_b = R)$. 
        \item For any type I node $(\mathbf{C}_a, 1, x_a, R_a, \rho_a)$, either $\rho_a = 0$, $\theta_{\mathbf{C}_a}(0) > 1$ and it is a leaf; or $\rho_a > 0$ and it has a unique child of either:
        \begin{enumerate}
            \item type I $(\mathbf{C}_{a'}, 1, x_{a'} = x_a, R_{a'} = \rho_a, \rho_{a'})$.
            
            \item type II $(S_{s_{b'}}, x_{b'} = x, R_{b'} = R)$. 
        \end{enumerate}
        \item For any type II node $(S_{s_b}, x_b, R_b)$ where $S_{s_b} = (S, \mathbf{C}, \{\mathbf{C}_{\alpha}, 1, \mathbb{B}_{r_{\alpha}}(y_{\alpha})\}_{\alpha \in I_b})$ it has $\# I_b$ child nodes such that for each index $\alpha$, there exists $R_{b, \alpha}$ and $x_{b, \alpha}$ such that 
         \begin{equation*}
        \vert x_{b, \alpha} -  y_{\alpha} \vert \leq \beta R_b r_{\alpha} \qquad \text{and} \qquad \frac{1}{2} \leq \frac{R_{b, \alpha}}{R_b r_{\alpha}} \leq 1 + \beta, 
    \end{equation*}
    so that the corresponding child node is either:
    \begin{enumerate}
        \item type I $(\mathbf{C}_{a'} = \mathbf{C}_{\alpha}, 1, x_{a'} = x_{b, \alpha}, R_{a'} = R_{b, \alpha}, \rho_{a'})$.
        \item type II $(S_{s_{b'}}, x_{b'} = x_{b, \alpha}, R_{b'} = R_{b, \alpha})$.
    \end{enumerate}
    \end{itemize}
    
\end{definition}
\begin{definition}[Coarse tree representation] \label{definition: coarse tree representation of cone dec}
    The \textbf{coarse tree representation} of a cone decomposition is obtained by relabelling the rooted tree appearing in Definition \ref{definition: tree representation of cone dec}: type I nodes $(\mathbf{C}_a, 1, x_a, R_a, \rho_a)$ are simply denoted by $(\theta_{\mathbf{C}_a}(0))$, while type II nodes $(S_{s_b}, x_b, R_b)$ by $S_{s_b}$. 
\end{definition}

\begin{definition}[Closeness of tree representations] \label{definition: gamma-close tree representations of cone dec}
For $\gamma \in (0,1/100)$, two $(\theta, \beta, \mathcal{S}, N)$-tree representations with parameters
\begin{itemize}
    \item $(N_S, N_C, \{x_a\}, \{x_b\}, \{R_a\}, \{\rho_a\}, \{R_b\}, 1, \{\mathbf{C}_a\}, \{S_b\})$,
    \item $(N_S', N_C', \{x_a'\}, \{x_b'\}, \{R_a'\}, \{\rho_a'\}, \{R_b'\}, 1, \{\mathbf{C}_a'\}, \{S_b'\})$,
\end{itemize}
are said to be \textbf{$\gamma$-close} if $N_S' = N_S$, $N_C' = N_C$ and they have the same coarse tree representations, such that:

\begin{enumerate}
    \item If the corresponding two nodes are both of type I, then we have:
    \begin{itemize}
        \item $d_\mathcal{H}(\mathbf{C}_a \cap \partial \mathbb{B}_1, \mathbf{C}_a' \cap \partial \mathbb{B}_1) \leq \gamma$.
        \item If $\rho_a > 0$, then
        \begin{equation*}
              |\rho_a - \rho_a'| \leq \gamma \min(\rho_a, \rho_a'), \qquad |x_a - x_a'| \leq \gamma \min(\rho_a, \rho_a'), \qquad |R_a - R_a'| \leq \gamma \min(\rho_a, \rho_a'). 
        \end{equation*}
        Otherwise if $\rho_a = 0$, then
        \begin{equation*}
            \rho_a' = 0, \qquad |x_a - x_a'| \leq \gamma \min(R_a, R_a'), \qquad \text{and} \qquad |R_a - R_a'| \leq \gamma \min(R_a, R_a'). 
        \end{equation*}
    \end{itemize}

    \item If the corresponding two nodes are both of type II, then we have:
    \begin{equation*}
        |x_b - x_b'| \leq \gamma \min(R_b, R_b') \min_{\alpha \in I_b}(r_{\alpha}), \qquad \text{and} \qquad |R_b - R_b'| \leq \gamma \min(R_b, R_b') \min_{\alpha \in I_b}(r_{\alpha}). 
    \end{equation*}
\end{enumerate}
\end{definition} 

Since the notion of a cone decomposition is local, in describing the space of triples it will be use to provide a global decomposition of the boundary of a given isoperimetric region:

\begin{definition}[Large-scale cone decomposition] \label{definition: large scale cone dec} Given $\theta, \gamma, \beta \in \mathbb{R}_+$, $\sigma \in (0, 1/3)$, and $N \in \mathbb{N}$, let $g_0, g$ be two $C^{k,\alpha}$ metrics on $M$, $t_0, t \in \mathbb{R}$, $\Omega_0 \in \mathcal{I}(g_0, t_0)$ and $\Omega \in \mathcal{I}(g, t)$, and $\mathcal{S} = \{S_s\}_s$ be a finite collection of $(\theta, \sigma, \gamma)$-smooth models. A \textbf{large-scale $(\theta, \beta, g_0, t_0, \Omega_0, \mathcal{S}, N)$-cone decomposition} of $\Omega$ (or of $\partial \Omega$ by abuse of language) consists of:
\begin{itemize}
    \item a collection of radii $\{r_{\alpha}\}_{\alpha}$ corresponding to the singular sets $\mathrm{Sing}(\Sigma_0) = \{p_{\alpha}\}_{\alpha}$, such that $\{B^g(p_{\alpha}, r_{\alpha})\}_{\alpha}$ are pairwise disjoint.
    
    \item a $(\theta, \beta, \mathcal{S}, N)$-cone decomposition for each $|\partial \Omega| \mres B^g_{r_{\alpha}}(p_{\alpha})$.
    
    \item a $C^1$ function $u: \Sigma_0 \setminus \bigcup_{p_{\alpha} \in \mathrm{Sing}(\Sigma_0)} B^g_{\frac{r_{\alpha}}{2}}(p_{\alpha}) \to \Sigma^\perp_0$ so that for $r_0 = \min_{\alpha} \{r_{\alpha}\} > 0$,
    \[
    r_0^{-1}|u| + |\nabla u| \leq \beta,
    \]
    and $\partial \Omega \setminus B^g_{r_{\alpha}}(p_{\alpha})$ coincides with $\mathrm{graph}_{\partial \Omega_0}(u) \setminus B^g_{r_{\alpha}}(p_{\alpha})$.
\end{itemize}
\end{definition}

\begin{remark}
    While Definition \ref{def: cone decomposition} is phrased for Euclidean balls equipped with a Riemannian metric, by choosing the radius smaller than $\mathrm{inj}(M,g)$, it suffices in Definition \ref{definition: large scale cone dec} to work in a Riemannian geodesic ball of the same radius; hence we will use both notations interchangeably.
\end{remark}

Similarly, we can define the corresponding notions of tree representation and $\gamma$-closeness for these global decompositions:

\begin{definition}[Tree representation of large-scale cone decomposition] \label{definition: tree rep of large scale cone dec}
Given a large-scale \\
$(\theta, \beta, g_0, t_0, \Omega_0, \mathcal{S}, N)$-cone decomposition of $\Omega \in \mathcal{I}(g,t)$ with parameters: $\mathrm{Sing}(\Sigma_0) = \{p_{\alpha}\}_{\alpha}$, radii $\{r_{\alpha}\}_{\alpha}$, and $(\theta, \beta, \mathcal{S}, N)$-cone decompositions for each $\vert \partial \Omega \vert \mres B^g_{r_{\alpha}}(p_{\alpha})$. The corresponding \textbf{tree representation} of the large-scale cone decomposition is a rooted tree uniquely defined by:
\begin{enumerate}
    \item The root node is labelled by a tuple $(\Sigma_0, g_0, \{p_{\alpha}\}, \{r_{\alpha}\})$.
    \item The root node has $\# \mathrm{Sing}(\Sigma_0)$ children, indexed by $\alpha$. The corresponding subtree rooted at the $\alpha$-child is the tree representation of the $(\theta, \beta, \mathcal{S}, N)$-cone decomposition for each $|\partial \Omega|_g \mres B^g_{r_{\alpha}}(p_{\alpha})$. 
\end{enumerate}
Finally, the \textbf{coarse tree representation} will be the directed rooted tree with the subtrees above replaced by their corresponding coarse trees.
\end{definition} 

\begin{definition}[Closeness of tree representations of large-scale cone decompositions] \label{definition: gamma-close tree representations large scale cone dec} For $\gamma \in (0,1/100)$, two $(\theta, \beta, g_0, t_0, \Sigma_0, \mathcal{S}, N)$-tree representations of large scale cone decompositions (with $\beta \leq \gamma$) are said to be \textbf{$\gamma$-close} if their root nodes have the same label, and their subtrees corresponding to the $\alpha$-child are $\gamma$-close for each $\alpha$. 
\end{definition}

\section*{Acknowledgements}

The authors thank Nick Edelen, Martin Man-chun Li, and Luca Spolaor for encouragement and useful discussions, as well as Yangyang Li and Zhihan Wang for sharing with us an earlier version of \cite{LW25}. 

\bigskip

KMS was supported in part by the EPSRC [EP/N509577/1], [EP/T517793/1]. GN was substantially supported by research grants from the Research Grants Council of the Hong Kong Special Administrative Region, China [Project No.: CUHK 14304121 and CUHK 14305122]. Part of this work was carried out while DP was supported by an AMS-Simons travel grant. 

\begingroup

\bibliographystyle{alpha}
\bibliography{main}
\endgroup

\hrule 

\Addresses

\end{document}